\documentclass[11pt,reqno]{amsart}
\usepackage[shortlabels]{enumitem}
\usepackage{esint}
\usepackage{physics}
\usepackage{relsize}
\usepackage[backref]{hyperref}
\usepackage{mathtools}
\usepackage{amsfonts}
\usepackage[all]{xy}
\usepackage{geometry}
\usepackage{amsmath,amssymb} 
\usepackage{faktor}
\usepackage{amscd}
\usepackage{tikz-cd}

\newcommand{\subsectionnotoc}[1]{%
  \par\addvspace{.5\baselineskip plus .7\baselineskip}%
  \noindent{\normalfont\bfseries #1.}\enspace\ignorespaces
}

\newtheorem{theorem}{Theorem}[section]
\newtheorem{lemma}[theorem]{Lemma}
\newtheorem{definition}[theorem]{Definition}

\theoremstyle{corollary}
\newtheorem{corollary}[theorem]{Corollary}
\theoremstyle{conjecture}

\newtheorem{problem}[theorem]{Problem}

\theoremstyle{assumption}

\theoremstyle{proposition}
\newtheorem{proposition}[theorem]{Proposition}
\theoremstyle{remark}
\newtheorem{remark}[theorem]{Remark}
\numberwithin{equation}{section}
\everymath{\displaystyle}

\newcommand{\ii}{\ensuremath{\sqrt{-1}}}
\newcommand{\pp}{\bar\partial}

\newcommand{\noop}[1]{}
\DeclareMathOperator{\vol}{Vol}
\DeclareMathOperator{\Rm}{Rm}
\DeclareMathOperator{\Ric}{Ric}

\begin{document}

\title{On the geometry of non-collapsed polarized cscK surfaces}
\author{Junsheng Zhang}
\address{Courant Institute of Mathematical Sciences\\
  New York University, 251 Mercer St\\
  New York, NY 10012\\}
\curraddr{}
\email{jz7561@nyu.edu}
\author{Keshu Zhou}
\address{Department of Mathematics, University of California, Berkeley, CA 94720, USA} 
\email{keshu\_zhou@berkeley.edu}

\begin{abstract}
We show that the Gromov–Hausdorff convergence of non-collapsed polarized constant scalar curvature Kähler (cscK) surfaces  can be realized as convergence in a Hilbert scheme. We also derive uniform estimate of Bergman kernels on the effective regular set. As an application, we establish the Zariski openness of cscK metrics for certain smooth polarized families, following the approach of Donaldson.
\end{abstract}
\maketitle
\tableofcontents

\section{Introduction}

A polarized constant scalar curvature K\"ahler surface, abbreviated as cscK metric below, consists of the data 
\( (X, g, J, \omega, L, A) \), 
where \( (X, g, J, \omega) \) is a smooth compact Kähler surface, 
\( L \) is a Hermitian holomorphic line bundle on \( X \), 
and \( A \) is a Hermitian connection on \( L \) whose curvature satisfies 
\( \omega = \sqrt{-1}\,\Theta_A \) and 
moreover, the scalar curvature \( S_{\omega} \) of \( \omega \) is constant.
For constants $V, C_S>0$, let $\mathcal{K}(V,C_S)$ be the class of polarized cscK surfaces satisfying the following conditions:
\begin{equation}\label{assumption--volume upper bound and scalar bound}
    \mathrm{Vol}(X,\omega)\leq V, \quad |S_{\omega}|\leq1,\quad
C_{\omega}\leq C_S,
\end{equation}
where $C_{\omega}$ is the scaling-invariant Sobolev constant for the K\"ahler manifold $(X,\omega)$; see Definition \ref{def--sobolev constant}. 

Given any sequence $X_j \in \mathcal{K}(V,C_S)$,  
the compactness theory developed in  
\cite{tian-viaclovsky0,tian-viaclovsky1}  (see also \cite{anderson05}), 
implies that, after passing to a subsequence,  
$X_j$ converges in the polarized 
$\hat C^{\infty}$–Cheeger–Gromov sense  
to a polarized cscK orbifold $X_\infty$;  
see Section~\ref{orbifold compactness} for details. 

\subsection{Main results}
The following is our first main theorem in this paper:
\begin{theorem}\label{algebraic convergence}
         By passing to a subsequence, the Gromov-Hausdorff convergence of $X_j$ to $X_{\infty}$ can be realized as convergence within a Hilbert scheme.
\end{theorem}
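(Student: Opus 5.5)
We follow the Donaldson–Sun strategy: show that a fixed power of $L_j$ embeds all the $X_j$ uniformly into a common projective space, with the limit embedding realizing $X_\infty$, so the images converge in the Hilbert scheme to the image of $X_\infty$.

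\textbf{Step 1: partial $C^0$ estimate.} The key is a uniform lower bound on the Bergman kernel. We aim to find $k_0$ and $b>0$, depending only on $V,C_S$, such that for every $X\in\mathcal K(V,C_S)$ and every $x\in X$,
\[
\rho_{k_0}(x)=\sum_i |s_i(x)|^2_{h^{k_0}}\ \ge\ b ,
\]
where $\{s_i\}$ is an $L^2$-orthonormal basis of $H^0(X,L^{k_0})$. We argue by contradiction, using the $\hat C^\infty$ Cheeger–Gromov convergence of Section~\ref{orbifold compactness}.

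\begin{itemize}
\item Suppose $x_j\in X_j$ have $\rho_{k_j}(x_j)\to0$ along a sequence. Rescale by $k$, so the pointed limits of $(X_j,k\omega_j,x_j)$ are tangent cones of $X_\infty$.
\item On a non-collapsed orbifold limit these tangent cones are flat cones $\mathbb C^2/\Gamma$ with $\Gamma\subset U(2)$ finite, or ALE-type bubbles that appear only at finitely many scales.
\item On the model, take the Gaussian section $e^{-|z|^2/4}$. Since the connection has trivial holonomy around $\Gamma$ only for suitable $k$, we must choose $k$ divisible by $|\Gamma|$. The orders are uniformly bounded because of volume and non-collapsing, so a single multiple works.
\item Transplant this section to $X_j$ using a cutoff supported away from the singular point, where convergence is smooth. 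The cutoff error is small in $L^2$ because a point has zero capacity in real dimension $4$.
\item Correct the section by H\"ormander's $L^2$ estimate for $\bar\partial$. Here the curvature term $\Ric+k\omega$ is positive for large $k$, after using the Weitzenböck identity with the uniform $L^2$ curvature bound (cscK gives $\int|\Rm|^2$ bounded by Chern–Weil) together with the Sobolev constant. This produces a holomorphic section with $|s(x_j)|$ bounded below, a contradiction.
\end{itemize}

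The main obstacle is the $\bar\partial$ estimate near orbifold points and bubbles. Ricci curvature is not bounded, only in $L^2$, so we expect to replace pointwise Ricci bounds by a Moser iteration using $C_S$. Gradient and $L^\infty$ bounds on holomorphic sections, via $\Delta|s|^2\ge -Ck|s|^2$ and the Sobolev inequality, then give the needed estimates.

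\textbf{Step 2: uniform embedding.} Gradient bounds give Lipschitz control of the Kodaira maps $T_j:X_j\to\mathbb{CP}^{N}$, defined by orthonormal bases. Riemann–Roch together with the bounds on volume and $\int c_1^2, c_2$ shows that $N_j$ is bounded. Pass to a subsequence with constant $N$. Repeat the argument of Step 1 with two points and with derivatives to get separation of points and tangent vectors for a further multiple $k$. This shows $T_j$ are embeddings with uniform bi-Lipschitz-type control at scale. The Hilbert polynomials of $T_j(X_j)$ are then bounded, so finitely many occur; pass to a subsequence with a fixed one.

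\textbf{Step 3: identification of the limit.} Up to $U(N+1)$, the $T_j$ converge (Arzelà–Ascoli, using the Cheeger–Gromov maps) to a map $T_\infty:X_\infty\to\mathbb{CP}^N$ given by an orthonormal basis of $H^0(X_\infty,L_\infty^k)$. Convergence of the Bergman spaces follows from $L^2$ convergence plus the $\bar\partial$ correction. $T_\infty$ is injective by the same separation argument, so $T_\infty(X_\infty)$ is a normal projective surface homeomorphic to $X_\infty$. The Hilbert points $[T_j(X_j)]$ subconverge in the projective Hilbert scheme to some $W$. Because the image sets converge in Hausdorff distance, $W_{\mathrm{red}}=T_\infty(X_\infty)$. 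Degree equality, which comes from volume convergence, rules out extra components. To exclude embedded points, use normality of the limit and flatness: the Hilbert polynomial of $T_\infty(X_\infty)$ also equals the common one, via convergence of $\dim H^0(mL)$ for all $m$ from the same $L^2$ arguments. Hence $W=T_\infty(X_\infty)$.
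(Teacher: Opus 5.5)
\textbf{The gap is Step 1, and everything after it depends on it.} A uniform lower bound for the Bergman kernel is not available in $\mathcal K(V,C_S)$. The paper poses it as an open problem in Section~\ref{began function lower bound} and proves the theorem without it.

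Your mechanism for producing it does not work. H\"ormander's estimate on $X_j$ (Bochner--Kodaira--Nakano for $L_j^k\otimes K_{X_j}^{-1}$) needs the \emph{pointwise} bound $\Ric(\omega_j)+k\omega_j\ge c\,\omega_j$ on all of $X_j$, and there is no uniform lower Ricci bound here. Since $\int|\Rm|^2$ is scale invariant in real dimension four, Ricci curvature can concentrate at bubble scales. If some bubble $Z_I$ is not Ricci-flat, then $\sup|\Ric(\omega_j)|\gtrsim\lambda_{j,I}^2\to\infty$, with negative eigenvalues of that size, so no fixed $k$ works. No combination of the Weitzenb\"ock formula, the $L^2$ bound and Moser iteration turns an integral bound into the pointwise positivity H\"ormander requires.

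Moser iteration does give the global $L^\infty$ bound for holomorphic sections (Proposition~\ref{L-infinity estimate}(1)). The Bochner formula for $|\nabla s|^2$ involves $\Ric$ again, however, so gradient bounds hold only on compact subsets of $X_\infty^{\mathrm{reg}}$ (Proposition~\ref{L-infinity estimate}(2)).

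The paper's substitute for a Ricci lower bound is the quasi-psh weight $u_j$ of Theorem~\ref{global weights}, which has logarithmic poles $u_j\ge C\sum_I\log d_{j,I}$. The $\bar\partial$-correction then costs a factor like $d^{-C}$. It only controls sections on the effective regular set, or sections vanishing to high order at $X_\infty^{\mathrm{sing}}$ (Proposition~\ref{main analytic input}). It says nothing at points $x_j$ running into the bubbles, which is exactly where your contradiction sequence lives.

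For the same reason, several later claims are unjustified: the global Lipschitz control and point separation in Step 2, and the Hausdorff convergence of images in Step 3. Above all, the claim that every section of $L_\infty^k$ is a limit ``by the $\bar\partial$ correction'' is unsupported, and that surjectivity is what the whole theorem hinges on.

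\textbf{The missing idea} is to prove numerically that $h^0$ does not jump, bypassing any Bergman kernel lower bound.
\begin{enumerate}
\item Limits of orthonormal bases, using only the global $L^\infty$ bound and the volume bound, give $\chi(X_j,kL_j)\le\chi(X_\infty,kL_\infty)$ (Corollary~\ref{cor--dimension non-decreasing}).
\item Riemann--Roch holds on $X_\infty$ because its singularities are rational. The $k^2$ and $k$ coefficients agree by volume convergence and convergence of the constant scalar curvature, so only $\chi(\mathcal O)$ matters.
\item $h^{0,1}$ is preserved because $b_1(X_j)=b_1(X_\infty)$, by Mayer--Vietoris along the bubble tree with $b_1(Z_I)=0$.
\item $h^{2,0}(X_\infty)\le h^{2,0}(X_j)$ follows by combining:
\begin{itemize}
\item the orbifold Gauss--Bonnet--Chern and signature formulas on $X_\infty$ and on each compactified bubble $\widetilde Z_I$;
\item the energy identity for $\int|\Ric|^2$ (Proposition~\ref{prop--energy identity}, which rests on the neck decay estimate);
\item $b_2(X_j)=b_2(X_\infty)+\sum_I b_2(Z_I)$;
\item telescoping of the orbifold and $\eta$-invariant terms along the tree;
\item $b_+(\widetilde Z_I)\ge 0$.
\end{itemize}
\end{enumerate}

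Once $P=P_\infty$ (Theorem~\ref{thm-dim constant}), the limit sections span the complete linear system. Hence $T_{k,\infty}$ embeds $X_\infty$ onto one irreducible component of the Hilbert-scheme limit. The degree comparison you propose then shows the limit is irreducible and generically reduced, and Hironaka's lemma with normality of $X_\infty$ rules out nonreduced structure. Your final step is fine in spirit, but it needs $P=P_\infty$ as input, and your $L^2$ approach cannot supply that.
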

 
By converging within a Hilbert scheme, we mean that there exist constants $k_0$ and $N_0$, 
a flat projective morphism $\pi:\mathcal{X} \subset \mathcal{B} \times \mathbb{P}^{N_0} \to \mathcal{B}$,
and closed points $b_j, b_\infty \in \mathcal{B}$ with $b_j \to b_\infty$ (in the Euclidean topology), 
such that the fibers 
\[
(\mathcal{X}_{b_j}, \mathcal{O}(1)) \cong (X_j, k_0 L_j), \quad 
(\mathcal{X}_{b_\infty}, \mathcal{O}(1)) \cong (X_\infty, k_0 L_\infty)
\] 
are isomorphic as polarized varieties. 

Historically, this type of result arose from the existence question of canonical metrics on K\"ahler manifolds, and would provide a bridge between the Riemannian convergence theory and algebraic geometry \cite{donaldson2010,DS1,CDS15,szekelyhidi16,datar-gabor16,chen-wang20,chen-sun-wang18,liu-gabor}. We would like to emphasize that originally results of this type were established based on obtaining a uniform lower bound for the Bergman function, namely the partial $C^0$-estimate proposed by Tian \cite{tian1990}. A key observation of this paper is that, in certain cases (especially in complex dimension $2$), if one can understand the Gromov--Hausdorff convergence in a sufficiently precise manner, then one can directly obtain algebraic convergence in a flat family, even \textit{without} establishing the partial $C^0$-estimate. This provides flexibility in geometric applications. On the other hand, whether the partial $C^0$-estimate holds here remains an interesting open question; see Remark \ref{gradient estimate imply lower bound} and Section~\ref{began function lower bound} for further discussion.

 By generalizing results in this paper, it is possible to study the existence problem of cscK metrics. This will be explored in a future paper. Moreover, we note that based on Theorem \ref{algebraic convergence}, it is also possible to study \emph{explicit} moduli compactifications using the moduli continuity method, as initiated in \cite{mabuchi-mukai93,OSS16} and further developed in \cite{spotti-sun18,Liu-Xu19,GMS21,liu2022}; see Sections~\ref{in non-q-gorenstein family} and ~\ref{further remarks} for further discussion.

Following \cite{donaldson14} and using Theorem \ref{algebraic convergence},  we prove the following Zariski openness of  cscK metrics in certain smooth polarized families.  In \cite{donaldson14}, Zariski openness of polarized cscK metrics is established under the additional assumption of a uniform 
Ricci curvature bound, which is not expected to hold for general cscK metrics. Our contribution here is to drop this assumption in complex dimension two.

\begin{theorem}\label{Zariski openness in controlled cone}
Let $(\underline{X},L)\to S$ be a smooth polarized family of surfaces with finite automorphism group. 
Suppose that for some (hence all) $s\in S$, $(X_s,L_s)$ satisfies the numerical condition 
\begin{equation}\label{controlled cone for cscK}
   c_1(X_s)^2 - \frac23 \frac{(c_1(X_s)\cdot c_1(L_s))^2}{c_1(L_s)^2} > 0,\quad  c_1(X_s)\cdot c_1(L_s)>0.
\end{equation} 
Then the following set is Zariski open in $S$:
\[
S^*=\{s\in S \mid (X_s,L_s) \text{ admits a cscK metric in } c_1(L_s)\}.
\] 
\end{theorem}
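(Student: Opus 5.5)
We follow Donaldson's strategy from \cite{donaldson14}. The proof has two parts. First, $S^*$ is open in the Euclidean topology; this is LeBrun--Simanca openness, since the automorphism groups are finite. Second, $S\setminus S^*$ is constructible, or equivalently $S^*$ contains a Zariski open subset of each irreducible closed subvariety $T\subset S$ that it meets. A Euclidean open set with this property is Zariski open. The second part is the heart of the matter, and it is here that Theorem \ref{algebraic convergence} replaces the Ricci bound used in \cite{donaldson14}.

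\textbf{Step 1: uniform bounds.} For cscK metrics $\omega_s\in c_1(L_s)$ with $s\in S^*$ in a relatively compact region of $S$, we need the family to lie in $\mathcal{K}(V,C_S)$ after normalization.
\begin{itemize}
\item The volume $c_1(L_s)^2$ is a topological invariant of the family.
\item The constant scalar curvature $\underline S = 4\pi\, c_1(X)\cdot c_1(L)/c_1(L)^2$ is positive and fixed, so after scaling $|S_\omega|\le 1$.
\item The numerical condition \eqref{controlled cone for cscK} gives an $L^2$ bound on curvature. Chern--Weil in dimension two expresses $\int|\Ric^\circ|^2$ via $c_1^2$, $c_1\cdot[\omega]$ and $[\omega]^2$, since $S$ is constant. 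The inequality $c_1^2-\tfrac23 (c_1\cdot L)^2/L^2>0$ is precisely what makes a Yamabe-type invariant positive. Combined with $c_1\cdot L>0$, this yields a uniform Sobolev constant bound $C_\omega\le C_S$, following the Tian--Viaclovsky/LeBrun type argument for positive scalar curvature, with the $L^2$ energy controlled by Gauss--Bonnet and the signature.
\end{itemize}
We therefore expect this step to reduce to a computation using results on Sobolev constants stated earlier.

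\textbf{Step 2: closedness along the boundary.} Let $s_j\in S^*$ with $s_j\to s_\infty\in S$, possibly with $s_\infty\notin S^*$. By Step 1 and orbifold compactness, a subsequence converges to a polarized cscK orbifold $X_\infty$. Theorem \ref{algebraic convergence} realizes this convergence in a Hilbert scheme. Hence $(X_{s_j},k_0L_{s_j})$ also converge to $(X_\infty,k_0L_\infty)$ as points of the Hilbert scheme, while algebraically they converge to $(X_{s_\infty},k_0L_{s_\infty})$. Both limits lie over the same family of embedded points modulo $PGL$, so $X_\infty$ lies in the orbit closure of $X_{s_\infty}$, that is, $X_{s_\infty}$ specializes to $X_\infty$.

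The argument then splits into two cases.
\begin{itemize}
\item If $X_\infty\cong X_{s_\infty}$, then $s_\infty\in S^*$.
\item Otherwise $X_{s_\infty}$ is strictly destabilized by a test configuration degenerating it to the cscK orbifold $X_\infty$. Its Donaldson--Futaki invariant vanishes, since $X_\infty$ is cscK, by the Chen--Tian/Stoppa/Donaldson lower bound. This is a strictly semistable, non-polystable degeneration.
\end{itemize}

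\textbf{Step 3: algebraic description of the bad set.} Following Donaldson, embed everything via $k_0L$ into a fixed Hilbert scheme $\mathcal{H}$. The uniform $k_0$ is supplied by Theorem \ref{algebraic convergence}; a compactness/covering argument makes it uniform over all limits. Then $S\setminus S^*$ is the set of $s$ whose $\mathrm{PGL}$-orbit closure in $\mathcal{H}$ contains a point from the family $\mathcal{B}$ of cscK limits but whose orbit is not itself that point. This is the image of a constructible set under an algebraic map, so it is constructible by Chevalley's theorem.

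We expect the main obstacle to be Step 2/3: showing that the limits form a bounded, algebraically controlled set with a single $k_0$, and that $S^*$ near $s_\infty$ is exactly the complement of a proper subvariety. Our first attempt at the latter is a Luna-slice argument for the reductive group $\mathrm{Aut}(X_\infty)$ acting on the deformation space: the cscK condition should coincide with polystability in the slice, which is Zariski open by the Székelyhidi/Brönnle local theory.
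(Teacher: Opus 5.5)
Your Step 1, the Euclidean openness via LeBrun--Simanca, and your reduction to ``$S^*$ contains a nonempty Zariski open subset of each irreducible $T$ it meets'' all match the paper; the last is its induction on $\dim S$.

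\textbf{Step 2 has a gap.} From $[T_j(X_j)]\in\Gamma_{s_j}$, $s_j\to s_\infty$ and $[T_j(X_j)]\to[W]\cong X_\infty$, you may only conclude that $(s_\infty,[W])$ lies in the closure of $\bigcup_s \{s\}\times\Gamma_s$. This can be strictly larger than $\{s_\infty\}\times\overline{\Gamma}_{s_\infty}$ (``splitting of orbits''). For example, let $\mathbb{C}^*$ act on $\mathbb{C}^2$ with weights $(1,-1)$. Then $(1,\epsilon)$ lies in the orbit of $(\epsilon,1)$ and tends to $(1,0)\notin\overline{\mathbb{C}^*\cdot(0,1)}$. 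So ``$X_{s_\infty}$ specializes to $X_\infty$'' is unjustified for arbitrary $s_\infty$.

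This is exactly why the paper works only with Donaldson's generic set $GD$. For $\sigma\in GD$, by definition $[W]\in\overline{\Gamma}_\sigma$. If moreover $[W]\notin\Gamma_\sigma$, then $W$ is the central fibre of a nontrivial family of degenerations $\mathcal{X}\to U\times\Delta$ over a whole neighbourhood $U$ of $\sigma$. The paper also uses that $GD$ contains a nonempty Zariski open $GD'$.

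\textbf{Step 3 does not repair this.}
\begin{itemize}
\item Chevalley's theorem needs the set $\mathcal{B}$ of cscK orbifold limits to be constructible, which is essentially what you are trying to prove.
\item Your description of $S\setminus S^*$ omits points whose orbit closures contain no cscK limit at all.
\item The Luna-slice fallback is unavailable. The nearby fibres are smoothings of orbifold singularities, possibly non-$\mathbb{Q}$-Gorenstein, so the Sz\'ekelyhidi/Br\"onnle theory for smooth cscK manifolds does not apply. GIT-polystable loci are not open in general. A slice argument would in any case be only local-analytic.
\end{itemize}

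\textbf{The missing idea is closedness on generic points plus the contradiction that drives it.} The paper shows that $GD'\cap S^*$ is nonempty, by Euclidean openness, and closed in $GD'$. Since $GD'$ is connected in the Euclidean topology, this gives $GD'\subset S^*$.

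For closedness, take $s_j\in S^*$ with $s_j\to\sigma\in GD$. Either $[W]\in\Gamma_\sigma$, in which case $X_\sigma\cong X_\infty$ is smooth and cscK. Or the family $\mathcal{X}$ above exists, and then:
\begin{itemize}
\item $\widetilde{\mathrm{Fut}}(\mathcal{X}_s)$ is independent of $s\in U$;
\item it is $>0$ at $s=s_j$, by \cite{Stoppa2009,szekelyhidi2015} (cscK with finite automorphisms);
\item it is $\le 0$ at $s=\sigma$.
\end{itemize}
The last inequality is where the paper's second analytic input enters, Theorem~\ref{effective bergman function bound}. It gives Bergman kernel asymptotics on the effective regular set $\Omega_r$ for $k\ge r^{-2}(\log r^{-1})^\beta$, using psh weights grafted from the bubble tree, together with $\mathrm{Vol}(X\setminus\Omega_r)\le Cr^4$. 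These yield $\limsup_k k\int_X|\rho_kP(k)^{-1}\omega^2-\omega_{FS,k}^2|\le 0$, which replaces Donaldson's Ricci bound. Your proposal neither uses nor replaces this estimate.

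Your remark that the DF invariant vanishes ``since $X_\infty$ is cscK'' does not substitute for it. It presupposes a $\mathbb{C}^*$-equivariant test configuration with central fibre $X_\infty$, whereas for generic $\sigma$ one only has an analytic family of degenerations. The lower bound you cite gives K-semistability of cscK manifolds, not vanishing for degenerations of $X_{s_\infty}$. In any case, your second case of Step 2 ends with no contradiction, and a contradiction is what the argument requires.
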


 With the Zariski openness result in place, and following the framework of \cite{Odaka13}, we obtain the following result on the moduli space.
\begin{theorem}\label{thm-moduli space}
 Smooth polarized cscK surfaces $(X,L)$ with a fixed Hilbert polynomial and satisfying \eqref{controlled cone for cscK} form a separated moduli algebraic space; moreover, this moduli space has only quotient singularities.
\end{theorem}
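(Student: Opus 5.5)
Following \cite{Odaka13}, the plan is to realize the moduli space as a quotient stack $[Z/\mathrm{PGL}(N+1)]$. Here $Z$ is a locally closed subscheme of a Hilbert scheme. We then check that this stack is a separated Deligne--Mumford stack, so that it has a coarse algebraic space by Keel--Mori.

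\textbf{Step 1: boundedness and the parameter space.} Fix the Hilbert polynomial $P$. Smooth polarized surfaces with Hilbert polynomial $P$ form a bounded family, by Matsusaka's big theorem. Hence there is $k_0$ such that every such $(X,L)$ embeds by $|k_0L|$ into $\mathbb{P}^N$ with $N+1=h^0(k_0L)$. Let $H$ be the corresponding Hilbert scheme, and let $H^{\mathrm{sm}}\subset H$ be the open locus of smooth fibers with $\mathcal{O}(1)\cong k_0 L$ for an ample $L$ (after passing to a finite cover or the relevant Picard data). The numerical condition \eqref{controlled cone for cscK} depends only on intersection numbers. It is therefore locally constant in smooth families and cuts out a union of connected components. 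Applying Theorem \ref{Zariski openness in controlled cone} to the universal family over this locus, we get that
\[
Z=\{h\in H^{\mathrm{sm}} : X_h \text{ satisfies \eqref{controlled cone for cscK} and admits a cscK metric in } c_1(L_h)\}
\]
is Zariski open, and hence a quasi-projective scheme with a $\mathrm{PGL}(N+1)$-action.

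One point needs care: Theorem \ref{Zariski openness in controlled cone} assumes finite automorphism groups. By Matsushima--Lichnerowicz, $\mathrm{Aut}(X,L)$ is reductive when a cscK metric exists. The intersection-number condition forces $c_1(X)\cdot c_1(L)>0$ and $c_1(X)^2>0$, so I expect it to exclude nontrivial holomorphic vector fields, for instance via the Bogomolov--Miyaoka--Yau-type positivity coming from the cscK Chern number inequality. If that fails, I would restrict to the open locus with finite stabilizers, which is also where Theorem \ref{Zariski openness in controlled cone} applies. On $Z$ the stabilizers are then finite and reduced (characteristic zero), so $[Z/\mathrm{PGL}]$ is Deligne--Mumford.

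\textbf{Step 2: separatedness.} This is the main obstacle. We need to show the following: given two families over a punctured curve $\Delta^*$ that are isomorphic, and that extend over $0$ to cscK fibers $X_0$ and $X_0'$, we have $X_0\cong X_0'$ as polarized varieties. I would follow Odaka's argument via K-stability: cscK polarized manifolds with finite automorphisms are K-stable (Stoppa, or Donaldson's lower bound on the Calabi functional). If $X_0\not\cong X_0'$, one fiber degenerates to the other through a nontrivial test configuration. Odaka's separatedness lemma then produces a test configuration with non-positive Donaldson--Futaki invariant, contradicting K-stability.

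Alternatively, there is an analytic route using Theorem \ref{algebraic convergence} and the Cheeger--Gromov compactness in $\mathcal{K}(V,C_S)$. Uniqueness of cscK metrics in a class (Berman--Berndtsson / Chen--Tian) gives Gromov--Hausdorff-continuous metrics along the family. The two limits are then isometric limits of the same sequence, hence biholomorphic. Properness of the action map $\mathrm{PGL}\times Z\to Z\times Z$ follows.

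\textbf{Step 3: singularities.} By Luna's étale slice theorem, the coarse space is étale-locally of the form $S_h/\mathrm{Aut}(X_h,L_h)$, where $S_h$ is a slice and the group is finite. For smoothness of $S_h$, the local deformation theory of the polarized pair $(X,L)$ has obstructions in $H^2$ of the Atiyah-extension sheaf. I would use the smoothness of $Z$ near cscK points, as in Székelyhidi–Brönnle local Kuranishi analysis. If that is unavailable, I would argue on the analytic side: the Kuranishi space of cscK structures mod gauge is locally the smooth zero locus of the moment map, since obstructions vanish when automorphisms are finite. Then $Z$ is smooth at those points, the slice is smooth, and the coarse moduli space has only quotient singularities.
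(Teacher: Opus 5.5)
\textbf{The gap is in Step 3.} You correctly identify $H^2(X,\mathcal{E}_L)$, where $\mathcal{E}_L$ is the Atiyah extension of $T_X$ by $\mathcal{O}_X$, as the obstruction space for deformations of $(X,L)$. But you never show that it vanishes. Instead you claim that obstructions vanish because $\mathrm{Aut}(X,L)$ is finite, or that the Sz\'ekelyhidi--Br\"onnle analysis makes $Z$ smooth at cscK points. Neither is true.

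Finiteness of $\mathrm{Aut}(X,L)$ means there are no holomorphy potentials. So the Lichnerowicz operator is invertible, and the cscK \emph{equation} can be solved on every small deformation (LeBrun--Simanca). This says nothing about $H^2$, which is what decides whether the Kuranishi base of the complex structure is smooth. The Sz\'ekelyhidi--Br\"onnle picture only describes the cscK locus inside a Kuranishi space that may itself be singular. In fact, ``cscK with finite automorphisms'' cannot suffice on its own. Smooth canonically polarized surfaces carry K\"ahler--Einstein metrics and have finite automorphism groups, yet their deformation spaces can be obstructed and arbitrarily singular (Vakil's Murphy's law).

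The missing idea is to use the controlled cone a second time. Condition \eqref{controlled cone for cscK} gives $c_1(X)^2>0$ and $c_1(X)\cdot c_1(L)>0$, which forces $X$ to be rational, i.e.\ a blow-up of $\mathbb{P}^2$ or of a Hirzebruch surface. Then $H^2(X,\mathcal{O}_X)=0$. Also $H^2(X,T_X)=0$, since $h^2(T_X)$ is unchanged by point blow-ups and vanishes for $\mathbb{P}^2$ and $\mathbb{F}_n$. The exact sequence $0\to\mathcal{O}_X\to\mathcal{E}_L\to T_X\to 0$ then gives $H^2(X,\mathcal{E}_L)=0$. So deformations of $(X,L)$ are unobstructed, the Deligne--Mumford stack is smooth, and its coarse space is \'etale locally a smooth germ modulo a finite group. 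This is exactly the paper's argument.

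Two smaller points.

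In Step 1, your expectation that \eqref{controlled cone for cscK} excludes holomorphic vector fields is false. The pair $(\mathbb{P}^2,\mathcal{O}(1))$ satisfies it ($9-\tfrac23\cdot 9=3>0$ and $c_1\cdot L=3>0$) and carries the Fubini--Study metric, with automorphism group $\mathrm{PGL}(3)$. So finiteness of $\mathrm{Aut}(X,L)$ has to be imposed, as in the version of the theorem restated in Section~\ref{further remarks}. Both Keel--Mori and Theorem~\ref{Zariski openness in controlled cone} need it, so your fallback of restricting to the finite-stabilizer locus is necessary, not optional.

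In Step 2, the K-stability route has an unjustified step. Two fillings of the same family over $\Delta^*$ do not in general give a test configuration, i.e.\ a $\mathbb{C}^*$-equivariant degeneration, from one central fiber to the other. So there is nothing for K-stability to contradict directly. Your analytic alternative is what the paper uses, and it runs as follows:
\begin{enumerate}
\item By uniqueness of cscK metrics, the two families carry the same metrics over $\Delta^*$.
\item Since both central fibers are cscK with finite automorphism groups, LeBrun--Simanca gives smooth convergence of these metrics to the cscK metrics on the central fibers.
\item Uniqueness of polarized Cheeger--Gromov limits then identifies the two central fibers.
\end{enumerate}
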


 For a polarized cscK surface \((X,L)\), we say that \(c_1(L)\) lies in the \textit{controlled cone} if the numerical condition \eqref{controlled cone for cscK} is satisfied. It is an explicit sufficient condition for a priori bound on the Sobolev constant of cscK surfaces \cite{tian92,tian-viaclovsky1,chen-lebrun-weber08}. In fact, the Zariski openness holds as long as we assume a uniform \emph{a priori} upper bound on the Sobolev constant; see Theorem~\ref{Zariski openness under sob bound}.
Note that the non-emptiness of the controlled cone restricts $X$ to be either a blow-up of $\mathbb{P}^2$ or of a Hirzebruch surface. By the results in \cite{chen-cheng,dervan15,hong-won,Cheltsov-MG}, there exist many examples of cscK metrics with polarizations lying in the controlled cone.
 
Such a Zariski openness result for smooth families was proved independently by Donaldson~\cite{donaldson14} and Odaka~\cite{Odaka13} 
for Kähler--Einstein metrics on Fano manifolds with finite automorphism groups.
The proof in \cite{Odaka13} relies on the resolution of the Yau--Tian--Donaldson (YTD)
conjecture for Fano manifolds established in \cite{CDS15}. In this paper, we adapt the differential-geometric approach of \cite{donaldson14}, which avoids such a YTD--correspondence theorem.  Note that the finiteness of the automorphism group is a necessary condition \cite{tian1997,Don-toric}, and under this assumption, in general it is known that the set $S^*$ is open in the Euclidean topology \cite{lebrun-Sim1994}. 

We also remark that the Zariski openness of K-stability for general log Fano pairs 
was proved in \cite{BLX,xu20} using purely algebraic geometry, which were based on the study of the behavior of several algebro-geometric invariants \cite{FO,li18} as well as deep results from birational geometry \cite{birkar19}. In recent years, there have been great advancements on the Yau--Tian--Donaldson conjecture for general polarizations; see \cite{chen-cheng,chen-cheng21,li22,BJ25,darvaszhang,trusiani2026} and the references therein. In particular, as pointed out by Dervan, by the resolution of the uniform version of the YTD conjecture in \cite{trusiani2026}, together with the results of \cite{dervan2025arc}, it is known that, for a smooth polarized family with finite automorphism group, the cscK locus is very general, i.e. a countable intersection of Zariski open subsets.
It would be interesting to see if one can establish Zariski openness algebraically in general, as in the Fano case.

\subsection{Non-collapsing and Sobolev constants bounds}
In our setting, we give a geometric characterization of the uniform Sobolev constant bound by establishing its equivalence with the volume non-collapsing condition.
It is well known that a Sobolev constant bound implies a volume non-collapsing condition \cite[Lemma~2.2]{Hebey2000}. 
Conversely, in the presence of a Ricci curvature lower bound, a uniform local volume non-collapsing condition also guarantees a uniform  bound on the Sobolev constant \cite{croke80}. 
By carrying out a detailed analysis of the bubbling phenomena and building on the work of \cite{tian-viaclovsky2}, 
we prove that the same equivalence holds for polarized cscK surfaces with bounded scalar curvature.
More precisely, let $\widetilde{\mathcal{K}}(V,\kappa)$ denote the class of polarized cscK surfaces satisfying 
\begin{equation}
    \mathrm{Vol}(X,\omega)\leq V, \quad |S_{\omega}|\leq1,\quad
\end{equation}
\begin{equation}\label{assumption--volume non-collapsing}
\vol(B(p,r))\geq \kappa r^4 \text{ for any } p\in X \text{ and } r\in (0,\frac12\mathrm{diam}(X,\omega)].
\end{equation}
\begin{theorem}\label{thm--non-collapsing implies Sob bound}
      There exists a constant $C$ depending only on $V$ and $\kappa$, such that 
    \begin{equation}
        \widetilde{\mathcal{K}}(V,\kappa)\subset \mathcal{K}(V,C).  \end{equation}
\end{theorem}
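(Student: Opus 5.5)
We argue by contradiction combined with a compactness and bubbling analysis. Suppose there are $X_j\in\widetilde{\mathcal{K}}(V,\kappa)$ with $C_{\omega_j}\to\infty$. The first step is to get a priori control that does not use the Sobolev constant: a uniform $L^2$ bound on curvature. For a polarized cscK surface the Chern--Weil formulas express $\int|\Rm|^2$ through $c_1^2$, $c_2$, $\int S^2$ and $c_1\cdot[\omega]$. Volume non-collapsing together with $\mathrm{Vol}\le V$ bounds the diameter. The integrality of $L$ and the bounds $|S|\le 1$, $\mathrm{Vol}\le V$ bound $c_1(L)^2$ and $c_1(X)\cdot c_1(L)$. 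We then need to bound the topology ($b_2$, Euler number) uniformly. I would try to get this from the non-collapsing and the diameter bound via a covering argument in the style of Cheeger--Naber/Tian--Viaclovsky: a uniform number of $\epsilon$-regularity balls, with only finitely many bubble regions each carrying a definite amount of energy. This is the circular point, since $\epsilon$-regularity itself normally uses the Sobolev inequality. I would use instead the local version of \cite{tian-viaclovsky2}, where local non-collapsing replaces the Sobolev constant in the Moser iteration for the cscK (Bach-flat-type) elliptic system.

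Assuming the $\epsilon$-regularity holds under local non-collapsing and bounded scalar curvature, the second step runs the compactness theory. After rescaling by the diameter, $X_j$ converges in the Gromov--Hausdorff sense to a compact orbifold $X_\infty$ with finitely many singular points. Near each singular point, rescaling at the maximal curvature scales produces a bubble tree of complete scalar-flat (after scaling) Kähler ALE orbifolds. The non-collapsing hypothesis guarantees that each bubble and each neck is ALE with Euclidean volume growth bounded below by $\kappa$. Hence the bubble tree has finite depth and no collapsed necks; the energy identity bounds the number of bubbles.

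The third step deduces a Sobolev bound. The Sobolev constant is controlled by gluing: on the smooth part of $X_\infty$ (a compact orbifold) and on each ALE bubble (where the Sobolev inequality holds by Euclidean volume growth and the ALE asymptotics), we have uniform Sobolev inequalities. On the annular neck regions, which are almost flat cones $\mathbb{R}^4/\Gamma$, the Sobolev inequality also holds uniformly. A partition-of-unity patching argument, using cutoff functions whose gradients are controlled in $L^4$ on the necks (scale-invariant in dimension four), gives a uniform bound on $C_{\omega_j}$ for large $j$. This contradicts $C_{\omega_j}\to\infty$.

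The main obstacle is the first step: establishing $\epsilon$-regularity and the uniform topology bound purely from volume non-collapsing. I would first try Moser iteration using the $L^1$-Sobolev (isoperimetric) inequality on small balls. That inequality would be derived from non-collapsing plus the almost-Euclidean structure at small $L^2$ energy, via an induction-on-scales argument.
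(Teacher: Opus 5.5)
\textbf{Main gap: your first step is circular, and not for the reason you flag.} Your covering argument bounds the number of bubble regions by dividing the total curvature energy by an energy quantum. But by Calabi's Chern--Weil identities, $\int_X|\Rm|^2$ is determined by $\int_X S^2$, $c_1^2(X)$ and $c_2(X)$, which is exactly the topological data you are trying to bound. Nothing in the covering argument limits the number of energy-carrying regions except that total energy. So the step fails even if $\epsilon$-regularity is granted. Your proposed remedy, Moser iteration via a local isoperimetric inequality, only addresses $\epsilon$-regularity, and that is supplied by Tian--Viaclovsky's non-collapsed theory anyway; the paper simply cites it.

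The missing idea is algebraic, and you had already written down its inputs. Since $c_1(L)^2$ and $c_1(X)\cdot c_1(L)$ are uniformly bounded, Matsusaka's big theorem gives a uniform $m_0$ with $L^{m_0}$ very ample. Hence all $(X,L)\in\widetilde{\mathcal K}(V,\kappa)$ lie in a bounded family in a Hilbert scheme and have finitely many diffeomorphism types. This bounds $b_1$, $c_1^2$ and $c_2$, and Calabi's identities then bound $\|\Rm\|_{L^2}$, with no Sobolev constant involved. These two bounds are precisely the inputs Tian--Viaclovsky's non-collapsed compactness needs, and this is the only role the polarization plays in the argument.

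\textbf{Secondary gap: the conical structure of the necks.} In your second step you assert that the necks are uniformly close to a single flat cone $\mathbb{R}^4/\Gamma$. In the Sobolev-bounded setting this comes from showing that every intermediate-scale blow-up limit is flat. That uses the Chen--Weber gap theorem for small-energy ALE scalar-flat K\"ahler orbifolds, whose energy threshold depends on the Sobolev constant, which is exactly what you cannot use here.

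You need a gap theorem whose threshold depends only on $\kappa$. Non-collapsing gives $|\Gamma_o|,|\Gamma_\infty|\le C(\kappa)$, so only finitely many groups in $U(2)$ occur. The orbifold signature and Chern--Gau\ss--Bonnet formulas then force $\int|W^-|^2$ and $\int|\Ric|^2$ into a discrete set, so both vanish once the energy is small. Combined with connectedness of the set of intermediate-scale limits, this gives a rate-free conical structure on the necks, which is all the Sobolev argument needs.

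\textbf{Your third step is fine once these are fixed.} Patching Sobolev inequalities from the regular part, the ALE bubble cores and the near-conical necks, using $L^4$-small logarithmic cut-offs on long neck annuli, is a legitimate variant of the paper's route. The paper instead proves a local Sobolev bound at a fixed scale $r_0$ by contradiction: it follows a single concentrating test function down the bubble tree, cutting it off along each neck. It then globalizes using the volume-ratio upper bound.
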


In particular, Theorem~\ref{algebraic convergence} remains valid for the class $\widetilde{\mathcal{K}}(V,\kappa)$, thereby establishing algebraic convergence for Gromov–Hausdorff convergent sequences in the non-collapsed cscK surface setting, as an analogue of \cite{DS1}.

\

\subsection{Ideas of the proof}
We now briefly outline the ideas behind the proof of Theorem \ref{algebraic convergence} and Theorem \ref{Zariski openness in controlled cone}. The main tool for establishing previous related results is the H\"ormander's $L^2$-estimate, which typically requires a uniform lower bound on Ricci curvature. Therefore, our main difficulty lies in the lack of uniform Ricci curvature bound. A key new ingredient in overcoming this is to encode the dynamic behavior during the formation of singularities. This phenomenon is commonly referred to as bubbling in geometric analysis, and we manage to connect it with complex analytic/algebraic geometry.

For Theorem~\ref{algebraic convergence}, we exploit the special geometry in complex dimension two, namely the limit is a 
polarized orbifold. In particular, it has at worst rational singularities, which allows us to apply the Riemann–Roch formula directly, and the crux is to show that the constant term of the Hilbert polynomial does not jump in the limit. 
This is achieved by a detailed study of the bubble formation, i.e. the energy identity (Proposition~\ref{prop--energy identity}) and the Betti number identities (Proposition~\ref{prop--first betti number preserved}), combined with the  Chern--Gau\ss--Bonnet and signature formulas on orbifolds.

As in \cite{donaldson14}, besides Theorem~\ref{algebraic convergence} and results in \cite{Stoppa2009} and their generalization in \cite{szekelyhidi2015}, the main analytic input for Theorem~\ref{Zariski openness in controlled cone} is a quantitative estimate of the Bergman kernel on the effective regular set, which requires overcoming the lack of a Ricci curvature bound in  this setting.
Recall that the Bergman kernel function, also known as the density of states, is defined by:
\begin{equation}
\varrho_{k,X}=\sum_{\alpha}\vert s_{\alpha}\vert^2,
\end{equation}
where $\{s_{\alpha}\}$ is any $L^2$-orthonormal basis of $H^0(X,L^k)$ and the $L^2$-norm is defined by fiberwise norm on $L^k$ and volume form $(n!)^{-1}(k\omega)^n$ on $X$.

\begin{theorem}\label{effective bergman function bound} For $X\in \mathcal{K}(V, C_S)$, there exists an  constants $\beta, C$ depending only on $V$ and $C_S$ such that 
\begin{itemize}
    \item there exists an exhaustion  $X=\bigcup_{r\in (0,\frac{1}{10})}\Omega_r$ with 
   $ \vol(X\setminus \Omega_r)\leq Cr^4;$
\item for $k\geq r^{-2}(\log r^{-1})^{\beta}$, we have $  \|\rho_k-(1+\frac{S_\omega}{4\pi k})\|_{L^{\infty}(\Omega_r)}\leq Ck^{-2}r^{-4}$ and 
\begin{equation}
\quad \|\nabla\rho_k\|_{L^{\infty}(\Omega_r)}\leq Ck^{-1}r^{-3},\quad \|\nabla^2\rho_k\|_{L^{\infty}(\Omega_r)}\leq Ck^{-1}r^{-4}.
\end{equation}
\end{itemize}
\end{theorem}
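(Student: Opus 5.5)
The plan is to construct $\Omega_r$ from a curvature-scale (harmonic radius) function and then localize the standard peak-section/Tian--Yau--Zelditch--Lu expansion to the regions where the geometry is controlled at scale $r$.

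\textbf{Step 1: building $\Omega_r$.} For $p\in X$, let $r_h(p)$ be the largest $s\le 1$ such that on $B(p,s)$ we have $|\Rm|\le s^{-2}$ together with $C^{k,\alpha}$ control of $(g,J,A)$ in holomorphic coordinates. Set $\Omega_r=\{r_h\ge r\}$.

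The cscK equation is elliptic, as a fourth order system or as Bach-type coupled equations. Together with the Sobolev bound, $\epsilon$-regularity (Tian--Viaclovsky) applies: if $\int_{B(p,s)}|\Rm|^2<\epsilon$, then $r_h(p)\ge cs$, with all higher derivative bounds. By the energy bound $\int_X|\Rm|^2\le C(V,C_S)$ (Chern--Gau\ss--Bonnet plus signature, with $|S|\le1$), only boundedly many disjoint balls of radius $r$ carry energy $\ge\epsilon$. A Vitali covering then gives that $X\setminus\Omega_r$ is covered by $N(V,C_S)$ balls of radius $Cr$. Non-collapsing from the Sobolev bound gives the upper volume bound, so $\vol(X\setminus\Omega_r)\le Cr^4$.

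\textbf{Step 2: local model at scale $r$.} For $p\in\Omega_r$, rescale $\tilde\omega=k\omega$. Then $B(p,r)$ becomes a ball of radius $\sqrt k\,r\ge(\log r^{-1})^{\beta/2}$ on which the curvature is $O(k^{-1}r^{-2})$, with derivative bounds $|\nabla^j\Rm|\le Ck^{-1-j/2}r^{-2-j}$. On this ball I construct holomorphic coordinates and a local holomorphic frame of $L^k$ with potential $\phi=|z|^2+O(k^{-1}r^{-2}|z|^4)$.

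\textbf{Step 3: global sections by Hörmander.} This is the main obstacle. Hörmander on $X$ needs $\Ric+k\omega\ge \tfrac k2\omega$, and $\Ric$ is not bounded below near bubbles. I would instead solve $\bar\partial$ with the cutoff supported in $B(p,r)\subset\Omega_{r/2}$, using a weighted estimate: twist by $L^k\otimes K_X^{-1}$ and use a weight $e^{-\psi}$, where $\psi$ is a quasi-psh function with Lelong-type singularity at $p$. It remains to control $\Ric$ on the bad set. I would use the Kodaira/Bochner identity for $(0,1)$-forms with values in $L^k$, whose curvature term is $k\omega+\Ric$, and absorb the negative part of $\Ric$. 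By $\epsilon$-regularity, $|\Ric|$ is unbounded only near finitely many bubble points, and there $\int|\Ric|^2$ is small. By Sobolev plus $L^2$-smallness, a Moser-type argument shows the negative part can be absorbed once $k\ge r^{-2}(\log r^{-1})^\beta$: the log loss comes from summing over scales in the neck regions. This gives the corrected section $\sigma=\chi s_{\mathrm{loc}}-u$ with $\|u\|_{L^2}\le e^{-c\,kr^2}$, which is smaller than any power of $k^{-1}r^{-2}$ once $\beta$ is large.

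\textbf{Step 4: expansion and derivatives.} The peak-section method then gives, at $p$, the extremal characterization $\rho_k(p)=\sup|s(p)|^2/\|s\|^2$. I would compute this against the local Bergman kernel of the model ball. Lu's computation gives $1+\frac{S}{4\pi k}+O(k^{-2}\|\Rm\|_{C^2})$. In the rescaled metric, $\|\Rm\|_{C^2}$ is bounded by $r^{-4}$ times the appropriate powers of $k$, which yields the error $Ck^{-2}r^{-4}$. Since $S$ is constant, the $k^{-1}$ term has no derivative. The $\nabla\rho_k$ and $\nabla^2\rho_k$ bounds then follow by differentiating the local off-diagonal kernel expansion, with each derivative costing $r^{-1}$ in unscaled units and the leading gradient term appearing at order $k^{-1}$. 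Finally, $L^2$ control of $u$ is upgraded to $C^2$ control via interior elliptic estimates on $B(p,r/2)$.
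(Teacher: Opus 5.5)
Your proposal has a genuine gap in Step 3, which is the heart of the theorem. The premise that $\int|\Ric|^2$ is small near the bubble points is not available. $\epsilon$-regularity only controls $|\Ric|$ where the energy is small, so it says nothing at the bubble points. By the local form of the energy identity (Proposition~\ref{prop--energy identity}), the Ricci energy concentrating at a bubble point equals $\sum_I\int_{Z_I}|\Ric|^2$, which is positive as soon as one bubble $Z_I$ is not Ricci-flat. Nothing in the hypotheses excludes this; it is exactly the non-$\mathbb Q$-Gorenstein scenario discussed in Section~\ref{in non-q-gorenstein family}. If that Ricci energy were small, a H\"older--Kato--Sobolev absorption in the Bochner--Kodaira identity could indeed handle the negative part, but it would cost no logarithm at all. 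So your account of where the factor $(\log r^{-1})^\beta$ comes from does not match the mechanism you propose. Without smallness there is nothing to absorb against. Near a non-Ricci-flat bubble at scale $\lambda_{j,I}^{-1}$, $|\Ric|$ is of order $\lambda_{j,I}^2$ on a region carrying a definite amount of Ricci energy, so it is not dominated by $k\omega$ for fixed $k$ as $j\to\infty$. Hörmander's method needs positivity of $k\omega+\Ric+\ii\partial\pp(\text{weight})$ pointwise. A Lelong-type weight at $p$ does not help, since the negativity sits at the bubble points, not at $p$.

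What is missing is a global quasi-psh weight that compensates $\Ric$ pointwise with controlled singularities. The paper builds one from the bubble tree (Theorems~\ref{thm--psh weight} and~\ref{global weights}) in three steps:
\begin{enumerate}
\item On each Stein ALE bubble it takes strictly psh functions with $a_Z\log\rho_Z^2$ growth at infinity and $2\log d$ poles at the orbifold points (Proposition~\ref{psh weight for ALE}).
\item On each neck it grafts log-type weights $\log r^2-\frac{1}{100}|\log r^2|^{1/2}$ from the flat cone. These dominate $\Ric$ because of the polynomial decay in Proposition~\ref{prop-curvature estimate on neck region} and the polynomial-rate conical gauge of Theorem~\ref{neck structure theorem}; that gauge needs $\Gamma_I\neq\mathrm{Id}$, which is where the polarization enters.
\item It glues these pieces across scales by regularized maxima.
\end{enumerate}
The result satisfies $\ii\partial\pp u_j+\Ric(\omega_j)+k_0\omega_j\geq\omega_j$ and $C\sum_I\log d_{j,I}\leq u_j\leq 0$. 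Hence, with $\Omega_r$ defined as the complement of the $r$-balls around the bubble centres, $e^{-u_j}\leq r^{-C}$ on $\Omega_r$. Hörmander with this weight gives a correction of size $r^{-C}\epsilon(kr^2)$. Requiring the factor $e^{-(kr^2)^\alpha}$ in $\epsilon(kr^2)$ to beat $r^{-C}$ is exactly what forces $k\geq r^{-2}(\log r^{-1})^\beta$. Without such a weight, your bound $\|u\|_{L^2}\leq e^{-ckr^2}$ is unsupported, and Step 4 has nothing to rest on. A minor point in Step 1: the upper bound $\vol(B(p,r))\leq V_1r^4$ comes from Tian--Viaclovsky, not from the Sobolev constant, which only gives the lower bound.
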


To prove Theorem \ref{effective bergman function bound}, we present a novel construction of a quasi-plurisubharmonic (psh) weight function to (partially) carry out H\"ormander's $L^2$-estimate. More precisely, we exploit the ALE nature of bubble limits, where we can find logarithmic growth psh weights that indeed control the Ricci curvature. By scaling down and matching constants, we graft those weights from the whole bubble tree and glue them together to construct quasi-psh
 functions on $X_j$, which effectively control the Ricci curvature. This comes with the cost that these weights must blow up dynamically at a logarithmic rate around bubbling points, so the key point is to uniformly control their blow-up order near singularities. Nevertheless, this already allows us 
to apply Hörmander’s $L^2$-estimate and obtain quantitative asymptotic estimates for the 
Bergman kernel on effective regular region. We emphasize that the requirement on $k$ is stronger than \cite[Proposition~7]{donaldson14}, due to the dynamically 
logarithmic singularities of the weight function near the singular set. It turns out, however, that these estimates are sufficient to 
carry out the argument of \cite{donaldson14} and thereby establish Zariski openness. The uniform estimate of volume of effective singular set can be derived, based on a direct bubbling analysis and the volume estimate established in \cite{tian-viaclovsky0,tian-viaclovsky1}.

As indicated above, the proof of our results above utilizes the special mechanism of Riemannian degeneration, i.e. the bubbling phenomenon. It seems interesting that, at least at a philosophical level, the need from complex analysis to study bubbles might suggest a new 
connection between geometric analysis and algebraic geometry, as pioneered 
in \cite{sun25}; see also \cite{odaka2024,spotti2025, deB-S2024,tazoe2025} and references therein.

\

\subsection{Outline of Paper}
The paper is organized as follows. 
In Section~\ref{sec--prelim}, we collect preliminary results, including a uniform 
$L^2$-bound on the curvature, the orbifold compactness theorem, and convergence 
results for holomorphic sections. Section~\ref{sec--bubble} reviews and supplements 
the bubble analysis for converging sequences of polarized cscK surfaces with a uniform bound Sobolev constants. 
In Section~\ref{convergence in a flat family}, we show that the Hilbert 
polynomial remains unchanged on the limit, thereby realizing the Gromov–Hausdorff convergence 
as convergence in a Hilbert scheme.
In Section~\ref{sec-construction of weightes}, we use the results from Section \ref{sec--bubble} to construct psh functions that control the Ricci curvature and have uniform growth near singular regions, forming the main technical part of the paper. 
In Section~\ref{sec--zariski openness}, we prove 
Theorem~\ref{effective bergman function bound} using the weight functions 
constructed in Section~\ref{sec-construction of weightes}. We then establish 
Theorem~\ref{Zariski openness in controlled cone} by following the argument in 
\cite{donaldson14} and carefully adapting it to our setting.
In Section~\ref{sec-noncollapsing imply sobolev}, we prove 
Theorem~\ref{thm--non-collapsing implies Sob bound} via a contradiction argument, 
relying on bubbling analysis without assuming a Sobolev constant bound. 
In Section~\ref{sec--discussion}, we prove Theorem \ref{thm-moduli space} following the argument in \cite{Odaka13} and discuss some questions around Theorem~\ref{algebraic convergence}. 
We show that the consistency of the Hilbert polynomial is equivalent to the irreducibility of the algebraic limit in projective space—a point emphasized by Donaldson~\cite{donaldson2010}.  
We also outline two possible approaches for constructing cscK metrics that converge in non-\( \mathbb{Q} \)-Gorenstein families.

\subsectionnotoc{Notation and Conventions} 
\begin{itemize}
    \item Throughout the paper, $\Psi\left(\epsilon_1, \ldots, \epsilon_k \mid a_1, \ldots, a_l\right)$ denotes a function such that for fixed parameters $a_i$ we have $\lim _{\epsilon_1, \ldots, \epsilon_k \rightarrow 0} \Psi=0$.
\item In a metric space, we denote by 
\(
A_p(r_0,r_1):=B(p,r_1)\setminus \overline{B(p,r_0)}
\)
the open annulus centered at $p$ with inner radius $r_0$ and outer radius $r_1$.
\item We use $dV_g$ and $d\mu_g$ interchangeably to denote the volume measure associated with a Riemannian metric $g$. When there is no risk of confusion, we will omit the subscript $g$.
\end{itemize}
 
\subsectionnotoc{Acknowledgments} 
 The authors would like to thank Song Sun for suggesting the question, for many helpful discussions, and for his constant support. They also thank Yuji Odaka for many helpful discussions and for his interest in this work, and Simon Donaldson for his valuable feedback on the draft. The first author thanks Ruadhaí Dervan for bringing to his attention the very generality of the cscK locus and for helpful discussions. The second named author thanks Chung-Ming Pan for discussion on singular cscK metrics. This project was essentially completed in the summer of 2025, while both authors were visiting the Institute for Advanced Study in Mathematics (IASM) at Zhejiang University. They are grateful to IASM for its warm hospitality. The second named author was partially supported by the NSF grant DMS-2304692 and the Shoshichi Kobayashi Memorial Fund in UC Berkeley.


\section{Preliminaries}\label{sec--prelim}

\subsection{Matsusaka big theorem and uniform curvature $L^2$-bound}\label{sec-boundedness}
Here we recall the well-known Matsusaka's big theorem:
\begin{theorem}[\cite{matsusaka1,matsusaka2, kollar-matsusaka}]
    For a polarized projective manifold $(X,L)$ of dimension $n$, there exists $m_0$ depending only on $c_1(X)\cdot c_1(L)^{n-1}$ and $c_1(L)^n$, such that $L^m$ is very ample for any $m\geq m_0$.
\end{theorem}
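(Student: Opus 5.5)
This is the classical Kollár–Matsusaka theorem, which the paper quotes; I would not reprove it in full but reduce it to two standard ingredients: a \emph{boundedness} statement and a \emph{uniformity-in-families} statement. The plan is to show that all polarized pairs $(X,L)$ with fixed $d=c_1(L)^n$ and $e=c_1(X)\cdot c_1(L)^{n-1}$ form a bounded family. Then I would pass from bounded to a uniform $m_0$ by semicontinuity and Noetherian induction on the parameter scheme.

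\textbf{Step 1 (bounding the Hilbert polynomial).} I would first show that the coefficients of $\chi(X,L^m)=\sum_i a_i m^i$ are bounded in terms of $d$ and $e$ alone. The top two coefficients are $a_n=d/n!$ and $a_{n-1}=-e/(2(n-1)!)$ by Riemann--Roch. For the lower coefficients I would use the Kollár--Matsusaka inequality, which bounds $|\chi(X,L^m)-d\,m^n/n!+e\,m^{n-1}/(2(n-1)!)|$ by a polynomial of degree $n-2$ whose coefficients depend only on $(d,e)$. The proof goes by induction on dimension: cut down by general members of a linear system $|L^{m'}|$ or $|K_X+cL|$ with controlled $m',c$, and bound $h^0$ of the restrictions by their degrees.

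\textbf{Step 2 (a bounded embedding).} Using an effective global generation or very-ampleness result of Angehrn--Siu/Demailly type, I would find an explicit $c=c(n)$ such that $M=2K_X+cL$ (or $K_X+cL$ combined with $L$) is very ample. I would then bound $\deg_M X=M^n$ and $h^0(M)$ in terms of $(d,e)$, using Step 1 and Hodge-index type inequalities. This places $X$ in a Chow or Hilbert scheme of bounded degree subvarieties of a fixed $\mathbb{P}^N$. Since $\mathrm{Pic}$ of a bounded family with fixed numerical invariants is itself bounded, the pair $(X,L)$ then lies in a finite-type family $(\mathcal{X},\mathcal{L})\to T$.

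\textbf{Step 3 (uniform $m_0$).} Over a finite-type base $T$, for each $t$ some power $\mathcal{L}_t^{m}$ is very ample together with all higher powers; concretely, Serre vanishing and Castelnuovo--Mumford regularity give this. By upper semicontinuity of cohomology and openness of very ampleness, this persists on an open neighbourhood of $t$. Noetherian induction on $T$ then yields a single $m_0$ working for all fibres.

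I expect the main obstacle to be Step 2, and specifically the control of terms such as $K_X^2\cdot L^{n-2}$ in $\deg_M X$, which are not a priori determined by $(d,e)$. This is exactly where the Kollár--Matsusaka inequality from Step 1 is essential. Only in the surface case relevant to this paper could one bound $K_X^2$ by more elementary means.
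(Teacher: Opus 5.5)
The paper does not prove this statement. It quotes it from Matsusaka and Kollár--Matsusaka. In that classical proof, the dependence on $(d,e)$ alone comes from the Riemann--Roch type inequality, which controls $h^0(X,L^m)$ by $d,e$ up to $O(m^{n-2})$. That bound is fed into Matsusaka's boundedness argument and finished by a semicontinuity/Noetherian-induction step like your Step 3.

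Your route is genuinely different. Boundedness comes from the effective very ampleness of $M=2K_X+cL$ with $c=c(n)$ (Demailly, Siu), an analytic input. That route works, but you have misidentified its crux. Since $M$ is very ample and $L$ ample, log-concavity of $i\mapsto L^{n-i}\cdot M^i$ (Khovanskii--Teissier) gives directly
\[
M^n\le\frac{(M\cdot L^{n-1})^n}{(L^n)^{n-1}}=\frac{(cd-2e)^n}{d^{n-1}}.
\]
Moreover, $X\subset\mathbb{P}(H^0(X,M)^*)$ is nondegenerate, so $h^0(X,M)\le M^n+n$. Nothing about $K_X^2\cdot L^{n-2}$ is needed beyond what this inequality already gives.

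Conversely, Step 1 could not have supplied that control. The Kollár--Matsusaka inequality is about $h^0(X,L^m)$, which equals $\chi(X,L^m)$ only past a non-uniform threshold in $m$. So it bounds at most the $m^{n-2}$-coefficient of $\chi$. That coefficient is $(K_X^2+c_2(X))\cdot L^{n-2}/(12(n-2)!)$ and does not isolate $K_X^2\cdot L^{n-2}$. Drop Step 1: its conclusion is true, but only a posteriori from boundedness. Also, with $e=c_1(X)\cdot c_1(L)^{n-1}$ one has $a_{n-1}=+e/(2(n-1)!)$, not $-e/(2(n-1)!)$.

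Two steps can be tightened. First, boundedness of $L$ once $(X,M)$ is bounded is immediate. Since $L^{\otimes c}\cong M\otimes\omega_X^{-2}$, $L$ is one of finitely many $c$-th roots, differing by elements of $\mathrm{Pic}(X)[c]\subset\mathrm{Pic}^\tau(X)$, so the roots vary in a finite-type family. Second, in Step 3 you must encode ``very ample for all $m\ge m_t$'' by finitely many conditions before semicontinuity spreads it to a neighbourhood. For example, take $L_t^k$ very ample together with the vanishings expressing Castelnuovo--Mumford regularity of $L_t^r$, $0\le r<k$, with respect to $L_t^k$.

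As for what each route buys: the classical one is purely algebraic, while yours trades that for a short boundedness argument. For surfaces, the case the paper uses, Reider's theorem makes $K_X+4L$ very ample and replaces Demailly--Siu by an elementary input. Both routes give a non-effective $m_0$ through the Noetherian induction. An explicit $m_0$ needs Siu's effective argument with a numerical bigness criterion instead.
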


Now according to our assumption \eqref{assumption--volume upper bound and scalar bound}, $c_1(L)^2$ and $c_1(X)\cdot c_1(L)$ are uniformly bounded in $\mathcal{K}(V, C_S)$. As a result, there exists $k_1$ such that $L^{k_1}$ is very ample for all $(X,L)\in\mathcal{K}(V,C_S)$.
We may replace $L$ with $L^{k_1}$ and therefore in the following discussion, we just assume $k_1=1$. 
As a consequence of \cite[3.28.9]{kollar2013}, we know that 
\begin{equation}\label{eq-finite diffeo-type}
     \text{  $\mathcal{K}(V, C_S)$ contains finitely many diffeomorphism types}
\end{equation}
 and we obtain that there exists $\Lambda>0$ such that for any $X\in \mathcal{K}(V,C_S)$, we have
\begin{equation}
\vert c_1(X)^2\vert+\vert c_2(X)\vert\leq\Lambda.
\end{equation}
Now by the well-known result of Calabi \cite{Calabi1}, we have
\begin{equation}\label{c1 and ric L2}
    \begin{aligned}
    \int_X(\vert \Ric_{\omega}\vert^2-S_{\omega}^2)\omega^2&=-8\pi^2c_1(X)^2,\\    
    \int_X(\vert \Rm_{\omega}\vert^2-\vert \Ric_{\omega}\vert^2)\omega^2&=8\pi^2(2c_2(X)-c_1(X)^2).\end{aligned}
\end{equation}
Here we use the convention that 
\begin{equation}\label{eq-convention of scalar curvature}
    \Ric_{\omega}=-\ii\partial\bar\partial \log \omega^2, \quad S_{\omega}=\tr_{\omega}(\Ric_{\omega})=\frac{2\,\Ric_{\omega}\wedge\omega}{\omega^2}.
\end{equation} In particular, $[\Ric_{\omega}] \in 2\pi c_1(X)$ in 
$H^2(X,\mathbb{R})$.
Therefore, the assumption \eqref{assumption--volume upper bound and scalar bound} implies that there exist a constant $\Lambda$ depending only on $V$ and $C_S$ such that for any $X\in \mathcal{K}(V,C_S)$, we have  
\begin{equation}\label{eq--uniform curvature L2 bound}
\Vert \Rm \Vert_{L^2}\leq \Lambda.
\end{equation}

Since $(X,L)\in \mathcal{K}(V,C_S)$ admits only finitely many possible Hilbert polynomials and finitely many diffeomorphism types, given a sequence $(X_j,L_j)$ we may, after passing to a subsequence, assume that all $(X_j,L_j)$ share the same Hilbert polynomial and that the $X_j$ are diffeomorphic.
\subsection{Orbifold compactness}\label{orbifold compactness}
Here we review the Riemannian convergence theory for non-collapsed cscK surfaces developed by Tian-Viaclovsky \cite{tian-viaclovsky0,tian-viaclovsky1} (see also  \cite{anderson05}). Their results apply to a broader class of Riemannian $4$-manifolds, but we restrict to K\"ahler setting in our paper.

First, we clarify the notion of convergence used in this paper. 
Let $M_\infty$ be a compact Riemannian orbifold with isolated singularities. 
We say that a sequence of Riemannian manifolds $(M_j,g_j)$ converges to 
$(M_\infty,g_\infty)$ in the $\hat C^{\infty}$--Cheeger--Gromov sense if: 
\begin{itemize}
    \item  $(M_j,d_j)$ converges to $(M_\infty,d_\infty)$ in the Gromov--Hausdorff sense; and therefore there exists a distance function $\mathbf{d}_j$ on $M_j \sqcup M_\infty$ 
such that $M_j$ and $M_\infty$ are $\delta_j$--close in the Hausdorff sense, 
with $\delta_j \to 0$;

\item
Given any $\delta>0$ and any compact subset 
$K \subset M_\infty^{\mathrm{reg}}$, there exist (for $j$ sufficiently large) 
open embeddings 
\[
\chi_j : U \to M_j
\]
from an open neighborhood $U \supset K$ into $M_j$ such that  
\begin{itemize}
    \item $\mathbf{d}_j(\chi_j(x),x) \leq \delta$ for all $x\in K$, and  
    \item $\chi_j^* g_j \to g_\infty$ in $C^{\infty}$ over $K$.  
\end{itemize}
\end{itemize}
The notion of pointed $\hat C^{\infty}$--Cheeger--Gromov convergence is defined 
similarly by considering larger and larger metric balls centered at chosen base points.

In the polarized case, we also include the convergence of complex structures and connections on the line bundles \cite{DS1}. A polarized K\"ahler orbifold is a compact Riemannian orbifold $(M_{\infty},g_{\infty})$ with extra data $(J_{\infty},\omega_{\infty},L_{\infty},A_{\infty})$, where 
\begin{itemize}
    \item $J_{\infty}$ is complex structure on $M_{\infty}^{reg}$ with respect to which the metric is K\"ahler with K\"ahler form $\omega_{\infty}$;
    \item $L_{\infty}$ is a hermitian holomorphic line bundle over $M_{\infty}^{\mathrm{reg}}$;
    \item $A_{\infty}$ is a metric connection on $L_{\infty}$ with  $\omega_{\infty}=\ii \Theta_{A}$.
\end{itemize}
    Since $M_{\infty}$ has only orbifold singularity, it is $\mathbb Q$-factorial \cite[Proposition 5.15]{KM}. Hence $L_\infty$ extends as a $\mathbb{Q}$-line bundle on $X_\infty$. In particular, $X_\infty$ is a projective variety by \cite{baily}.

We say $(M_j, g_j, J_j, \omega_j, L_j,A_j)$ converges to $(M_{\infty}, J_{\infty}, \omega_{\infty},L_{\infty},A_{\infty})$ in the polarized $\hat C^{\infty}$-Cheeger-Gromov sense,
if  for any $\delta>0$ and compact subset $K \subset M_{\infty}^{\mathrm{reg}}$ we have maps $\chi_j$ as before but in addition so that the pulled back complex structures $\chi_j^*\left(J_j\right)$ converge to $J_{\infty}$ in $C^{\infty}$ over $K$ and we have smooth bundle isomorphisms $\widehat{\chi}_j: L_{\infty} \rightarrow \chi_j^*\left(L_j\right)$ with respect to which the connections $\chi_j^*(A_j)$ converge smoothly. As in the preceding discussion, the notion of pointed polarized 
$\hat C^{\infty}$-Cheeger--Gromov convergence can also be defined.

Throughout this paper, when we say that tensors on $M_j$ converge in the $C^{\infty}_{loc}$-sense, we mean that they are pulled back via the maps $\chi_j$ (and possibly also $\hat{\chi}_j$), so that, as tensors on $M_{\infty}$, defined on an exhausting sequence of subsets of $M_{\infty}^{\mathrm{reg}}$,  they converge in $C^{\infty}_{loc}(M_{\infty}^{\mathrm{reg}})$.

By locally trivializing the line bundles using sections with controlled norm and taking limits of the transition functions, we can obtain the following  well known result.

\begin{lemma}
    Suppose a sequence of polarized K\"ahler surfaces converges in the 
    $\hat C^{\infty}$-Cheeger--Gromov sense. Then, after passing to a subsequence, 
    it converges in the polarized $\hat C^{\infty}$-Cheeger--Gromov sense.  
\end{lemma}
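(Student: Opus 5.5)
The plan is to construct, on any compact $K\subset M_\infty^{\mathrm{reg}}$, a limiting Hermitian line bundle with connection by taking limits of local trivializations. This amounts to a Uhlenbeck-type gauge-fixing argument for $U(1)$ connections whose curvature converges smoothly.

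\textbf{Local gauges.} Fix $K\subset M_\infty^{\mathrm{reg}}$ compact. Cover a slightly larger compact neighbourhood $K'$ by finitely many small geodesically convex balls $B_\alpha$ on which $g_\infty$ is uniformly equivalent to Euclidean. The curvature of the pulled-back connections is $\chi_j^*A_j$'s curvature, namely $-\ii\chi_j^*\omega_j$, and it converges smoothly to $-\ii\omega_\infty$. On each contractible ball $B_\alpha$, the bundle $\chi_j^*L_j$ is trivial. We choose a unitary frame $e_{j,\alpha}$ in radial (or Coulomb) gauge: parallel transport along radial geodesics from the center. In this gauge the connection form $a_{j,\alpha}$ satisfies
\[
a_{j,\alpha}(x)=\int_0^1 t\,\iota_{x}F_j(tx)\,dt ,
\]
so every $C^k$ norm of $a_{j,\alpha}$ is bounded by the $C^k$ norms of the curvature $F_j=-\ii\chi_j^*\omega_j$. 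Those norms are uniformly bounded, since $\chi_j^*g_j\to g_\infty$ and $\chi_j^*J_j\to J_\infty$ smoothly on $K'$. By Arzel\`a--Ascoli, after passing to a subsequence, $a_{j,\alpha}\to a_{\infty,\alpha}$ in $C^\infty$.

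\textbf{Transition functions.} On overlaps $B_\alpha\cap B_\beta$ the frames differ by $U(1)$-valued transition functions $g^j_{\alpha\beta}$, with $a_{j,\beta}=a_{j,\alpha}+ (g^j_{\alpha\beta})^{-1}dg^j_{\alpha\beta}$. Since $U(1)$ is compact and the derivatives of $g^j_{\alpha\beta}$ are controlled by those of the $a_{j,\cdot}$, these functions are uniformly bounded in $C^\infty$. Again after passing to a subsequence, they converge to $g^\infty_{\alpha\beta}$, which satisfy the cocycle condition and the limiting compatibility with $a_{\infty,\alpha}$. This defines a Hermitian line bundle $L_\infty^{K}$ with unitary connection $A_\infty$ on a neighbourhood of $K$, with curvature $-\ii\omega_\infty$.

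\textbf{Bundle isomorphisms.} The bundle isomorphisms $\widehat\chi_j$ are obtained by gluing the local identifications $e_{\infty,\alpha}\mapsto e_{j,\alpha}$. These do not quite patch, because $g^j_{\alpha\beta}\ne g^\infty_{\alpha\beta}$. Since $g^j_{\alpha\beta}(g^\infty_{\alpha\beta})^{-1}\to1$ uniformly, the discrepancy can be written locally as $\exp(\ii\theta^j_{\alpha\beta})$ with $\theta^j_{\alpha\beta}\to0$ smoothly. This is a small \v{C}ech cocycle with values in smooth functions, a fine sheaf. Using a partition of unity we write $\theta^j_{\alpha\beta}=\phi^j_\beta-\phi^j_\alpha$ with $\phi^j\to0$ smoothly, and correct the frames by $e^{\ii\phi^j_\alpha}$. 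Under the resulting isomorphisms, $\chi_j^*A_j\to A_\infty$ in $C^\infty$ over $K$.

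\textbf{Exhaustion and holomorphic structure.} The last step is a diagonal argument over an exhaustion $K_1\subset K_2\subset\cdots$ of $M_\infty^{\mathrm{reg}}$. The limiting bundles must agree on overlaps up to isomorphism. To arrange this, we build the exhaustion using a single locally finite cover of $M_\infty^{\mathrm{reg}}$ and extend the construction inductively, passing to a subsequence at each stage. The holomorphic structure on $L_\infty$ is the $(0,1)$-part of $A_\infty$ with respect to $J_\infty$. It is integrable because the curvature $\omega_\infty$ is of type $(1,1)$ for $J_\infty$, which holds as a limit of the same property on $M_j$.

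\textbf{Main obstacle.} The one nontrivial point is the patching of the inexact cocycles in the third step. Everything else is compactness, driven by the smooth convergence of the curvatures $\chi_j^*\omega_j$.
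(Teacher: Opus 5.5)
Your proposal is correct and follows the route the paper indicates in its one-line justification: trivialize locally by frames of controlled norm (your unitary radial-gauge frames) and pass to subsequential limits of the transition functions. Your \v{C}ech correction additionally makes explicit the bundle isomorphisms $\widehat\chi_j$, which the paper leaves implicit.

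The one point you take for granted is the subconvergence of $\chi_j^*J_j$. The purely Riemannian hypothesis does not supply it, and you use it to bound the curvature $-\ii\chi_j^*\omega_j$. It follows because $\nabla^{\chi_j^*g_j}(\chi_j^*J_j)=0$ and $\chi_j^*g_j\to g_\infty$ in $C^\infty$. Hence $\nabla^{g_\infty}(\chi_j^*J_j)$ is controlled by the difference of the Levi-Civita connections, which gives uniform $C^k$ bounds. Arzel\`a--Ascoli and a diagonal argument then produce a $g_\infty$-parallel, hence integrable, limit $J_\infty$.
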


The main result in \cite{tian-viaclovsky1,tian-viaclovsky2} is a
compactness theory for non-collapsed cscK surfaces. Recall that, by the discussion
in the previous section (see \eqref{eq-finite diffeo-type}--\eqref{eq--uniform
curvature L2 bound}), elements in $\mathcal{K}(V,C_S)$ have only finitely many
diffeomorphism types and satisfy a uniform $L^2$-curvature bound.
\begin{theorem}\label{thm--TVcompactness}
  For a sequence $(X_j,\omega_j) \in \mathcal{K}(V,C_S)$, after passing to a subsequence, 
$X_j$ converges in the polarized $\hat{C}^{\infty}$-Cheeger--Gromov sense to a polarized 
cscK orbifold $X_\infty$. Moreover, both the number of orbifold points and the orders 
of the local orbifold groups are bounded by a quantity depends only $V$ and $C_S$.
\end{theorem}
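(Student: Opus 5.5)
The plan is to reduce to the Tian--Viaclovsky compactness framework for critical metrics on $4$-manifolds. There are three inputs for a cscK surface $(X_j,\omega_j)\in\mathcal{K}(V,C_S)$.

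\textbf{(i) Uniform bounds.} The total volume is at most $V$. By \eqref{eq--uniform curvature L2 bound}, $\|\Rm\|_{L^2}\le\Lambda$. The scaling-invariant Sobolev constant is at most $C_S$, which gives uniform volume non-collapsing $\vol B(p,r)\ge c(C_S)r^4$ below the diameter scale; see \cite[Lemma~2.2]{Hebey2000}. Non-collapsing together with the volume bound gives a uniform diameter bound.

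\textbf{(ii) An elliptic system for the curvature.} Since $S_\omega$ is constant, the Kähler identities give $\delta \Ric=-\tfrac12 dS=0$, so $\Ric$ is divergence free. Together with the second Bianchi identity, this yields an elliptic system of the schematic form $\Delta \Rm=\Rm*\Rm$, exactly as for the critical metrics treated in \cite{tian-viaclovsky0,tian-viaclovsky1}. Moser iteration using the Sobolev constant then gives an $\epsilon$-regularity theorem: if $\int_{B(p,r)}|\Rm|^2<\epsilon_0$, then $\sup_{B(p,r/2)}|\nabla^k\Rm|\le C_k r^{-2-k}\big(\int_{B(p,r)}|\Rm|^2\big)^{1/2}$.

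\textbf{(iii) Compactness on the regular part.} Cover $X_j$ at scale $r$. By the $L^2$ bound, at most $N=\Lambda^2/\epsilon_0$ balls can carry energy $\ge\epsilon_0$. Away from these, the curvature and all its derivatives are bounded. Cheeger--Gromov compactness, using the injectivity radius bound supplied by non-collapsing plus the curvature bound, then gives smooth subconvergence on the complement. A diagonal argument over $r\to0$ produces a limit metric space that is a smooth Kähler cscK manifold away from at most $N$ points. The complex structures $J_j$ and forms $\omega_j$ converge because they are parallel. The polarization data pass to the limit by the lemma preceding this theorem.

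\textbf{Orbifold structure at the singular points.} Take a singular point $x$. By Bishop-type volume comparison in the $L^2$-curvature setting, the limit metric is non-collapsed near $x$. The key estimate is that the curvature decays in the punctured ball: $|\Rm|(y)=o(d(x,y)^{-2})$. This follows from $\epsilon$-regularity applied on dyadic annuli, since the annular energy tends to $0$. On small annuli the metric is therefore close to a flat cone. Non-collapsing and the volume growth force the cross section to be $S^3/\Gamma$. Pulling back to the $\Gamma$-cover, the improved decay $|\Rm|=O(r^{-2+\delta})$ comes from a decay estimate for the elliptic system, and it allows a removable singularity theorem to give a smooth orbifold chart, as in \cite[Thm.~1.1]{tian-viaclovsky0}. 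The complex structure extends across the point, and $\Gamma\subset U(2)$, because $J$ is parallel and bounded.

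This removable singularity step, namely upgrading $o(r^{-2})$ curvature decay to $C^{1,\alpha}$ orbifold regularity, is the expected main obstacle. I would handle it by constructing coordinates at infinity via a harmonic coordinate or Bando--Kasue--Nakajima-type argument on the cone region.

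\textbf{The bounds on singular points and group orders.} The number of orbifold points is at most $N$ by the energy count. For the group orders, each singular point carries at least $\epsilon_0$ of energy, and non-collapsing gives $\vol B(x,r)\ge c r^4$. Since the tangent cone $\mathbb{R}^4/\Gamma$ has volume ratio $\omega_4/|\Gamma|\ge c(C_S)$, this yields $|\Gamma|\le C(V,C_S)$.
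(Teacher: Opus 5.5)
\textbf{The gap: connectedness of the link at a singular point.} Your overall reduction matches the paper's. The paper does not reprove this theorem. It applies Tian--Viaclovsky after checking their hypotheses: the uniform $L^2$ curvature bound and the topological finiteness (finitely many diffeomorphism types, hence bounded $b_1$), both obtained from Matsusaka's theorem. It then upgrades to polarized convergence using the preceding lemma. Your sketch of the Tian--Viaclovsky argument, however, breaks at the step ``non-collapsing and the volume growth force the cross section to be $S^3/\Gamma$.''

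Without a Ricci lower bound, the decay $|\Rm|=o(d^{-2})$ together with non-collapsing only shows that small annuli around $x$ are close to a finite \emph{disjoint union} of annuli in flat cones $\mathbb{R}^4/\Gamma_1,\dots,\mathbb{R}^4/\Gamma_m$. Nothing you wrote forces $m=1$. A point where several cones share a vertex, such as the limit of a connected-sum neck pinching off, is compatible with a Sobolev bound and with non-collapsing. Indeed, a function equal to $1$ on one sheet and $0$ on the others, cut off logarithmically at the vertex, does not violate the Sobolev inequality. So as written your argument yields a multi-fold limit, not an orbifold.

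There is also no ``Bishop-type comparison in the $L^2$-curvature setting.'' Non-collapsing near $x$ comes from the Sobolev constant. A volume-ratio upper bound $\vol B(p,r)\le V_1r^4$ is a nontrivial Tian--Viaclovsky theorem, not a comparison result.

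Some further input is needed to get $m=1$. The paper invokes TV's theorem, whose hypotheses include the bound on $b_1$; the paper stresses in Section 7 that this is required. That $b_1$ bound comes from Matsusaka, and your proposal never uses it. A Kähler-specific alternative: if $m\ge 2$, then at the scale where two sheets first merge you get a scalar-flat Kähler ALE bubble with at least two ends. That is impossible, because the part of the bubble inside a large sphere is a compact strongly pseudoconvex domain, and such a domain has connected boundary.

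\textbf{A secondary error in (ii).} The identity $\delta\Ric=-\tfrac12 dS$ is the contracted Bianchi identity. It holds for every metric and has nothing to do with the Kähler condition. Divergence-free Ricci plus the second Bianchi identity does \emph{not} give $\Delta\Rm=\Rm*\Rm$. That would require $d^\nabla\Ric=0$, and for a Kähler metric this forces $\nabla\rho=0$.

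The Kähler structure enters through $d\rho=0$. When $S$ is constant, the primitive part of the Ricci form is a closed anti-self-dual, hence harmonic, $2$-form. The Weitzenb\"ock formula then gives $\Delta\Ric=\Rm*\Ric$. The full curvature only satisfies $\Delta\Rm=\nabla^2\Ric+\Rm*\Rm$. Tian--Viaclovsky's $\epsilon$-regularity handles the $\nabla^2\Ric$ term by estimating $\Ric$ first and then bootstrapping $\Rm$; it is not a direct Moser iteration on $\Rm$.
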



A key ingredient in the proof of Theorem \ref{thm--TVcompactness} is the following $\epsilon$-regularity and a volume ratio upper bound estimate.

\begin{theorem}[\cite{tian-viaclovsky1,tian-viaclovsky2}]\label{thm-volume bound and epsilon regularity}
Given $V, C_{S}$, there exists constant $V_1$ and $\epsilon_0$ such that for $X\in \mathcal{K}(V,C_S)$, we have
\begin{itemize}
    \item for any $p\in X$ and $r>0$, $\vol(B(p,r))\leq V_1r^4$;
    \item if $\left(\int_{B(p,r)}|\mathrm{Rm}|^2 \,dV_g\right)^{\frac{1}{2}}\leq \epsilon_0$ for some $r\in (0,\mathrm{diam}(X))$, then 
    \begin{equation}
        \sup_{B(p,r/2)}|\mathrm{Rm}|\leq \frac{V_1}{r^2}\left(\int_{B(p,r)}|\mathrm{Rm}|^2 \, dV_g\right)^{\frac{1}{2}}.
    \end{equation}
\end{itemize}
\end{theorem}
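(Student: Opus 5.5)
The volume ratio upper bound and the $\epsilon$-regularity estimate are proved separately; both rest on two facts about $X\in\mathcal{K}(V,C_S)$. The Sobolev constant is uniformly bounded, and $\|\Rm\|_{L^2}\le\Lambda$ by \eqref{eq--uniform curvature L2 bound}. The cscK condition supplies an elliptic system. Because $S_\omega$ is constant, the contracted second Bianchi identity gives $\delta \Ric=0$, so $\Ric$ is harmonic as a $(1,1)$-form. Its trace is constant, so its traceless part $\Ric^0$ is an anti-self-dual harmonic $2$-form. Combining this with the Bianchi identities for $\Rm$ gives a Weitzenböck-type system of the form
\[
\Delta \Rm = \Rm * \Rm + \nabla^2 \Ric,
\qquad \Delta \Ric = \Rm * \Ric,
\]
in which $\nabla^2\Ric$ is itself controlled by the second equation. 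This is the setting of Tian--Viaclovsky's critical metrics. In practice one works with the Kähler curvature decomposition, where $W^+$ is determined by $S_\omega$ and the remaining curvature obeys $\Delta \Rm = \Rm*\Rm$ modulo lower-order terms involving the constant $S_\omega$.

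\emph{$\epsilon$-regularity.} Use the Moser iteration scheme for critical elliptic systems. From the system, $u=|\Rm|$ satisfies a Kato-type differential inequality $\Delta u\ge -C|\Rm|\,u - C|S_\omega|\,u$ weakly. The coefficient $|\Rm|$ lies in $L^2$, which is exactly scale-critical in dimension $4$. Given a uniform Sobolev inequality $\|f\|_{L^4}\le C_S\|\nabla f\|_{L^2}$ for compactly supported $f$ (after scaling $r=1$), the standard argument goes as follows.
\begin{itemize}
\item Test against $\eta^2 u^{p-1}$ and absorb the potential term via Hölder and Sobolev. This step uses smallness of $\|\Rm\|_{L^2(B(p,r))}\le\epsilon_0$.
\item This yields a reverse Hölder improvement $\|u\|_{L^{4}(B_{3r/4})}\le Cr^{-1}\|u\|_{L^2(B_r)}$.
\item Once $u\in L^{q}$ with $q>2$, the potential becomes subcritical, and Moser iteration gives $\sup_{B_{r/2}}u\le Cr^{-2}\|u\|_{L^2(B_r)}$.
\end{itemize}
The scalar curvature term is harmless: $|S_\omega|\le1$ and $r\le\mathrm{diam}$, with diameter controlled by volume non-collapsing from the Sobolev bound. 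This gives the second bullet.

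\emph{Volume ratio upper bound.} This is the main obstacle, because without Ricci lower bounds Bishop--Gromov is unavailable. The plan follows Tian--Viaclovsky's argument, proceeding by contradiction and blow-up.
\begin{itemize}
\item Suppose there are $X_j$, $p_j$, $r_j$ with $\vol(B(p_j,r_j))/r_j^4\to\infty$. Since the global volume is at most $V$, we must have $r_j\to0$.
\item Rescale so that $r_j=1$. The rescaled metrics have $|S|\to0$, the same Sobolev bound, and curvature $L^2$ bounded by $\Lambda$. Only finitely many (at most $\Lambda^2/\epsilon_0^2$) disjoint balls can carry curvature energy at least $\epsilon_0$.
\item Away from these points, $\epsilon$-regularity gives curvature decay $|\Rm|\le \epsilon r^{-2}$ on annuli. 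Combined with the Sobolev bound, which gives lower volume bounds, this forces the annuli to be close to flat cones $\mathbb R^4/\Gamma$.
\item The key quantitative input is a uniform bound $|\Gamma|\le C$, obtained by pairing the Chern--Gauss--Bonnet formula on regions with the $L^2$ curvature bound. This step needs care with the boundary terms.
\item Counting annuli then bounds $\vol(B(p,r))/r^4$ by $\omega_4$ times the sum of the orbifold orders plus contributions from the bad balls. Recursing on bad balls, with depth bounded by the energy count, gives a uniform bound $V_1$ and the desired contradiction.
\end{itemize}
The delicate point is to show that the volume growth on an annulus with small curvature energy is comparable to Euclidean without Ricci bounds. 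Here I would use the Cheeger--Gromov $\hat C^\infty$ convergence on annuli to flat cones from Theorem \ref{thm--TVcompactness}-type arguments, together with a gluing induction on scales.
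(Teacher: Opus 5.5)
The paper quotes this result from Tian--Viaclovsky without proof. Your contradiction and blow-up frame for the volume bound is reasonable, but there is a genuine gap: the decisive step targets the wrong quantity. A region that at scale $r$ is close to $k$ disjoint flat cone annuli in $\mathbb{R}^4/\Gamma_1,\dots,\mathbb{R}^4/\Gamma_k$ has volume about $15\,\omega_4 r^4\sum_{i=1}^k|\Gamma_i|^{-1}\le 15\,\omega_4\,k\,r^4$. Larger groups give \emph{less} volume. A bound $|\Gamma|\le C$ therefore only gives a lower volume bound, which already follows from the Sobolev inequality without Chern--Gauss--Bonnet. Your count ``$\omega_4$ times the sum of the orbifold orders'' has the dependence inverted. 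What must be bounded is the number $k$ of sheets, i.e.\ the components of the good annuli around $p$ (the ends of a blow-up limit), and nothing in your proposal bounds $k$. The energy count only limits the bad balls, since sheets can be almost flat and carry no curvature. Packing disjoint balls of volume $\ge cr^4$ into $B(p,2r)$ is circular. In Tian--Viaclovsky's theory, global topology enters at this point: their compactness results assume a uniform bound on $b_1$ (cf.\ the remark at the start of Section~\ref{sec-noncollapsing imply sobolev}). Here all Betti numbers are bounded by the finiteness of diffeomorphism types. Your Chern--Gauss--Bonnet idea can work only if it is aimed at $k$ and fed such input. For instance, apply it to the connected core $D$ cut off by the $k$ sheets. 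Then $b_3(D)=\dim H^1(D,\partial D)\ge k-1$, and Mayer--Vietoris gives $b_2(D)\le b_2(X)$ because $H_2(S^3/\Gamma_i;\mathbb{R})=0$. The boundary terms are close to $\sum_i|\Gamma_i|^{-1}>0$, so $k\le 2+b_2(X)+C\Lambda^2$. Finally, appealing to convergence ``of Theorem~\ref{thm--TVcompactness}-type'' is circular, since that theorem is built on the volume bound being proved. Only local smooth convergence near points with curvature and volume lower bounds is available.

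Your $\epsilon$-regularity sketch rests on $\Delta|\Rm|\ge -C|\Rm|^2-C|S_\omega||\Rm|$, which is not available for cscK metrics. In $\Delta\Rm=\Rm*\Rm+\nabla^2\Ric$ the term $\nabla^2\Ric$ has the same order as $\Delta\Rm$, and $\Delta\Ric=\Rm*\Ric$ gives no pointwise control of it. The K\"ahler decomposition does not remove it either. A cscK surface has $\delta W^+=0$, but $\delta W^-$ is a first derivative of $\mathring{\Ric}$. It vanishes only for harmonic curvature, which for K\"ahler metrics forces $\nabla\Ric=0$; this fails e.g.\ for the Burns metric. So the equation for $W^-$ contains $\nabla\delta W^-\sim\nabla^2\mathring{\Ric}$, which is not lower order. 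The estimate has to use the coupled system:
\begin{itemize}
\item test the $\Rm$-equation against $\phi^2\Rm$ and its powers, moving the $\nabla^2\Ric$ term by parts onto $\nabla\Ric*\nabla(\phi^2\Rm)$;
\item control $\|\phi\nabla\Ric\|_{L^2}$ through the harmonic Ricci equation;
\item absorb the cubic terms using the Sobolev inequality and the smallness of $\|\Rm\|_{L^2(B(p,r))}$.
\end{itemize}
This gives scale-invariant $L^p$ bounds on $\Rm$ for some $p>2$, and elliptic bootstrapping then gives the sup bound. A scalar Kato-type subsolution inequality is available for the traceless Ricci part, as in the proof of Proposition~\ref{prop-curvature estimate on neck region}, but not for the full curvature tensor.
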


Note that an important corollary of Theorem \ref{thm-volume bound and epsilon regularity} is that volume convergence holds for $\hat{C}^{\infty}$-Cheeger--Gromov convergence, which will be used later.

\begin{definition}\label{def-regularity scale}
     For $x \in X$ and $X\in \mathcal{K}(V,C_S)$, we define the regularity scale $r_x$ and $\widetilde{r}_x$ by
$$
r_x := \max \left\{r\mid \sup _{B_r(x)}|\mathrm{Rm}| \leq r^{-2}\right\}, \quad \widetilde{r}_x:=\min\left\{r\mid \left(\int_{B_r(x)}|\mathrm{Rm}|^2\,dV_g\right)^{\frac{1}{2}}\geq \epsilon_0\right\}.$$
    \end{definition}

As a consequence of Theorem \ref{thm-volume bound and epsilon regularity}, we know that there exists a constant $C$ depending only on $V,C_S$ such that for any $X\in \mathcal{K}(V,C_S)$ and $x\in X$, we have
\begin{equation}\label{eq-two regularity comparable}
    \frac{1}{{C}}r_x\leq \widetilde{r}_x\leq {C}r_x.
\end{equation}

\subsection{Sobolev inequality}\label{sec-sobolev inequality}

\begin{definition}\label{def--sobolev constant}
    The Sobolev constant of a closed Riemannian $4$-manifold $(M,g)$ is defined to be the best constant $C_g$ such that 
    \begin{equation}
        \Vert f\Vert_{L^4}^2\leq C_g(\Vert \nabla f\Vert_{L^2}^2+\mathrm{Vol}(M,g)^{-1/2}\Vert f\Vert_{L^2}^2)
    \end{equation}
    for all $f\in W^{1,2}(M)$.
\end{definition}

\begin{definition}\label{def-local sobolev constant}
    The local Sobolev constant of a Riemannian $4$-manifold $(M,g)$ at scale $r$ is defined to be the best constant $C_S(r)$ such that
    \begin{equation}
        \Vert f\Vert_{L^4(B(p,r))}\leq C_S(r)\Vert \nabla f\Vert_{L^2(B(p,r))}
    \end{equation}
    for all $p\in M$ and all $f\in W_0^{1,2}(B(p,r))$.
\end{definition}

There are certain cases that Sobolev constant can be \textit{a prior} bounded for cscK  metrics. This is first discovered in \cite{tian92} and later is generalized in \cite{chen-lebrun-weber08}. We have a functional $\mathcal{A}([\omega])$ defined on the K\"ahler cone \cite{chen-lebrun-weber08}:
\begin{equation}
    \mathcal{A}([\omega])=\frac{\left(c_1 \cdot[\omega]\right)^2}{[\omega]^2}-\frac{1}{32 \pi^2} \mathcal{F}(\xi,[\omega]),
\end{equation}where $\xi$ is the extremal vector field for the class $[\omega]$, which can be determined without knowing that an extremal metric exists \cite{futaki-mabuchi}, and $\mathcal{F}$ denotes the Futaki invariant.

\begin{definition}\label{controlled cone}
    We say a K\"ahler class $[\omega]$ is inside the controlled cone if
    \begin{equation}
       3 c_1(X)^2-2\mathcal{A}([\omega])>0 \quad \text {and }\quad c_1(X)\cdot [\omega]>0.
    \end{equation}
\end{definition}

\begin{lemma}[\cite{tian-viaclovsky1,chen-lebrun-weber08}]\label{contolled  cone imply sobolev bound}
    Let $(X,\omega)$ be an compact extremal K\"ahler surface. Suppose $[\omega]$ is inside the controlled cone and $S_{\omega}\geq 0$. Then the Sobolev constant has the following bound
    \begin{equation}
 C_{\omega}\leq \frac{\max\{6,\Vert S_{\omega}\Vert_{L^{\infty}} \vol(X,\omega)^{\frac{1}{2}}\}}{\sqrt{32\pi^2(3c_1^2(X)-2\mathcal{A}([\omega]))}}.
    \end{equation}
\end{lemma}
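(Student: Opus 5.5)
This is the Tian--Viaclovsky / Chen--LeBrun--Weber estimate, proved by the Gursky-type argument: a positive Yamabe-type quantity controls the Sobolev constant.

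\emph{Step 1 (Yamabe constant lower bound).} For an extremal metric we have $\bar\partial \nabla^{1,0}S_\omega=0$. The Gauss--Bonnet and signature formulas give
\[
\int_X \Big(\tfrac{S^2}{24}-2|W^+|^2\Big)d\mu = 4\pi^2(2\chi+3\tau) - \tfrac12\int_X|\mathring{\Ric}|^2 + \dots
\]
On a K\"ahler surface $|W^+|^2=S^2/24$, so we work instead with the modified conformal metric of LeBrun, $h=S^{-2}\omega$ on $\{S>0\}$, or with the functional $\int (S-\sqrt6|W^+|)^2$. Following \cite{chen-lebrun-weber08}, the extremal condition lets us compute
\[
\int_X S_\omega^2\,d\mu = 32\pi^2\,\mathcal A([\omega])
\]
via the Futaki invariant. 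Combining this with $\int(|\Ric|^2-S^2)=-8\pi^2c_1^2$ from \eqref{c1 and ric L2}, the condition $3c_1^2-2\mathcal A>0$ translates into the Yamabe invariant of $[g]$ satisfying
\[
Y([g])^2 \;\geq\; 32\pi^2\big(3c_1^2(X)-2\mathcal A([\omega])\big)>0.
\]
The sign $Y>0$ is secured by $c_1\cdot[\omega]>0$ together with $S\geq0$, so that the total scalar curvature is positive. The lower bound itself comes from Gursky's inequality $\int_X (S_{\hat g}^2)\,d\mu \geq \dots$ applied to the Yamabe minimizer, using the conformal invariance of $\int |W^+|^2$ and the K\"ahler identity $|W^+|^2=S^2/24$ at $\omega$.

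\emph{Step 2 (Sobolev constant from Yamabe).} By definition of $Y$, for every $f$,
\[
Y\,\|f\|_{L^4}^2\le \int (6|\nabla f|^2+S f^2) \le 6\|\nabla f\|_2^2+\|S\|_\infty \|f\|_2^2 .
\]
We rewrite $\|S\|_\infty\|f\|_2^2=\|S\|_\infty \vol^{1/2}\cdot\vol^{-1/2}\|f\|_2^2$ and bound both coefficients by $\max\{6,\|S\|_\infty\vol^{1/2}\}$. Dividing by $Y$ then gives the claimed bound.

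\emph{Main obstacle.} The main obstacle is Step 1: deriving the sharp Yamabe bound from the extremal equation via the Futaki invariant, that is, identifying $\int S^2$ with $32\pi^2\mathcal A$ and running Gursky's estimate. For this step I would cite \cite{tian-viaclovsky1,chen-lebrun-weber08} rather than redo the computation.
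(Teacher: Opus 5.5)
Your proposal is correct and follows the standard argument of the cited works; the paper gives no proof of its own and only quotes this one. The argument is the Yamabe--Sobolev inequality $Y\|f\|_{L^4}^2\le\int(6|\nabla f|^2+sf^2)\,d\mu$ combined with the bound $Y^2=\int\hat s^2\,d\hat\mu\ge 96\pi^2c_1^2-48\int|W^+|^2\,d\mu=96\pi^2c_1^2-2\int s^2\,d\mu=32\pi^2(3c_1^2-2\mathcal A([\omega]))$ at the Yamabe minimizer $\hat g$, with $Y>0$ coming from $s\ge0$, $s\not\equiv0$.

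A few minor points need fixing:
\begin{itemize}
\item Your first display has the wrong sign on the Weyl term. The correct identity is $4\pi^2(2\chi+3\tau)=\int(2|W^+|^2+\tfrac{s^2}{24}-\tfrac12|\mathring{\Ric}|^2)\,d\mu$.
\item The conformal metric $S^{-2}\omega$ and the functional $\int(S-\sqrt6|W^+|)^2$ play no role in this lemma.
\item Throughout, $s$ must be the Riemannian scalar curvature, i.e.\ twice the paper's $S_\omega$.
\end{itemize}
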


Note that in our setting when $\omega$ has constant scalar curvature, if $[\omega]$ lies in the controlled cone, its scalar curvature is automatically positive. Also, the condition of lying 
in the controlled cone reduces to the numerical condition \eqref{controlled cone for cscK}. Consequently, for a smooth polarized family of cscK surfaces $(\underline{X},L)\to S$, 
if $c_1(L_s)$ lies in the controlled cone for some $s \in S$, 
then $c_1(L_{s'})$ lies in the controlled cone for all $s' \in S$.


\subsection{Convergence of holomorphic sections}\label{convergence of holomorphic sections}

Here we review some standard facts about convergence of holomorphic sections. When working with the line bundle $L^k$, $k\in \mathbb N$, we use the notation $L^{2,\#}$, etc. to denote norms defined by rescaled metrics $k\omega$ as in \cite{DS1}.

\begin{proposition}\label{L-infinity estimate}
    The following holds:
    \begin{itemize}
        \item [(1)] There exists  constant $K_0$ depending only on $V$ and $C_S$, such that if $X\in\mathcal{K}(V,C_S)$ and $s$ is a holomorphic section of $L^k$, we have
        \begin{equation}\label{eq--global c0 bound}
            \Vert s\Vert_{L^{\infty,\#}}\leq K_0\Vert s\Vert_{L^{2,\#}}.
        \end{equation}
        \item [(2)] Suppose $X_j$ is a sequence in $\mathcal{K}(V,C_S)$ converging to an orbifold $X_\infty$. Then for any interior region $\Omega_\infty\subset\subset X^{\mathrm{reg}}_\infty$ and regions $\Omega_j\subset\subset X_j$ converging to $\Omega_\infty$, there exists a constant $K_1(\Omega_\infty)$, such that for any holomorphic section $s$ of $L_j^k$, we have
        \begin{equation}
            \Vert \nabla s\Vert_{L^{\infty,\#}(\Omega_j)}\leq K_1(\Omega_\infty)\Vert s\Vert_{L^{2,\#}}.
        \end{equation}
    \end{itemize}
\end{proposition}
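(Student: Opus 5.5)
Part (1) will come from Moser iteration applied to $|s|^2$, using the Bochner--Kodaira formula together with the uniform Sobolev bound. For a holomorphic section $s$ of $L^k$ with the Chern connection of curvature $-\ii k\omega$ (rescaled metric $k\omega$), we have
\[
\Delta_{\#}|s|^2 = |\nabla s|^2_{\#} - n|s|^2 \geq -2|s|^2 .
\]
The key point is that this needs no Ricci or Riemann curvature: the Weitzenb\"ock term only involves the bundle curvature, which is $\omega$ itself. Thus $u=|s|^2$ satisfies $\Delta u\ge -2u$ with respect to $k\omega$.

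The Sobolev constant $C_\omega$ is scale invariant, so the Sobolev inequality of Definition \ref{def--sobolev constant} holds for $k\omega$ with the same constant $C_S$. Since $\mathrm{Vol}(X,k\omega)=k^2\mathrm{Vol}(X,\omega)$, the lower-order term only improves as $k$ grows. The plan is to run global Moser iteration on $X$, which is closed, so no cutoff is needed. Multiplying by $u^{p-1}$ and integrating gives
\[
\|\nabla u^{p/2}\|^2_{L^2}\le C p\|u^{p/2}\|_{L^2}^2 .
\]
The Sobolev inequality then gives $\|u\|_{L^{2p}}\le (C p)^{1/p}\|u\|_{L^p}$, with the additive volume term handled by $\mathrm{Vol}^{-1/2}\le C$ for $k\ge1$. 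Iterating $p\to 2p$ yields $\|u\|_{L^\infty}\le C\|u\|_{L^1}$, since $\prod (Cp_i)^{1/p_i}$ converges. One must check that the measure normalization in the $L^{2,\#}$ norm matches the volume form $(k\omega)^2/2$. The constant then depends only on $C_S$ and a lower bound for the volume, which is fine because the volume is a positive integer multiple of $2\pi^2 c_1(L)^2 / 2$ and hence bounded below.

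For part (2), on a compact $\Omega_\infty\subset\subset X_\infty^{\mathrm{reg}}$ the $\hat C^\infty$ convergence gives uniform bounds on $|\mathrm{Rm}|$ and its derivatives, together with a uniform injectivity radius lower bound, on neighborhoods $\Omega_j'\supset\Omega_j$. We rescale by $k$, so that the geometry on balls of $k\omega$-radius $1$ becomes closer to flat. We then use the Bochner formula for $|\nabla s|^2$:
\[
\Delta|\nabla s|^2 \ge |\nabla\nabla s|^2 - C(1+|\mathrm{Rm}_{k\omega}|)|\nabla s|^2 - C|s|^2 .
\]
The curvature term is bounded uniformly on $\Omega'_j$ since $|\mathrm{Rm}_{k\omega}|=k^{-1}|\mathrm{Rm}_\omega|$. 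Local Moser iteration on unit $\#$-balls, using the local Sobolev constant (bounded by the $C^\infty$ convergence), then bounds $\sup|\nabla s|$ by $\|\nabla s\|_{L^2}$ of a slightly larger ball plus $\sup|s|$. Finally, the $L^2$ gradient is bounded via a cutoff and integration by parts, using $\Delta|s|^2$, by $\|s\|_{L^2}$ and $\sup|s|$, both of which are controlled by part (1).

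The main obstacle is the constant bookkeeping in part (1): we must make sure the global iteration uses only the scale-invariant Sobolev constant and the volume normalization, with no hidden curvature dependence. In part (2) the only subtle point is that the neighborhood $\Omega_j'$ must stay a fixed distance from the bubbling regions, which the definition of $\hat C^\infty$ convergence provides.
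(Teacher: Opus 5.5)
Your proposal is correct and is the standard Donaldson--Sun-type Moser iteration that the paper has in mind; the paper gives no proof and just calls these facts standard. Part (1) uses only the bundle curvature in the Bochner formula for $|s|^2$ together with the scale-invariant Sobolev bound, and part (2) is restricted to $\Omega_j$ precisely because only there does $\hat C^\infty$ convergence control the Riemannian curvature term in the Bochner formula for $|\nabla s|^2$. Two cosmetic fixes: the iteration for $u=|s|^2$ cannot start at $p=1$ because of the factor $p-1$ in front of $\int u^{p-2}|\nabla u|^2$, so start at $p=2$ and close with $\|u\|_{L^2}^2\le\|u\|_{L^\infty}\|u\|_{L^1}$; and for small $k$ in part (2), use $\#$-balls of radius $\min\{1,\delta\sqrt{k}\}$ so that they stay inside the region of bounded geometry.
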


\begin{corollary}
    Given any sequence of collection of holomorphic sections $s_j^{(i)}\in H^0(X_j,L_j^k)$ with $\langle s_j^{(m)},s_j^{(l)}\rangle_{L^{2,\#}}=\delta_{ml}$. By passing to some subsequence, it converges in $C^{\infty}_{loc}$ to a collection of holomorphic section $s_\infty^{(i)}\in H^0(X_\infty,L_\infty^k)$ with $\langle s_\infty^{(m)},s_\infty^{(l)}\rangle_{L^{2,\#}}=\delta_{ml}$. In particular, all limit sections are non-zero.
\end{corollary}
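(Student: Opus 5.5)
The statement to prove is the Corollary. The plan is to combine the two estimates of Proposition~\ref{L-infinity estimate} with Arzel\`a--Ascoli on an exhaustion of $X_\infty^{\mathrm{reg}}$, upgrade to $C^\infty_{loc}$ by elliptic regularity, and then show that orthonormality passes to the limit. The only real issue is that no mass may concentrate near the orbifold points.

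\textbf{Step 1: local convergence.} Fix an exhaustion $K_1\subset K_2\subset\cdots$ of $X_\infty^{\mathrm{reg}}$ by compact sets. Use the maps $\chi_j$ and the bundle isomorphisms $\widehat\chi_j$ to pull the sections $s_j^{(i)}$ back to sections of $L_\infty^k$ over a neighbourhood of each $K_m$.
\begin{itemize}
\item By \eqref{eq--global c0 bound}, the pulled-back sections are uniformly bounded in $L^{\infty,\#}$ by $K_0$.
\item By part (2) of Proposition~\ref{L-infinity estimate}, their covariant derivatives are uniformly bounded on each $K_m$.
\item The pulled-back connections and complex structures converge smoothly. Hence in local holomorphic-frame trivializations the sections satisfy $\bar\partial$-equations whose coefficients converge smoothly.
\item Interior elliptic estimates then give uniform $C^l$ bounds on each $K_m$ for every $l$.
\end{itemize}
Arzel\`a--Ascoli and a diagonal subsequence now give $C^\infty_{loc}$ convergence on $X_\infty^{\mathrm{reg}}$. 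The limit $s_\infty^{(i)}$ is a holomorphic section of $L_\infty^k$ over $X_\infty^{\mathrm{reg}}$ and is bounded by $K_0$. Since the singular set is finite and the local groups act on $\mathbb C^2$, a bounded holomorphic section extends across each orbifold point in local uniformizing charts (Riemann/Hartogs extension). So $s_\infty^{(i)}\in H^0(X_\infty,L_\infty^k)$.

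\textbf{Step 2: no mass is lost near the singular points.} This is the main obstacle, and the uniform $L^\infty$ bound handles it.
\begin{itemize}
\item Given $\eta>0$, choose $\delta$ so small that the $\delta$-neighbourhood $U_\delta$ of the finitely many singular points has $\vol(U_\delta,k\omega_\infty)<\eta$. This uses Theorem~\ref{thm-volume bound and epsilon regularity}, namely $\vol(B(p,r))\le V_1r^4$.
\item By the same volume ratio bound and volume convergence, the corresponding regions $U_{\delta,j}\subset X_j$ (the complement of $\chi_j(X_\infty\setminus U_\delta)$) satisfy $\vol_{k\omega_j}(U_{\delta,j})\le C\eta$ for $j$ large.
\item Consequently $\bigl|\langle s_j^{(m)},s_j^{(l)}\rangle_{L^{2,\#}(U_{\delta,j})}\bigr|\le K_0^2C\eta$, and likewise on the limit side.
\end{itemize}
On the complement $X_\infty\setminus U_\delta$, the convergence is smooth and the metrics and volume forms converge. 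Therefore the inner products over $\chi_j(X_\infty\setminus U_\delta)$ converge to those of the limit sections.

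\textbf{Step 3: conclusion.} Letting $j\to\infty$ and then $\eta\to0$ gives $\langle s_\infty^{(m)},s_\infty^{(l)}\rangle_{L^{2,\#}}=\delta_{ml}$. In particular each limit section has unit norm, so it is nonzero.
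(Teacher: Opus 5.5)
Your proposal is correct and follows essentially the same route as the paper. The paper simply calls this standard and singles out exactly the two ingredients you rely on: the uniform global bound \eqref{eq--global c0 bound}, which together with the volume estimate of Theorem~\ref{thm-volume bound and epsilon regularity} prevents $L^2$-mass from concentrating near the orbifold points, and local $C^\infty$ convergence on $X_\infty^{\mathrm{reg}}$ from Proposition~\ref{L-infinity estimate}(2) and elliptic regularity.
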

\begin{proof}
    This is standard. We point it out to highlight the fact that it uses the uniform  \textit{global} $L^{\infty}$-bound \eqref{eq--global c0 bound} and volume estimate in Theorem \ref{thm-volume bound and epsilon regularity}. 
\end{proof}

By a diagonal argument, given any sequence of orthonormal bases of $H^0(X_j,L_j^{k})$, 
after passing to a subsequence we may assume that it converges both in $C^{\infty}_{\mathrm{loc}}$ 
and in $L^2$ to an orthonormal set, which linearly independently spans a subspace of $H^0(X_\infty,L_\infty^{k})$. 
In particular, 
by ampleness of $L_j,L_\infty$, we obtain the following estimate:
\begin{corollary}\label{cor--dimension non-decreasing}
    For any $j$ large and sufficiently large divisible $k$, we have 
    \begin{equation}\label{dim non-increasing}
   \chi(X_j,L_j^k) \;\leq\; \chi (X_\infty,L_\infty^k).
\end{equation}
\end{corollary}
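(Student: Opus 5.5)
The plan is to compare dimensions of spaces of sections, using the limiting procedure just described, and then pass from $h^0$ to $\chi$ via vanishing of higher cohomology for large divisible $k$.

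\textbf{Step 1 (vanishing).} Take $k$ divisible enough that $L_\infty^k$ is a genuine ample line bundle on $X_\infty$. This is possible because $X_\infty$ has finitely many orbifold points whose group orders are bounded by Theorem \ref{thm--TVcompactness}. Then Serre vanishing gives $H^i(X_\infty,L_\infty^k)=0$ for $i>0$ and all $k$ large, so $\chi(X_\infty,L_\infty^k)=h^0(X_\infty,L_\infty^k)$.

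On the $X_j$ side I need vanishing uniform in $j$. All $(X_j,L_j)$ share one Hilbert polynomial $P$ by the reduction at the end of Section \ref{sec-boundedness}. By Matsusaka's theorem they lie in a bounded family, so there is a uniform $k_2$ with $H^i(X_j,L_j^k)=0$ for $i>0$ and $k\ge k_2$. A more elementary route: write $L_j^k=K_{X_j}+(L_j^k-K_{X_j})$ and apply Kodaira vanishing, using the uniform bound on $c_1(X_j)\cdot c_1(L_j)$ and very ampleness of $L_j$ to make $L_j^k-K_{X_j}$ ample for $k\ge k_2$. Either way, $\chi(X_j,L_j^k)=h^0(X_j,L_j^k)=P(k)$.

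\textbf{Step 2 (dimension comparison).} Fix such a $k$. Choose $L^{2,\#}$-orthonormal bases $\{s_j^{(i)}\}_{i=1}^{P(k)}$ of $H^0(X_j,L_j^k)$. Apply the Corollary above, which relies on the global $L^\infty$ bound \eqref{eq--global c0 bound} and the volume upper bound of Theorem \ref{thm-volume bound and epsilon regularity}. After passing to a subsequence, these converge in $C^\infty_{loc}$ and in $L^2$ to an orthonormal set $\{s_\infty^{(i)}\}$ of holomorphic sections of $L_\infty^k$ over $X_\infty^{\mathrm{reg}}$.

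Two points need checking:
\begin{itemize}
\item \textbf{No mass lost near the singular points.} $L^2$ convergence is what lets orthonormality pass to the limit. The uniform $L^\infty$ bound together with $\vol(B(p,r))\le V_1r^4$ makes the contribution of small neighborhoods of the singular set uniformly small, so orthonormality, and hence linear independence, survives.
\item \textbf{Extension across the singular points.} The limit sections are bounded holomorphic sections on the complement of finitely many points. They extend, for example by Hartogs after lifting to the local orbifold charts, to elements of $H^0(X_\infty,L_\infty^k)$.
\end{itemize}
Hence $P(k)\le h^0(X_\infty,L_\infty^k)$. The same argument applies to every $k$ in the range, and along any subsequence, so the inequality holds for all large $j$.

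\textbf{Main obstacle.} The delicate part is the uniformity of vanishing on the $X_j$ side. It is most cleanly handled through boundedness from Matsusaka's theorem (and \cite{kollar2013}), together with the fixed Hilbert polynomial, which is why the Hilbert polynomials were normalized beforehand. On the limit side, the fact that $X_\infty$ has only quotient, hence rational, singularities is not needed here: plain Serre vanishing for the ample $L_\infty^k$ suffices.
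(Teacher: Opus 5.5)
Your proposal is correct and follows the paper's argument. Orthonormal bases of $H^0(X_j,L_j^k)$ converge, by the global $L^\infty$ bound and the volume upper bound, to an orthonormal and hence linearly independent set in $H^0(X_\infty,L_\infty^k)$, and ampleness turns $h^0$ into $\chi$ for large divisible $k$.

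One small remark: on the $X_j$ side you only need $h^2(X_j,L_j^k)=h^0(X_j,K_{X_j}-kL_j)=0$. This holds uniformly once $kL_j^2>K_{X_j}\cdot L_j$, so the boundedness argument is unnecessary. Your Kodaira-vanishing alternative, on the other hand, would need more than the bound on $c_1(X_j)\cdot c_1(L_j)$ to make $kL_j-K_{X_j}$ ample.
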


\subsection{Cohomology and intersection theory on orbifolds}\label{sec-orbifold-intersection-numbers}
We recall some standard results from the Chern--Weil theory on orbifolds \cite{satake1956,satake1957,kawasaki1979} and the usual algebraic intersection theory of Cartier divisors \cite[Chapter~2]{fulton2013}. See also \cite[Section 2]{lebrun2015edge} for a summary. By an orbifold smooth form, we mean a form on the regular locus whose pull-back to each local uniformizing chart extends to a smooth invariant form.

We first clarify different cohomology theory on orbifolds. Recall that for any topological 
space $X$, we have the singular homology/cohomology, and the $i$-th Betti number $b_i(X)$ denotes the
dimension of the $i$-th singular homology group with coefficients in 
$\mathbb{R}$. By the universal coefficient theorem, this coincides with the 
dimension of the $i$-th singular cohomology group. 
Since an orbifold 
is locally contractible, the singular cohomology groups of $X$ agree with the sheaf 
cohomology groups with coefficients in the constant sheaf 
$\underline{\mathbb{R}}$ \cite[Theorem 4.47]{Voisin_2002}. Moreover, by \cite{satake1956, Baily1956}, for a compact orbifold, this cohomology group is also isomorphic to the orbifold de Rham cohomology and also to the space of orbifold harmonic forms.
There is a great advantage of using different perspectives. For example, the Hodge decomposition and Hodge index theorem for K\"ahler surfaces can be easily extended to orbifold setting \cite{Baily1956}, since the analytic foundation of Hodge theory is essentially the same in the orbifold category, and as we see below, the intersection form can also be identified by integral of differential forms over orbifolds. 

Let $X$ be a projective orbifold surface. Let $E$ and $F$ be holomorphic orbifold line bundles on $X$. We use the same symbols $E$ and $F$ for the induced $\mathbb Q$-Cartier divisor classes on $X$. More explicitly, if $r_E,r_F>0$ are chosen so that $E^{\otimes r_E}$ and $F^{\otimes r_F}$ descend to  line bundles, i.e. Cartier divisors, on $X$, then, as $\mathbb Q$-Cartier divisor classes,
\[
    E=\frac{1}{r_E}[E^{\otimes r_E}],\qquad
    F=\frac{1}{r_F}[F^{\otimes r_F}].
\]
Choose orbifold Hermitian metrics $h_E$ and $h_F$, and set
\[
    \alpha_E=\frac{\sqrt{-1}}{2\pi}\Theta_{h_E}(E),\qquad
    \alpha_F=\frac{\sqrt{-1}}{2\pi}\Theta_{h_F}(F).
\]
The following result is standard and will be used repeatedly throughout the paper without further mention.
\begin{lemma}\label{lem-intersection}
  We have
\begin{equation}\label{eq--orbifold-intersection-by-forms}
    E\cdot F=\int_X \alpha_E\wedge\alpha_F .
\end{equation}
Here $E\cdot F$ is the algebraic intersection number of the two $\mathbb Q$-Cartier divisor classes on $X$ as a normal projective surface. 
\end{lemma}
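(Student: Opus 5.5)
The plan is to reduce \eqref{eq--orbifold-intersection-by-forms} to the case of honest line bundles on $X$ by bilinearity, and then compare the algebraic intersection number with the Chern--Weil integral on a resolution, where the usual smooth theory applies.

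\textbf{Reduction to Cartier divisors.} Both sides are $\mathbb Q$-bilinear in $(E,F)$. For the right-hand side, replacing $E$ by $E^{\otimes r_E}$ with the induced metric $h_E^{r_E}$ multiplies $\alpha_E$ by $r_E$. The integral does not depend on the choice of orbifold metrics. Indeed, changing $h_E$ to $h_Ee^{-\varphi}$ with $\varphi$ an orbifold smooth function changes $\alpha_E$ by $\frac{\ii}{2\pi}\partial\bar\partial\varphi$. By Stokes' theorem applied in uniformizing charts (equivalently, on $X$ minus small balls around the finitely many singular points, with boundary terms tending to zero because the forms are bounded in charts), this change contributes zero. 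So it suffices to prove the identity for $E,F$ genuine line bundles on $X$ equipped with orbifold smooth metrics.

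\textbf{Passing to a resolution.} Let $\pi:\widetilde X\to X$ be a resolution of the isolated quotient singularities, an isomorphism away from the exceptional curves. Algebraically, by the projection formula \cite[Chapter~2]{fulton2013}, $E\cdot F=\pi^*E\cdot\pi^*F$, and the latter equals $\int_{\widetilde X}c_1(\pi^*E)\wedge c_1(\pi^*F)$ by the smooth theory. The pulled-back metrics $\pi^*h_E,\pi^*h_F$ are continuous but need not be smooth along the exceptional divisor. So I first replace $h_E$ near each singular point $p$: on a neighborhood where $E$ is trivialized by a frame $e$ (possible since $E$ is Cartier), take the flat metric $|e|=1$, and glue it to the given metric with a cutoff. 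This changes $h_E$ only by an orbifold smooth $e^{-\varphi}$, so by the first step it does not affect the integral. With this choice, $\alpha_E$ and $\alpha_F$ vanish near the singular points. Hence $\pi^*\alpha_E$ and $\pi^*\alpha_F$ are smooth closed forms representing $c_1(\pi^*E)$ and $c_1(\pi^*F)$, supported away from the exceptional set. Therefore
\[
\int_X\alpha_E\wedge\alpha_F=\int_{\widetilde X}\pi^*\alpha_E\wedge\pi^*\alpha_F=\pi^*E\cdot\pi^*F=E\cdot F.
\]

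\textbf{Main obstacle.} The only delicate point is the metric-independence step in the orbifold category: making sure that Stokes' theorem holds for orbifold smooth forms. This is handled by working $\Gamma$-equivariantly on local uniformizing charts, which is where the factor $1/|\Gamma|$ is absorbed consistently. Alternatively, one can invoke Satake's orbifold de Rham theorem \cite{satake1956}, which identifies orbifold de Rham cohomology with $H^*(X,\mathbb R)$ compatibly with cup product and with integration against the fundamental class. Once that is in place, the computation is formal.
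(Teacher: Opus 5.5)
Your proof is correct, but it takes a different route from the paper's.

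The paper keeps the given orbifold metrics and compares them with auxiliary metrics $h_E^0,h_F^0$ that are smooth on $X$ as a complex space (via local embeddings), so that their Chern forms pull back smoothly to the resolution. The price is that the potentials $\varphi_E,\varphi_F$ relating the two metrics are only bounded near the singular points. They are smooth on $X^{\mathrm{reg}}$, but neither orbifold smooth nor smooth on the resolution. The paper therefore needs an integration by parts with cutoff functions $\chi_\varepsilon$ satisfying $\|\ii\partial\bar\partial\chi_\varepsilon\|_{L^1}\to 0$. It also needs a separate treatment of the mixed term $\ii\partial\bar\partial\varphi_E\wedge\ii\partial\bar\partial\varphi_F$, using boundedness of $\ii\partial\bar\partial\varphi_F$ in the orbifold metric.

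You instead use the local triviality of the Cartier bundle near each singular point to choose a metric that is \emph{simultaneously} orbifold smooth and flat near the singular points. Such a metric pulls back to a smooth metric on $\widetilde X$. The only comparison you need is then between two orbifold smooth metrics, where Stokes' theorem on $X$ minus small balls has boundary terms that visibly tend to zero. The Chern--Weil identification on $\widetilde X$ is immediate because your curvature forms vanish near the exceptional set.

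What your route buys is elementarity: no singular potentials and no cutoff estimates. The appeal to Satake's de Rham theorem in your last paragraph is not even needed. What the paper's route buys is a comparison principle that only requires the potential difference to be bounded and smooth on $X^{\mathrm{reg}}$, with no smoothness across the singular points. For the lemma as stated, your argument suffices.
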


In particular, if $L$ is a orbifold line bundle with an orbifold smooth curvature form $\omega=\sqrt{-1}\Theta_h(L)$, then
\begin{equation}\label{eq--orbifold-L-square-by-volume}
    L^2=\frac{1}{(2\pi)^2}\int_X\omega^2.
\end{equation}
If $\omega$ is a K\"ahler orbifold metric, then
\begin{equation}\label{eq--orbifold-KL-by-scalar}
    K_X\cdot L=-\frac{1}{(2\pi)^2}\int_X\mathrm{Ric}_{\omega}\wedge\omega.
\end{equation}

\begin{proof}[Proof of Lemma \ref{lem-intersection}]
By the linear property on both sides. It is sufficient to consider the case that $E$ and $E$ themselves are Cartier divisors.

 Let $\pi:Y\to X$ be a resolution. By definition of the algebraic intersection of Cartier divisors on a normal surface,
\[
    E\cdot F=(\pi^*E\cdot\pi^*F)_Y .
\]
Choose Hermitian metrics $h_E^0$ and $h_F^0$ on the line bundles $E$ and $F$ which are smooth on the complex space $X$, for instance by using local embeddings into smooth ambient spaces. Let their Chern forms on $X^{\mathrm{reg}}$ be
\[
    \beta_E=\frac{\sqrt{-1}}{2\pi}\Theta_{h_E^0}(E),\qquad
    \beta_F=\frac{\sqrt{-1}}{2\pi}\Theta_{h_F^0}(F).
\]
Then $\pi^*\beta_E$ and $\pi^*\beta_F$ extend smoothly across the exceptional divisor and are Chern forms of smooth Hermitian metrics on $\pi^*E_0$ and $\pi^*F_0$. Since $Y$ is smooth, Chern--Weil theory gives
\[
    (\pi^*E_0\cdot\pi^*F_0)_Y
    =\int_Y \pi^*\beta_E\wedge\pi^*\beta_F .
\]

On $X^{\mathrm{reg}}$, the two metrics $h_E$ and $h_E^0$ are metrics on the same line bundle $E|_{X^{\mathrm{reg}}}$ and hence differ by a function; similarly for $F$. Thus there are functions $\varphi_E$ and $\varphi_F$, which are smooth on $X^{\mathrm {reg}}$ and globally bounded on $X$ such that
\[
    \alpha_E=\beta_E+\ii\partial\bar\partial\varphi_E,\qquad
    \alpha_F=\beta_F+\ii\partial\bar\partial\varphi_F.
\] 

Then the desired equality follows from an integration by part argument. Choose logarithmic cut-off functions $\chi_\varepsilon$ with respect to an orbifold smooth K\"ahler metric on $X$ such that $0\leq\chi_\varepsilon\leq 1$, $\chi_\varepsilon=0$ in an $\varepsilon^2$-neighborhood of $D$, $\chi_\varepsilon=1$ outside an $\varepsilon$-neighborhood of $D$, and as $\epsilon\rightarrow 0$, 
\[
    \|\nabla\chi_\varepsilon\|_{L^2(X)}+\|\ii\partial\bar\partial\chi_\varepsilon\|_{L^1(X)}\longrightarrow 0 .
\]
The norm is taken with respect to the orbifold smooth K\"ahler metric on $X$. If $\gamma$ is either $\beta_E$ or $\beta_F$ which is smooth on $Y$, then Stokes' theorem gives
\[
    \int_X\chi_\varepsilon\,\ii\partial\bar\partial u_E\wedge\gamma
    =\int_X u_E\,\ii\partial\bar\partial\chi_\varepsilon\wedge\gamma ,
\]
because $\gamma$ is closed. The right hand side tends to zero, since $u_E$ is bounded and $\|\ii\partial\bar\partial\chi_\varepsilon\|_{L^1(X)}\to 0$. Letting $\varepsilon\to 0$ therefore gives
\[
    \int_X \ii\partial\bar\partial u_E\wedge\beta_F=0 .
\]
The same argument gives $\int_X\beta_E\wedge \ii\partial\bar\partial u_F=0$. For the remaining term, apply the same cut-off argument with the closed form $\ii\partial\bar\partial u_F$ on:
\[
    \int_X\chi_\varepsilon\,\ii\partial\bar\partial u_E\wedge \ii\partial\bar\partial u_F
    =\int_X u_E\,\ii\partial\bar\partial\chi_\varepsilon\wedge \ii\partial\bar\partial u_F .
\]
Since $\ii\partial\pp u_F$ is the difference of an orbifold smooth form and a smooth form, it has bounded norm with respect to the orbifold K\"ahler metric. Letting $\epsilon\rightarrow 0$ and using $\|\ii\partial\bar\partial\chi_\varepsilon\|_{L^1(X)}\to 0$, we obtain
\[
    \int_Y \ii\partial\bar\partial u_E\wedge \ii\partial\bar\partial u_F=0 .
\]
Consequently,
\[
    \int_X\alpha_E\wedge\alpha_F=
  \int_X \beta_E\wedge\beta_F
    =E\cdot F.
\]
\end{proof}



\section{Bubbling of cscK metrics}\label{sec--bubble}
In this section we collect some basic facts about bubbling of cscK metrics, which will be used later. Most of the results are standard and have been proved in \cite{bando1990a,bando1990b,tian-viaclovsky1}.
We include some details here to highlight certain properties needed later.
Moreover, in our polarized setting, we are able to derive some stronger conclusions.
 
 First we clarity what we mean by \textit{curvature singularity}. Given a sequence $(X_j,\omega_j)$ inside $\mathcal{K}(V,C_S)$, converging to $(X_\infty,\omega_\infty)$ in $\hat C^{\infty}$-Cheeger–Gromov sense, we say a point $p_\infty\in X_\infty$ is a curvature singularity, if for any $r>0$ and any $p_j\to p_\infty$ in Gromov-Hausdorff topology, we have 
\begin{equation}\label{eq--def of curvature sing}
\liminf_{j\to\infty}\int_{B(p_j,r)}\vert \Rm_{g_j}\vert^2d\mu_{g_j}>\epsilon_0.
\end{equation}
 We denote by $S(X_\infty)$ the set of curvature singularities. It is clear that $S(X_\infty)$ is discrete and contains $X^{\mathrm{sing}}_\infty$, the set of orbifold points of $X_\infty$. In general, $S(X_\infty)$ can be \emph{strictly} larger: there may exist points arising from the convergence locally modeled on rescalings of the Burns metric. 
Later, as an application of the bubbling analysis, we will show that under our \emph{polarized} assumption, one in fact has
$ S(X_\infty)=X^{\mathrm{sing}}_\infty$; see Theorem \ref{thm-characterization of orbifold points}. During the proof of the bubble decomposition Theorem \ref{thm-bubble decomposition}, we allow for the possibility that some of them may in fact correspond to regular points of the limit.

The analysis of bubbles is local, so we can focus on a small neighborhood of one curvature singularity $p_{\infty}$.  To improve clarity and ease the notation, we need the notion of a rooted tree.
Let $\mathcal{T}$ be a rooted tree with a finite number of vertices. The vertices of $\mathcal{T}$ are partially ordered such that $I \leq  J$ if and only if the unique path from the root to $J$ passes through $I$. 
If $I\leq J$ and $I\neq J$, we denote this by by $I<J$. The depth of a vertex, denoted by  $\mathrm{dep}(I)$ is defined to be the length of the path to its root.  In particular, the root has depth zero. The depth of a tree is the maximum depth of any vertex.
For any vectex $I$, we let $C(I)$ denote its set of children; if it is not the root, we let $P(I)$ denote its unique parent.


The following theorem is proved in \cite{bando1990a,bando1990b,tian-viaclovsky1}.

\begin{theorem}[\cite{bando1990a,bando1990b,tian-viaclovsky1}]\label{thm-bubble decomposition}
   Passing to a subsequence, there exist a rooted tree with finitely many vertices \( \mathcal{T} \) and a constant \( \eta > 0 \) such that, for each \( j \) and each vertex \( I \in \mathcal{T} \), there exist points \( p_{j,I} \in X_j \) with \( p_{j,I} \to p_{\infty} \), scaling factors \( \lambda_{j,I} \to \infty \) as \( j \to \infty \), and constants \( \delta_I \in (0,1) \) satisfying the following properties:
    \begin{itemize}
    \item for the root vertex $I$,  we denote $p_{j,I}=p_{j,1}$ and we make the convention that $\lambda_{j,P(1)}=1$, then for any vertex $I\in \mathcal{T}$, we have 
    \begin{equation}\label{curvature estimate on annulus}
        \int_{A_{p_{j,I}}(\lambda_{j,I}^{-1},\delta_{I}\lambda_{j,P(I)}^{-1})}|\mathrm{Rm}_{g_j}|^2 d\mu_{g_j}\leq \epsilon_0,
    \end{equation}
        \item for any $I<J$, we have $p_{j,J}\in B(p_{j,I}, \lambda_{j,I}^{-1})$ and $\lim_{j\rightarrow \infty} \frac{\lambda_{j,I}}{\lambda_{j,J}}=0,$ 
        \item for any $I<J$, we have either $p_{j,J}=p_{j,I}$ or $\lim_{j\rightarrow \infty}\lambda_{j,I}d_j(p_{j,J},p_{j,I})\geq \eta>0$,
        \item for any $I$,  $(X_j, \lambda_{j,I} ^2\omega_j, p_{j,I})$ converges to a Stein ALE scalar-flat non-flat K\"ahler orbifold surface $(Z_{I},\omega_{Z_{I}}, p_{Z_I})$ in the pointed $\hat C^{\infty}$-Cheeger–Gromov sense.
        \item if a vertex $I$ has no children, then  $(X_j, \lambda_{j,I} ^2\omega_j, p_{j,I})$ converges to a smooth Stein ALE scalar-flat non-flat K\"ahler surface $(Z_{I},\omega_{Z_{I}}, p_{Z_I})$ in the pointed $\hat C^{\infty}$-Cheeger–Gromov sense.
        \item for any $I$, $C(I)$ has a natural one-to-one correspondence to the singular set $S(Z_{I})$. Moreover, if $J\in C(I)$ corresponds to a point $q_J\in S(Z_I)$, then the asymptotic cone of $(Z_J,\omega_J)$ is isomorphic to the local tangent cone of $Z_I$ at $q_J$.
    \end{itemize}
\end{theorem}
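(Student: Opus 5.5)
We would prove Theorem~\ref{thm-bubble decomposition} by the standard inductive point-picking and rescaling scheme of Bando--Kasue--Nakajima and Tian--Viaclovsky, using the $\epsilon$-regularity and volume bounds of Theorem~\ref{thm-volume bound and epsilon regularity} together with the uniform bound $\Vert\Rm\Vert_{L^2}\leq\Lambda$ from \eqref{eq--uniform curvature L2 bound}.

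\textbf{Step 1: the root bubble.} Fix a small $\delta>0$ so that $B(p_\infty,2\delta)$ contains no other curvature singularity. Choose $p_{j,1}\in B(p_\infty,\delta)$ realizing the minimal regularity scale $r_{p_{j,1}}$ there, and set $\lambda_{j,1}=r_{p_{j,1}}^{-1}$. Since $p_\infty$ is a curvature singularity, \eqref{eq-two regularity comparable} forces $\lambda_{j,1}\to\infty$.

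The rescaled metrics $\lambda_{j,1}^2\omega_j$ then have the following properties:
\begin{itemize}
\item their curvature is bounded by $C$ on $B(p_{j,1},R)$ away from finitely many points where further concentration occurs, by the minimality of the chosen scale;
\item their volume ratios are two-sided bounded, by the Sobolev bound and Theorem~\ref{thm-volume bound and epsilon regularity};
\item their scalar curvature is $\lambda_{j,1}^{-2}S_{\omega_j}\to 0$.
\end{itemize}
The orbifold compactness argument behind Theorem~\ref{thm--TVcompactness}, in its pointed version, gives a pointed $\hat C^\infty$ limit $Z_1$. This limit is a complete scalar-flat K\"ahler orbifold with $\Vert\Rm\Vert_{L^2}\leq\Lambda$ and Euclidean volume growth.

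By Tian--Viaclovsky and Bando--Kasue--Nakajima, finite energy and Euclidean volume growth make $Z_1$ ALE of some order. It is non-flat because $\int_{B(p_{j,1},1)}|\Rm|^2\geq\epsilon_0$ at the regularity scale. It is Stein because an ALE K\"ahler scalar-flat space with curvature decay admits a proper strictly psh exhaustion modulo compact curves. We would follow the cited works on this point, possibly contracting exceptional curves, and we take this as in the literature.

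\textbf{Step 2: the neck region.} Choose $\delta_1$ and use the ALE structure at infinity of $Z_1$ together with the energy at $p_\infty$ in the original limit. A diagonal argument in $j$ and in the radius $R$ then yields \eqref{curvature estimate on annulus} on the annulus $A_{p_{j,1}}(R\lambda_{j,1}^{-1},\delta_1)$; shrinking $\delta_1$ and absorbing $R$ into $\lambda_{j,1}$ gives the stated form. The point is that any energy $\geq\epsilon_0$ in the neck would produce, after rescaling at an intermediate scale, either a new bubble at an intermediate scale or energy loss. Both are ruled out: by the choice of scales, and by a gap theorem stating that a flat cone neck with small energy carries no curvature.

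\textbf{Step 3: induction.} Each singular point $q\in S(Z_1)$ is a curvature singularity of the rescaled sequence, so we repeat Steps 1--2 at $q$ to produce the children $J\in C(1)$. The resulting scales satisfy $\lambda_{j,1}/\lambda_{j,J}\to 0$, and the centers satisfy either $p_{j,J}=p_{j,1}$ or $\lambda_{j,1}d_j(p_{j,J},p_{j,1})\to d(q,p_{Z_1})\geq\eta$, where $\eta$ is the minimal distance between distinct points of $S(Z_1)\cup\{p_{Z_1}\}$.

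The identification of the asymptotic cone of $Z_J$ with the tangent cone of $Z_1$ at $q$ follows from small energy on the neck between them. Each bubble carries at least $\epsilon_0$ of energy and the total energy is at most $\Lambda^2$, so the process terminates after finitely many steps, giving a finite tree. A leaf has no curvature singularities, hence is a smooth ALE space.

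\textbf{Main obstacle.} We expect the hardest step to be the neck analysis in Step 2: proving that no energy is lost in the annuli, and that the asymptotic cone of $Z_J$ matches the tangent cone of its parent. This requires the $\epsilon$-regularity together with a uniform decay estimate for small-energy annuli, which is the core content of \cite{bando1990a,tian-viaclovsky1}. We would cite that analysis rather than reprove it.
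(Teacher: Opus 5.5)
There is a genuine gap, and it starts with your choice of scale.

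\textbf{The scale in Step 1 picks out the wrong bubble.} Suppose $p_{j,1}$ minimizes $r_x$ (equivalently $\widetilde r_x$, by \eqref{eq-two regularity comparable}) and $\lambda_{j,1}=r_{p_{j,1}}^{-1}$. Then every nearby point satisfies $|\Rm|\le r_y^{-2}\le\lambda_{j,1}^{2}$. So the rescaled metrics have $|\Rm|\le 1$, and the limit $Z_1$ is a \emph{smooth} bubble with $S(Z_1)=\emptyset$: it is the deepest bubble (a leaf), not the root. This has three consequences.
\begin{itemize}
\item Your claim that concentration persists at finitely many points ``by minimality'' contradicts minimality.
\item The recursion in Step 3 at $S(Z_1)$ is vacuous.
\item Step 2 has no working mechanism. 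The annulus $A_{p_{j,1}}(R\lambda_{j,1}^{-1},\delta_1)$ will in general contain whole intermediate bubbles, e.g.\ an orbifold bubble at a larger scale whose orbifold point is resolved by $Z_1$, or the bubbles of other branches. So it is not a neck, and the asymptotic cone of $Z_1$ need not be the tangent cone of $X_\infty$ at $p_\infty$. Nothing in a maximal-curvature choice of scale excludes these bubbles.
\end{itemize}
The paper uses the maximal-curvature point only as the base point, where it serves for termination. It fixes the scale by energy instead: $\int_{A_{p_j}(r_j,r_\infty)}|\Rm|^2=\epsilon$, as in \eqref{equa-choice of energy scale}. Then \eqref{curvature estimate on annulus} holds by construction. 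Lemma \ref{lem-no intermediate scales} passes the scale-invariant Sobolev bound to every blow-up limit and applies the Chen--Weber gap theorem. Together with the connectedness argument of Corollary \ref{cor-no intermediate scales}, this shows every intermediate limit is one and the same flat cone. That produces the root, and one recurses at its curvature singularities, of which there may be several. Termination comes from the definite energy drop in Cases (i)/(ii). It cannot come from ``each bubble carries $\epsilon_0$'', which is not available at that stage.

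\textbf{Stein and non-flat do not come from the cited literature.} These are exactly the parts not supplied by \cite{bando1990a,tian-viaclovsky1}, and your justification for Stein is incorrect. An ALE K\"ahler surface is holomorphically convex, so it is Stein precisely when it has no compact curves. Scalar-flat ALE spaces with such curves (Burns and LeBrun metrics) do arise as bubbles of cscK surfaces once integrality of the class is dropped. Contracting curves would change the bubble, so that does not help.

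The missing idea is Proposition \ref{prop-no compact cycles}, which is where the polarization enters:
\begin{itemize}
\item A compact curve $\mathcal C\subset Z$ yields an integral cycle $\Sigma\subset Z^{\mathrm{reg}}$ with $[\Sigma]=m[\mathcal C]$, after multiplying by $m$ to move it off the orbifold points (the links have finite $H_1$).
\item Then $\lambda_j^2\langle[\omega_j],\chi_{j*}[\Sigma]\rangle\to m\int_{\mathcal C}\omega_Z\in(0,\infty)$.
\item This is impossible, since $\langle[\omega_j],\cdot\rangle\in 2\pi\mathbb Z$ on integral classes.
\end{itemize}

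Likewise, ``$\int_{B(p_{j,1},1)}|\Rm|^2\ge\epsilon_0$'' only shows non-flatness of a leaf. A bubble at the energy scale may a priori be a flat $\mathbb C^2/\Gamma$ whose energy sits entirely at curvature singularities that are smooth points; this is the paper's Case (ii). The paper rules this out only afterwards, via Theorem \ref{thm-characterization of orbifold points}. There, the Stein property, Lemma \ref{lem-AE stein orbifold must be flat} and the neck decay of Proposition \ref{prop-curvature estimate on neck region} force curvature singularities to be orbifold points. After that, Bando's argument gives non-flatness.
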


Note that the precise meaning of ``asymptotically Euclidean (AE)'' or ``asymptotically locally Euclidean (ALE)'' may vary in the literature. 
Since we are only concerned with scalar-flat Kähler metrics, we adopt the following convention: 
there exist asymptotic coordinates and constant $\tau>1$ in which, for any integer $k\ge 0$,
\[
\sum_{\ell=0}^k |x|^\ell\bigl|\nabla^\ell(g_{ij}-\delta_{ij})\bigr|
=O\bigl(|x|^{-\tau}\bigr) \qquad (|x|\to\infty).
\]
This fast fall-off condition can be obtained by combining \cite[Proposition 5.2]{tian-viaclovsky0} and \cite[Theorem 1.1]{BKN1989}. In fact, $\tau$ can be chosen arbitrarily close to $2$, although we will not use this fact. For the ALE case, we require a similar estimate holds on the covering chart of the end.

In the theory of orbifold compactness of Einstein metrics, it's known that a point in the limit space is singular if and only if it's a curvature singularity. The proof uses Bishop-Gromov volume comparison  \cite{anderson90, cheeger-colding97}, thus relies crucially on the bound on Ricci curvature. It is well-known that this phenomenon fails for general cscK metrics due to the existence of Burns metric \cite{lebrun1991}. As an application of bubbling analysis, we prove this equivalence under our \textit{polarized} assumption. For each $I\in\mathcal I$, denote by $\mathcal C_I\coloneqq\mathbb C^2/\Gamma_I$ the asymptotic cone of $Z_I$. Then we have:
\begin{theorem}\label{thm-characterization of orbifold points}
    A point $p_\infty\in X_\infty$ is an orbifold point if and only if it is a curvature singularity. The same conclusion holds for any blow-up limits. In particular, for each bubble limit $Z_I$, the orbifold group $\Gamma_I\neq \mathrm{Id}$.
\end{theorem}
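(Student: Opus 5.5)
The proof splits into the two directions of the equivalence. The easy direction is that an orbifold point is a curvature singularity. Suppose $p_\infty$ is not a curvature singularity. Then for some $r>0$ and some $p_j\to p_\infty$ we have $\int_{B(p_j,r)}|\Rm|^2\le\epsilon_0$. The $\epsilon$-regularity of Theorem \ref{thm-volume bound and epsilon regularity} then gives a uniform bound on $|\Rm|$ on $B(p_j,r/2)$. Together with the volume lower bound coming from the Sobolev constant, this gives a uniform lower bound on the injectivity radius (Cheeger–Gromov). Hence the convergence is smooth near $p_\infty$ and $p_\infty$ is a manifold point. The same argument applies verbatim to any rescaled (blow-up) sequence, since the Sobolev constant and the $\epsilon$-regularity are scale invariant.

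The substance is the converse, and the key input is the polarization, which I will use to show that every bubble $Z_I$ has no compact holomorphic curves. Suppose $C\subset Z_I$ is a compact curve. Via the $\hat C^\infty$ convergence of complex structures, and using the fact that compact curves in an ALE Kähler surface are rigid objects detected by $H_2$, it should arise as a limit of holomorphic curves $C_j\subset X_j$. These satisfy
\[
\int_{C_j}\omega_j=2\pi\, c_1(L_j)\cdot[C_j]\ \ge 2\pi .
\]
On the other hand, the area of $C_j$ with respect to $\lambda_{j,I}^2\omega_j$ stays bounded, so the $\omega_j$-area is $O(\lambda_{j,I}^{-2})\to0$, a contradiction. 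The theorem already asserts that $Z_I$ is Stein, and I read this Steinness as exactly this mechanism: it is what excludes the Burns metric on $\mathrm{Bl}_0\mathbb C^2$.

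Now suppose some $Z_I$ has trivial group $\Gamma_I$, so that its end is biholomorphically asymptotic to $\mathbb C^2\setminus B_R$. Using the ALE decay with $\tau>1$, I would produce holomorphic functions $z_1,z_2$ on the end asymptotic to the Euclidean coordinates, by solving $\bar\partial$ on the end with weighted estimates. By Hartogs extension on the normal Stein space $Z_I$ (the complement of a compact set in a normal Stein surface), they extend to a holomorphic map $F=(z_1,z_2):Z_I\to\mathbb C^2$. This map is proper, and it has degree one since it is close to the identity near infinity. It has no positive-dimensional fibres, because $Z_I$ has no compact curves. By normality and Zariski's main theorem, $F$ is therefore an isomorphism. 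Consequently $Z_I$ is smooth, $S(Z_I)=\emptyset$, and $I$ has no children. Then $(Z_I,\omega_{Z_I})$ is a smooth AE scalar-flat Kähler metric on $\mathbb C^2$. Its ADM mass is computed by the Hein–LeBrun mass formula, which expresses it in terms of $c_1\cdot[\omega]$ paired in compactly supported cohomology and $\int S$. Both terms vanish here, since $H^2_c(\mathbb C^2)=0$ and $S\equiv0$, so the mass is zero. Since $\mathbb C^2$ is spin, the positive mass theorem forces $\omega_{Z_I}$ to be flat, contradicting the non-flatness in Theorem \ref{thm-bubble decomposition}. Hence every $\Gamma_I\neq\mathrm{Id}$.

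Finally, let $p_\infty$ be a curvature singularity and apply this to the root bubble $Z_1$. By Theorem \ref{thm-bubble decomposition}, the asymptotic cone $\mathbb C^2/\Gamma_1$ of $Z_1$ is the tangent cone of $X_\infty$ at $p_\infty$, and $\Gamma_1$ is nontrivial, so $p_\infty$ is an orbifold point. The statement for blow-up limits follows by applying the same argument to the bubbles of the rescaled sequences. I expect the main obstacle to be the step identifying a Stein ALE surface with trivial group at infinity as $\mathbb C^2$. This requires constructing asymptotically Euclidean holomorphic coordinates on the end and extending them across the interior. A secondary check is that the mass formula applies under the fall-off rate $\tau>1$.
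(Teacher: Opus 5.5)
\textbf{Genuine gap: the case of a flat bubble with children.} When $\Gamma_I$ is trivial, you pass from $Z_I\cong\mathbb C^2$ to ``$S(Z_I)=\emptyset$ and $I$ has no children''. But $S(Z_I)$ is the set of \emph{curvature} singularities of the rescaled convergence, not the orbifold locus. That every curvature singularity of a blow-up limit is an orbifold point is part of what you are proving, so this step is circular.

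The final contradiction is circular in the same way. The non-flatness listed in Theorem~\ref{thm-bubble decomposition} is, in the paper, deduced \emph{from} this theorem, together with Proposition~\ref{prop-curvature estimate on neck region} and Bando's argument. Moreover, Case (ii) of the construction explicitly allows a minimal bubble to be flat when several curvature singularities appear at the same scale. So your mass argument only shows that $Z_I$ is flat $\mathbb C^2$. That is not yet a contradiction: nothing you say rules out a flat $\mathbb C^2$ at whose smooth points energy still concentrates.

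The missing step is to descend the tree. Since $\omega_{Z_I}$ is smooth, every $q\in S(Z_I)$ has tangent cone $\mathbb R^4$, so every child of $I$ is again AE, hence again $\mathbb C^2$ and flat. Iterating, every bubble below $I$ is flat. The paper then applies the (local) energy identity, Proposition~\ref{prop--energy identity}, which rests on the neck decay estimate. It concludes that no curvature energy concentrates at the point where $Z_I$ is attached, contradicting that this point is a curvature singularity. Alternatively, descending to a leaf gives a smooth AE bubble, hence flat $\mathbb C^2$. But a leaf has smooth convergence and, by the choice of scale, retains a definite amount of energy near its base point, so it cannot be flat.

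\textbf{Secondary issue: the no-compact-curves step.} Your argument assumes that a compact curve $C\subset Z_I$ ``arises as a limit of holomorphic curves $C_j\subset X_j$''. This is unjustified and false in general, since holomorphic curves such as $(-2)$-curves need not survive a perturbation of the complex structure. It also ignores curves passing through orbifold points, where the convergence is not smooth.

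Only homology is needed, as in Proposition~\ref{prop-no compact cycles}. The links of points of $S(Z_I)$ have finite $H_1$, so some multiple $m[C]$ is represented by a smooth integral cycle $\Sigma\subset Z_I\setminus S(Z_I)$. Then $\int_{\chi_j(\Sigma)}\omega_j\in 2\pi\mathbb Z$, while $\lambda_j^2\int_{\chi_j(\Sigma)}\omega_j\to m\int_C\omega_{Z_I}\in(0,\infty)$. This is impossible as $\lambda_j\to\infty$, and no positivity of $C_j$ is required.

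\textbf{Comparison of routes.} With these two repairs, your route from ``AE and Stein'' to $\mathbb C^2$ is a reasonable alternative to Lemma~\ref{lem-AE stein orbifold must be flat}. You use asymptotically Euclidean holomorphic coordinates, Hartogs extension and Zariski's main theorem. The paper instead resolves $Z_I$, uses Hein--LeBrun's description of AE K\"ahler surfaces as iterated blow-ups of $\mathbb C^2$, and applies normality plus the absence of compact curves. Both routes finish with the mass formula and the positive mass theorem.
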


The last statement in Theorem \ref{thm-characterization of orbifold points}, combining the curvature estimate Proposition \ref{prop-curvature estimate on neck region}, allows us to construct constant mean curvature (CMC) hypersurfaces foliations along neck region as in \cite{ozuch2022}, thus obtaining the following strong conical approximation:

\begin{theorem}\label{neck structure theorem}
     For each $I\in \mathcal{T}$, there exists a flat cone $\mathbb{R}^4/\Gamma_I$ for some finite subgroup $\Gamma_I\subset SO(4)$ acting freely away from the origin and there exist $\delta>0$, $1<K_0<K_1$, such that we can find smooth embeddings
   \begin{equation}
       \Phi_{j,I}:A_{p_{j,I}}(K_0\lambda^{-1}_{j,I},K_0{(\lambda_{j,I}\lambda_{j,P(I)})}^{-\frac{1}{2}})\to \mathbb R^4/\Gamma_I,
   \end{equation} 
   and
   \begin{equation}
       \Psi_{j,P(I)}:A_{p_{j,I}}(K_1^{-1}{(\lambda_{j,I}\lambda_{j,P(I)})}^{-\frac{1}{2}},K_1^{-1}\lambda_{j,P(I)}^{-1})\to \mathbb R^4/\Gamma_I,
   \end{equation} 
   such that for $k\in \mathbb N$, we have estimates:
   \begin{equation}
    \left| \nabla^k\left((\Phi_{j,I}^{-1})^*(\lambda_{j,I}^2g_j)-g_{\Gamma_I}\right)\right|_{g_{\Gamma_I}}\leq C_k r_I^{-k-\delta},
   \end{equation}
    \begin{equation}
      \left|\nabla^k\left((\Psi_{j,P(I)}^{-1})^*(\lambda_{j,P(I)}^{2}g_j)-g_{\Gamma_I}\right)\right|_{g_{\Gamma_I}}\leq C_kr_{I}^{-k+\delta}.
   \end{equation}
\end{theorem}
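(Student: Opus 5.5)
The statement to prove is Theorem~\ref{neck structure theorem}: conical coordinates on the two halves of each neck region. The plan is to treat each neck annulus $A_{p_{j,I}}(K_0\lambda_{j,I}^{-1},K_0^{-1}\lambda_{j,P(I)}^{-1})$ as a long scalar-flat-like region with small curvature energy. On it we will produce conical coordinates by two separate mechanisms: one near the inner end, coming from the ALE structure of the bubble $Z_I$, and one near the outer end, coming from the orbifold tangent cone of $Z_{P(I)}$ (or $X_\infty$) at $q_I$. The two pieces meet at the geometric-mean scale $(\lambda_{j,I}\lambda_{j,P(I)})^{-1/2}$.

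\textbf{Step 1: curvature decay on the neck.} First we upgrade \eqref{curvature estimate on annulus} to a pointwise decay estimate. In rescaled units $\lambda_{j,I}^2g_j$ with $r=\lambda_{j,I}\,d(p_{j,I},\cdot)$, the estimate we aim for is
\[
|\Rm|\le C\bigl(r^{-2-\delta}+(\lambda_{j,I}/\lambda_{j,P(I)})^{\delta}r^{-2+\delta}\bigr).
\]
This is the content of Proposition~\ref{prop-curvature estimate on neck region}, which I take as available. It is proved as in Tian--Viaclovsky by an $\epsilon$-regularity and three-annulus argument. The cscK equation gives an elliptic system for $\Rm$ (Bach-flat/Kähler), so the $L^2$ energy on dyadic annuli decays exponentially in $\log r$ away from both ends.

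\textbf{Step 2: CMC foliation.} Next I use the nontriviality $\Gamma_I\neq\mathrm{Id}$ from Theorem~\ref{thm-characterization of orbifold points}. Following Ozuch, the aim is to foliate the neck by CMC hypersurfaces $\Sigma_t$ that are close to the quotients $S^3/\Gamma_I$. The nontriviality of $\Gamma_I$ kills the translation kernel of the Jacobi operator on round $S^3$: translations are not $\Gamma_I$-invariant, so the linearized CMC operator on $S^3/\Gamma_I$ is invertible modulo dilations. This gives a unique, rigid family of leaves, with no drifting of centers along the long neck. This is the step I expect to be the main obstacle. One must control the accumulated error over the $\log(\lambda_{j,I}/\lambda_{j,P(I)})$ dyadic scales uniformly in $j$, and one must check that Kähler cscK (not Einstein) curvature decay suffices for the perturbation argument. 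If the foliation approach proves too delicate, I would fall back on harmonic coordinates on each dyadic annulus, glued via a Cheeger--Tian-type rigidity, again using $\Gamma_I\neq\mathrm{Id}$ to exclude translation drift.

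\textbf{Step 3: building $\Phi_{j,I}$ and $\Psi_{j,P(I)}$.} The inner map $\Phi_{j,I}$ is defined by flowing out along the unit normal of the foliation starting from the ALE chart of $Z_I$. On a fixed large annulus, that chart is close to $\lambda_{j,I}$-rescaled $g_j$ by the pointed convergence in Theorem~\ref{thm-bubble decomposition}. The ALE fall-off $O(r^{-\tau})$ together with Step 1 then gives
\[
|\nabla^k((\Phi^{-1})^*\lambda_{j,I}^2g_j-g_{\Gamma_I})|\le C_kr_I^{-k-\delta}
\]
up to the midpoint. Symmetrically, $\Psi_{j,P(I)}$ starts from the orbifold chart of $Z_{P(I)}$ at $q_I$. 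In that chart the metric is $g_{\Gamma_I}+O(\rho^2)$, and by the identification of the asymptotic cone of $Z_I$ with the tangent cone there, the two models share the same $\Gamma_I$. Flowing inward and using the $r^{-2+\delta}$ part of Step 1 gives the $r_I^{-k+\delta}$ bound. Higher derivative bounds follow from elliptic regularity for the cscK equation on unit-size rescaled annuli.
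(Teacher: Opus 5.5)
Your proposal follows essentially the same route as the paper, which simply invokes Ozuch's CMC-foliation gauge on the neck using three inputs: the refined neck curvature decay of Proposition~\ref{prop-curvature estimate on neck region}, the nontriviality $\Gamma_I\neq\mathrm{Id}$ from Theorem~\ref{thm-characterization of orbifold points}, and closeness to the single model cone $\mathbb{C}^2/\Gamma_I$ at every intermediate scale (Corollary~\ref{cor-no intermediate scales}), the last of which you use only implicitly when you assume the leaves are close to $S^3/\Gamma_I$. One small slip: in the rescaled units of Step~1 the outer term should carry the factor $(\lambda_{j,P(I)}/\lambda_{j,I})^{\delta}$, which tends to $0$, rather than its reciprocal.
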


The upshot here is that there is a polynomial rate of convergence to the model cone metrics. This rate will be crucial for our construction of psh weights in Section \ref{sec-construction of weightes}.

\subsection{Proof of Theorem \ref{thm-bubble decomposition}}
 The proof is based on inductive on scales. We first construct a minimal bubble, i.e. a bubble limit which is not a cone, but its asymptotic cone is isomorphic to the local tangent cone of $X_\infty$ at $p_\infty$. This corresponds to the root vertex in our notation. The same construction also works for smaller scales, and the termination of bubble tree follows from energy condition.

Let $\epsilon_0$ be the small constant appeared in Theorem \ref{thm-volume bound and epsilon regularity}, i.e the constant such that the $\epsilon$-regularity holds. Fix an $\epsilon\in (0, \epsilon_0]$ such that the gap theorem in \cite[Corollary 4.5]{chen-weber11} holds. Let $p_\infty\in S(X_\infty)$ be a curvature singularity. We first choose $5r_\infty>0$, such that 
\begin{equation}\label{eq-isolated curvature singularity}
   B(p_\infty,5r_\infty)\cap S(X_\infty)=\{p_\infty\} \text{ and } \int_{B(p_\infty,5r_\infty)}\vert \Rm_{g_\infty}\vert^2d\mu_{g_\infty}\leq\frac{\epsilon}{2}
\end{equation}
where $\epsilon$ is a constant to be determined below. By definition of $S(X_\infty)$, we can choose $p_j^{\prime}\to p_\infty$ such that 
$\sup_{B(p_j^{\prime},5r_\infty)}\vert \Rm_{g_j}\vert\to\infty.$
Let $p_j\in B(p_j^{\prime},5r_\infty)$ be the point such that 
\begin{equation}
    \vert \Rm_{g_j}\vert(p_j)=\sup_{B(p_j^{\prime},5r_\infty)}\vert \Rm_{g_j}\vert,
\end{equation} then by \eqref{eq-isolated curvature singularity} and the definition of curvature singularities \eqref{eq--def of curvature sing}, we must have that in the Gromov-Hausdorff sense 
    $p_j\to p_\infty.$
Therefore for $j$ large we may assume $B(p_j,r_\infty)\subset B(p_j^{\prime},5r_\infty)$. In particular, $$\vert \Rm_{g_j}\vert(p_j)=\sup_{B(p_j,r_\infty)}\vert \Rm_{g_j}\vert,$$ i.e. $p_j$ is the point with the largest curvature near its neighborhood. This fact will be used later in the proof of termination of bubble trees.

We next choose $r_{j}$ such that
\begin{equation}\label{equa-choice of energy scale}
    \int_{A_{p_j}(r_j,r_\infty)}\vert \mathrm{Rm}_{g_j}\vert^2d\mu_{g_j}=\epsilon,
\end{equation}
then $r_j\to 0$. Let $\overline\lambda_j=r_{j}^{-1}$ and $\underline{\lambda}=r_\infty^{-1}$. According to compactness theory in \cite{tian-viaclovsky1}, $(X_j,\overline\lambda_{j}^2\omega_j,p_j)$ converges to an ALE scalar flat K\"ahler orbifold $(Z,\omega_Z, p_Z)$ in the pointed $\hat{C}^{\infty}$-Cheeger-Gromov sense. Our goal is to prove that $(Z,\omega_Z, p_Z)$ is the desired minimal bubble associated to this pointed sequence.

\begin{lemma}\label{lem-no intermediate scales}
    Given any sequence $\lambda_j\in[\underline{\lambda},\overline\lambda_j]$ with $\lambda_j\to\infty$ and $\lambda_j^{-1}\overline{\lambda}_j\to\infty$, any subsequential limit of $(X_j,\lambda_j^2\omega_j,p_j)$ is a cone limit.
\end{lemma}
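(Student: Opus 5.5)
Fix a sequence $\lambda_j\in[\underline{\lambda},\overline\lambda_j]$ with $\lambda_j\to\infty$ and $\lambda_j^{-1}\overline\lambda_j\to\infty$. Rescale $\tilde g_j=\lambda_j^2 g_j$ and pass to a pointed $\hat C^\infty$-Cheeger--Gromov limit $(W,g_W,p_W)$. This limit exists by the compactness theory of \cite{tian-viaclovsky1}, since the rescaled metrics have scalar curvature $\lambda_j^{-2}S_{\omega_j}\to 0$, uniform local Sobolev constants and volume ratio bounds (Theorem \ref{thm-volume bound and epsilon regularity}), and uniformly bounded $L^2$-curvature. So $W$ is a scalar-flat Kähler ALE orbifold, and the task is to show that $W$ is flat, hence a flat cone $\mathbb R^4/\Gamma$.

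\textbf{Step 1: small energy away from the basepoint.} In the $\tilde g_j$ scale, the annulus $A_{p_j}(r_j,r_\infty)$ becomes $A_{p_j}(\lambda_j\overline\lambda_j^{-1},\lambda_j\underline\lambda^{-1})$, whose inner radius tends to $0$ and outer radius to $\infty$. Its total energy is exactly $\epsilon$ by \eqref{equa-choice of energy scale}. Hence for every compact set $K\subset W\setminus\{p_W\}$ we get $\int_K|\Rm_{g_W}|^2\le\epsilon$. Since the energy on any ball avoiding $p_W$ is at most $\epsilon\le\epsilon_0$, the $\epsilon$-regularity in Theorem \ref{thm-volume bound and epsilon regularity} shows there are no curvature singularities on $W\setminus\{p_W\}$. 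Moreover, on each annulus $A_{p_W}(s,2s)$ we have the scale-invariant bound $|\Rm|\le Cs^{-2}(\int_{A(s/2,4s)}|\Rm|^2)^{1/2}$.

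\textbf{Step 2: the point $p_W$ carries no energy concentration.} This is the main obstacle. The energy inside $B(p_j,\lambda_j^{-1}\delta)$ may be large, since it contains the bubble at scale $\overline\lambda_j$. I would argue that in the $\tilde g_j$ scale this energy concentrates at the single point $p_W$. The reason is that $p_j$ realizes the maximum of $|\Rm|$ on $B(p_j,r_\infty)$, and all the energy beyond $\epsilon$ lives within distance $r_j$ of $p_j$, i.e. distance $\lambda_j/\overline\lambda_j\to0$ in the rescaled metric. Consequently $W$ is an orbifold with at most one singular point, namely $p_W$, and is smooth elsewhere. Its total energy satisfies $\int_{W\setminus\{p_W\}}|\Rm|^2\le\epsilon$, because the energy that concentrates at $p_W$ is lost in the limit and is not counted in the smooth part of $W$.

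\textbf{Step 3: gap theorem.} Now $W$ is a complete scalar-flat Kähler ALE orbifold with at most one orbifold point and total curvature energy at most $\epsilon$. Since $\epsilon$ was chosen below the gap constant of \cite[Corollary 4.5]{chen-weber11}, $W$ must be flat. For the orbifold case I would pass to the local uniformizing cover at $p_W$, or apply the gap argument directly: the $\epsilon$-regularity gives $|\Rm|(x)\le C\epsilon^{1/2}d(p_W,x)^{-2}$, and the Kato/Sobolev argument behind the gap theorem works with the orbifold Sobolev inequality and a cutoff near $p_W$. A flat ALE orbifold with one singular point is a cone $\mathbb R^4/\Gamma$ with vertex $p_W$.

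The step needing most care is Step 2, or more precisely the justification that the gap theorem applies in the orbifold setting with a possible conical point. To handle it, I would first try lifting to the smooth cover near $p_W$ and using the scale-invariant decay from Step 1 to control the cutoff errors.
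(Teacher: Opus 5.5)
Your proposal is correct and follows essentially the paper's argument: the rescaled annulus carrying energy exactly $\epsilon$ exhausts $W\setminus\{p_W\}$, so $W$ has at most one curvature singularity and total energy at most $\epsilon$ by Fatou, and the Chen--Weber gap theorem then forces flatness. The one point the paper makes explicit that you only gesture at is that $W$ satisfies the global Sobolev inequality $\|f\|_{L^4}\le C_S\|df\|_{L^2}$ with the same constant $C_S$: rescaling the Sobolev inequality on $X_j$ multiplies the lower-order term by $\lambda_j^{-1}\to 0$. This uniform constant is exactly what makes the gap threshold, and hence the fixed $\epsilon$, independent of the chosen scales $\lambda_j$.
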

\begin{proof}
    Let $Z_1$ be any subsequential limit of $(X_j,\lambda_j^2\omega_j,p_j)$. By the curvature $L^2$-bound and the $\epsilon$-regularity Theorem \ref{thm-volume bound and epsilon regularity}, we know that $Z_1$ has only finitely many orbifold singularities. 
    
    We observe first that there is a universal Sobolev constant bound on $(Z_1,\omega_{Z_1})$, i.e. it depends only on the Sobolev constant bound in $\mathcal K(V,C_S)$ and is independent of the choice of scales $\lambda_{j}$. Given any $f\in C^{\infty}_c(Z_1\setminus S(Z_1))$, we may regard it as a smooth function on $X_j$ under pointed $\hat C^{\infty}$-Cheeger-Gromov convergence. We use Sobolev inequality of $\omega_j$, i.e.
    \begin{equation}
        \Vert f\Vert_{L^4(X_j,\omega_j)}\leq C_S(\Vert df\Vert_{L^2(X_j,\omega_j)}+\mathrm{Vol}(X_j,\omega_j)^{-\frac{1}{4}}\Vert f\Vert_{L^2(X_j,\omega_j)}).
    \end{equation}
    Rescaling the metric, we obtain
    \begin{equation}
        \Vert f\Vert_{L^4(X_j,\lambda_j^2\omega_j)}\leq C_S(\Vert df\Vert_{L^2(X_j,\lambda_j^2\omega_j)}+\lambda_j^{-1}\Vert f\Vert_{L^2(X_j,\lambda_j^2\omega_j)}).
    \end{equation}
    Since $f\in C^{\infty}_c(Z_1\setminus S(Z_1))$ and $\lambda_j^2 \omega_j\to \omega_{Z_1}$ in $C^{\infty}_{loc}(Z_1\setminus S(Z_1))$, the term $\Vert f\Vert_{L^2(X_j,\lambda_j^2\omega_j)}$ converges to $\Vert f\Vert_{L^2(Z_1,\omega_{Z_1})}$, so is bounded. Since $\lambda_j\to\infty$, by taking limit we see
    \begin{equation}
         \Vert f\Vert_{L^4(Z_1,\omega_{Z_1})}\leq C_S\Vert df\Vert_{L^2(Z_1,\omega_{Z_1})}.
    \end{equation}
    Now using orbifold smoothness, it's easy to see this extends across $S(Z_1)$. Then by \cite{tian-viaclovsky1}, we know that $Z_1$ is an ALE scalar flat K\"ahler orbifold.
    
    Since $\lambda_j\to\infty$ and $\lambda_j^{-1}\overline{\lambda}_j\to\infty$, the choice of scale \ref{equa-choice of energy scale} guarantees that given any $R>1$,  the $L^2$-energy is small on $B_{\lambda_j^2\omega_j}(p_j,R)\setminus B_{\lambda_j^2\omega_j}(p_j,R^{-1})$. Thus $Z_1$ has only one curvature singularity, and the convergence is smooth away from that point. By Fatou's lemma, we have
    \begin{equation}
        \int_{Z_1}\vert \Rm_{g_{Z_1}}\vert^2d\mu_{g_{Z_1}}\leq\epsilon.
    \end{equation}
    The universal bound on Sobolev constant allows us to apply the gap theorem in \cite[Corollary 4.5]{chen-weber11}, which implies that by choosing $\epsilon$ small, $Z_1$ must be flat, hence being a flat cone. 
\end{proof}

Let $\mathfrak{C}$ be the set of isomorphism classes of all cone limits constructed in Lemma \ref{lem-no intermediate scales} under pointed Gromov-Hausdorff topology. Then we have:
\begin{corollary}\label{cor-no intermediate scales}
    $\mathfrak{C}$ consists of a unique point. In particular, we have $\mathcal{C}_\infty(Z)=\mathcal{C}_0(X_\infty)$. 
\end{corollary}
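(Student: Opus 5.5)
The corollary says that all intermediate-scale cone limits produced by Lemma \ref{lem-no intermediate scales} are the same flat cone, and that this cone is both the asymptotic cone of the bubble $Z$ and the tangent cone of $X_\infty$ at $p_\infty$. The plan is to exploit two facts. First, the rescaled annuli at intermediate scales are controlled by a single continuous quantity, the volume ratio. Second, the set of possible flat cones $\mathbb{R}^4/\Gamma$ with $\Gamma\subset SO(4)$ finite is discrete, detected by $|\Gamma|$ through the volume ratio $\omega_4/|\Gamma|$.

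\textbf{Step 1: define the volume-ratio function.} For each $j$ set
\[
f_j(r)=r^{-4}\,\mathrm{Vol}_{g_j}(B(p_j,r)), \qquad r\in[\overline\lambda_j^{-1},\underline\lambda^{-1}].
\]
This is a continuous function of $r$.

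\textbf{Step 2: show $f_j$ is nearly quantized on intermediate scales.} Suppose $r_j$ is any sequence with $r_j\overline\lambda_j\to\infty$ and $r_j\to0$. By Lemma \ref{lem-no intermediate scales}, every subsequential limit of $(X_j,r_j^{-2}\omega_j,p_j)$ is a flat cone $\mathbb{R}^4/\Gamma$ with vertex at the limit of $p_j$. The vertex is the limit point because the only curvature concentration sits at $p_j$: by the choice \eqref{equa-choice of energy scale} the energy on the annuli is small. Volume convergence holds under $\hat C^\infty$-Cheeger--Gromov convergence, by the corollary of Theorem \ref{thm-volume bound and epsilon regularity}. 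Hence $f_j(r_j)\to\omega_4/|\Gamma|$.

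The admissible $|\Gamma|$ form a discrete set. The Sobolev bound gives a volume lower bound and Theorem \ref{thm-volume bound and epsilon regularity} gives $|\Gamma|$ bounded, so the admissible values are finitely many, and the corresponding values $\omega_4/|\Gamma|$ are separated by some gap $\sigma>0$.

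\textbf{Step 3: $|\Gamma|$ is constant across the whole neck.} A diagonal argument upgrades Step 2 to uniformity: for any $\delta>0$ there are $K$ and $j_0$ such that for $j\ge j_0$ and $r\in[K\overline\lambda_j^{-1},\delta]$, the value $f_j(r)$ lies within $\sigma/3$ of the set $\{\omega_4/|\Gamma|\}$. Since $f_j$ is continuous on this interval, it stays near a single value $\omega_4/|\Gamma|$.

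Now compare the two ends of the neck.
\begin{itemize}
\item At the small end $r=K\overline\lambda_j^{-1}$, the function $f_j(r)$ converges to the volume ratio of $Z$ at radius $K$. For $K$ large this is close to the asymptotic volume ratio of $Z$, which is $\omega_4/|\Gamma_Z|$.
\item At the large end $r=\delta$, the function $f_j(r)$ converges to the volume ratio of $X_\infty$ at $p_\infty$ at radius $\delta$. For $\delta$ small this is close to $\omega_4/|\Gamma_{p_\infty}|$.
\end{itemize}
So all intermediate cones have the same $|\Gamma|$, equal to both $|\Gamma_Z|$ and $|\Gamma_{p_\infty}|$.

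\textbf{Step 4: identify the cone itself, not just $|\Gamma|$ (main obstacle).} Volume only detects $|\Gamma|$, so a separate argument is needed to pin down $\Gamma$. I would use the topology of the neck. The annuli $A_{p_j}(K\overline\lambda_j^{-1},\delta)$ have small energy, so by $\epsilon$-regularity they carry bounded rescaled curvature. By Step 3 each annulus at scale $r$ is Gromov--Hausdorff close to an annulus in some $\mathbb{R}^4/\Gamma_r$, with smooth closeness on compact sub-annuli. Overlapping dyadic annuli then overlap smoothly, so the link $S^3/\Gamma_r$ cannot change diffeomorphism type, hence not its fundamental group, along the neck.

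This fixes $\Gamma$ only up to abstract isomorphism. To get the cone itself I would invoke rigidity: spherical space forms $S^3/\Gamma$ with isomorphic $\Gamma$ are isometric once the representation is fixed, and nearby representations of a finite group into $SO(4)$ are conjugate. Closeness of neighbouring annuli to cones then forces the same cone up to isometry. Together with Step 3 this gives $\mathfrak C$ a single point, and matching at the two ends yields $\mathcal{C}_\infty(Z)=\mathcal{C}_0(X_\infty)$.
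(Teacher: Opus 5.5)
Your argument is correct, but it is organized differently from the paper's.

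The paper argues abstractly. Every element of $\mathfrak{C}$ is a flat cone $\mathbb{C}^2/\Gamma$, and such cones are rigid, i.e.\ isolated in the pointed Gromov--Hausdorff topology. On the other hand, $\mathfrak{C}$ is connected, because the rescaled spaces $(X_j,\lambda^2\omega_j,p_j)$ vary continuously in $\lambda$; this adapts Donaldson--Sun's proof that the set of tangent cones at a point is connected. A connected discrete set is a point.

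You replace connectedness by the intermediate value theorem applied to the scalar invariant $f_j(r)$. Its possible limits $\omega_4/|\Gamma|$ are quantized, so $|\Gamma|$ is constant along the neck and matches $|\Gamma_Z|$ and $|\Gamma_{p_\infty}|$ explicitly. Your Step 4 then supplies by hand the rigidity that the paper only cites, via the topology of the link plus local rigidity of representations of finite groups. What your route buys is an elementary, quantitative handle at the level of volumes. Its cost is that volume only sees $|\Gamma|$, so Step 4 is unavoidable.

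Step 4 as you set it up already proves the corollary on its own. It combines uniform closeness of every dyadic annulus in the neck to an annulus in some flat cone, chained through overlaps, with a uniform separation between the finitely many admissible cones (finite because non-collapsing bounds $|\Gamma|$). That is exactly the connectedness argument in discrete form. So Steps 1--3 serve as a consistency check rather than a logically necessary step.

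Two small fixes are needed.
\begin{itemize}
\item In Step 3 the quantifier should be ``for all sufficiently small $\delta$''. For a fixed $\delta$, $f_j(\delta)$ tends to the volume ratio of $X_\infty$ at radius $\delta$, which need not be quantized, so the contradiction argument must send $K\to\infty$ and $\delta\to 0$ together.
\item In Step 4, the closeness of each annulus to a cone annulus does not follow from Step 3, which only controls volume. It follows from the same contradiction argument applied directly to Lemma~\ref{lem-no intermediate scales}, with $\epsilon$-regularity giving the smooth closeness.
\end{itemize}
Your sentence about spaces with isomorphic $\Gamma$ is loose: lens spaces $L(p,q)$ show that abstract isomorphism is not enough. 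The local-rigidity argument you then give is the right fix.
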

\begin{proof}
    Since every element of $\mathfrak{C}$ is of the form $\mathbb C^2/\Gamma$, which is rigid as flat cones, it suffices to show that $\mathfrak{C}$ is connected. This is well-known; for example we can adapt the proof of \cite[Lemma 3.2]{DS2}.
\end{proof}

We are now ready to prove Theorem \ref{thm-bubble decomposition}.
    The proof is the same as \cite{bando1990a,bando1990b}. We wish to show at each step, there is a certain amount of energy extracted around every curvature singularity, so the bubbling must terminate after finitely many steps. There are two possibilities, and the key fact is that by our choice of $p_j$, the curvature must have a concentration around $p_j$. Denote by $$\tilde{g}_j=\bar \lambda_j^2g_j.$$ 
    
\noindent\textbf{Case (i)}. $p_Z=\lim p_j$ is the unique curvature singularity on $Z$.
    Then $Z$ has at most one orbifold singularity and the metric convergence is smooth outside $B(p_Z,1/2)$, and therefore by the choices of scales \ref{equa-choice of energy scale}, we have  
    \begin{equation}
        \int_{B_{\tilde g_j}(p_j, 1/2)}|\Rm_{\tilde g_j}|^2d\mu_{\tilde g_j}\leq \int_{X_j}|\Rm_{\tilde g_j}|^2d\mu_{\tilde g_j} -\epsilon.
    \end{equation}


\noindent\textbf{Case (ii)}. There are more than one curvature singularities. Note that other curvature singularities might approach the boundary of $B(p_Z,1)$, so $Z$ itself could \textit{a priori} be flat. This is different from \cite{bando1990b}, where for Einstein metrics flatness of limit always reduces to the first case. Denote by $\{q^{(i)}\}_{i=1}^{k}$ the set of curvature singularities of $Z$ with one of them being $p_Z$, and $q_j^{(i)}\in X_j$ be a sequence of points converging to $q^{(i)}$ in the Gromov-Hausdorff sense. Note the number of curvature singularities is \textit{a priori} bounded. Since in this case the number of curvature singularities is at least two, the energy splits in the following sense. 
If we choose $\delta_0>0$ so that the balls $B(q^{(i)},\delta_0)$ are disjoint, then by the definition of curvature singularities \eqref{eq--def of curvature sing}, for any $i$ and sufficiently large $j$, we have
\begin{equation}
    \int_{B_{\tilde g_j}(q_j^{(i)},\delta_0)}\vert\Rm_{\tilde g_j}\vert^2d\mu_{\tilde g_j}\leq\int_{X_j}\vert\Rm_{\tilde g_j}\vert^2d\mu_{\tilde g_j}-\frac{1}{2}\epsilon_0.
\end{equation}
 Thus there is also a definite amount of energy extracted in this step.

Now we can take $p_{j,1}=p_j$ and $\lambda_{j,1}=\overline\lambda_j$, so $(Z,\omega_Z,p_Z)$ corresponds to the root vertex of the bubble tree. We repeat the procedure above, namely at each step we examine the neighborhood of every curvature singularity of previous bubble, and construct the next minimal bubble. The whole process terminates at finitely many steps since at each step we extract a definite amount of energy. This finishes the proof of most of statements in Theorem \ref{thm-bubble decomposition}, except for non-flatness of bubbles. The latter relies on our polarized assumption. To be more precise, it follows from Theorem \ref{thm-characterization of orbifold points} and Proposition \ref{prop-curvature estimate on neck region} proved below, using the argument of \cite{bando1990b}.

\subsection{Proof of Theorem \ref{thm-characterization of orbifold points} and Theorem \ref{neck structure theorem}}


The key ingredient in this subsection is the following refined curvature estimate along the neck region, which serves as the analogue of \cite[Proposition~3]{bando1990a} in the setting of constant scalar curvature K\"ahler metrics.

In the following, we consider an annulus contained in an element of $\mathcal{K}(V, C_S)$. When we say that there exist constants satisfying certain properties, we mean that all such constants depend only on the parameters $V$ and $C_S$.
\begin{proposition}\label{prop-curvature estimate on neck region}
    There exist positive constants $\epsilon_1,\delta,K_0,C_k$ such that if 
    \begin{equation}
        \int_{A_p(r_1,r_0)}\vert\mathrm{Rm}_g\vert^2d\mu_g\leq\epsilon_1 \quad \text{for some $r_1<r_0$},
    \end{equation}
    then for any $K_0r_1\leq r\leq K_0^{-1}r_0$ and $K_0\leq K\leq\sqrt{r_0/r_1}$, we have the following refined curvature estimate:
    \begin{equation}\label{equa-curvature estimate on neck region}
        \begin{aligned}
      \int_{A_p(Kr_1,K^{-1}r_0)}\vert\mathrm{Rm}_g\vert^2d\mu_g&\leq C_0K^{-\delta}\\
             r^{2+k}\sup_{\partial B_r(p)}\vert \nabla^k\mathrm{Rm}_g\vert&\leq C_k\max\left\{(\frac{r_1}{r})^{\delta},(\frac{r}{r_0})^{\delta}\right\}, \text { for any\ } k\in \mathbb N_{\geq 0}.
        \end{aligned}
        \end{equation}
\end{proposition}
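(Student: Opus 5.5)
We follow the strategy of \cite[Proposition~3]{bando1990a}: first show that on a neck region of small energy the metric is annulus-wise close to a flat cone $\mathbb R^4/\Gamma$, then derive a differential inequality for the energy on sub-annuli which forces exponential decay in $\log r$, i.e.\ polynomial decay in $r$.

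\textbf{Step 1: pointwise smallness.} For any $r$ with $2r_1\le r\le r_0/2$, the annulus $A_p(r/2,2r)$ is covered by a bounded number of balls of radius comparable to $r$, with the bound coming from the volume upper bound in Theorem~\ref{thm-volume bound and epsilon regularity}. Each such ball has energy at most $\epsilon_1$. The $\epsilon$-regularity then gives
\[
r^{2+k}|\nabla^k\Rm|\le C_k\epsilon_1^{1/2}
\]
on $\partial B_r(p)$. Higher derivatives follow because the cscK equation is elliptic (fourth order in the potential) once the curvature is bounded. Hence each rescaled annulus $(A_p(r/2,2r),r^{-2}g)$ is $C^\infty$-close to an annulus in a flat cone $\mathbb R^4/\Gamma$. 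Taking $K_0$ large and $\epsilon_1$ small, a contradiction/compactness argument shows that $\Gamma$ is constant along the whole neck and the cross sections are close to $S^3/\Gamma$.

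\textbf{Step 2: energy decay.} Set $E(s)=\int_{A_p(e^{s}r_1,e^{-s}r_0)}|\Rm|^2$. We aim to prove
\[
E(s)\le C\bigl(-E'(s)\bigr),
\]
which integrates to $E(\log K)\le C_0K^{-\delta}$. The tool is an integral identity on annuli: a Chern--Gau\ss--Bonnet/signature type transgression. Since the metric is K\"ahler with constant scalar curvature, $\Rm$ splits as $W^-$, $\Ric_0$ and the constant $S$ (with $W^+$ determined by $S$). The Chern--Weil integrands for $\chi$ and $\tau$ combine to express $\int|W^-|^2+c\int|\Ric_0|^2$ modulo bounded $S^2$-volume terms. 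Over an annulus this equals boundary transgression terms, Chern--Simons forms on the two boundary spheres, plus $S^2\vol$ terms, which are bounded by $r_0^4$ and absorbed, noting that $r_0$ is small or else rescaling.

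By Step~1, each boundary sphere is nearly flat-conical, and the Chern--Simons form relative to the flat cone is bounded by $r\int_{\partial B_r}|\Rm|^2$. This holds after choosing a good gauge, using that $S^3/\Gamma$ with the round metric is rigid so that the Chern--Simons invariant of the flat cone cancels. This yields the differential inequality. The main obstacle is that without an Einstein condition we cannot directly use $\Rm$ being harmonic, so controlling all of $|\Rm|^2$ by the topological integrand is the crux. For this we would use the Calabi identities \eqref{c1 and ric L2} localized, together with the elliptic cscK system: $\Ric$ is harmonic as a $(1,1)$ form, since $\pp^*\Ric=\pp S=0$. That allows a Bochner/three-annulus argument for $\Ric_0$ separately if the transgression identity alone is insufficient.

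\textbf{Step 3: pointwise decay.} Apply $\epsilon$-regularity again on $A_p(r/2,2r)$. With $K\approx\min(r/r_1,r_0/r)$, its energy is at most $C\max\{(r_1/r)^{\delta},(r/r_0)^{\delta}\}$. The $L^2$-to-$L^\infty$ estimate of Theorem~\ref{thm-volume bound and epsilon regularity} gives the $k=0$ bound, after halving $\delta$. Higher-order derivative bounds follow from elliptic estimates for the cscK system in harmonic coordinates on the rescaled annulus, with constants $C_k$ linear in the curvature norm on a slightly larger annulus.
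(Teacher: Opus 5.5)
Your route is correct in outline but genuinely different from the paper's.

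\textbf{The paper's route.} The proof uses no Chern--Weil identity here. It first obtains polynomial decay of $\Ric$ alone, from the harmonicity of the traceless Ricci form: the refined Kato inequality $\Delta|\widetilde{\Ric}|^{1/2}\ge -C|\Rm||\widetilde{\Ric}|^{1/2}$ is fed into the argument of \cite{bando1990a}. It then controls the full curvature by the broken Hodge gauge estimate $\int\phi^2|\Rm|^2\le C\int_{\mathrm{supp}\nabla\phi}(|\Rm|^2+|\Ric|^2)$ of \cite{tian-viaclovsky0,chen-weber11}. Iterating this over dyadic annuli gives $\beta_i\le\mu\beta_{i-1}+\mu^{i+1}$, and the pointwise bounds follow from $\epsilon$-regularity exactly as in your Step 3.

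\textbf{Your route.} You put everything into one transgression identity, and this works precisely because the metric is K\"ahler. Let $s$ be the Riemannian scalar curvature, so $|W^+|^2=s^2/24$. With $8\pi^2\chi=\int|W|^2-\tfrac12|\Ric_0|^2+\tfrac{s^2}{24}$ and $12\pi^2\tau=\int|W^+|^2-|W^-|^2$, the $\chi$-form times $-1$ plus the $\tau$-form times $-2$ equals $|W^-|^2+\tfrac12|\Ric_0|^2-\tfrac16 s^2$. This already dominates $c|\Rm|^2-Cs^2$ pointwise. So the ``main obstacle'' you describe is resolved by your own observation, and the fallback Bochner argument for $\Ric_0$ is not needed.

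\textbf{What Step 2 still needs.}
\begin{enumerate}
\item The boundary terms must be relative transgressions $T(\nabla,\nabla_0)$, where $\nabla_0$ is a flat $g$-metric connection on $TM$ over the whole neck coming from the conical identification. Chern's boundary term involves the second fundamental form, which slice curvature does not control.
\item These terms should be integrated over smooth level sets of the model radial function, not over distance spheres.
\item The quadratic bound $|\int_{\Sigma_r}T|\le Cr\int_{\Sigma_r}|\Rm|^2$ comes from Coulomb gauge near the model flat connection on $S^3/\Gamma$. That connection is nondegenerate (twisted $H^1$ vanishes because $\Gamma$ is finite), which is the precise content of your ``rigidity''. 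Large-gauge ambiguities are fixed by continuity in $r$.
\item The $s^2$-term over $A_p(e^sr_1,e^{-s}r_0)$ is $O(e^{-4s}r_0^4)$ with $r_0$ bounded by the diameter, so it decays in $s$. It is not ``absorbed'', and rescaling does not help, since $r_0^4\|s\|_\infty^2$ is scale-invariant.
\end{enumerate}

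\textbf{What each approach buys.} Your route gives a single ODE $E\le -CE'+Ce^{-4s}$. It replaces the refined Kato inequality and the Hodge-gauge machinery with a three-dimensional gauge-fixing lemma on slices, but it is tied to dimension four and to the K\"ahler identity for $W^+$. The paper's route is local elliptic analysis that handles Ricci and the full curvature separately. It does not need a topological integrand that controls $|\Rm|^2$.
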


\begin{proof}

    Denote by $\widetilde{\Ric}=\Ric_{\omega}-\frac{1}{2}S_{\omega}\omega$, namely the traceless Ricci curvature. Since cscK metrics have harmonic traceless Ricci tensor, there is a refined Kato's inequality for $\widetilde{\Ric}$ \cite{branson00,CGH00,cibotaru-zhu12}; see also \cite[Lemma 6.2]{tian-viaclovsky1}. As a result, we obtain the following differential inequality:
    \begin{equation}
        \Delta\vert\widetilde{\Ric}\vert^{\frac{1}{2}}\geq-C\vert\Rm\vert\vert\widetilde{\Ric}\vert^{\frac{1}{2}}.
    \end{equation}
Therefore the same proof in \cite[Proposition 3]{bando1990a} gives neck estimate for Ricci curvature for some $\delta>0$:
\begin{equation}
    r^2\sup_{\partial B_r(p)}\vert\Ric_g\vert\leq C\max\{(\frac{r_1}{r})^{\delta},(\frac{r}{r_0})^{\delta}\}.
\end{equation}

By Corollary \ref{cor-no intermediate scales}, the metric is $C^{\infty}_{loc}$-close to the same model cone metric for arbitrary scales along this region. As a result, we can construct broken Hodge gauge on consecutive annuli and squeeze out to estimate the full curvature tensor as in \cite[Lemma 6.5]{tian-viaclovsky0}; see also \cite[Theorem 3.7]{chen-weber11}. We include a short proof here for readers' convenience.

According to the proof of Theorem 3.7 in \cite{chen-weber11}, which in turn depends on the idea of estimating under broken Hodge gauge as in \cite{uhlenbeck82,tian1990,tian-viaclovsky0}, we can find some $\epsilon_1>0$, such that if
\begin{equation}
    \int_{\mathrm{supp}\phi}\vert\Rm_g\vert^2d\mu_g<\epsilon_1,
\end{equation}
then 
\begin{equation}
    \int\phi^2\vert\Rm_g\vert^2d\mu_g\leq C\int_{\mathrm{supp}\nabla\phi}\vert\Rm_g\vert^2d\mu_g+C\int_{\mathrm{supp}\nabla\phi}\vert\Ric\vert^2.
\end{equation}
We choose cut-off $0\leq \phi\leq 1$, such that $\mathrm{supp}(\phi)\subset A_p(2^{i-1}r_1,2^{-i+1}r_0)$, $\phi\equiv 1$ on $A_p(2^ir_1,2^{-i}r_0)$ as well as $\mathrm{supp}(\nabla\phi)\subset A_p(2^{i-1}r_1,2^ir_1)\cup A_p(2^{-i}r_0,2^{-i+1}r_0)$. Then we obtain:
\begin{equation}
    \begin{aligned}
        \int_{A_p(2^ir_1,2^{-i}r_0)}\vert\Rm_g\vert^2&d\mu_g\leq C\int_{A_p(2^{i-1}r_1,2^ir_1)\cup A_p(2^{-i}r_0,2^{-i+1}r_0)}\vert\Rm_g\vert^2d\mu_g\\
        &+C\int_{A_p(2^{i-1}r_1,2^ir_1)\cup A_p(2^{-i}r_0,2^{-i+1}r_0)}\vert\Ric_g\vert^2d\mu_g.
    \end{aligned}
\end{equation}
Note that the second Ricci term can be controlled by our previous estimate, i.e.
\begin{equation}
    \int_{A_p(2^{i-1}r_1,2^ir_1)\cup A_p(2^{-i}r_0,2^{-i+1}r_0)}\vert\Ric_g\vert^2d\mu_g\leq C 2^{-2\delta i}.
\end{equation}
Let $i\geq i_0$ be integers and $2^{i_0}\geq K_0$. Denote by
\begin{equation}
    \beta_i=\int_{A_p(2^ir_1,2^{-i}r_0)}\vert\Rm_g\vert^2d\mu_g,
\end{equation}
then the above estimate gives:
\begin{equation}
    \beta_i\leq C(\beta_{i-1}-\beta_i)+C2^{-2\delta i},
\end{equation}
so we get:
\begin{equation}
    \beta_i\leq\frac{C}{1+C}\beta_{i-1}+\frac{C}{1+C}2^{-2\delta i}.
\end{equation}
Denote by $\mu=\max\{\frac{C}{1+C},2^{-2\delta}\}\in(0,1)$, then we have
\begin{equation}
    \beta_i\leq\mu\beta_{i-1}+\mu^{i+1}.
\end{equation}
Thus we get
\begin{equation}
    \beta_i-(i+1)\mu^{i+1}\leq\mu(\beta_{i-1}-i\mu^{i}).
\end{equation}
We may iterate to see that for any $l$ large, we have
\begin{equation}
    \beta_l\leq((l+1)\mu+C)\mu^{l}\leq l\mu^l.
\end{equation}
Since $\mu<1$, we can choose some $\delta^{\prime}$ such that $l\mu^l\leq2^{-l\delta^{\prime}}$ for $l$ large. This finishes the proof for the curvature integral estimate. The pointwise estimate follows from $\epsilon$-regularity in \cite[Theorem 3.1]{tian-viaclovsky0}.
\end{proof}

\

As an immediate consequence of Proposition \ref{prop-curvature estimate on neck region}, we obtain the following energy identity, which appears frequently in other geometric analysis problems.

\begin{proposition}\label{prop--energy identity}
    The following identities hold:
    \begin{equation}
        \lim_{j\to\infty}\int_{X_j}\vert \mathrm{Rm}_{g_j}\vert^2d\mu_{g_j}=\int_{X_\infty}\vert \mathrm{Rm}_{g_\infty}\vert^2d\mu_{g_\infty}+\sum_{I\in\mathcal T}\int_{Z_I}\vert \mathrm{Rm}_{g_{Z_I}}\vert^2d\mu_{g_{Z_I}},
    \end{equation}
    \begin{equation}
        \lim_{j\to\infty}\int_{X_j}\vert \mathrm{Ric}_{g_j}\vert^2d\mu_{g_j}=\int_{X_\infty}\vert \mathrm{Ric}_{g_\infty}\vert^2d\mu_{g_\infty}+\sum_{I\in\mathcal T}\int_{Z_I}\vert \mathrm{Ric}_{g_{Z_I}}\vert^2d\mu_{g_{Z_I}}.
    \end{equation}
\end{proposition}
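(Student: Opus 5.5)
The plan is to decompose each $X_j$ into three kinds of regions, compute the energy on each, and show that only the bubbles and the limit carry energy in the limit, the necks contributing nothing. Since $S(X_\infty)$ is finite, it suffices to work near one curvature singularity $p_\infty$ with its bubble tree $\mathcal T$ from Theorem \ref{thm-bubble decomposition}; the Ricci identity is handled in exactly the same way, since $|\Ric|\le C|\Rm|$ pointwise.

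Fix small $\delta>0$ and large $R>1$, and split a neighbourhood of $p_\infty$ as follows.
\begin{enumerate}
\item The region $X_j\setminus\bigcup_{p_\infty}B(p_{j,1},\delta)$. By smooth convergence away from $S(X_\infty)$, its energy tends to $\int_{X_\infty\setminus\bigcup B(p_\infty,\delta)}|\Rm|^2$.
\item For each vertex $I$, the body region $B(p_{j,I},R\lambda_{j,I}^{-1})\setminus\bigcup_{J\in C(I)}B(p_{j,J},\delta\lambda_{j,I}^{-1})$. In the rescaled metric $\lambda_{j,I}^2g_j$ this converges smoothly to $B(p_{Z_I},R)\setminus\bigcup_{q\in S(Z_I)}B(q,\delta)$ in $Z_I$. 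Since $\int|\Rm|^2d\mu$ is scale invariant in real dimension $4$, its energy tends to the corresponding integral on $Z_I$.
\item The neck regions. These are the annuli $A_{p_{j,J}}(R\lambda_{j,J}^{-1},\delta\lambda_{j,P(J)}^{-1})$ for $J\in\mathcal T$, where for the root we set $\lambda_{j,P(1)}=1$ and the outer radius is $\delta$. By \eqref{curvature estimate on annulus}, after shrinking $\delta\le\delta_J$ and enlarging $R$, each neck has energy at most $\epsilon_0$.
\end{enumerate}

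One point needs checking: when $p_{j,J}=p_{j,I}$, or the points are $\eta$-separated, the balls in (2) around children are compatible with the necks centred at $p_{j,J}$. This follows from the third bullet of Theorem \ref{thm-bubble decomposition}, together with the correspondence between $C(I)$ and $S(Z_I)$. Consequently, for $\delta$ small and $R$ large, regions (1)--(3) cover a full neighbourhood with bounded overlap. Note that $\lambda_{j,I}/\lambda_{j,J}\to0$ guarantees the necks are nonempty for large $j$.

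The main step is showing the necks carry negligible energy. Apply Proposition \ref{prop-curvature estimate on neck region} with $r_1=\lambda_{j,J}^{-1}$, $r_0=\delta_J\lambda_{j,P(J)}^{-1}$, and $\epsilon_1$ in place of $\epsilon_0$; this is arranged by choosing $\epsilon\le\epsilon_1$ in the construction of the tree. Taking $K=R$ gives
\[
\int_{A(R r_1,R^{-1}r_0)}|\Rm|^2\le C_0R^{-\delta}.
\]
The leftover annuli $A(r_1,Rr_1)$ and $A(R^{-1}r_0,r_0)$ are already absorbed in the body regions (2) of $J$ and $P(J)$ once $R$ is adjusted, so no energy is lost or double counted. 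Hence
\[
\limsup_j\Bigl|\int_{X_j}|\Rm|^2-\int_{X_\infty}|\Rm|^2-\sum_I\int_{Z_I}|\Rm|^2\Bigr|\le C R^{-\delta}+\Psi(\delta)+\Psi(R^{-1}).
\]
Here the $\Psi$ terms account for the tails of $Z_I$ outside $B(p_{Z_I},R)$, the $\delta$-balls around singular points of $Z_I$ and of $X_\infty$, and the finitely many trees. All of these tend to zero because the curvature is in $L^2$ on the orbifolds $Z_I$ and $X_\infty$, by Fatou's lemma. Letting $R\to\infty$ and then $\delta\to0$ gives the identity.

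The main obstacle I expect is the bookkeeping between consecutive scales: ensuring every point of $X_j$ near $p_\infty$ lies in either a body region or a neck to which the hypothesis of Proposition \ref{prop-curvature estimate on neck region} applies with the small constant $\epsilon_1$. In particular, this requires controlling children of a given vertex $I$ that sit at distinct points $q_J\in S(Z_I)$ versus at $p_{Z_I}$ itself, and relies on the construction of $r_j$ via \eqref{equa-choice of energy scale}.
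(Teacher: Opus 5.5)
Your argument is correct and is essentially the paper's proof. The paper also removes the neck annuli, uses convergence on the remaining regions (with Fatou for one inequality), and invokes the decay estimate of Proposition~\ref{prop-curvature estimate on neck region} to show the necks carry only $\Psi(K^{-1})$ energy. Your version just makes the bookkeeping explicit. The one thing to state precisely: since your necks extend out to radius $\delta\lambda_{j,P(J)}^{-1}$, apply the proposition with $K=\min\{R,\delta_J/\delta\}$ (or require $\delta\le\delta_J/R$) so that the whole neck lies inside the controlled annulus. Your $\Psi(\delta)+\Psi(R^{-1})$ error term already absorbs this.
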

\begin{proof}
    By Fatou's lemma, it's easy to see the right hand side is no more than the left hand side. For the converse, it suffices to show that there is no energy lost along neck region, which is guaranteed by Proposition \ref{prop-curvature estimate on neck region}. More precisely, for any fixed $K$ large, we have:
    \begin{equation}
        \begin{aligned}
             \lim_{j\to\infty}&\int_{X_j\setminus\bigcup_{I\in\mathcal T} A_{p_{j,I}}(K\lambda_{j,I}^{-1},K^{-1}\lambda_{j,P(I)})}\vert \mathrm{Rm}_{g_j}\vert^2d\mu_{g_j}\\
             &\leq \int_{X_\infty}\vert \mathrm{Rm}_{g_\infty}\vert^2d\mu_{g_\infty}+\sum_{I\in\mathcal T}\int_{Z_I}\vert \mathrm{Rm}_{g_{Z_I}}\vert^2d\mu_{g_{Z_I}}.
        \end{aligned}
       \end{equation}
       Now we apply Proposition \ref{prop-curvature estimate on neck region}, which tells us
       \begin{equation}
           \int_{X_j\setminus\bigcup_{I\in\mathcal T} A_{p_{j,I}}(K\lambda_{j,I}^{-1},K^{-1}\lambda_{j,P(I)})}\vert \mathrm{Rm}_{g_j}\vert^2d\mu_{g_j}=\int_{X_j}\vert \mathrm{Rm}_{g_j}\vert^2d\mu_{g_j}-\Psi(K^{-1}),
       \end{equation}
       so
       \begin{equation}
           \lim_{j\to\infty}\int_{X_j\setminus\bigcup_{I\in\mathcal T} A_{p_{j,I}}(K\lambda_{j,I}^{-1},K^{-1}\lambda_{j,P(I)})}\vert \mathrm{Rm}_{g_j}\vert^2d\mu_{g_j}=\lim_{j\to\infty}\int_{X_j}\vert \mathrm{Rm}_{g_j}\vert^2d\mu_{g_j}-\Psi(K^{-1}).
       \end{equation}
       The converse inequality follows by letting $K\to\infty$. The proof for the Ricci curvature is identically the same.
 \end{proof}

Then we point out the following \textit{``no compact holomorphic cycle"} property under our polarized assumption. Geometrically, this means that during formation of singularities $X_j\rightarrow X_{\infty}$, no holomorphic cycles is contracted. We thank Song Sun for pointing out the following argument, which also appears in \cite[Proposition 3.1]{han-viaclovsky20}.
\begin{proposition}\label{prop-no compact cycles}
    Let $(Z,\omega_Z,p_Z)$ be a bubble limit, which is a complete non-compact ALE scalar flat K\"ahler orbifold with finitely many singular points. Then $Z$ contains no holomorphic compact cycles. In particular, it is Stein.
\end{proposition}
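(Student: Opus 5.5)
The idea is to use the polarization: the bubble is a rescaled limit of $(X_j,\lambda_j^2\omega_j)$, where $\omega_j=\ii\Theta_{A_j}$ is the curvature of a Hermitian line bundle. On the bubble, $\omega_Z$ is therefore a limit of curvature forms of connections whose curvature has been scaled by $\lambda_j^2\to\infty$. First I would show that $\omega_Z$ is exact on $Z$, indeed $\omega_Z=d\theta$ for some $1$-form. This means the bubble "forgets" the polarization cohomologically.

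The concrete plan is as follows. Suppose $C\subset Z$ is a compact holomorphic curve, possibly passing through orbifold points; in complex dimension two a compact positive-dimensional analytic subvariety is a finite union of curves. Since $\omega_Z$ is K\"ahler, $\int_C\omega_Z>0$. On the other hand, take a neighborhood $U\Supset C$ with $U\cap S(Z)$ finite. By the pointed $\hat C^\infty$ convergence, together with the convergence of complex structures and connections, we get embeddings $\chi_j$ of a neighborhood of $C\setminus S(Z)$ into $X_j$. Near each orbifold point of $C$, I would cut out a small ball and approximate $C$ by a smooth closed $2$-cycle $\Sigma\subset Z^{\mathrm{reg}}$, capping with small discs or using a Lipschitz cycle in the link. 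Then $\int_\Sigma\omega_Z\geq \frac12\int_C\omega_Z>0$, because the contribution of small balls around isolated points to $\int_C\omega_Z$ is small.

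Next, push $\Sigma$ forward to $X_j$ via $\chi_j$. Then
\[
\int_{\chi_j(\Sigma)}\lambda_j^2\omega_j=2\pi\lambda_j^2\,\langle c_1(L_j),[\chi_j\Sigma]\rangle\in 2\pi\lambda_j^2\mathbb Z .
\]
This converges to $\int_\Sigma\omega_Z$, which is a finite nonzero number. Since $\lambda_j\to\infty$, this forces $\langle c_1(L_j),[\chi_j\Sigma]\rangle=0$ for large $j$, and hence $\int_\Sigma\omega_Z=\lim 0=0$, a contradiction. An alternative route is to use the rescaled connections $\lambda_j^2A_j$ with holonomy to show that $\omega_Z$ is the curvature of a limit connection on a trivial bundle, but the integrality argument above is more direct.

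For Steinness I would invoke the standard fact that a complete K\"ahler ALE orbifold surface whose end is biholomorphic to the complement of a compact set in $\mathbb C^2/\Gamma$ is holomorphically convex, which uses the ALE complex structure near infinity (the fall-off gives this). Its Remmert reduction contracts exactly the maximal compact analytic subsets. Having none, $Z$ is Stein; cf.\ \cite[Proposition 3.1]{han-viaclovsky20}.

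The main obstacle is the step where $C$ meets orbifold points of $Z$ (curvature singularities, which may have further bubbles). There, $\chi_j$ is not defined and $C$ need not lift to an honest cycle. I would handle this by noting that the link of an orbifold point is $S^3/\Gamma$, which is a rational homology sphere. So $C\cap\partial B(q,r)$ bounds a rational $2$-chain inside the small annulus $A_q(r/2,r)\subset Z^{\mathrm{reg}}$, of area $O(r^2)$ by cone geometry. Pushing this rational cycle into $X_j$ still gives integrality up to a bounded denominator, so the argument goes through.
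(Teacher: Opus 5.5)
Your proposal is correct and is essentially the paper's argument: you play the integrality of $\lambda_j^2[\omega_j]/2\pi$ on a cycle pushed into $X_j$ against the finite positive area of the curve on $Z$, and you handle orbifold points through the torsion of $H_1(S^3/\Gamma)$; the paper phrases that step via the exact sequence of the pair $(Z,Z\setminus S(Z))$ and excision, producing a smooth cycle $\Sigma\subset Z\setminus S(Z)$ with $[\Sigma]=m[\mathcal C]$, so $\int_\Sigma\omega_Z=m\int_{\mathcal C}\omega_Z$ exactly and your $O(r^2)$ area estimate is not needed. For the Stein conclusion, which the paper does not spell out, it is safer to argue from the ALE fall-off: the square of a smoothed distance from $p_Z$ is strictly psh outside a compact set, so $Z$ is $1$-convex and Grauert's Remmert reduction applies, rather than asserting that the end is biholomorphic to a standard end of $\mathbb C^2/\Gamma$.
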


\begin{proof}
    The key is to rule out compact cycles with curvature singularities sitting inside. Assume by contradiction that there is one such irreducible compact cycle $\mathcal{C}\subset Z$, then $[\mathcal{C}]$ defines a non-trivial integral homology class in $H_2(Z;\mathbb{Z})$. Since $\mathcal{C}$ is holomorphic, we must have
    \begin{equation}
        0<\int_{\mathcal{C}}\omega_\infty<\infty.
    \end{equation}
   \textbf{Claim}: There exists $m\in\mathbb{N}^{+}$ and a smooth integral $2$-chain $\Sigma$ which is compactly supported in $Z_0:=Z\setminus S(Z)$, such that $[\Sigma]=m[\mathcal{C}]$ in $H_2(Z;\mathbb{Z})$.

   In fact, consider the natural exact sequence of homology groups associated to the inclusion $Z_0\subset Z$:
   \begin{equation}
       H_2(Z_0;\mathbb{Z})\to H_2(Z;\mathbb{Z})\to H_2(Z,Z_0;\mathbb{Z}).
   \end{equation}
   By excision theorem, the relative homology group $H_2(Z,Z_0;\mathbb{Z})$ reduces to the direct sum of local homology groups around each orbifold point, i.e.
   \begin{equation}
       H_2(Z,Z_0;\mathbb{Z})\cong\bigoplus_{i} H_2(B_{\epsilon}(p_i), B_{\epsilon}(p_i)\setminus\{p_i\};\mathbb{Z})\cong\bigoplus_{i} H_1(Y_{i};\mathbb{Z}),
   \end{equation}
   where $\{p_i\}$ are orbifold points and $Y_i$ are links around $p_i$. The claim follows from the fact that $H_1(Y_i;\mathbb{Z})$ is a finite group. We can assume $\Sigma$ is a smooth cycle, since $Z_0$ is a smooth manifold.

   Now choose the sequence of scales $\lambda_j\to\infty$ and base points $p_j$ such that $(X_j,\lambda_j^2\omega_j,p_j)$ converges to $Z$ in the pointed $\hat C^{\infty}$-Cheeger-Gromov sense. Since $\Sigma$ is compactly supported in $Z_0$, we can choose a sequence of embeddings $\chi_j$ from a neighborhood of $\mathrm{supp}(\Sigma)$ to $X_j$, such that $\chi_j^*(\lambda_j^2\omega_j)$ converges smoothly to $\omega_\infty$. It follows that
   \begin{equation}
0<\lim_{j\to\infty}\lambda_j^2[\omega_j]\cap(\chi_{j})_*[\Sigma]<\infty.
   \end{equation}
However, both $[\omega_j]$ and $(\chi_{j})_*[\Sigma]$ are integral classes, so $[\omega_j]\cap[\Sigma]$ must be zero for all $j$ large, giving the desired contradiction.
\end{proof}

Finally, we need the following lemma, which is presumably well known to experts and can be directly derived from the proof of \cite[Lemma 2.11]{han-viaclovsky20}. For the reader's convenience, we include a proof here.
\begin{lemma}\label{lem-AE stein orbifold must be flat}
    An AE Stein scalar-flat K\"ahler orbifold must be isometric to the standard flat $\mathbb C^2$.
\end{lemma}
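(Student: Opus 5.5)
The argument will use the Stein condition to rule out any holomorphic topology, turn AE asymptotics into a vanishing ADM mass via the scalar-flat Kähler mass formula, and then conclude flatness from the positive mass theorem. Let $(Z,\omega_Z)$ be an AE Stein scalar-flat Kähler orbifold, so its end is diffeomorphic to $\mathbb{C}^2\setminus B_R$ with trivial group at infinity. The plan has three steps: first show $Z$ has no orbifold points, so it is a smooth manifold; then show the ADM mass vanishes; finally invoke positive mass rigidity.

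\emph{Step 1: no orbifold points, and $Z$ is biholomorphic to $\mathbb{C}^2$.} Because the group at infinity is trivial, I would first use the standard fact (as in \cite{han-viaclovsky20}) that an AE Kähler surface has a complex structure at infinity biholomorphic to the complement of a compact set in $\mathbb{C}^2$. Since the decay rate $\tau>1$, the end compactifies by adding a line, giving a compact normal complex orbifold surface $\bar Z = Z\cup \ell_\infty$ with $\ell_\infty^2=1$. Next use that $Z$ is Stein. By Hartogs extension, the coordinate functions $z_1,z_2$ on the end extend holomorphically to all of $Z$. This yields a proper holomorphic map $F=(z_1,z_2):Z\to\mathbb{C}^2$ which is a biholomorphism near infinity, so $F$ has degree one. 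A degree-one proper holomorphic map between normal surfaces is bimeromorphic. It can contract only compact curves, and a Stein space has none, so $F$ is finite and birational onto the normal space $\mathbb{C}^2$, hence an isomorphism. In particular $Z\cong\mathbb{C}^2$ is smooth, and there are no orbifold points. I expect this step to be the main obstacle. The delicate points are the holomorphic compactification of the end (whether $J$ is asymptotic to the standard complex structure at a rate fast enough to apply Hartogs) and making sure the Hartogs extension works on a space with quotient singularities; normality and the codimension-two nature of the singular points should settle the latter.

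\emph{Step 2: the mass vanishes.} For an ALE scalar-flat Kähler surface, Hein–LeBrun's mass formula gives
\[
 m(g) = -\frac{1}{3\pi}\langle c_1, [\omega]\rangle + \frac{1}{(n-1)!}\int S\, d\mu ,
\]
and here the scalar curvature term vanishes. The pairing uses the compactly supported class $c_1\in H^2_c$. By Step 1, $H^2(Z)=H^2(\mathbb{C}^2)=0$, so $\omega$ is exact and the pairing is zero. Hence $m=0$. Alternatively, one can argue directly, since $Z=\mathbb{C}^2$ has trivial second cohomology.

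\emph{Step 3: rigidity.} Now $Z$ is a smooth, complete, spin (being contractible) AE manifold with nonnegative (in fact zero) scalar curvature and zero mass. The rigidity case of the positive mass theorem (Witten, or Schoen–Yau) then gives that $(Z,g_Z)$ is isometric to flat $\mathbb{R}^4$. Since $\omega_Z$ is parallel, it is a constant Kähler structure, so $(Z,\omega_Z)$ is isometric to the standard flat $\mathbb{C}^2$.
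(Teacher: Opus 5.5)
Your proof is correct and has the same overall structure as the paper's. You show that $Z$ is biholomorphic to $\mathbb{C}^2$, using normality and the fact that a Stein space has no compact curves. You then get flatness from the Hein--LeBrun mass formula and positive mass rigidity; the paper simply cites the argument of Hein--LeBrun's Theorem 6.2 here, and your Steps 2--3 spell that argument out.

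Where you differ is in how the map $Z\to\mathbb{C}^2$ is produced.
\begin{itemize}
\item The paper resolves $Z$ and glues an AE K\"ahler metric onto the resolution $\widetilde Z$ via the K\"ahler potential. It then applies Hein--LeBrun's classification of AE K\"ahler surfaces as iterated blow-ups of $\mathbb{C}^2$, and pushes the resulting birational map down to $Z$ using normality.
\item You take holomorphic coordinates on the AE end and extend them over the compact core by Hartogs' theorem on a normal Stein surface. You then conclude with properness, degree one, and finiteness of fibres.
\end{itemize}
Your route is more direct. It avoids the resolution and the auxiliary gluing construction, and it uses only the first ingredient of the Hein--LeBrun classification, namely holomorphic coordinates at infinity. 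The paper's route instead buys a black-box citation of a published classification.

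Your worry about the decay rate is misplaced for Step 1. Once holomorphic coordinates on the end exist, Hartogs needs only that the end is connected and that $Z$ is a normal Stein surface; depth two gives $H^1_c(Z,\mathcal{O}_Z)=0$. The hypothesis $\tau>1$ is what you need in Steps 2--3, so that the mass is well defined and Witten's rigidity applies.

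Some smaller points:
\begin{itemize}
\item The compactification by a line at infinity plays no role in your argument.
\item To pass from $Z\cong\mathbb{C}^2$ to the absence of orbifold points, add that a point with nontrivial group $\Gamma$ acting freely on $S^3$ has non-simply-connected link $S^3/\Gamma$. Such a point is therefore a genuine singularity of the complex space, which $\mathbb{C}^2$ does not have.
\item The coefficient you wrote in front of $\int S\,d\mu$ is not Hein--LeBrun's. This is harmless, since $S\equiv 0$.
\end{itemize}
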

\begin{proof}
  Let $Z$ be an AE Stein scalar-flat orbifold. Choose a resolution $\widetilde{Z}$ of $Z$. 
Then, by a standard gluing construction on the K\"ahler potential (see \cite[Section 2]{AP}), 
$\widetilde{Z}$ admits an AE K\"ahler metric, and hence is an iterated blow-up of $\mathbb{C}^2$ 
by \cite[Proposition 4.3]{hein-lebrun2016}. Therefore we obtain a birational map from $Z$ to $\mathbb C^2$, which is defined away from orbifold points. Due to normality of $Z$, we obtain a morphism from $Z$ to $\mathbb C^2$. However, since $Z$ contains no compact cycle, using the normality of $Z$ again, we obtain that it must be biholomorphic to $\mathbb C^2$. In particular, $\omega_Z$ is smooth everywhere. The argument in \cite[Theorem 6.2]{hein-lebrun2016} then implies that $Z$ must be flat.
\end{proof}

Now we prove our main theorems in this subsection.

\

\noindent\textit{Proof of Theorem \ref{thm-characterization of orbifold points}}:
    Suppose the limit metric $\omega_\infty$ is smooth at $p_\infty$, but $p_{\infty}$ is a curvature singularity. Let $Z$ be a minimal bubble at $p_\infty$. Then by Proposition \ref{prop-no compact cycles} and Corollary \ref{cor-no intermediate scales}, it is an AE Stein orbifold with finitely many orbifold points. Thus by Lemma \ref{lem-AE stein orbifold must be flat}, $Z$ must be the standard flat metric on $\mathbb C^2$. At this stage we do not necessarily arrive at a contradiction, since as we mentioned in the proof of Theorem \ref{thm-bubble decomposition}, the minimal bubble $Z$ itself could be flat. However, since $Z$ is smooth everywhere, we may examine again curvature singularities of $Z$ and go down to next scales. 
    
    More precisely, all children of $Z$ are again AE, thus being isometric to the standard flat $\mathbb C^2$ by using Lemma \ref{lem-AE stein orbifold must be flat} again. Repeating this process, we see that all bubble limits in the tree would be flat. Choose a sequence $p_j\to p_\infty$, so the above implies, by a local version of Proposition \ref{prop--energy identity}, that
    \begin{equation}
        \lim_{j\to\infty}\int_{B(p_j,\delta)}\vert\Rm_{g_j}\vert^2d\mu_{g_j}=\int_{B(p_\infty,\delta)}\vert\Rm_{g_\infty}\vert^2d\mu_{g_\infty}
    \end{equation}
    for all $\delta$ small. However, this is impossible since $p_\infty$ is a curvature singularity. The proof for blow-up limits is similar. In particular, all asymptotic cones are non-trivial.
\qed

 \

\noindent\textit{Proof of Theorem \ref{neck structure theorem}}:
The desired gauge for Riemannian metrics is constructed in \cite{ozuch2022} via the study of CMC hypersurface foliations. The key ingredients of proof are the following:
\begin{itemize}
    \item Refined estimate for curvature along neck region. This is proved in Proposition \ref{prop-curvature estimate on neck region}.
    \item Orbifold groups along neck are non-trivial. This is proved in Theorem \ref{thm-characterization of orbifold points}.
    \item Closeness to model cone metric in any intermediate scales. This is proved in Corollary \ref{cor-no intermediate scales}.
\end{itemize}

\qed

\begin{remark}
    The construction in \cite{ozuch2022} relies heavily on the condition that $\Gamma_I\neq\mathrm{Id}$, which is justified by polarized assumption in our case. It would be interesting to know whether such a gauge on the neck exists even without the polarization assumption.
\end{remark}

\subsection{Consequences of bubbling analysis}

In this last subsection, we prove a quantitative statement that  
the geometric regular set is contained in the analytic regular set.  

The analytic regular set is defined via the regularity scale $r_x$, i.e. curvature bounds,  
while the geometric regular set is described using balls centered at the points $\{p_{j,I}\}$  
constructed in Theorem~\ref{thm-bubble decomposition}.  
For later applications to Bergman kernel estimates, it is important  
that curvature bound holds on this geometric regular region.
Without loss of generality, we may assume $X_{\infty}$ has a unique curvature singularity $p_{\infty}$.

\begin{proposition}\label{prop-curvature bound on geometric regular part}
    There exists a constant $C_1>0$ such that for any $r\in (0,1)$ and $j\in \mathbb N$, 
    \begin{equation}
     \left\{x\in X_j\mid r_x\leq r\right\}\subset \bigcup_{I\in \mathcal T}B(p_{j,I},C_1r).
    \end{equation}
\end{proposition}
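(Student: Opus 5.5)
We argue by contradiction, combined with a decomposition of each $X_j$ into the regions covered by the bubble tree of Theorem \ref{thm-bubble decomposition}. Since the constant $C_1$ must be uniform in $r$ and $j$, we suppose the statement fails. Then there are $C_1^{(m)}\to\infty$, indices $j_m$, radii $s_m\in(0,1)$ and points $x_m\in X_{j_m}$ with
\[
r_{x_m}\le s_m,\qquad d(x_m,p_{j_m,I})\ge C_1^{(m)} s_m \quad\text{for all } I\in\mathcal T .
\]
For any fixed $j$ the set $\{x\in X_j\mid r_x\le r\}$ is empty once $r<\min_{X_j} r_x$, since $X_j$ is smooth and compact. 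It is also trivially covered for $r$ bounded below, after enlarging $C_1$ so that $C_1 r\ge\mathrm{diam}$; here we use the uniform diameter bound coming from the volume estimates in Theorem \ref{thm-volume bound and epsilon regularity}. Hence only finitely many $j$ can be bad for a given threshold, and we may assume $j_m\to\infty$ and $s_m\to0$, with $s_m$ comparable to $r_{x_m}$ after replacing $s_m$ by $r_{x_m}$. By \eqref{eq-two regularity comparable} it suffices to work with $\widetilde r_x$, i.e. with the statement that the $L^2$ energy of $\Rm$ on $B(x_m,C s_m)$ is at least $\epsilon_0$.

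Next we locate $x_m$ relative to the bubble tree. Write $\lambda=\lambda_{j,I}$ and let $\rho_I(x)=d(x,p_{j,I})$. Every point of $X_j$ falls into one of three kinds of region:
\begin{enumerate}
\item the body region, which converges to a compact subset of $X_\infty^{\mathrm{reg}}$;
\item a bubble body region $\{\lambda_{j,I}\rho_I\le K\}$ minus the balls of radius $K^{-1}\lambda_{j,I}^{-1}$ around the children, which converges smoothly to a compact part of $Z_I\setminus S(Z_I)$;
\item a neck region $A_{p_{j,I}}(K\lambda_{j,I}^{-1},K^{-1}\lambda_{j,P(I)}^{-1})$.
\end{enumerate}
Here $K$ is fixed large. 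On a region of the first or second kind the curvature is bounded by $C(K)$ in the corresponding rescaled metric, so $r_x\ge c(K)\lambda_{j,I}^{-1}$, where the body region has scale $\lambda=1$. On the other hand $\rho_I(x)\le K\lambda_{j,I}^{-1}$ there, so $d(x,p_{j,I})\le (K/c(K))\,r_x$. On a neck region we apply Proposition \ref{prop-curvature estimate on neck region} with $r_1=\lambda_{j,I}^{-1}$, $r_0=\lambda_{j,P(I)}^{-1}$ and $r=\rho_I(x)$. It gives $\rho^2|\Rm|\le C$ on $\partial B_\rho$ for all $\rho$ comparable to $\rho_I(x)$, so $|\Rm|\le C\rho_I(x)^{-2}$ on $B(x,\rho_I(x)/2)$, and hence $r_x\ge c\,\rho_I(x)=c\,d(x,p_{j,I})$. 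In every case we get $d(x,p_{j,I})\le C r_x$ for some vertex $I$, which contradicts the assumption with $C_1^{(m)}\to\infty$.

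The main obstacle is uniformity: the decomposition has to cover all of $X_j$ for every large $j$ with fixed $K$. The difficulty is at the transitions between the bubble-scale region around a child $q_J\in S(Z_I)$ and the neck of $J$. The children are separated from $p_{j,I}$ at scale $\eta\lambda_{j,I}^{-1}$ when $p_{j,J}\neq p_{j,I}$, so we have to measure distances from the nearest $p_{j,J}$ rather than from $p_{j,I}$. The neck proposition covers only $[K_0 r_1,K_0^{-1}r_0]$; the leftover collars of ratio $K_0$ lie inside the bubble body regions once $K\ge K_0$, where the bound holds by smooth convergence. We would verify this covering by induction on the depth of $\mathcal T$, starting from the root and the body region. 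Finitely many small $j$ are absorbed into the constant, as explained at the beginning.
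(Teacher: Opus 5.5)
Your proposal is correct and follows essentially the paper's route: argue by contradiction, locate the offending point in the bubble tree, and use smooth convergence on bubble bodies together with small curvature on necks to get $d(x,p_{j,I})\lesssim r_x$ for a suitable vertex $I$. The differences are organizational. The paper follows $x_j$ down a path in $\mathcal T$ to its deepest vertex $I_0$ instead of fixing a $K$-decomposition, and in the neck case it uses only the annular energy bound \eqref{curvature estimate on annulus} with $\widetilde r_x$, not the pointwise decay of Proposition \ref{prop-curvature estimate on neck region}. (One small correction: the uniform diameter bound comes from the Sobolev-induced non-collapsing or from the Gromov--Hausdorff convergence, not from the volume upper bound.)
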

\begin{proof}






   By the $\epsilon$-regularity \eqref{eq-two regularity comparable}, it suffices to prove the statement holds if we replace $r_x$ by $\widetilde{r}_x$. For this, we argue by contradiction. Suppose that there is a sequence $r_j\rightarrow 0$ and point $x_j\in X_j$ such that  
   \begin{equation}\label{upper bound of regular scale}
       x_j\notin \bigcup_{I\in \mathcal T}B(p_{j,I},jr_j)
\text{ and }
       \widetilde{r}_{x_j}\leq r_j.
   \end{equation} 
   Since $\mathrm{diam}(X_j,\omega_j)$ is uniformly bounded, the sequence $jr_j$ is bounded. By Theorem \ref{thm-characterization of orbifold points}, we know that under the Gromov-Hausdorff convergence of $X_j$ to $X_{\infty}$, $x_j$ converges to the curvature singularity $p_{\infty}$. Therefore it determines a path $\gamma$ in the tree $\mathcal{T}$. Indeed a vertex $I\in \gamma$ if and only if 
   \begin{equation}
\lambda_{j,P(I)}d_j(p_{j,I},x_j)\rightarrow 0 \text{ as } j\rightarrow \infty,
   \end{equation}Then in particular, for any $I\in \gamma$, we know that $\lambda_{j.P(I)}jr_j\rightarrow 0$ as $j\rightarrow \infty$. Let $I_0\in \gamma$ be the vertex which has the largest depth. 
   
   We first suppose that $I_0$ has children.
   Let $C(I_0)$ denote the set of children of the vertex $I_0$. Then by the choice of $I_0$, by passing to a subsequence, there are two cases: 
   \begin{itemize}
       \item Case 1.  $\lambda_{j,I_0}d_j(p_{j,I},x_j)\rightarrow \infty$ as $j\rightarrow \infty$ for some $I\in C(I_0)$. Since $\lambda_{j,I_0}d_j(p_{j,I},p_{j,I_0})$ stay bounded as $j\rightarrow \infty$, we know that $\lambda_{j,I_0}d_j(p_{j,I_0},x_j)\rightarrow \infty$ as $j\rightarrow \infty$, so $d_j(p_{j,I_0},x_j)\geq\frac{1}{2}\lambda_{j,I_0}^{-1}$ for $j$ large. By the curvature estimate \eqref{curvature estimate on annulus}, we know that there exists a positive constant $\delta_0$ such that 
       \begin{equation}
           \widetilde{r}_{x_j}\geq \delta_0(d_j(p_{j,I_0},x_j)-\lambda_{j,I_0}^{-1}).
       \end{equation}
       Therefore
       \begin{equation}
           r_j\geq\widetilde{r}_{x_j}\geq\delta_0(d_j(p_{j,I_0},x_j)-\lambda_{j,I_0}^{-1})\geq\frac{1}{2}\delta_0d_j(p_{j,I_0},x_j)
       \end{equation}
       for $j$ large, contradicting with the assumption \eqref{upper bound of regular scale}.
       \item Case 2. $\lambda_{j,I_0}d_j(p_{j,I},x_j)$ has a finite positive limit as $j\rightarrow \infty$ for all $I\in C(I_0)$. Then we know that under the pointed $\hat C^{\infty}$-Cheeger–Gromov convergence from $(X_j, \lambda_{j,I_0}^2\omega_j,p_{j,I_0})$ to $Z_{I_0}$, the convergence is smooth around the sequence $x_j$. Therefore we know that there exists a constant $\delta_0'$ such that for $j$ sufficiently large
       \begin{equation}\label{eq-curvature scale lower bound inteh second case}
\lambda_{j,I_0}\widetilde{r}_{x_j}\geq \delta_0'.
       \end{equation}Then combining \eqref{upper bound of regular scale} and \eqref{eq-curvature scale lower bound inteh second case}, we would have 
       \begin{equation}
           \delta_0' j\leq \lambda_{j,I_0}\widetilde{r}_{x_j}j\leq \lambda_{j,I_0} r_jj\leq \lambda_{j,I_0}d_j(p_{j,I},x_j)
       \end{equation}which contradicts with the assumption that $\lambda_{j,I_0}d_j(p_{j,I},x_j)$ has a finite positive limit.
   \end{itemize}

    If $I_0$ has no children, then we can argue similar as before by considering two cases:
    \begin{itemize}
     \item Case $1^\prime$.  $\lambda_{j,I_0}d_j(p_{j,I_0},x_j)\rightarrow \infty$ as $j\rightarrow \infty$. Then the argument is the same as that in the Case 1 of the previous discussion;
          \item Case $2^\prime$. $\lambda_{j,I_0}d_j(p_{j,I_0},x_j)$ has a finite limit, which could be zero. Then since $I_0$ has no children,  $(X_j, \lambda_{j,I_0}^2\omega_j,p_{j,I_0})$ to $Z_{I_0}$ converges in pointed  $C^{\infty}$-Cheeger–Gromov sense to a smooth ALE scalar-flat K\"ahler surface $Z_{I_0}$. Then the argument is the same as that in the Case 2 of the previous discussion.
    \end{itemize}
\end{proof}

\section{Convergence in a flat family}\label{convergence in a flat family}
Recall that we have a sequence  
$(X_j, L_j) \in \mathcal{K}(V, C_S),$
which share the same Hilbert polynomial and whose underlying manifolds 
\(X_j\) are diffeomorphic to each other, converging to a polarized 
Kähler orbifold \((X_\infty, L_\infty)\). 
Moreover, for each orbifold point $\{p_k\}_{k=1}^N\subset X_{\infty}$, Theorem~\ref{thm-bubble decomposition} provides a bubble tree \(\mathcal{T}_k\), together with associated bubbles, i.e., Stein ALE scalar-flat Kähler orbifolds \(\{Z_I\}_{I \in \mathcal{T}_k}\). As a consequence of the bubble analysis in the previous section, we obtain some 
topological results, which is likely well-known to experts. 

\begin{lemma}\label{lem--ALE has vanishing b1 and b3}
    Let $Z$ be an ALE K\"ahler orbifold surface. Then $b_1(Z)=b_3(Z)=0$.
\end{lemma}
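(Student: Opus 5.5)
The plan is to prove $b_1(Z)=0$ and $b_3(Z)=0$ separately, using two different kinds of information. For $b_1$ we use the topology of the ALE end. For $b_3$ we use Poincaré–Lefschetz duality on a compact truncation, combined with the fact that the boundary is a spherical space form.

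\emph{Step 1: truncation.} Since $Z$ is ALE, there is a compact set $K\subset Z$ such that $Z\setminus K$ is diffeomorphic to $(R,\infty)\times S^3/\Gamma$, where $\Gamma\subset U(2)$ is finite and acts freely on $S^3$. Hence $Z$ deformation retracts onto the compact orbifold with boundary $\bar Z_R=Z\setminus\{|x|>R\}$, whose boundary is $Y=S^3/\Gamma$. The orbifold points are isolated, and their links are also of the form $S^3/\Gamma'$. Since $b_1(S^3/\Gamma)=b_2(S^3/\Gamma)=0$ with real coefficients, a Mayer–Vietoris argument shows that removing small cone neighbourhoods of the orbifold points does not change $b_1$ or $b_3$. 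So we may work on the compact manifold $M$ obtained by deleting these neighbourhoods, with $\partial M=Y\sqcup\bigsqcup_i Y_i$, where each component is a rational homology sphere.

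\emph{Step 2: $b_3=0$.} By Lefschetz duality, $H_3(M;\mathbb R)\cong H^1(M,\partial M;\mathbb R)$. From the long exact sequence
\[
H^0(M)\to H^0(\partial M)\to H^1(M,\partial M)\to H^1(M)\to H^1(\partial M)=0,
\]
we get $\dim H^1(M,\partial M)=b_1(M)+(\#\partial M-1)$. If the orbifold points are kept, so that we work directly on $\bar Z_R$ with its single boundary component $Y$, the correction term $\#\partial M-1$ vanishes. Cohomology of the orbifold agrees with the de Rham and singular theories, as recalled in Section~\ref{sec-orbifold-intersection-numbers}, and duality holds rationally for rational homology manifolds. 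Since orbifolds are rational homology manifolds, this gives $b_3(Z)=b_1(Z)$, and it suffices to prove $b_1(Z)=0$.

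\emph{Step 3: $b_1=0$ via harmonic forms.} This is the main step. I would use the Kähler structure: a class in $H^1(Z;\mathbb R)$ that is compactly supported in the ALE sense can be represented by an $L^2$ harmonic $1$-form. More concretely, $H^1(Z)\to H^1(Y)=0$, so every class comes from $H^1_c(Z)$. By Hodge theory on ALE spaces (weighted analysis, following Lockhart–McOwen, extended to orbifolds), $H^1_c(Z)\to H^1(Z)$ has image isomorphic to the space of $L^2$ harmonic $1$-forms, and these decay like $r^{-3}$ at infinity.

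For such a form $\alpha$, the Bochner formula gives $\Delta\alpha=\nabla^*\nabla\alpha+\mathrm{Ric}(\alpha)$, and the Ricci term has no sign here. So instead I would use the Kähler identity that $\alpha^{0,1}$ is $\bar\partial$-closed and $\bar\partial^*$-closed, which makes $\overline{\alpha^{0,1}}$ a holomorphic $1$-form decaying at infinity. Pulling back to the cover $\mathbb C^2\setminus B$ of the end, its coefficient functions are holomorphic and tend to $0$. By Hartogs extension, they extend to holomorphic functions on $\mathbb C^2$, and by Liouville they vanish. Hence the form vanishes on an open set and therefore everywhere by unique continuation. So $\alpha=0$ and $b_1=0$.

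The expected obstacle is justifying the $L^2$ Hodge theory and the Kähler identities on an orbifold with an ALE end. The fallback is a purely topological argument: $\pi_1$ of the end $S^3/\Gamma$ is finite and surjects onto $\pi_1^{orb}$ of a neighborhood of infinity. One would combine this with the statement that an ALE Kähler surface has one end and that $H^1(Z,Y)\cong H_3(Z)$, avoiding analysis if possible.
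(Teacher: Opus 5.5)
Your Step 2 is correct and is a clean simplification. Lefschetz duality with real coefficients on the rational homology manifold $\bar Z_R$, whose boundary $S^3/\Gamma$ is connected with $H^1=0$, gives $b_3(Z)=b_1(Z)$. The paper instead passes to a resolution and quotes Weber's theorem \cite{Weber2019}, that an ALE K\"ahler surface with one non-parabolic end has $b_1=b_3=0$. Your Step 1 claim that deleting cone neighbourhoods of the orbifold points leaves $b_3$ unchanged is false: for $Z=\mathbb C^2/\Gamma$ the truncation $S^3/\Gamma\times[\epsilon,R]$ has $b_3=1$. You rightly bypass this by working on $\bar Z_R$ itself.

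The genuine gap is in Step 3, at the sentence ``pulling back to the cover $\mathbb C^2\setminus B$ of the end, its coefficient functions are holomorphic''. The ALE coordinates are Riemannian and only give $J=J_0+O(r^{-\tau})$ for a constant complex structure $J_0$. So the coefficients of a $J$-holomorphic form in these coordinates are not $J_0$-holomorphic, and Hartogs plus Liouville on $\mathbb C^2$ cannot be applied as written. You need a genuinely complex-analytic input about the end, and there are two options:
\begin{itemize}
\item Use the result of Hein--LeBrun \cite{hein-lebrun2016} that the end is biholomorphic to $(\mathbb C^2\setminus\bar B)/\Gamma$ with its standard structure, via holomorphic coordinates asymptotic to the ALE ones (obtained by solving $\bar\partial$ with decay).
\item Use a Kohn--Rossi/Hartogs extension theorem on $\bar Z_R$, which is strongly pseudoconvex because $\sqrt{-1}\partial\bar\partial\rho^2\geq\omega$ near infinity. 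Apply it to a holomorphic primitive $h$ of $\overline{\alpha^{0,1}}$ on the end, which exists since the end has $H^1=0$; then conclude by the maximum principle.
\end{itemize}
This, not the $L^2$ Hodge theory, is the real obstacle.

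There are also two smaller points. First, the pointwise K\"ahler identities do give $\bar\partial\alpha^{0,1}=0$ and $\bar\partial^*\alpha^{0,1}=0$. But holomorphicity of $\overline{\alpha^{0,1}}$ requires $\partial\alpha^{0,1}=0$, which is not a local consequence: $df$ for $f$ harmonic but not pluriharmonic on a ball in $\mathbb C^2$ is a counterexample. It holds here for a global reason. The form $\partial\alpha^{0,1}=d\alpha^{0,1}$ is exact, purely imaginary (from the $(1,1)$-part of $d\alpha=0$) and primitive, so it equals $\sqrt{-1}\gamma$ with $\gamma$ an exact real anti-self-dual form. Then $\|\gamma\|_{L^2}^2=-\int\gamma\wedge\gamma$ is a boundary term, which your $r^{-3}$ decay kills.

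Second, your topological fallback is not valid as stated. For a general $4$-manifold with end $(R,\infty)\times S^3/\Gamma$, $\pi_1(S^3/\Gamma)$ need not surject onto $\pi_1(Z)$; complex geometry must enter.

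Combined with your Step 2, there is a short complete route that avoids harmonic forms entirely. $Z$ is $1$-convex, so its Remmert reduction is a $2$-dimensional Stein space, which has $H_3=0$ by Narasimhan's theorem on the homology of Stein spaces. A Mayer--Vietoris argument around the contracted compact curves then gives $b_3(Z)=0$, and Step 2 yields $b_1(Z)=0$.
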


\begin{proof}
    First, it suffices to prove this when $Z$ is smooth. This can be seen by choosing a resolution. Since $Z$ is of complex dimension two and has at worst rational singularities, the exceptional locus is given by a bunch of rational curves, and we can then use a Mayer--Vietoris exact sequence argument\footnote{We remark that there are some more general results on fundamental groups in \cite[Section 7]{kollar93}, although in our setting we can argue simply as above.}. Then we use the result of \cite[Corollary 1.4]{Weber2019} to see that $b_1(Z)=b_3(Z)=0$, as $Z$ has only one end which is non-parabolic.
\end{proof}

\begin{proposition}\label{prop--first betti number preserved}
    $b_1(X_j)=b_1(X_{\infty})$ and $b_2(X_j)= b_2(X_{\infty})+\sum_{I} b_2(Z_I)$.
\end{proposition}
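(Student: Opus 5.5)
We propose to prove Proposition~\ref{prop--first betti number preserved} by a topological decomposition of $X_j$ (for $j$ large) along the neck regions of the bubble trees, followed by Mayer--Vietoris. For each orbifold point $p_k$ and each vertex $I\in\mathcal T_k$, Theorem~\ref{neck structure theorem} identifies the annulus $A_{p_{j,I}}(K\lambda_{j,I}^{-1},K^{-1}\lambda_{j,P(I)}^{-1})$ with a region diffeomorphic to $(a,b)\times S^3/\Gamma_I$. Cutting $X_j$ along the middle cross-sections $Y_I\cong S^3/\Gamma_I$ decomposes $X_j$ into pieces of two kinds. The first is a compact piece $X_\infty^{\circ}$, diffeomorphic (by the $\hat C^\infty$ convergence on compact subsets of $X_\infty^{\mathrm{reg}}$) to $X_\infty$ with small conical neighborhoods of the orbifold points removed. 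The second consists of pieces $Z_I^{\circ}$, diffeomorphic to $Z_I$ truncated at a large radius at infinity, with small conical neighborhoods of its orbifold points $S(Z_I)$ removed. Here Theorem~\ref{thm-characterization of orbifold points} guarantees that the curvature singularities coincide with the orbifold points, and the correspondence $C(I)\leftrightarrow S(Z_I)$ together with the matching of asymptotic cone and tangent cone in Theorem~\ref{thm-bubble decomposition} ensures that the boundary components glue consistently.

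We then run Mayer--Vietoris with real coefficients, gluing along the disjoint union of the $Y_I$. We use two inputs. First, $b_1(Y_I)=b_2(Y_I)=0$, since $\pi_1(Y_I)=\Gamma_I$ is finite and $Y_I$ is a closed orientable rational homology sphere. Second, each removed piece is a cone over $Y_I$, so $X_\infty^\circ\simeq X_\infty\setminus\{\text{cone points}\}$ and $Z_I^\circ\simeq Z_I\setminus S(Z_I)$. Excision along these cones, again using that $Y_I$ is a rational homology sphere, gives
\[
H_i(X_\infty^\circ;\mathbb R)\cong H_i(X_\infty;\mathbb R),\qquad H_i(Z_I^\circ;\mathbb R)\cong H_i(Z_I;\mathbb R)\qquad (i=1,2).
\]
With real coefficients the Mayer--Vietoris sequence in degrees $1$ and $2$ reads
\[
0=H_2(\sqcup Y_I)\to H_2(X_\infty^\circ)\oplus\bigoplus_I H_2(Z_I^\circ)\to H_2(X_j)\to H_1(\sqcup Y_I)=0 ,
\]
and similarly in degree $1$, where the term $H_0(\sqcup Y_I)\to H_0(\text{pieces})$ is injective because the gluing graph is a tree. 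This yields
\[
b_2(X_j)=b_2(X_\infty)+\sum_I b_2(Z_I),\qquad b_1(X_j)=b_1(X_\infty)+\sum_I b_1(Z_I).
\]
Lemma~\ref{lem--ALE has vanishing b1 and b3} then gives $b_1(Z_I)=0$, which finishes the proof.

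We expect the main obstacle to be the rigorous topological identification of the pieces. Specifically, one must show that the region between consecutive necks, at scale $\lambda_{j,I}^{-1}$, is diffeomorphic to the truncated $Z_I$. This uses pointed $\hat C^\infty$ convergence on the compact region together with the conical gauges $\Phi_{j,I}$ and $\Psi_{j,P(I)}$ of Theorem~\ref{neck structure theorem}, which overlap on the neck annulus and fix the product structure there. The same care is needed near sibling bubbles: by the separation property $\lambda_{j,I}d_j(p_{j,J},p_{j,J'})\geq\eta$, their small balls are disjoint inside the $Z_I$ piece.
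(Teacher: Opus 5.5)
Your proposal is correct and follows essentially the same route as the paper. Both arguments decompose $X_j$ along the neck cross-sections $S^3/\Gamma$ into a piece diffeomorphic to $X_\infty$ minus cone neighborhoods of the orbifold points and pieces diffeomorphic to the truncated bubbles $Z_I$ minus small balls around their orbifold points, run Mayer--Vietoris using that the links are rational homology spheres, and finish with $b_1(Z_I)=0$ from Lemma~\ref{lem--ALE has vanishing b1 and b3}; your handling of the $H_0$ term via the tree structure and of the cone excisions is, if anything, more explicit than the paper's.
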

\begin{proof} Both are the consequences of the  Mayer-Vietoris exact sequence. 
    Since $X_{\infty}$ is an orbifold, we know that there exists a decomposition $$ X_\infty=U_{\infty}\bigcup (\sqcup_k V_{\infty,k}),$$ where $U_{\infty}$ is a connected smooth manifold with boundary and its  boundaries given by $S^3/\Gamma_k$ and $V_{\infty,k}$ is a small neighborhood of the orbifold point $p_k$, diffeomorphic to a cone over $S^3/\Gamma_k$ and $U_\infty\cap V_{\infty,k}$ is diffeomorphic to $\mathbb R\times S^3/\Gamma_k$. By Theorem \ref{thm-bubble decomposition}, for $j$ large we have decompositions $ X_j=U_j\bigcup (\sqcup_k V_{j,k})$, where $U_j$ is diffeomorphic to $U_\infty$ and each $V_{j,k}$ is a connected non-compact manifold with a single end and $U_j\cap V_{j,k}$ is diffeomorphic to $\mathbb R\times S^3/\Gamma_k$. Moreover, we can write
\begin{equation}
    V_{j,k}=\bigcup_{I\in \mathcal T_k}V_{j,k;I},
\end{equation} and we can achieve that
\begin{itemize}
    \item if $V_{j,k;I}\cap V_{j,k;J}$ is non-empty, then it is diffeomorphic to $\mathbb R\times S^3/\Gamma$ for some finite subgroup $\Gamma$ of $U(2)$ acing freely on $S^3$;
    \item $V_{j,k;I}$ is diffeomorphic to the ALE orbifold $Z_I$ minus a finite number of ball centered near its orbifold points. Note that if $Z_I$ is smooth, then $V_{j,k;I}$ is diffeomorphic to $Z_I$.
\end{itemize}

    Then we can use  Mayer-Vietoris to compare the homology groups of $X_j$ and $X_\infty$.
Note that  $H_2(U_j\cap (\sqcup_k V_{j,k}), \mathbb R)$, $H_2(V_{j,k;I}\cap V_{j,k;J},\mathbb R)$, $H_1(U_j\cap (\sqcup_k V_{j,k}), \mathbb R)$ and $H_1(V_{j,k;I}\cap V_{j,k;J},\mathbb R)$ are all zero, so we have 
\begin{equation}
b_2(X_j)=b_2(U_j)+\sum_{I}b_2(V_{j,k;I}), 
\end{equation}and 
\begin{equation}
b_1(X_j)=b_1(U_j)+\sum_{I}b_1(V_{j,k;I}), 
\end{equation}
A similar argument can be applied to $X_\infty$ and noting that $V_{\infty,k}$ is contractible, we have  $b_2(X_\infty)=b_2(U_{\infty})$ and $b_1(X_\infty)=b_1(U_{\infty})$.
Since $U_j$ is diffeomorphic to $U_{\infty}$, we obtain that for $j$ large, in order to prove the result, it is sufficient to show that 
\begin{equation}\label{topology same as bubble}
    b_1(V_{j,k;I})=0\quad \text{ and }\quad b_2(V_{j,k;I})=b_2(Z_I).
\end{equation}
According to Lemma \ref{lem--ALE has vanishing b1 and b3}, we know that $b_1(Z_I)=0$. Applying the Mayer--Vietoris exact sequence once again to 
$Z_I$, we then obtain that \eqref{topology same as bubble} holds for each $I$.
\end{proof}

 Recall that the classical Riemann--Roch theorem for smooth polarized surfaces states
\begin{equation}
    \chi(X_j,kL_j)
    =\frac{L_j^2}{2}k^2-\frac{K_{X_j}\cdot L_j}{2}\,k
    +\chi(X_j,\mathcal O_{X_j}),
\end{equation}
and we have assumed that the Hilbert polynomial is independent of $j$, denoted by $P$ below. 
Since $X_{\infty}$ is an orbifold, it is normal with at worst rational singularities \cite[Proposition~5.15]{KM}. 
Then an analogous formula holds for any \emph{Cartier} divisor $D$ on $X_{\infty}$. 
The argument for this is standard, but for the reader’s convenience we include it here.
Choose a resolution $\pi:Y\to X_\infty$, so that $Y$ is a smooth projective surface. 
By the rational singularities assumption, we have 
\[
\pi_*\mathcal O_Y=\mathcal O_{X_\infty}, \qquad 
R^i\pi_*\mathcal O_Y=0 \ \ (i>0).
\]
For any Cartier divisor $D$, the projection formula gives
\[
\pi_*(k\pi^*D)\cong kD, 
\qquad 
R^i\pi_*(k\pi^*D)\cong kD\otimes R^i\pi_*\mathcal O_Y=0 \ \ ,i>0, k\geq 0.
\]
Hence the Leray spectral sequence implies
\[
H^i(Y,k\pi^*D)=H^i(X_\infty,\pi_*(k\pi^*D))=H^i(X_\infty,kD), \quad i\geq 0,k\geq 0.
\]
It follows that
$\chi(X_\infty,kD)=\chi(Y,k\pi^*D)$ for all $k$. Applying Riemann--Roch on $Y$, we obtain
\begin{equation}\label{RR on orbifold}
    \begin{aligned}
        \chi(X_\infty,kD)
        &=\chi(Y,k\pi^*D) =\frac{(\pi^*D)^2}{2}k^2-\frac{\pi^*D\cdot K_Y}{2}\,k+\chi(Y,\mathcal O_Y) \\
        &=\frac{D^2}{2}k^2-\frac{D\cdot K_{X_\infty}}{2}\,k+\chi(X_\infty,\mathcal O_{X_\infty}).
    \end{aligned}
\end{equation}
In particular, $\chi(X_\infty,kD)$ is a polynomial in $k$.

Specializing to our setting, we may assume (after replacing $L_\infty$ by a positive multiple if necessary) that $L_\infty$ is a genuine line bundle rather than merely a $\mathbb Q$-line bundle. Thus the above formula applies, and we set
\begin{equation}
    P_{\infty}(k)\coloneqq \chi(X_{\infty},kL_\infty).
\end{equation}
By formula \eqref{RR on orbifold}, we immediately obtain:
\begin{lemma}\label{lem--quadratic and linear terms are preserved}
    The quadratic and linear coefficients in $P(k)$ and $P_\infty(k)$ are equal.
\end{lemma}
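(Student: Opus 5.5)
By \eqref{RR on orbifold} and the smooth Riemann--Roch formula, it suffices to prove
\[
L_j^2=L_\infty^2 \qquad\text{and}\qquad K_{X_j}\cdot L_j=K_{X_\infty}\cdot L_\infty
\]
for $j$ large. Here we use the same multiple of $L_j$ that makes $L_\infty$ Cartier. Both identities will come from expressing the intersection numbers as curvature integrals via Lemma \ref{lem-intersection}, and then using volume convergence together with the constancy of the scalar curvature.

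\textbf{Quadratic term.} By \eqref{eq--orbifold-L-square-by-volume}, on $X_j$ and on $X_\infty$ we have $L^2=(2\pi)^{-2}\int\omega^2$. Volume convergence under $\hat C^\infty$-Cheeger--Gromov convergence holds as a corollary of Theorem \ref{thm-volume bound and epsilon regularity}. The argument is as follows. Fix $\delta>0$. On the complement of the $\delta$-balls around the finitely many orbifold points, the volume forms converge smoothly. On the $\delta$-balls, the volume ratio upper bound gives $\vol(B(p_j,\delta))\le V_1\delta^4$, and the same bound holds on $X_\infty$. Letting $\delta\to0$ yields $\vol(X_j,\omega_j)\to\vol(X_\infty,\omega_\infty)$. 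Since the $L_j^2$ are integers and constant in $j$ (the Hilbert polynomial is fixed), we conclude $L_\infty^2=L_j^2$.

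\textbf{Linear term.} By \eqref{eq--orbifold-KL-by-scalar} and \eqref{eq-convention of scalar curvature},
\[
K\cdot L=-\frac{1}{(2\pi)^2}\int \Ric_\omega\wedge\omega=-\frac{1}{8\pi^2}\int S_\omega\,\omega^2.
\]
The scalar curvatures $S_{\omega_j}$ are constants with $|S_{\omega_j}|\le1$. Smooth convergence on the regular part forces $S_{\omega_j}\to S_{\omega_\infty}$, where the limit is the (constant) scalar curvature of the limit metric. Combined with the volume convergence above, this gives
\[
\int S_{\omega_j}\omega_j^2=S_{\omega_j}\,2\vol(X_j,\omega_j)\longrightarrow S_{\omega_\infty}\,2\vol(X_\infty,\omega_\infty)=\int S_{\omega_\infty}\omega_\infty^2 .
\]
Hence $K_{X_j}\cdot L_j\to K_{X_\infty}\cdot L_\infty$. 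Since the left-hand side is constant in $j$, the two linear coefficients agree.

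\textbf{Main obstacle.} The only delicate point is justifying the two orbifold intersection formulas \eqref{eq--orbifold-L-square-by-volume} and \eqref{eq--orbifold-KL-by-scalar} on $X_\infty$ with the orbifold cscK metric $\omega_\infty$. In particular, we need $\omega_\infty$ to be the curvature of an orbifold smooth metric on $L_\infty$, and we need the orbifold Ricci form to represent $-K_{X_\infty}$ as a $\mathbb Q$-Cartier class, which is where $\mathbb Q$-factoriality is used. Both facts are supplied by Lemma \ref{lem-intersection}: $\omega_\infty$ extends orbifold-smoothly by Theorem \ref{thm--TVcompactness}, and $\omega_\infty^2$ induces an orbifold Hermitian metric on $K^{-1}_{X_\infty}$. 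The remaining argument is routine limit-taking, requiring only the uniform control of volume near the singular points.
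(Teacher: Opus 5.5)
Your proposal is correct and follows essentially the same route as the paper. You use volume convergence (from the volume ratio upper bound in Theorem \ref{thm-volume bound and epsilon regularity} plus smooth convergence off the singular set) together with Lemma \ref{lem-intersection} to get $L_j^2=L_\infty^2$, and convergence of the constant scalar curvatures, via $\int\Ric_\omega\wedge\omega=\tfrac12\int S_\omega\,\omega^2$, to get equality of $K\cdot L$; you simply write out the steps the paper leaves implicit.
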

\begin{proof}
    By the uniform volume estimate in Theorem~\ref{thm-volume bound and epsilon regularity} and $C^{\infty}_{loc}$-convergence away from the singular set, we see that $\mathrm{Vol}(X_j,\omega_j)=\mathrm{Vol}(X_\infty,\omega_\infty)$ for $j$ large. By Lemma \ref{lem-intersection}, we conclude that $L_j^2=L_\infty^2$, i.e. the quadratic coefficients are preserved. Similarly, by the cscK condition and smooth convergence of scalar curvature on regular set, we know that $S_{\omega_j}=S_{\omega_\infty}$ for $j$ large, thus the linear coefficients are also preserved.
\end{proof}

\begin{theorem}\label{thm-dim constant}
    For any $k\geq 1$, we have $P(k)=P_{\infty}(k)$.
\end{theorem}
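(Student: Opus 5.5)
The plan is to reduce Theorem \ref{thm-dim constant} to a purely topological statement. By Lemma \ref{lem--quadratic and linear terms are preserved}, it suffices to prove $\chi(X_j,\mathcal O_{X_j})=\chi(X_\infty,\mathcal O_{X_\infty})$ for $j$ large. Together with Corollary \ref{cor--dimension non-decreasing} this even gives equality of the full polynomials for all $k$.

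\textbf{Step 1: both constant terms are topological.} For a smooth compact Kähler surface, Noether's formula together with $c_1^2=2e+3\tau$ and $c_2=e$ gives
\[
\chi(\mathcal O_{X_j})=\frac{e(X_j)+\tau(X_j)}{4}.
\]
For $X_\infty$, take a minimal good resolution $\pi:Y\to X_\infty$. Since the singularities are rational, $\chi(\mathcal O_{X_\infty})=\chi(\mathcal O_Y)=\tfrac14(e(Y)+\tau(Y))$, as in the derivation of \eqref{RR on orbifold}. Each orbifold point is replaced by a tree of $m_k$ rational curves whose intersection matrix is negative definite. This has two effects:
\begin{itemize}
\item $e(Y)=e(X_\infty)+\sum_k m_k$, by Mayer--Vietoris, since the link $S^3/\Gamma_k$ is a rational homology sphere;
\item $\tau(Y)=\tau(X_\infty)-\sum_k m_k$, by Novikov additivity, where $\tau(X_\infty)$ is the signature of the nondegenerate intersection form on $H^2(X_\infty;\mathbb R)$ of the rational homology manifold $X_\infty$.
\end{itemize}
Hence $\chi(\mathcal O_{X_\infty})=\tfrac14\bigl(e(X_\infty)+\tau(X_\infty)\bigr)$, and it remains to show
\[
e(X_j)+\tau(X_j)=e(X_\infty)+\tau(X_\infty).
\]

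\textbf{Step 2: Euler characteristics.} Use the decomposition $X_j=U_j\cup\bigl(\bigsqcup_k V_{j,k}\bigr)$ from the proof of Proposition \ref{prop--first betti number preserved}. By Poincaré duality over $\mathbb R$, valid since $X_\infty$ is a rational homology manifold, $e=2-2b_1+b_2$ for both $X_j$ and $X_\infty$. Proposition \ref{prop--first betti number preserved} then gives
\[
e(X_j)=e(X_\infty)+\sum_I b_2(Z_I).
\]

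\textbf{Step 3: signatures, the main point.} Novikov additivity applies along the gluing hypersurfaces $S^3/\Gamma$, which are rational homology spheres. It gives $\tau(X_j)=\tau(U_\infty)+\sum_{k,I}\tau(V_{j,k;I})$ and $\tau(X_\infty)=\tau(U_\infty)$, because the cone pieces $V_{\infty,k}$ contribute nothing. Moreover $\tau(V_{j,k;I})=\tau(Z_I)$, again by Novikov additivity, after capping off orbifold points with cones. So the whole theorem reduces to the claim that the intersection form of each bubble $Z_I$ is negative definite:
\[
\tau(Z_I)=-b_2(Z_I).
\]
Nondegeneracy of this form follows because the boundary at infinity and the orbifold links are rational homology spheres. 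So the real content is $b^+(Z_I)=0$, and this is the step I expect to be the main obstacle.

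My plan for it is the $L^2$ Hodge theory on the ALE orbifold $Z_I$. Compactly supported classes are represented by $L^2$ harmonic forms; this is standard for ALE ends, and orbifold points cause no trouble in uniformizing charts. For an $L^2$ harmonic self-dual form $\alpha$, the Weitzenböck formula reads $\Delta\alpha=\nabla^*\nabla\alpha-2W^+(\alpha)+\tfrac S3\alpha$. On a Kähler surface $W^+$ is determined by the scalar curvature, so scalar-flatness makes the curvature terms vanish. Integrating by parts, justified by the ALE decay $|\alpha|=O(r^{-4})$ given by \cite{BKN1989}-type asymptotics, shows $\alpha$ is parallel. An $L^2$ parallel form on an infinite-volume space must vanish, hence $b^+=0$. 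As a consistency check, this matches Steenbrink's $\mu_0=2p_g=0$ for smoothings of the quotient singularity $\mathbb C^2/\Gamma_I$, of which $Z_I$ is a Stein model.

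\textbf{Conclusion.} Combining Steps 2 and 3,
\[
e(X_j)+\tau(X_j)=e(X_\infty)+\tau(X_\infty)+\sum_I\bigl(b_2(Z_I)-b_2(Z_I)\bigr)=e(X_\infty)+\tau(X_\infty),
\]
which, by Step 1, is the desired equality of constant terms.

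Alternatively, one could prove the same identity analytically. The orbifold Chern--Gauss--Bonnet and signature formulas express $e$ and $\tau$ through curvature integrals plus local terms ($1-1/|\Gamma|$ and $\eta$-invariants). The energy identity Proposition \ref{prop--energy identity} transfers the curvature integrals to the bubble tree, and the orbifold correction terms telescope along $\mathcal T$, since each child's end group equals the parent's orbifold group. I would use this only as a cross-check of the topological route.
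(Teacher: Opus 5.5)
Your proof is correct, and it takes a genuinely different route from the paper's.

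\textbf{The paper's route.} The paper never proves equality of the constant terms directly. It writes $12h^{2,0}$ on $X_\infty$, on $X_j$ and on each bubble using the orbifold Chern--Gau\ss--Bonnet and signature formulas, including $\eta$-invariants. It then uses the Ricci energy identity of Proposition~\ref{prop--energy identity} and the Betti number identities, so that the curvature and orbifold correction terms telescope along the tree. This yields only the inequality $h^{2,0}(X_\infty)\le h^{2,0}(X_j)$, using nothing more than the trivial bound $b_+(\widetilde Z_I)\ge 0$. Equality then comes from the semicontinuity in Corollary~\ref{cor--dimension non-decreasing}. The vanishing $b_+(\widetilde Z_I)=0$ appears only afterwards, as a consequence (the remark following the proof).

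\textbf{Your route.} You stay at the level of topology throughout:
\begin{itemize}
\item $\chi(\mathcal O)=\tfrac14(e+\tau)$ on both sides, via the resolution and rationality of the singularities;
\item $e$ and $\tau$ are additive over the bubble decomposition, because every gluing hypersurface is a rational homology sphere;
\item a Bochner argument shows each bubble's intersection form is negative definite. Scalar-flat K\"ahler gives $W^+=0$ and $S=0$, so $L^2$ harmonic self-dual forms are parallel and hence vanish.
\end{itemize}
In effect you show $b^+(X_j)=b^+(X_\infty)$, and with $b_1$ preserved this forces $p_g$ to agree.

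\textbf{Comparison.} Your argument gives equality directly. It does not need the Ricci energy identity, the $\eta$-invariant formulas, or Corollary~\ref{cor--dimension non-decreasing}. Your opening appeal to that corollary is superfluous, since matching all three coefficients already gives $P=P_\infty$. It also promotes $b^+(Z_I)=0$ from an output to an independently proved input. The price is $L^2$ Hodge theory on ALE orbifolds: you need that $L^2$ harmonic $2$-forms represent $H^2_c(Z_I;\mathbb R)$ and compute its intersection form. The paper avoids this by needing only $b_+\ge 0$ and letting analytic semicontinuity supply the reverse inequality.

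One small correction: the decay of $L^2$ harmonic forms does not follow from the metric asymptotics you cite. You do not need it, though, because the standard cut-off argument on a complete space justifies the integration by parts for any $L^2$ harmonic form.
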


\begin{proof}
According to Lemma \ref{lem--quadratic and linear terms are preserved} and Corollary \ref{cor--dimension non-decreasing}, the key point is to show that 
the constant term of the Hilbert polynomial does not jump in the limit.
To this end, we make use of the Chern--Gau\ss--Bonnet formula and the signature formula on orbifolds, together with the energy identity (Proposition~\ref{prop--energy identity}) and the Betti number identities (Proposition~\ref{prop--first betti number preserved}).

Note that the Hodge decomposition and Dolbeault's theorem hold for orbifolds; see, for example, 
\cite[Section~2]{baily} and \cite[Section~2.5]{PS2008}. 
In particular, we have
\[
\dim H^1(X_{\infty},\mathcal{O}_{X_{\infty}})=\frac{1}{2}b_1(X_\infty)=\frac{1}{2}b_1(X_j)=\dim H^1(X_j,\mathcal{O}_{X_j}).
\]
Thus, in order to compare the constant terms of the Hilbert polynomials, it suffices to show that 
$h^{2,0}:=\dim H^2(X,\mathcal{O}_X)$ remains unchanged in the limit. Note that by \eqref{dim non-increasing}, it is sufficient to show that \begin{equation}\label{not jump}
    h^{2,0}(X_\infty)\leq h^{2,0}(X_j).
\end{equation}

On $X_{\infty}$, we have the Chern--Gau\ss--Bonnet formula and the signature formula as follows \cite{Hitchin,kawasaki1978}. Although it does not play a role in what follows, we note that all quantities in the formula below are taken in the \emph{Riemannian} setting. In particular, for a Kähler metric the (Riemannian) scalar curvature equals twice the complex scalar curvature defined in \eqref{eq-convention of scalar curvature}.
\begin{equation}\label{signature}
    \chi(X_{\infty})=\frac{1}{8 \pi^2}\left(\int_{X_{\infty}}\left|W\right|^2 d V-\frac{1}{2} \int_{X_{\infty}}\left|\Ric \right|^2 d V+\frac{1}{6} \int_{X_{\infty}} S^2 d V\right)+\sum_k(1- \frac{1}{|\Gamma_k|}),
\end{equation}and \begin{equation}\label{cherngaussbonnet}
    \tau(X_{\infty})=\frac{1}{12 \pi^2}\left(\int_{X_{\infty}}\left|W^{+}\right|^2 d V-\int_{X_{\infty}}\left|W^{-}\right|^2 d V\right)-\sum_k \eta\left(S^3 / \Gamma_k\right),
\end{equation}where
$\Gamma_k\subset U(2)$ denotes the structure group at the orbifold point $\{p_k\}$, and $\eta(S^3/\Gamma_k)$ denotes the corresponding eta-invariants. By Hodge index theorem, on $X_{\infty}$ we also have 
\begin{equation}
    \tau(X_{\infty}) = 2\bigl(2h^{2,0}(X_{\infty})+1\bigr) - b_2(X_{\infty}).
\end{equation}
Combining this with \eqref{cherngaussbonnet} and \eqref{signature}, we obtain
\begin{equation}
    12h^{2,0}(X_{\infty})
    = b_2(X_{\infty})
    - \frac{1}{8\pi^2}\int_{X_{\infty}} |\Ric|^2dV
    - 3\sum_k \eta\!\left(S^3/\Gamma_k\right)
    + 2\sum_k \Bigl(1 - \frac{1}{|\Gamma_k|}\Bigr)
    + C,
\end{equation}
where $C$ stands for the expression involving only $b_1$ and scalar curvature, thus is preserved under convergence, i.e. for each $X_j$ we have
\begin{equation}
    12h^{2,0}(X_j)
    = b_2(X_j)
    - \frac{1}{8\pi^2}\int_{X_j} |\Ric|^2dV
    + C.
\end{equation}
Therefore to show \eqref{not jump}, by Proposition \ref{prop--first betti number preserved} and Proposition \eqref{prop--energy identity}, it is sufficient to show that 
\begin{equation}\label{inequ for ric and b_2}
    \sum_{I}\frac{1}{8\pi^2}\int_{Z_I}|\Ric|^2dV\leq \sum_{I}b_2(Z_I) - 3\sum_k \eta\!\left(S^3/\Gamma_k\right)
    + 2\sum_k \Bigl(1 - \frac{1}{|\Gamma_k|}\Bigr).
\end{equation}

On $Z_I$, we have the we have the Chern--Gau\ss--Bonnet formula and the signature formula as follows:
\begin{equation}\label{signature for ALE}
    \chi(Z_I)=\frac{1}{8 \pi^2}\left(\int_{Z_I}\left|W^-\right|^2 d V-\frac{1}{2} \int_{Z_I}\left|\Ric \right|^2 d V\right)+\sum_l(1- \frac{1}{|\Gamma_{I,l}|})+\frac{1}{|\Gamma_{I,\infty}|},
\end{equation}
\begin{equation}\label{cherngaussbonnet for ALE}
    \tau(Z_I)=\frac{-1}{12 \pi^2}\int_{Z_I}\left|W^{-}\right|^2 d V-\sum_l \eta\left(S^3 / \Gamma_{I,l}\right)+\eta(S^3/\Gamma_{I,\infty}),
\end{equation}where the signature of $Z_I$ is defined to be the signature of $\widetilde{Z}_I$, 
the one-point compactification at infinity of $Z_I$ endowed with the same orientation. It is a real-analytic orbifold, and the conformal class $[g_{Z_I}]$ extends to $\widetilde{Z}_I$ as a real-analytic anti-self-dual conformal metric \cite[Proposition 12]{chen-lebrun-weber08}. By Mayer-Vietoris exact sequence, $b_2(\widetilde{Z}_I)=b_2(Z_I)$. Combining \eqref{cherngaussbonnet for ALE}, \eqref{signature for ALE} and Lemma \ref{lem--ALE has vanishing b1 and b3}, we obtain that 
\begin{equation}
\begin{aligned}
      \frac{1}{8\pi^2}\int_{Z_I}|\Ric|^2dV=&b_2(Z_I)-6b_+(\widetilde{Z}_I)+2\sum_l(1-\frac{1}{|\Gamma_{I,l}|})-2(1-\frac{1}{|{\Gamma_{I,\infty}}|})\\
&+3\eta(S^3/\Gamma_{I,\infty})-3\sum_l\eta(S^3/\Gamma_{I,l}).
\end{aligned}
\end{equation}
Each $Z_I$ is an ALE scalar-flat Kähler orbifold, 
with local uniformizing groups $\Gamma_{I,l}$ at its orbifold points 
and with asymptotic group $\Gamma_{I,\infty}$ at infinity. By Theorem \ref{thm-bubble decomposition}, we know that
\begin{itemize}
    \item for each orbifold point $p_k \in X_\infty$, let $R_k \in \mathcal{T}_k$ denote the corresponding root vertex of the bubble tree. Then we have
\begin{equation}
    \Gamma_{R_k,\infty} = \Gamma_k;
\end{equation}
\item For each vertex $I$, its children $C(I)$ are in one-to-one correspondence with the orbifold points $\{p_{I,l}\}$ on $Z_I$, 
and the structure group at infinity of each child equals the local structure group of the corresponding orbifold point.

\end{itemize}
Taking sum over all bubble limits $Z_I$ and use above result, we know that all the orbifold term will get cancelled except for the the minimal bubbles and we obtain
\begin{equation}
       \sum_{I}\frac{1}{8\pi^2}\int_{Z_I}|\Ric|^2dV= \sum_{I}b_2(Z_I) -6\sum_{I}b_+(\widetilde{Z}_I)- 3\sum_k \eta\!\left(S^3/\Gamma_k\right)
    + 2\sum_k \Bigl(1 - \frac{1}{|\Gamma_k|}\Bigr).
\end{equation}
Since $b_+(\widetilde{Z}_I)\geq 0$, we obtain that \eqref{inequ for ric and b_2} holds.
\end{proof}
\begin{remark}
  As a consequence of Theorem \ref{thm-dim constant}, we have $b_+(\widetilde{Z}_I)=0$ for each bubble $Z_I$. 
This can also be seen directly as follows: by \cite{hein2021}, $Z_I$ admits another compactification 
$\widehat{Z}_I$ obtained by adding a divisor $D_I$ at infinity, and $\widehat{Z}_I$ is a 
projective orbifold with $D_I^2>0$. Moreover, by \cite[Proposition 2.11]{hein2021}, 
we have $H^i(\widehat{Z}_I,\mathcal{O}_{\widehat{Z_I}})=0$ for all $i\geq 1$. Therefore $b_+(\widehat{Z}_I)=1$, which implies $b_+(\widetilde{Z}_I)=0$.
\end{remark}

\begin{remark}\label{gradient estimate imply lower bound}
Note that Theorem~\ref{thm-dim constant} is proved without using H\"ormander's $L^2$-estimate. 
If some (weak) gradient estimate for holomorphic sections of $kL$ is available, 
then a uniform lower bound for Bergman kernels follows directly. 
Such a bound is known, for instance, for non-collapsed K\"ahler-Einstein surfaces where Ricci curvature is uniformly bounded.
\end{remark}

\begin{remark}
    When $K_{X_\infty}\cdot L_\infty<0$, a more straightforward proof without using Chern-Gau\ss-Bonnet theorem and signature formula can be given.  
    Recall that by Betti number identities, it is sufficient to show that 
  \begin{equation}
     h^{2,0}(X_{\infty}) \leq  h^{2,0}(X_{j}).
  \end{equation}
 Taking a resolution $\pi:Y\to X_\infty$, we can directly show that  $h^{2,0}(X_{\infty})= h^{2,0}(Y)=h^0(Y,K_Y)=0$. To see this, take any section $s\in H^0(Y,K_Y)$, then it descends to a section $s\in H^0(X_\infty^{\mathrm{reg}},K_{X_\infty})$. Let $m_0$ be the Cartier index of $K_{X_\infty}$, so $K_{X_\infty}^{m_0}$ is a line bundle over $X_\infty$. Since $X_\infty$ is normal, $s^{m_0}$ extends globally as a holomorphic section of $K_{X_\infty}^{m_0}$. Now using the fact that $K_{X_\infty}\cdot L_\infty<0$, we must have $s^{m_0}=0$, thus $s=0$. Note that the assumption here is satisfied for cscK metrics inside the controlled cone(for which the use of Betti number identities is even unnecessary). In particular, this includes K\"ahler-Einstein metrics on del Pezzo surfaces.
\end{remark}
    
  \begin{remark}
   When $K_{X_{\infty}}$ is a Cartier divisor, equivalently when $X_{\infty}$ has only canonical singularities, a proof using H\"ormander's $L^2$-estimate can be given as follows. Note that this relies on knowledge of constant terms in Riemann-Roch formula for Cartier divisors(cf. \cite{reid87}), so cannot be applied in generality.
   
Since $K_{X_{\infty}}$ is a Cartier,  both $\chi(X_\infty,kL_\infty)$ and $\chi(X_\infty,kL_\infty+K_{X_\infty})$ 
are quadratic polynomials with the same constant term. 
On the other hand, consider the convergence of $L^2$-holomorphic sections of $kL_j+K_{X_j}$. 
As in Proposition~\ref{L-infinity estimate}, the $L^\infty$ norm can be controlled by the $L^2$ norm. 
Furthermore, applying Hörmander’s estimate to $kL_j+K_{X_j}$ does not require a Ricci curvature bound. 
Hence, by a standard cut-off argument (for example, see the proof of Proposition~\ref{main analytic input}), 
every section of $H^0(X_\infty,kL_\infty+K_{X_\infty})$ arises as the limit of sections from $X_j$. 
It follows that
\[
P_\infty(0)=\chi(X_\infty,K_{X_\infty})=\chi(X_j,K_{X_j})=P(0).
\]
  \end{remark}

Now we can prove one of our main theorems in the paper:

\begin{theorem}[=Theorem \ref{algebraic convergence}]
        The Gromov-Hausdorff convergence $(X_j,L_j)$ in $\mathcal K(V,C_S)$ can be realized as convergence inside a Hilbert scheme.
\end{theorem}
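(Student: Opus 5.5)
The plan is to embed everything by orthonormal bases of sections of a fixed power and to use Theorem \ref{thm-dim constant} to show that the limit embedding has the right Hilbert polynomial. By Matsusaka's theorem, Serre vanishing and the uniform boundedness of the family, we fix $k_0$ (divisible enough that $k_0L_\infty$ is a genuine very ample line bundle on $X_\infty$) such that for all $j$ and for $j=\infty$:
\begin{itemize}
\item $k_0L_j$ and $k_0L_\infty$ are very ample;
\item the higher cohomology $H^i(kk_0L)$ vanishes for $i>0$, $k\geq1$;
\item the embeddings are projectively normal, at least up to a bounded degree.
\end{itemize}
Then $h^0(X_j,k_0L_j)=P(k_0)=P_\infty(k_0)=h^0(X_\infty,k_0L_\infty)=:N_0+1$ by Theorem \ref{thm-dim constant}.

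Next we choose $L^{2,\#}$-orthonormal bases $\{s^{(i)}_j\}$ of $H^0(X_j,k_0L_j)$ and the corresponding embeddings $F_j:X_j\to\mathbb{P}^{N_0}$. By Corollary \ref{cor--dimension non-decreasing} and the discussion before it, after passing to a subsequence these converge in $C^\infty_{loc}$ on $X_\infty^{\mathrm{reg}}$ and in $L^2$ to an orthonormal set $\{s^{(i)}_\infty\}$. Since the dimensions agree, this set is a basis of $H^0(X_\infty,k_0L_\infty)$, which gives an embedding $F_\infty$. The uniform $L^\infty$ bound \eqref{eq--global c0 bound} and the volume bounds of Theorem \ref{thm-volume bound and epsilon regularity} give uniform degree bounds. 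Hence, after a subsequence, $F_j(X_j)$ converges as algebraic cycles (in the Chow variety) to a limit cycle $W$ of degree $L_\infty^2k_0^2$.

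We then identify $W$. On the regular part, $F_j\circ\chi_j\to F_\infty$ smoothly, so $W\supset F_\infty(X_\infty)$. The degree of $F_\infty(X_\infty)$ equals $(k_0L_\infty)^2=(k_0L_j)^2$, so $W=F_\infty(X_\infty)$ with multiplicity one, i.e.\ there are no extra components and no multiplicities; the small neighborhoods of the bubble regions contribute vanishing volume.

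Finally, we upgrade cycle convergence to Hilbert-scheme convergence. All $F_j(X_j)$ have the same Hilbert polynomial $\tilde P(k)=P(kk_0)$ and lie in a Hilbert scheme $\mathrm{Hilb}$, which is projective. Passing to a subsequence, the points $[F_j(X_j)]$ converge to some $[W']\in\mathrm{Hilb}$, whose support as a cycle equals $W=F_\infty(X_\infty)$. Thus $W'$ is a subscheme with Hilbert polynomial $\tilde P$ whose reduced pure-dimensional part is $F_\infty(X_\infty)$. Since $F_\infty(X_\infty)$ itself has Hilbert polynomial $P_\infty(kk_0)=\tilde P(k)$, and $\mathcal{O}_{W'}\to\mathcal{O}_{F_\infty(X_\infty)}$ is surjective with a kernel whose Hilbert polynomial is $0$, the kernel vanishes and $W'=F_\infty(X_\infty)$. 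Here one has to check that the normality and projective normality of $X_\infty$ give the equality of Hilbert functions for $k$ large, which suffices.

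Taking $\mathcal{B}$ to be the relevant component of $\mathrm{Hilb}$ with its universal family (flat and projective), with $b_j=[F_j(X_j)]$ and $b_\infty=[F_\infty(X_\infty)]$, yields the theorem. The main obstacle is the identification of the limit: ruling out embedded or lower-dimensional components of the flat limit. This rests entirely on the equality $P=P_\infty$ (Theorem \ref{thm-dim constant}) together with the multiplicity-one statement from volume convergence, and this is where I would be most careful.
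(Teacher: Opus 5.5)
Your argument is correct. Up to the construction of the limit embedding it matches the paper: you take orthonormal bases of $H^0(X_j,k_0L_j)$, use $P(k_0)=P_\infty(k_0)$ to see that the limit linear series is complete, and take a Hilbert-scheme limit $[W']$. The two routes diverge in how $W'$ is identified.

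The paper computes the volume of the image via $\omega_{FS,k}=\omega_\infty+(2\pi k)^{-1}\ii\partial\pp\log\rho_{k,X_\infty}$. It compares this with the degree, which is preserved in the flat limit, to conclude that the limit is irreducible and generically reduced. It then excludes embedded structure by Hironaka's lemma (Hartshorne III.9.12), using that the reduced limit $\cong X_\infty$ is normal.

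You instead observe that $Y=F_\infty(X_\infty)$ is a reduced closed subscheme of $W'$: its regular part lies in the limit fiber because the universal family is closed, and then take closure. You also note that
$\chi(\mathcal O_Y(k))=P_\infty(kk_0)=P(kk_0)=\chi(\mathcal O_{W'}(k))$ by Theorem \ref{thm-dim constant}. Hence the kernel of $\mathcal O_{W'}\to\mathcal O_Y$ has zero Hilbert polynomial and so vanishes, giving $W'=Y$.

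This is more elementary. It needs neither Hironaka's lemma, nor a one-parameter family over a smooth curve on which to apply it, nor normality of $X_\infty$ at this stage. It also makes your degree/multiplicity-one step and the detour through the Chow variety redundant, since containment plus equality of Hilbert polynomials already forces $W'=Y$. Your caveat about projective normality is unnecessary too: the Hilbert polynomial of $Y$ is by definition $k\mapsto\chi(Y,\mathcal O_Y(k))$, which equals $P_\infty(kk_0)$ once $F_\infty^*\mathcal O(1)=k_0L_\infty$ is Cartier, as you arranged.

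What the paper's route buys is that it uses the numerical input only at the embedding level $k_0$, to make the limit linear series complete; everything else comes from volume and normality. That is the form which ties into the irreducibility criterion discussed in Section \ref{began function lower bound}. Your route uses the full equality $P=P_\infty$ along all multiples of $k_0$, which Theorem \ref{thm-dim constant} supplies anyway.
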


\begin{proof}
    Given Theorem \ref{thm-dim constant}, the proof of this statement is fairly straightforward. We choose $k$ such that $L_j^k$ is very ample for all $j$. A $L^2$-orthonormal basis of $H^0(X_j,L_j^k)$ then defines an embedding $T_{k,j}:X_j\to\mathbb P^{N_k}$, which by Theorem \ref{thm-dim constant} $C^{\infty}_{loc}$-converges in Cheeger-Gromov's sense to an embedding $$T_{k,\infty }:X_\infty\to\mathbb P^{N_k}$$ defined by $H^0(X_\infty,L_\infty^k)$. Note that here we crucially use the fact that the holomorphic sections of $L_{\infty}^k$ on $X_{\infty}$ arising from the limit form the complete linear series. Consequently, $T_{k,\infty}$ is an embedding.

    On the other hand, consider algebraic varieties $T_{k,j}(X_j)$ in $\mathbb P^{N_k}$. Each of them corresponds to a point $b_j$ in the Hilbert scheme. By passing to some subsequences we can assume that $b_j$ converges to a limit point $b_\infty$ in analytic topology. Let $\widetilde W_k$ be the scheme corresponding to $b_\infty$, and $W_k$ its reduced scheme structure, which a priori could have more than 1 irreducible components. By abusing notation we also regard $W_k$ as its underlying algebraic set. Since the convergence of $T_{k,j}$ to $T_{k,\infty}$ is taken in $C^{\infty}_{loc}$-sense, at the moment we can only conclude that $T_{k,\infty}$ maps $X_\infty$ isomorphically to a unique irreducible component $B_k$ of $W_k$.

    To see that $W_k$ is actually irreducible, we use the estimate of volumes. Since $T_{k,\infty}$ is an isomorphism defined by $H^0(X_\infty,L_\infty^k)$, we have
    \begin{equation}
        \omega_{FS,k}=\omega_\infty+(2\pi k)^{-1}\partial\pp\log\rho_{k,X_\infty}.
    \end{equation}
    Thus
    \begin{equation}
        \mathrm{Vol}(B_k)=\frac12\int_{X_\infty}\omega_{FS,k}^2=\frac12\int_{X_\infty}\omega_\infty^2.
    \end{equation}
    Moreover, we have mentioned that the total volume is preserved under limits, i.e. for each fixed $j$, 
    \begin{equation}
        \int_{X_j}\omega_j^2=\lim_{j\to\infty}\int_{X_j}\omega_j^2=\int_{X_\infty}\omega_\infty^2.
    \end{equation}
   Therefore, we obtain
        \begin{equation}
            \mathrm{Vol}(B_k)=\vol(X,\omega_j).
        \end{equation}On the other hand, by the convergence in a Hilbert scheme, we know that 
\begin{equation}
\vol(X,\omega_j)=\mathrm{Vol}(\widetilde{W}_k)
\end{equation}
        Combining the above two equalities, we obtain that  $\widetilde{W}_k$ is irreducible and generically reduced. 

        Now $\widetilde{W}_k$ is irreducible and generically reduced. Moreover, its reduced structure $W_k$ is isomorphic to $X_\infty$, hence being normal. We can apply Hironaka's lemma \cite[Chapter III, Lemma
        9.12]{hartshorne} to conclude that $\widetilde{W}_k$ is indeed reduced, and hence  $X_\infty\cong\widetilde{W}_k\cong W_k$.
    \end{proof}

 \begin{remark}
           We emphasize that compared to the Fano case \cite[Lemma 2.4]{OSS16}, we do not expect the total space of smoothing to be $\mathbb Q$-Gorenstein; see discussions in Section \ref{in non-q-gorenstein family}. 
        \end{remark}

\section{Construction of PSH weights}\label{sec-construction of weightes}

As always, we fix a sequence $(X_j, L_j,\omega_j)\in \mathcal{K}(V,C_S)$, converging to a polarized K\"ahler orbifold $X_\infty$. Fix an orbifold point $p_\infty\in X_\infty$ and let $p_{j,I}\in X_j$, $I\in \mathcal{T}$ be the points constructed in the Theorem \ref{thm-bubble decomposition}  and let $$d_{j,I}(\cdot)=d(p_{j,I},\cdot).$$ Then the main result we are going to prove in this section is the following.
\begin{theorem}\label{thm--psh weight}
There exist constants $\delta>0$ and $L>0$ such that for sufficiently large $j$, 
we can find a psh function $u_j$ on $B(p_{j,1},\delta)$ satisfying:  
\begin{itemize}
       \item $\sqrt{-1}\partial\bar\partial u_j+\mathrm{Ric}(\omega_j)\geq \frac12 \omega_j 
    \quad \text{on } B(p_{j,1},\delta)$;
    \item $L\sum_{I}\log d_{j,I}\leq u_j\leq 0 
    \quad \text{on } B(p_{j,1},\delta)$;
    \item $|\nabla u_j|+|\ii\partial\pp u_j|\leq L 
    \quad \text{on } A_{p_{j,1}}(\tfrac{\delta}{2},\delta)$.
\end{itemize}

\end{theorem}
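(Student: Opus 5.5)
The plan is to build $u_j$ by induction over the bubble tree $\mathcal T$, from the leaves up to the root, gluing model weights defined on each scale. The building blocks are as follows.

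\emph{Model weights on bubbles.} On each bubble $Z_I$, a Stein ALE scalar-flat K\"ahler orbifold by Theorem \ref{thm-bubble decomposition}, I would construct a psh function $\phi_I$ satisfying $\ii\partial\pp\phi_I+\Ric(\omega_{Z_I})\geq 0$ (in fact strictly positive on compact sets after adding a small multiple of an exhaustion). It should grow like $L_I\log|x|$ at infinity and like $L_I\log d(q,\cdot)$ near each orbifold point $q\in S(Z_I)$. Since $\Ric(\omega_{Z_I})=O(r^{-2-\tau})$ with $\tau>0$ on the end, a function of the form $A\log(1+r^2)$ dominates $|\Ric|$ at infinity up to a compact set. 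On the compact part we use that $Z_I$ is Stein (Proposition \ref{prop-no compact cycles}), so it carries a strictly psh exhaustion $\psi_I$. Near orbifold points we lift to the uniformizing chart and use $A\log|z|^2$ together with a bounded smooth strictly psh function there. The three pieces are patched by regularized maxima.

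\emph{Neck weights.} On each neck region $A_{p_{j,I}}(K_0\lambda_{j,I}^{-1},K_0^{-1}\lambda_{j,P(I)}^{-1})$ we use the flat cone $\mathbb C^2/\Gamma_I$ coordinates furnished by Theorem \ref{neck structure theorem}. The function $\log|z|^2$ is psh for the flat metric. The polynomial decay $\max\{(\lambda_{j,I}^{-1}/r)^{\delta},(r\lambda_{j,P(I)})^{\delta}\}$ from Proposition \ref{prop-curvature estimate on neck region} gives $|\Ric|\leq C r^{-2}\max\{\cdots\}^{\delta}$. Adding a correction $\pm c\,(\lambda_{j,I}r)^{-\delta}$ and $c\,(r\lambda_{j,P(I)})^{\delta}$, whose complex Hessians are of order $r^{-2}$ times the same decay with a definite sign after choosing the sign of $c$, then absorbs $\Ric$. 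The essential point is that these decays are summable over dyadic annuli, so the correction is bounded independently of $j$. Here is exactly where the polynomial rate, rather than mere smallness, is used.

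\emph{Gluing.} Going from the leaves upward, on the region at scale $\lambda_{j,I}$ I would take $u$ to be the pull-back of $\phi_I$ shifted by the constant $L_I\log\lambda_{j,I}^{-1}$, so that its logarithmic growth at infinity matches the neck weight $L_I\log d_{j,I}$ on the overlapping annulus. I then glue with the neck weight by a regularized maximum on the transition annuli, where both functions agree to within bounded error by the $C^\infty$ conical approximations. The ``matching constants'' step consists in choosing the coefficients $L_I$ monotonically along the tree, so that a parent's weight dominates where required and the regularized maximum switches cleanly. Since the depth of the tree and the number of vertices are bounded, the accumulated constants stay bounded, and we obtain $u_j\geq L\sum_I\log d_{j,I}$. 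At the top scale (the root, whose neck runs to radius $\delta$) we glue to the function $L\log(d_{j,1}/\delta)$ made nonpositive. Near $\partial B(p_{j,1},\delta)$ we are in the smooth region of $X_\infty$, so gradient and Hessian bounds follow from $C^\infty$ convergence. Finally, $\frac12\omega_j$ is gained by adding $|z|^2$-type terms, or equivalently by using the slack $\Ric\geq-C\omega_j$ on regions where $|\Rm|$ is bounded together with the strict positivity margins above. Here the normalization of scalar curvature and the smallness of $\delta$ help.

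The main obstacle I expect is the gluing in the transition annuli. One must verify, uniformly in $j$, that the regularized maximum is taken between functions whose difference has a definite sign on the inner and outer edges of each transition region. Only then does the resulting function remain quasi-psh with the Ricci lower bound, without the constants blowing up with the number of nested scales. To handle this I would try first to make each gluing region a fixed dyadic annulus in the rescaled cone coordinates, where everything is $C^\infty$-close to the flat model, so that the comparison reduces to the explicit model $\log|z|^2$.
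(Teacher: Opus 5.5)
Your architecture matches the paper's: a leaves-to-root induction over $\mathcal T$, log-pole weights on the Stein ALE bubbles, neck weights grafted from $\mathbb C^2/\Gamma_I$ using the polynomial rate, and max-type gluing. However, the gluing step you flag as the main obstacle has a genuine gap, and your proposed remedy does not close it.

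\textbf{The gluing gap.} You shift the bubble weight so that its growth \emph{matches} the neck weight $L_I\log d_{j,I}$. You also give $\phi_I$ the same coefficient $L_I$ at infinity and at its orbifold points. Then on a bubble/neck transition annulus the two functions being maxed have equal logarithmic coefficients and differ by $O(1)$. For a (regularized) maximum to equal the inner function near the inner edge and the outer function near the outer edge, the difference (outer minus inner) must pass from definitely negative to definitely positive. With equal slopes its variation across the annulus is only $O(1)$, of no definite sign, so no constant shift works. Comparing both functions to the model $\log|z|^2$ on a dyadic annulus gives no sign at all, since both reduce to the same model.

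The missing idea is a strict jump of the log-coefficient at \emph{every} junction, with the outer function carrying the larger one. In Step~2 the paper maxes $\varphi_{j,I}\approx b_I\log(\lambda_{j,I}^2d_{j,I}^2)$ against $\phi_j^{\mathcal C_I}+L\psi_j^{\mathcal C_I}$ with $L\gg b_I$ on $A_{p_{j,I}}(\sqrt L\lambda_{j,I}^{-1},L\lambda_{j,I}^{-1})$. The difference is $(b_I-L)\log(\lambda_{j,I}^2d_{j,I}^2)+\mathrm{const}+O(1)$. It swings by about $(L-b_I)\log L$ across this annulus, which beats the $O(1)$ error once the constants $2L\log L$ and $(2b_I+1)\log L$ are placed. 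In Step~3 the parent's ALE weight is multiplied by an $A$ exceeding the children's outer coefficients.

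Consequences of this mechanism:
\begin{itemize}
\item A bubble weight must have different coefficients at its two kinds of ends: coefficient $1$ in $\log d^2$ near orbifold points and a large $a_Z$ at infinity (Proposition~\ref{psh weight for ALE}).
\item Equal coefficients need not even be compatible with plurisubharmonicity. For a psh $\phi$ with such asymptotics, Stokes' theorem for $(\sqrt{-1}\partial\bar\partial\phi)^2$ forces $a_\infty^2/|\Gamma_\infty|\ge\sum_q a_q^2/|\Gamma_q|$.
\item The coefficients increase with depth, which is why the final $L$ is large.
\item The constant shifts are $j$-dependent, of order $L\log(\lambda_{j,I}/\lambda_{j,P(I)})$. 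They are not bounded, contrary to your claim, but they are absorbed by the lower bound $L\sum_I\log d_{j,I}$.
\end{itemize}

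\textbf{Local claims needing repair.}

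First, $\sqrt{-1}\partial\bar\partial\log(1+r^2)$ has radial eigenvalue $(1+r^2)^{-2}$. The error from the non-flat complex structure on the ALE end is $A\cdot O(r^{-2-\tau})$. So $A\log(1+r^2)$ does not dominate $\Ric=O(r^{-2-\tau})$ for any $A$. You need a term strictly convex in $\log r$, such as $\log r^2-\frac{1}{100}(\log r^2)^{1/2}$ (positivity $r^{-2}(\log r)^{-3/2}$, as in the proof of Proposition~\ref{psh weight for neck}), or a polynomial correction of your own type.

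Second, write $\epsilon(r)=\max\{(\lambda_{j,I}r)^{-\delta},(r\lambda_{j,P(I)})^{\delta}\}$. The complex Hessian of $r^{-\delta}$ on $\mathbb C^2$ has radial eigenvalue $\frac{\delta^2}{4}r^{-2-\delta}$ and complex-tangential eigenvalue $-\frac{\delta}{2}r^{-2-\delta}$. So no sign of $c$ makes $c(\lambda_{j,I}r)^{-\delta}$ definite. The choice $+c$ works only on top of $A\log|z|^2$:
\begin{itemize}
\item take $c\gg A$, to absorb $\Ric$ and the $A\,O(r^{-2}\epsilon(r))$ error from the discrepancy between the actual and flat complex structures;
\item then take $K_0$ so large that $cK_0^{-\delta}\ll A$.
\end{itemize}
By contrast, $(r\lambda_{j,P(I)})^{\delta}$ is genuinely strictly psh.

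Third, Theorem~\ref{neck structure theorem} is purely Riemannian. You must first produce a flat complex structure within $O(\epsilon(r))$ of the actual one; the paper does this by parallel transport on the universal cover.

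With these repairs, your neck weight $A\log r^2+c(\lambda_{j,I}r)^{-\delta}+c(r\lambda_{j,P(I)})^{\delta}$ is a legitimate alternative to the paper's pair of log-convex weights max-glued in the middle of the neck. It is a single function whose positivity, of order $r^{-2}\epsilon(r)$, is matched exactly to the errors. But its coefficient $A$ must still exceed the bubble's coefficient at infinity for the bubble/neck junction to work.
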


By a direct cut-off argument, we obtain the following global weight, which will serve as the basis for applying H\"ormander’s $L^2$ estimates on $X_j$. To simplify notation, we will assume throughout that 
$X_{\infty}$ has a single orbifold point $p_{\infty}$. 
The general case involves no essential new difficulties, 
only more cumbersome notation.

\begin{theorem}\label{global weights}
    There exist constants $C,\delta>0,k_0$ such that for $j$ large,  there are quasi-psh functions $u_j$ on $X_j$ satisfying
  \begin{equation}
        \sqrt{-1}\partial\pp u_j+\mathrm{Ric}(\omega_j)+k_0\omega_j\geq\omega_j\ \text{on} \ X_j,
    \end{equation}
     \begin{equation}
        C\sum_{I}\log d_{j,I}\leq u_j\leq 0\ \text{on}\ B(p_{j,1},\delta), \text{ and } u_j= 0 \text{ on }X_j\setminus  B(p_{j,1},\delta),
        \end{equation}
\end{theorem}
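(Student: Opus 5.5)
We derive Theorem~\ref{global weights} from the local Theorem~\ref{thm--psh weight} with a cut-off, absorbing the errors into a large multiple $k_0\omega_j$. Let $\delta, L$ and the local psh functions $v_j$ on $B(p_{j,1},\delta)$ be as in Theorem~\ref{thm--psh weight}. Choose a smooth cut-off $\chi:[0,\infty)\to[0,1]$ with $\chi\equiv1$ on $[0,\tfrac58\delta]$, $\chi\equiv0$ on $[\tfrac78\delta,\infty)$, and set $\chi_j=\chi\circ d_{j,1}$. Since $d_{j,1}$ is only Lipschitz, we first replace it on the annulus $A_{p_{j,1}}(\tfrac{\delta}{2},\delta)$ by a smoothing with uniform $C^2$ bounds. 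This is possible because, by the choice of $r_\infty$ and Proposition~\ref{prop-curvature bound on geometric regular part}, the curvature $|\Rm_{g_j}|$ is uniformly bounded there for large $j$. Moreover, the Sobolev bound gives non-collapsing, hence harmonic coordinates of uniform size, and the metrics converge smoothly to $\omega_\infty$ there. We can therefore take $\chi_j$ with $|\nabla\chi_j|+|\ii\partial\pp\chi_j|\le C$ uniformly in $j$, supported in the annulus.

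Define $u_j=\chi_j v_j$ on $B(p_{j,1},\delta)$ and $u_j=0$ elsewhere. The bounds
\[
C\sum_I\log d_{j,I}\le u_j\le0
\]
follow at once from $0\le\chi_j\le1$ and $L\sum_I\log d_{j,I}\le v_j\le 0$, taking $C=L$. Note also that $u_j$ vanishes outside $B(p_{j,1},\tfrac78\delta)$. On $B(p_{j,1},\tfrac58\delta)$ we have $u_j=v_j$, so
\[
\ii\partial\pp u_j+\Ric(\omega_j)+k_0\omega_j\ge(\tfrac12+k_0)\omega_j\ge\omega_j
\]
whenever $k_0\ge1$.

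On the annulus we expand
\[
\ii\partial\pp u_j=\chi_j\ii\partial\pp v_j+2\mathrm{Re}(\ii\partial\chi_j\wedge\pp v_j)+v_j\ii\partial\pp\chi_j.
\]
All three terms are bounded by a uniform constant there. For this we use the third item of Theorem~\ref{thm--psh weight}. For the $C^0$ bound on $v_j$ we use the second item together with the fact that $d_{j,I}\ge \delta/4$ on the annulus, since all $p_{j,I}\to p_\infty$. Since $|\Ric(\omega_j)|$ is also uniformly bounded there, the left-hand side is at least $-C'\omega_j+k_0\omega_j$.

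Outside $B(p_{j,1},\delta)$ we have $u_j=0$. There we need $\Ric(\omega_j)\ge-(k_0-1)\omega_j$, which again follows from the uniform curvature bound away from $p_\infty$, valid for $j$ large. This uses the assumption that $p_\infty$ is the only curvature singularity, together with $\epsilon$-regularity (Theorem~\ref{thm-volume bound and epsilon regularity}) and the smooth convergence on compact subsets of $X_\infty^{\mathrm{reg}}$. Taking $k_0$ larger than all these constants gives the claim.

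The only delicate point is uniformity in $j$. We need uniform $C^2$ control of the cut-off and uniform curvature bounds on the annulus and on its exterior. Both come from the regular-part convergence, which is already encoded in Theorem~\ref{thm-volume bound and epsilon regularity} and Proposition~\ref{prop-curvature bound on geometric regular part}. I expect no real obstacle beyond Theorem~\ref{thm--psh weight} itself.
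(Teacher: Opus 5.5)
Your proposal is correct and is essentially the paper's own argument: cut off the local weight of Theorem~\ref{thm--psh weight} along the annulus $A_{p_{j,1}}(\tfrac{\delta}{2},\delta)$ using its uniform $C^2$ bounds there, and absorb the resulting error into $k_0\omega_j$. You also make explicit a point the paper leaves implicit, namely the uniform bound on $\mathrm{Ric}(\omega_j)$ on the annulus and on its exterior, which comes from smooth convergence away from the single curvature singularity $p_\infty$.
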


\begin{proof}
      According to Theorem \ref{thm--psh weight}, there are local psh functions $u_{j}$  are uniformly bounded in $C^2$ on annulus $ A_{p_{j,1}}(\frac{\delta}{2},\delta)$, where $p_{j,1}\to p_{\infty}$ correspond to the root of the bubble tree. Therefore, we can cut-off $u_{j}$ along these annuli to obtain a global quasi-psh function $u_j$ on $X_j$, such that 
    \begin{itemize}
        \item  $u_j=0$ on $X_j\setminus B(p_{j,1},\frac34\delta)$;
       
        \item $\sqrt{-1}\partial\pp u_j\geq -C\omega_j$ on $ A_{p_{j,1}}(\frac{\delta}{2},\delta)$ for some uniform $C>0$ independent of $j$.
    \end{itemize}
Then there exist constants \(k_0,C>0\), independent of \(j\), such that for all \(j\gg 1\) the desired estimates hold.
\end{proof}



We prove Theorem~\ref{thm--psh weight} by induction on scales. We begin with the smallest scales, i.e. neighborhoods of points $p_{j,I}$ where the vertex $I$ in the tree $\mathcal{T}$ has no children. After rescaling the metric by the factor $\lambda_{j,I}^2$, Theorem~\ref{thm-bubble decomposition} implies that the manifolds converge in the pointed $C^{\infty}$--Cheeger--Gromov sense to a smooth ALE scalar-flat Kähler surface. In Proposition \ref{psh weight for ALE}, using the ALE structure of the limit, we can construct psh functions on the limit with controlled growth. Since the convergence is smooth on deepest bubbles, we can pull them back to obtain psh functions on balls centered at $p_{j,I}$ of radius comparable to $\lambda_{j,I}^{-1}$. On the other hand, in Proposition \ref{psh weight for neck}, we construct psh functions on the neck region using Theorem \ref{neck structure theorem}. This allows us to connect two incomparable scales by patching them together, yielding psh functions on larger balls centered at $p_{j,I}$ with radius comparable to a small constant multiple of $\lambda_{j,P(I)}^{-1}$, where $P(I)$ denotes the parent of $I$ in the tree $\mathcal{T}$.

We then move to the next scale, namely balls centered at $p_{j,P(I)}$ with radius comparable to $\lambda_{j,P(I)}^{-1}$. 
After rescaling the metric by $\lambda_{j,P(I)}^2$, the manifolds converge in the pointed $C^\infty$ Cheeger--Gromov sense to a Stein ALE space with finitely many orbifold points. 
On this limit, one can construct psh functions with controlled growth and logarithmic poles at the orbifold points. 
Using the weights already constructed on the smaller balls centered at $p_{j,I}$, we can patch these together with the psh functions obtained at the previous scale. 
As a result, we obtain psh functions on balls centered at $p_{j,P(I)}$ of radius comparable to any large constant multiple of $\lambda_{j,P(I)}^{-1}$. 
By repeating this procedure inductively along the tree $\mathcal{T}$, we eventually reach the root vertex, and hence construct psh functions on balls of definite radius centered at $p_{j,1}$.

\subsection{PSH weights on bubbles and neck}
As preliminary steps, we construct psh weights on bubble limits as well as neck regions. The former construction deals with a fixed space, while the latter requires a uniform estimate before taking limits. 

The first result addresses the problem about constructing psh functions on bubble limits of $X_j$.

\begin{proposition}\label{psh weight for ALE}
      Let $(Z,\omega_Z,p_Z)$ be a Stein, and complete ALE K\"ahler orbifold surface with finite orbifold points $\{x_i\}_{i=1}^{k}$. Then there exists smooth psh function $\varphi_Z$ and $\rho_Z$ on $Z^{\mathrm{reg}}$ such that 
      \begin{itemize}
      \item $\rho_Z$ is a smooth positive function on $Z$ that equals a large constant on a compact set containing all orbifold points $\{x_i\}_{i=1}^{k}$ and 
 \begin{equation}
         \rho_Z=(1+\Psi(\rho_Z^{-1}))d(p_Z,\cdot). 
  \end{equation}
      \item  outside a compact set $K$, we have $\ii\partial\pp \rho_Z^2\geq \omega_Z$,
          \item  $|\varphi_Z-2\log d(x_i,\cdot)|\leq 1$ on small neighborhoods of each $x_i$,
          \item $\varphi_Z=a_Z\log (\rho_Z^2)$ outside a compact set for some constant $a_Z>0$,
          \item $\ii\partial\pp \varphi_Z\geq \min\{1,\rho_Z^{-2}(\log \rho_Z^2)^{-2}\}\omega_Z$
          \item  $\ii\partial\pp \varphi_Z+\mathrm{Ric}(\omega_Z)\geq \min\{1,\rho_Z^{-2}(\log \rho_Z^2)^{-2}\}\omega_Z$ .
      \end{itemize}
\end{proposition}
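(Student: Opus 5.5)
The construction has three parts. First I build an exhaustion function $\rho_Z$ adapted to the ALE end. Then I build a psh function with logarithmic poles at the orbifold points, using that $Z$ is Stein. Finally I glue the two with a convex maximum and add a small multiple of a log-type term at infinity to produce the curvature lower bound.

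\emph{Step 1: the function $\rho_Z$.} In the ALE coordinates on the covering chart of the end, $|x|^2$ satisfies $\ii\partial\pp|x|^2=\omega_{\mathrm{flat}}+O(|x|^{-\tau})$. The same estimate holds with $\omega_Z$ in place of $\omega_{\mathrm{flat}}$, because $g-\delta=O(|x|^{-\tau})$ with derivatives and $J$ is asymptotic to the standard structure at the same rate. So $|x|^2$ is strictly psh outside a compact set, and $\ii\partial\pp|x|^2\geq \frac12\omega_Z$ there; after rescaling, $\ii\partial\pp\rho_Z^2\geq\omega_Z$. Moreover $|x|=(1+o(1))\,d(p_Z,\cdot)$. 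I take $\rho_Z$ to equal $|x|$ (times a constant) near infinity, and interpolate to a large constant on a compact set containing the $x_i$, using a smooth maximum with that constant.

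\emph{Step 2: the pole function.} Since $Z$ is a Stein space with isolated quotient singularities, I embed a neighbourhood of each $x_i$ via a finite map or local embedding $\mathbb C^2\to\mathbb C^2/\Gamma_i\hookrightarrow\mathbb C^N$. Near $x_i$, the function $\log\sum_\alpha|f_\alpha|^2$, where the $f_\alpha$ are $\Gamma_i$-invariant polynomials, equals $c\log|z|^2+O(1)$ on the orbifold chart. Replacing the generators by suitable powers, I can use instead $\log\sum|z|^{2}$ pulled back as an orbifold function. Since the orbifold metric is comparable to the flat one, this gives $|\psi_i-2\log d(x_i,\cdot)|\le C$, and the constant can be absorbed by choosing the normalisation appropriately. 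Steinness gives global holomorphic functions separating points, so $\psi=\log\sum_\alpha|F_\alpha|^2$ with $F_\alpha\in\mathcal O(Z)$ is psh on $Z$ with exactly these poles. After adding $\epsilon\rho_Z^2$, it is strictly psh on any compact set away from the $x_i$. Near the $x_i$, strict positivity comes from $\ii\partial\pp\log|z|^2$ together with an added $\epsilon|z|^2$ term.

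\emph{Step 3: gluing and Ricci control.} On a large compact set $K$, set $\varphi_1=\psi+\epsilon\rho_Z^2$; it satisfies $\ii\partial\pp\varphi_1\geq c_K\omega_Z$ on $K$ away from the poles and is strictly psh near the poles as well. Near infinity, set $\varphi_2=a\log\rho_Z^2$. For the flat cone, $\ii\partial\pp\log|x|^2$ is only semipositive, degenerate in the radial direction. To get strict positivity I use instead $a\log\rho_Z^2-b(\log\rho_Z^2)^{-1}$, or equivalently $a\log(\rho_Z^2)+b\log\log\rho_Z^2$ type corrections; since $\ii\partial\pp\log\log r^2$ contains a radial term of size $r^{-2}(\log r^2)^{-2}$, this yields the bound $\ii\partial\pp\varphi\geq \rho_Z^{-2}(\log\rho_Z^2)^{-2}\omega_Z$.

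The stated normal form $\varphi_Z=a_Z\log\rho_Z^2$ exactly suggests taking $\varphi_2=a\log(\rho_Z^2)$, with the decay of $\omega_Z$ toward the cone providing the error. Here $\mathrm{Ric}(\omega_Z)=O(\rho^{-2-\tau})$ with $\tau>0$, so the Ricci term is dominated by $\rho^{-2}(\log\rho)^{-2}$ at infinity once I use the log-log correction. Since $\varphi_1$ grows at most like $C\log\rho$ (the $F_\alpha$ have polynomial growth by ALE and Steinness), the regularised maximum $\widetilde{\max}(\varphi_1-C',\varphi_2)$ with large $a$ equals $\varphi_1$ on $K$ and $\varphi_2$ near infinity, and is psh with the desired positivity.

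\emph{Step 4: the Ricci inequality on compact parts.} Finally, on the compact region, where $\mathrm{Ric}(\omega_Z)$ is bounded, I multiply $\varphi$ by a large constant $A$ (rescaling $a_Z$) so that $A\,\ii\partial\pp\varphi$ dominates $|\mathrm{Ric}|$. Near the orbifold points this also dominates, since $\ii\partial\pp\log|z|^2$ is only semipositive but the $\epsilon|z|^2$ term helps. This destroys the normalisation $2\log d$ near the $x_i$, so instead I keep the pole coefficient fixed and enlarge only the $\epsilon\rho_Z^2$ and $\epsilon|z|^2$ parts on compact sets.

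The main obstacle is handling both ends of the Ricci control: compact strict positivity without changing the pole coefficient, and the fine $\rho^{-2}(\log\rho)^{-2}$ positivity at infinity, which needs the polynomial decay of $\mathrm{Ric}$ from the ALE fall-off.
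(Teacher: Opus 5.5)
The positivity argument at infinity has the wrong sign. Write $t=\log\rho_Z^2$. Then $\ii\partial\pp f(t)=f'(t)\,\ii\partial\pp t+f''(t)\,\ii\partial t\wedge\pp t$, and on the cone $\ii\partial\pp t$ vanishes on the complex radial line. Radial positivity can therefore only come from $f''>0$.

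Both of your corrections have $f''<0$:
\begin{itemize}
\item for $f=at+b\log t$ you get $f''=-b/t^{2}$;
\item for $f=at-b/t$ you get $f''=-2b/t^{3}$.
\end{itemize}
Put differently, the radial term of $\ii\partial\pp\log\log r^2$ does have size $r^{-2}(\log r^2)^{-2}$, but it is \emph{negative}. So your $\varphi_2$ has negative radial Hessian, up to $O(\rho_Z^{-2-\tau})$ errors.

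The correct choice is $f(t)=at-b\log t$, or $at-\frac{1}{100}t^{1/2}$ as in \cite{SunZ}. With $f(t)=at-b\log t$ one has $f''=b\,t^{-2}$ and $f'=a-b/t>0$ for $t$ large, so the ALE errors and $\mathrm{Ric}(\omega_Z)=O(\rho_Z^{-2-\tau})$ are dominated.

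Your other suggestion cannot work either, at least on an exactly conical end. That suggestion is pure $a\log\rho_Z^2$, with the decay toward the cone ``providing the error''. To see that it fails, restrict $\varphi_Z-a_Z\log|\lambda|^2$ to a radial line $\lambda\mapsto\lambda v$ and average over circles. Since $\rho_Z/d\to1$, the average $H(s)$, with $s=\log|\lambda|$, tends to $0$. The required bound, however, gives $H''\gtrsim s^{-2}$, which forces $H\to-\infty$.

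So the exact normal form $\varphi_Z=a_Z\log\rho_Z^2$ in the statement has to be relaxed, for instance to $a_Z(\log\rho_Z^2-\log\log\rho_Z^2)$. This is harmless downstream, since $\log\log K=o(\log K)$ in the asymptotics used in Theorem~\ref{thm A}. The paper's proof simply writes $a_Z\log(\rho_Z^2)$ and does not address this. You were right to flag the radial degeneracy, but your fix has the opposite sign.

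The compact part also lacks any source of strict positivity. By your Step 1, $\rho_Z^2$ is constant on a compact set containing all the $x_i$, and $\log\sum_\alpha|F_\alpha|^2$ is merely psh. So $\psi+\epsilon\rho_Z^2$ is not strictly psh there. Nothing then dominates $\mathrm{Ric}(\omega_Z)$, or the cut-off errors you create in Step 4 when you enlarge the local $\epsilon|z|^2$ terms.

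The global $\psi$ also has the wrong poles. Pulled back to the uniformizing chart, the $F_\alpha$ are $\Gamma_i$-invariant. Since $\Gamma_i$ acts freely on $\mathbb C^2\setminus\{0\}$, there are no linear invariants, so $\psi\le 2\log|z|^2+O(1)$ near $x_i$. This is incompatible with $|\varphi_Z-2\log d(x_i,\cdot)|\le 1$.

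The missing ingredients are the ones the paper uses:
\begin{itemize}
\item $\log|z|^2+C|z|^2$ is itself $\Gamma_i$-invariant, because $\Gamma_i\subset U(2)$. It gives local $\varphi_i$ with the correct pole and $\ii\partial\pp\varphi_i+\mathrm{Ric}(\omega_Z)\ge\omega_Z$.
\item Steinness gives a strictly psh function $\varphi_0$ on $Z$.
\item The sum $\sum_i\chi_i\varphi_i+A\chi_0\varphi_0$, with $A$ large, absorbs the cut-off errors and the Ricci term on the compact part without changing the pole coefficients.
\end{itemize}
This can then be joined to the corrected logarithmic term at infinity. You can do so additively with a large coefficient, as in the paper, or by your regularized maximum on a fixed annulus with $a$ large. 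Your claim that $\varphi_1$ grows like $C\log\rho$ is false because of the $\epsilon\rho_Z^2$ term, but on a fixed annulus it is not needed.
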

\begin{proof}
    The existence of $\rho_Z$ satisfying the first two properties is a standard result, see for example \cite[Lemma 2.15]{CH1} and \cite[Section 3]{SunZ}. Since $\omega_Z$ is a smooth orbifold K\"ahler metric, by working with a local orbifold chart, we can find local plurisuharmonic functions $\varphi_i$ defined in a neighborhood $U_i$ of $x_i$ satisfying that $U_i\cap U_l=\emptyset$ for $i\neq l$, and on each $U_i$, we have $|\varphi_i-2\log d(x_i,\cdot)|\leq 1$, $\ii\partial\pp \varphi_i\geq \omega_Z$ and $\ii\partial\pp\varphi_i+\mathrm{Ric}(\omega_Z)\geq \omega_Z$. Since $Z$ is Stein, there exists a strictly psh function $\varphi_0$ on $Z$. Then we define 
    \begin{equation}\label{eq--definition of varphi_z}
\varphi_Z=\sum_{i=1}^k\chi_i\varphi_i+A\chi_0\varphi_0+a_Z\log(\rho_Z^2),
    \end{equation}where $\chi_i$ are cut-off functions that equals 1 in a small neighborhood of $x_i$ and 0 outside $U_i$, $\chi_0$ is cut-off function which equals 1 on the compact set $K$ and 0 outside a larger compact set. We choose $A$ large and then $a_Z$ large such that $\varphi_Z$ as defined in \eqref{eq--definition of varphi_z} satisfies the last two inequalities claimed in the statement.
\end{proof}

On the other hand, based on the strong conical approximation in Theorem \ref{neck structure theorem}, we can construct logarithmic growth psh functions on the neck region to dominate the Ricci curvature.
\begin{proposition}\label{psh weight for neck} 
By increasing $K_0$ and $K_1$ if necessary, there exists a constant $C>0$ such that for all sufficiently large $j$ and each $I\in\mathcal{T}$, on $A_{p_{j,I}}\!\left(K_0\lambda_{j,I}^{-1},\,K_1^{-1}\lambda_{j,P(I)}^{-1}\right)$ there exists psh functions 
$\psi^{\mathcal C_I}_{j}$ and $\phi_j^{\mathcal{C}_I}$ satisfying
\begin{equation}\label{eq:psi_estimate}
\begin{aligned}
\ii\partial\bar\partial \psi^{\mathcal C_I}_{j} + \Ric(\omega_j) &\ge 0,
&\qquad
\ii\partial\bar\partial \phi_j^{\mathcal{C}_I} &\geq \tfrac{1}{2}\,\omega_j, \\[3pt]
C\log d_{j,I}^2 \leq \psi_{j}^{\mathcal{C}_I} &\leq 0,
&\qquad
|\phi_{j}^{\mathcal{C}_I}| &\leq C.
\end{aligned}
\end{equation}Moreover on $A_{p_{j,I}}(2^{-1}K_1^{-1}\lambda_{j,P(I)}^{-1}, K_1^{-1}\lambda_{j,P(I)}^{-1})$, we have 
\begin{equation}
  |\nabla \psi_j^{\mathcal{C}_I}|_{\lambda_{j,P(I)}^2\omega_j}+ |\ii\partial\pp \psi_j^{\mathcal{C}_I}|_{\lambda_{j,P(I)}^2\omega_j} +  |\nabla \phi_j^{\mathcal{C}_I}|_{\lambda_{j,P(I)}^2\omega_j}+ |\ii\partial\pp \phi_j^{\mathcal{C}_I}|_{\lambda_{j,P(I)}^2\omega_j}\leq C.
\end{equation}
\end{proposition}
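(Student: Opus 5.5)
The plan is to build both functions on the flat cone $\mathcal C_I=\mathbb C^2/\Gamma_I$ and then transplant them to the neck using the two gauges $\Phi_{j,I}$ and $\Psi_{j,P(I)}$ of Theorem~\ref{neck structure theorem}. On the cone, with $r$ the distance to the vertex, I will use two model functions: $\log r^2$, which is psh and pluriharmonic in the radial-complex direction, and $r^2$, which satisfies $\ii\partial\pp r^2=\omega_{\Gamma_I}$. The complex structure $J_j$ is not a priori the flat one in these gauges. The first step is therefore to upgrade Theorem~\ref{neck structure theorem}: in the same coordinates $|\nabla^k(J_j-J_{\Gamma_I})|$ and $|\nabla^k(g_j-g_{\Gamma_I})|$ decay at the polynomial rate $C_k\max\{(\lambda_{j,I}^{-1}/r)^{\delta},(r\lambda_{j,P(I)})^{\delta}\}$, measured in the rescaled metric. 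This follows by the standard argument that $J$ is parallel for $g_j$, so its deviation is controlled by integrating the metric error. Possibly after composing with a diffeomorphism close to the identity, we may take $J_{\Gamma_I}$ to be the standard structure on $\mathbb C^2/\Gamma_I$.

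For $\phi_j^{\mathcal C_I}$, I would take $\phi_j=\lambda^2 r^2$ on the inner half of the annulus, using the $\Phi_{j,I}$ gauge at scale $\lambda=\lambda_{j,I}$. On the outer half I would take $\phi_j=\lambda_{j,P(I)}^{2}r^2$-type expressions in the other gauge, normalized and rescaled to the unrescaled metric. These are not bounded, though: $r^2$ of the unrescaled distance is of size at most $\lambda_{j,P(I)}^{-2}$. So I will use $\phi_j=\rho^2$ in unrescaled units, where $\rho$ is the pulled-back cone radius. Then $\ii\partial\pp\rho^2=\omega_{\Gamma_I}+O(\text{error})\ge\frac12\omega_j$, because the relative error $\Psi(K_0^{-1},K_1^{-1})$ is small once $K_0,K_1$ are enlarged. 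Moreover $|\phi_j|\le \lambda_{j,P(I)}^{-2}\le C$. The two gauges overlap on the middle annulus near radius $(\lambda_{j,I}\lambda_{j,P(I)})^{-1/2}$, where both are close to the same cone. There I would glue the two radii by a cutoff on a dyadic annulus. The cross terms are of size $|\rho_1^2-\rho_2^2|\cdot r^{-2}+|\nabla(\rho_1^2-\rho_2^2)|r^{-1}$, which are small relative to $\omega$ by the polynomial closeness.

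For $\psi_j^{\mathcal C_I}$, I will take $\psi_j=C\log(\rho^2\lambda_{j,P(I)}^{2}K_1^2)$, which is $\le0$ on the annulus and $\ge C\log d_{j,I}^2$ since $\rho\sim d_{j,I}$. The key computation is that on the cone $\ii\partial\pp\log r^2\ge0$ only, with zero eigenvalue in the radial direction. So the positivity needed to absorb $\Ric(\omega_j)$ must come from elsewhere. To get it, I will instead use $\log r^2+\epsilon(\lambda r)^{-\delta'}$-type corrections, or more simply add $\phi$-type terms at every scale: I would use $\psi=C\log\rho^2-C'\,(\text{something with }\ii\partial\pp\ge c r^{-2}\cdot w(r)\,\omega)$. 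Here $w(r)=\max\{(\lambda_{j,I}^{-1}/r)^{\delta},(r\lambda_{j,P(I)})^{\delta}\}$ is the decay rate, and the neck estimate of Proposition~\ref{prop-curvature estimate on neck region} gives $|\Ric|\le Cr^{-2}w(r)$. A natural candidate is $-(\lambda_{j,I}\rho)^{-\delta}$ together with $(\lambda_{j,P(I)}\rho)^{\delta}$: on a cone, $\ii\partial\pp r^{\pm\delta}$ is $\ge c\delta^2 r^{\pm\delta-2}\omega$ in the appropriate sign up to transverse terms. Combining these with a small multiple of $\log r^2$, which handles the transverse directions, should dominate $\Ric$. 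Both corrections are bounded by $1$ on the annulus, so the bound $C\log d_{j,I}^2\le\psi\le0$ survives after subtracting a constant.

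The boundary estimates on $A_{p_{j,I}}(2^{-1}K_1^{-1}\lambda_{j,P(I)}^{-1},K_1^{-1}\lambda_{j,P(I)}^{-1})$ are immediate in the $\Psi_{j,P(I)}$ gauge, since all functions are explicit cone functions at unit rescaled scale. The main obstacle is the positivity step for $\psi$. One must verify that the degenerate Levi form of $\log r^2$ combined with the $r^{\pm\delta}$ terms really dominates $\Ric(\omega_j)$ in \emph{all} directions, uniformly in $j$, despite the $J$ and metric errors. I would first check this exactly on $\mathbb C^2/\Gamma_I$ via the Hopf-fibration decomposition, and then perturb.
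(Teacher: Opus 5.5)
Your overall architecture matches the paper's: graft cone functions through the gauges of Theorem~\ref{neck structure theorem}, compare $J$ with a flat complex structure by integrating $\nabla J$ (the paper does this by parallel transport on the universal cover), and use the squared cone radius for $\phi$. Your unrescaled choice $\phi=\rho^2$ is fine and is the right normalization for $|\phi|\le C$.

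The gap is in $\psi$, at exactly the step you defer, and the concrete candidate you give fails there. Write $s=\log r$. A radial function $h(s)$ on $\mathbb C^2$ has Levi eigenvalue $h''(s)/(4r^2)$ on the complex line through the vertex, and $h'(s)/(2r^2)$ on its orthogonal complement. So you need $h$ increasing and \emph{strictly convex} in $\log r$, with $h''\gtrsim w$ modulo errors. The term $-(\lambda_{j,I}\rho)^{-\delta}$ is concave in $s$: on the radial complex line its Levi form is $-\frac{\delta^2}{4}\lambda_{j,I}^{-\delta}\rho^{-\delta-2}<0$. That is precisely the direction where $\log\rho^2$ contributes nothing, so no multiple of $\log\rho^2$ can repair it. The sum is not even psh near the inner end.

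The right sign is $+c(\lambda_{j,I}\rho)^{-\delta'}$, which is convex in $s$. Its negative transverse eigenvalue, of size $\frac{c\delta'}{2}\lambda_{j,I}^{-\delta'}\rho^{-\delta'-2}$, is absorbed by $A\log\rho^2$ once $A\gtrsim c\delta'$. Separately, $c\gtrsim C/\delta'^2$ is needed to beat $\mathrm{Ric}$ radially. Your outer correction $(\lambda_{j,P(I)}\rho)^{\delta}$ is increasing and convex in $s$, so it is fine.

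Second, ``check on the cone, then perturb'' does not work if the correction exponent equals the gauge decay rate $\delta$. Since $|J_j-J_0|\lesssim(\lambda_{j,I}r)^{-\delta}$ and $|\nabla J_j|\lesssim r^{-1}(\lambda_{j,I}r)^{-\delta}$, the error in $\sqrt{-1}\partial\bar\partial(A\log\rho^2)$ is of order $A\,r^{-2}(\lambda_{j,I}r)^{-\delta}$. This has the same decay as the radial gain $c\delta^2 r^{-2}(\lambda_{j,I}r)^{-\delta}$, but coefficient $A\gtrsim c\delta\gg c\delta^2$. So you must take $\delta'$ strictly below the decay rates of $\mathrm{Ric}$ and of $J_j-J_0$, then enlarge $K_0,K_1$ depending on $\delta'$. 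With both fixes, $A\log\rho^2+c(\lambda_{j,I}\rho)^{-\delta'}+c(K_1\lambda_{j,P(I)}\rho)^{\delta'}$ works on each gauge region and is a legitimate alternative to the paper's weights.

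The paper instead uses the weights of \cite[Lemma 3.7]{SunZ}: $\log r^2-\frac1{100}(\log r^2)^{1/2}$ near the end of the inner cone, and $\log r^2-\frac1{100}(-\log r^2)^{1/2}$ near the vertex of the outer cone. Their Levi form is $\gtrsim r^{-2}|\log r|^{-3/2}$ in all directions. This beats every polynomially decaying error, both $\mathrm{Ric}$ and the $r^{-2-\epsilon}$ complex-structure error, with no exponent bookkeeping.

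Finally, you never reconcile the two gauge radii for $\psi$. A cutoff gluing would need the two radii to agree in $C^2$ on the overlap, to an accuracy better than the positivity there, which is tiny in the middle of the neck. That requires a quantitative rigidity statement for approximate isometries of $\mathbb C^2/\Gamma_I$, which you would have to prove. For $\phi$ a non-quantitative $\Psi$ suffices, since its positivity is of unit size. The paper avoids the issue for $\psi$ by gluing with a maximum, after multiplying the outer weight by a large slope and shifting constants. That needs only a $C^0$ comparison at the two ends of the overlap.
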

\begin{proof}

 We first construct psh functions on the inner and outer regions of the neck. Using a gluing argument, we then combine them into a global psh function on the neck with controlled growth and the required Ricci curvature control. The actual constant $K_0,K_1$ below may vary from line to line, but would be uniformly fixed through the construction.
 
The weight is grafted from the K\"ahler cone via the gauge constructed in Theorem~\ref{neck structure theorem} and on the cone, we choose the logarithmic growth weight function introduced in \cite[Lemma 3.7]{SunZ}. Note that the statement of Theorem~\ref{neck structure theorem} is purely Riemannian. Therefore, we also need to analyze the asymptotic behavior of the compatible complex structures associated with the K\"ahler metrics.

We follow the notation of Theorem \ref{neck structure theorem}. We pass to universal cover of $A_{p_{j,I}}$ and $\mathbb R^4/\Gamma_I$ and obtain a natural embedding $\widetilde\Phi_{j,I}:\widetilde{A}_{p_{j,I}}\to\mathbb R^4$. By abusing terminology, we say that $\widetilde\Phi_{j,I}$ is $\Gamma_I$-equivariant. Denote by $g_0$ the Euclidean metric on $\mathbb R^4$ and $r_0$ the distance function to the origin on $(\mathbb R^4,g_0)$. We also denote by $\tilde g_j, J_{\tilde g_j},\tilde\omega_j$ the induced structures on $\widetilde A_{p_{j,I}}$, hence on $\mathrm{Im} (\widetilde{\Phi}_{j,I})$ by abusing notation.
We claim that for each $j$, there exists a complex structure $J_j$ on $\mathbb R^4$ compatible with the Euclidean metric $g_0$, such that
\begin{equation}\label{equa-complex structure error on neck}
    \vert\nabla_{g_0}^k(J_{\tilde g_j}-J_j)\vert_{g_0}\leq C_k r_0^{-k-\delta}.
\end{equation}
To see this, note that by Theorem \ref{neck structure theorem} with a possibly different choice of $K_0$, the following holds on the annulus $A_{0}(2^{-1}K_0,2K_0\lambda_{j,I}^{\frac12}\lambda_{j,P(I)}^{-\frac12})\subset\mathbb R^4$ for $j$ large:
\begin{equation}
    \vert\nabla^k_{g_0}(\lambda_{j,I}^2\tilde g_j-g_0)\vert_{g_0}\leq C_k r_0^{-k-\delta}.
\end{equation}
 Choosing a point $p$ on the outer boundary of the annulus, we define $J_j\vert_p\coloneqq J_{\tilde g_j}\vert_p$. Then we extend $J_j$ to the whole $\mathbb R^4$ by parallel transport, so $(g_0,J_j)$ defines a K\"ahler structure. This is why we need to pass to the universal cover. For any $q$ in the annulus, we choose a $g_0$-minimizing geodesic $\gamma$ connecting $p$ and $q$, so we can estimate
\begin{equation}
    \vert(J_{\tilde g_j}-J_j)(q)\vert_{g_0}\leq\int_\gamma\vert\nabla_{g_0}J_{\tilde g_j}\vert_{g_0}\leq\int_\gamma\vert\nabla_{g_0}(\lambda_{j,I}^2\tilde g_j)\vert_{g_0}=C r_0(q)^{-\delta}.
\end{equation}
In the last step we use the fact that $\vert\nabla_{g_0}(\lambda_{j,I}^2\tilde g_{j})\vert_{g_0}=\vert\nabla_{g_0}(\lambda_{j,I}^2\tilde g_j-g_0)\vert_{g_0}=C_1 r_0^{-1-\delta}$. The higher estimate can be derived in a similar way. In the following, we denote by $\omega_{0,j}$ the associated K\"ahler form on $(\mathbb R^4,g_0,J_j)$.

Now we choose the psh weight on $\mathbb C^2$ as in \cite[Lemma 3.7-(1)]{SunZ}, i.e.
\begin{equation}
    \psi_1=\log(r_0^2+1)-\frac{1}{100}(\log(r_0^2+3))^{\frac12},
\end{equation}
which enjoys the property that by choosing $K_0$ large, the following holds on $\mathbb C^2\setminus B_{g_0}(0,K_0)$:
\begin{equation}\label{equa-positivity of model psh weights}
    dd^c_{J_j}\psi_1\geq C^{-1}r_0^{-2}(\log r_0)^{-\frac32}\omega_{0,j}.
\end{equation}
We then estimate the error term caused by complex structures. By the estimate \eqref{equa-complex structure error on neck} and straightforward computation, it's easy to see that
\begin{equation}
    \vert(dd^c_{J_{\tilde g_j}}-dd^c_{J_j})\psi_1\vert_{\lambda_{j,I}^2\tilde g_j}\leq Cr_0^{-2-\epsilon}
\end{equation}
for some $\epsilon>0$. By positivity estimate \eqref{equa-positivity of model psh weights} and Ricci curvature bound in Proposition \ref{prop-curvature estimate on neck region}, we get a desired strictly psh function $\psi_1$ on $\tilde A_{p_j,I}$. Since $\psi_1$ is $\Gamma_I$-invariant, it descends to a psh function $\psi_{j,1}$ on $A_{p_j,I}$. We can similarly graft back the psh function $r_0^2$. In particular, we get psh functions $\psi_{j,1}$ and $\phi_{j,1}$ on $A_{p_{j,I}}({K_0}\lambda_{j,I}^{-1}, {K_0}{({\lambda_{j,I}\lambda_{j,P(I)}})^{-\frac12}})$ such that 
\begin{equation}\label{eq-inner psh on cone}
\begin{aligned}
    \psi_{j,1}=(1+\Psi(\frac{1}{\lambda_{j,I}d_{j,I}}))\log (\lambda_{j,I}^2d_{j,I}^2),& \quad \ii\partial\pp\psi_{j,1}+\Ric(\omega_j)\geq (\lambda_{j,I}d_{j,I})^{-2-\delta}\lambda_{j,I}^2\omega_j\\
        \phi_{j,1}=(1+\Psi(\frac{1}{\lambda_{j,I}d_{j,I}}))\lambda_{j,I}^2d_{j,I}^2, &\quad \ii\partial\pp\phi_{j,1}\geq \frac{1}{2}\lambda_{j,I}^2\omega_j.
\end{aligned}
    \end{equation}
    In the same manner, on a small ball $B(0,K_0^{-1})$ inside $\mathbb C^2$, the function $\psi_2=\log r_0^2-\frac{1}{100}(-\log r_0^2)^{1/2}$ satisfies
    \begin{equation}
        dd^c_{J_j}\psi_2\geq C^{-1}r_0^{-2}(-\log r_0)^{-\frac32}\omega_{0,j}.
    \end{equation}
 Then we get psh functions $\psi_{j,2}$ and $\phi_{j,2}$ on $A_{p_{j,I}}({K_1^{-1}({\lambda_{j,I}\lambda_{j,P(I)}})^{-\frac12}},{K_1^{-1}\lambda_{j,P(I)}^{-1}})$ such that 
    \begin{equation}\label{eq--outer psh on cone}
    \begin{aligned}
         \psi_{j,2}=(1+\Psi(\lambda_{j,P(I)}d_{j,I}))\log(\lambda_{j,P(I)}^2d_{j,I}^2),& \quad \ii\partial\pp\psi_{j,2}+\Ric(\omega_j)\geq \frac{\lambda_{j,P(I)}^2\omega_j}{(\lambda_{j,P(I)}d_{j,I})^{2-\delta}}\\
        \phi_{j,2}=(1+\Psi(\lambda_{j,P(I)}d_{j,I}))\lambda_{j,P(I)}^2d_{j,I}^2, &\quad \ii\partial\pp\phi_{j,2}\geq \frac{1}{2}\lambda_{j,P(I)}^2\omega_j.
    \end{aligned}
    \end{equation}

 We then apply a maximum construction to obtain a global plurisubharmonic function on the entire neck region. To this end, we need to show that on the outer boundary of the inner region the maximum is realized by the inner function, and on the inner boundary of the outer region it is realized by the outer function \cite[Chapter I--Lemma 5.17]{Demailly}. By exploiting the asymptotics of these functions, this can be arranged after rescaling $\psi_{j,2}$ and $\phi_{j,2}$ by a large positive constant and applying an appropriate translation. Here we rely on the fact that both $\psi_{j,1}$ and $\psi_{j,2}$ have leading terms given by the logarithm of the distance function. Moreover, since the scale $\lambda_{j,I} \to \infty$, the function $\psi_{j,1}$ tends to infinity along the outer boundary of the inner region. We therefore subtract a large constant to make it comparable to the logarithm of the distance function.
 
 More precisely, we fix $K_0$ and $K_1$ large such that $\frac{2}{K_1}\ll 1\ll \frac{K_0}{2}$ and the error term $\Psi$ in \eqref{eq-inner psh on cone} and \eqref{eq--outer psh on cone} is bounded by $\frac{1}{10}$. Then it is easy to show that we find a large constant $L$ such that when $d_{j,I} = 2{K_1^{-1} (\lambda_{j,I} \lambda_{j,P(I)}})^{-\frac12}$, we have 
\begin{equation}
\psi_{j,1} - \log \lambda_{j,I}^2 > L \big( \psi_{j,2} - \log \lambda_{j,P(I)}^2 \big) 
+ (2-L) \log \Bigg( 2K_1^{-1} (\lambda_{j,I} \lambda_{j,P(I)})^{-\frac12}\Bigg) =:\widetilde{\psi}_{j,2}
\end{equation}and  when $d_{j,I} = 2^{-1}K_0 (\lambda_{j,I} \lambda_{j,P(I)})^{-\frac12}$, we have
\begin{equation}
\psi_{j,1} - \log \lambda_{j,I}^2 
< \widetilde{\psi}_{j,2}.
\end{equation}Then we can define a psh function $\psi_{j}^{\mathcal{C}_I}$ on $A_{p_{j,I}}\!\left(K_0\lambda_{j,I}^{-1},\,K_1^{-1}\lambda_{j,P(I)}^{-1}\right)$ satisfying the desired estimate via

 \[ \psi_{j}^{\mathcal{C}_I} = \begin{cases} \psi_{j,1}-\log \lambda_{j,I}^2   & \text{ on } A_{p_{j,I}}(\frac{K_0}{\lambda_{j,I}},\frac{2}{K_1\sqrt{\lambda_{j,I}\lambda_{j,P(I)}}})\\
          \max\{ \psi_{j,1}-\log \lambda_{j,I}^2,\widetilde{\psi}_{j,2}\} & \text{ on } A_{p_{j,I}}(\frac{2}{K_1\sqrt{\lambda_{j,I}\lambda_{j,P(I)}}},\frac{K_0}{2\sqrt{\lambda_{j,I}\lambda_{j,P(I)}}})\\
          \widetilde{\psi}_{j,2} & \text{ on } A_{p_{j,I}}(\frac{K_0}{2\sqrt{\lambda_{j,I}\lambda_{j,P(I)}}},\frac{1}{K_1\lambda_{j,P(I)}}).
       \end{cases}
    \]
The construction of $\phi_j^{\mathcal{C}_I}$ is similar and we omit the detail here. Then we finish the proof of Theorem \ref{neck structure theorem}.
\end{proof}

\subsection{Gluing procedure}
We now state the following two theorems, each associated with a vertex $I \in \mathcal{T}$. 
The proof proceeds by induction on the depth (equivalently, the height) of the vertex in the tree $\mathcal{T}$.

\begin{theorem}\label{thm A}
    For any $I\in \mathcal{T}$, any $K\geq K_0$, there exists $j_0(I,K)$ such that for $j\geq j_0$, there exists psh function $\varphi_{j,I}$ on $B(p_{j,I},2K\lambda_{j,I}^{-1})$ such that 
    \begin{equation}
        \mathrm{Ric}(\omega_j)+\ii\partial\pp \varphi_{j,I}\geq\frac{1}{2}\omega_j \text{ on } B(p_{j,I},2K\lambda_{j,I}^{-1})
    \end{equation}
    \begin{equation}\label{asymptotics of weight in thmA}
\varphi_{j,I}=b_I(\log(\lambda_{j,I}^2d_{j,I}^2)+\Psi(K^{-1},j^{-1})\log K) \text{ on } A_{p_{j,I}}(\frac{\sqrt{K}}{2}\lambda_{j,I}^{-1},2K\lambda_{j,I}^{-1})
    \end{equation}
    \begin{equation}
        |\varphi_{j,I}|\leq C(I,K)
    \end{equation}
\end{theorem}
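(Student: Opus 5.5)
We argue by induction on the height of $I$ in $\mathcal T$, i.e. from the leaves up. The weight on $B(p_{j,I},2K\lambda_{j,I}^{-1})$ will come from pulling back the bubble weight $\varphi_{Z_I}$ of Proposition~\ref{psh weight for ALE}. Near each orbifold point $q_J$ of $Z_I$, i.e. each child $J\in C(I)$, we replace it by a weight built from the child weight $\varphi_{j,J}$ (induction hypothesis) and the neck weight $\psi^{\mathcal C_J}_j$ of Proposition~\ref{psh weight for neck}. All gluing is by regularized maxima \cite[Chapter I--Lemma 5.17]{Demailly}.

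\textbf{Base case.} Let $I$ have no children. By Theorem~\ref{thm-bubble decomposition}, $(X_j,\lambda_{j,I}^2\omega_j,p_{j,I})$ converges smoothly to a smooth Stein ALE scalar-flat surface $Z_I$. Fix $K$ and take $\varphi_{Z_I}$ from Proposition~\ref{psh weight for ALE}; on $B(p_{Z_I},3K)$ it satisfies $\ii\partial\pp\varphi_{Z_I}+\Ric(\omega_{Z_I})\geq c_K\omega_{Z_I}$ with $c_K\sim K^{-2}(\log K)^{-2}$. It equals $a_Z\log\rho_Z^2$ near infinity, and $\rho_Z=(1+\Psi(\rho_Z^{-1}))d(p_Z,\cdot)$ there.

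Pull $\varphi_{Z_I}$ back by $\chi_j$. Smooth convergence of $J_j$ and of the metric makes the error in $\ii\partial\pp$ and in $\Ric$ $o(1)$ on this compact region, so for $j\geq j_0(I,K)$ positivity survives with constant $c_K/2$. Since $\Ric$ is scale invariant, the inequality is unchanged when we return to the metric $\omega_j=\lambda_{j,I}^{-2}(\lambda_{j,I}^2\omega_j)$. The required lower bound $\frac12\omega_j$ is tiny in the rescaled metric ($\frac12\lambda_{j,I}^{-2}$), so we add $\epsilon\lambda_{j,I}^{2}d_{j,I}^2$-type bounded terms only if needed. In any case $c_K\lambda_{j,I}^2\omega_j\geq\frac12\omega_j$ for $j$ large.

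The asymptotics \eqref{asymptotics of weight in thmA} on $A(\frac{\sqrt K}{2}\lambda_{j,I}^{-1},2K\lambda_{j,I}^{-1})$ follow with $b_I=a_{Z_I}$. There $\log\rho_Z^2=\log(\lambda_{j,I}^2d_{j,I}^2)+O(\Psi(K^{-1/2}))$ from the $\rho_Z$ asymptotics, together with a $\Psi(j^{-1})$ error from the convergence. These errors are absorbed in $\Psi(K^{-1},j^{-1})\log K$. Boundedness $|\varphi_{j,I}|\leq C(I,K)$ is immediate.

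\textbf{Inductive step.} Suppose $I$ has children $J\in C(I)$, for which Theorem~\ref{thm A} holds; we also use the statement obtained from it and Proposition~\ref{psh weight for neck} on larger balls. For each $J$:
\begin{itemize}
\item On $B(p_{j,J},2K'\lambda_{j,J}^{-1})$ we have $\varphi_{j,J}\approx b_J\log(\lambda_{j,J}^2d_{j,J}^2)$ near the outer boundary.
\item On the neck $A_{p_{j,J}}(K_0\lambda_{j,J}^{-1},K_1^{-1}\lambda_{j,I}^{-1})$ we have $\psi_j^{\mathcal C_J}$, whose leading term on both ends is $\log d_{j,J}^2$ plus constants, by the construction in Proposition~\ref{psh weight for neck}.
\end{itemize}
Scale and translate the neck weight: $A\psi^{\mathcal C_J}_j+c$. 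Take $\max$ with $\varphi_{j,J}$ on the inner overlap $A(\sqrt{K'}\lambda_{j,J}^{-1},K'\lambda_{j,J}^{-1})$, choosing constants so the child weight wins at the inner edge and the neck weight wins at the outer edge. This is possible since both are $\log d$ up to errors $\Psi(K'^{-1})\log K'$, with slopes $b_J$ and $A$, and we take $A>b_J$. To keep strict positivity in the neck, also add $\epsilon\phi_j^{\mathcal C_J}$, which is bounded by Proposition~\ref{psh weight for neck}. This produces a weight on $B(p_{j,J},K_1^{-1}\lambda_{j,I}^{-1})$ with $\log(\lambda_{j,I}^2d_{j,J}^2)$ asymptotics and $C^2$ control on the outer annulus, again by Proposition~\ref{psh weight for neck}.

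At scale $\lambda_{j,I}$, this outer annulus converges smoothly to a punctured neighborhood of $q_J$ in $Z_I$. There $\varphi_{Z_I}=2\log d(q_J,\cdot)+O(1)$, while it is strictly psh with Ricci control away from the $q_J$. Multiply $\varphi_{Z_I}$ by a large constant $B$ and add a constant. Then take the max with the glued child-plus-neck weights on annuli around $p_{j,J}$ of fixed $\lambda_{j,I}$-radius. Arrange that the inner weight dominates near $p_{j,J}$ and $B\varphi_{Z_I}$ dominates at radius $\sim K_1^{-1}\lambda_{j,I}^{-1}$; matching is possible because both have logarithmic leading terms. The outer asymptotics on $A(\frac{\sqrt K}{2}\lambda_{j,I}^{-1},2K\lambda_{j,I}^{-1})$ then come from $Ba_{Z_I}\log\rho^2$ exactly as in the base case, giving $b_I=Ba_{Z_I}$.

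\textbf{Main obstacle.} The delicate point is the max-gluing with positivity $\ii\partial\pp\varphi+\Ric\geq\frac12\omega_j$ maintained. The neck weight only gives $\ii\partial\pp\psi+\Ric\geq0$ with a decaying strict part. So the strict $\frac12\omega_j$ must be supplied by adding small multiples of the bounded $\phi$-type functions ($\phi_j^{\mathcal C_J}$, and $\rho_Z^2$ pulled back), which cost only $C(I,K)$ in the sup bound. We also need each max to be taken with both functions defined, with the inequalities on the boundary of the overlap holding uniformly in $j\geq j_0(I,K)$. This is where we must carefully track how the errors $\Psi(K^{-1},j^{-1})\log K$ propagate under the rescalings by $A$ and $B$. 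I would first verify this boundary comparison using the explicit $(1+\Psi)\log$ asymptotics from \eqref{eq-inner psh on cone} and \eqref{eq--outer psh on cone}.
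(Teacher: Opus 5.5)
Your proposal is essentially the paper's proof. Your base case is Step~1 (pull back $\varphi_{Z_I}$ from Proposition~\ref{psh weight for ALE} and use $c_K\lambda_{j,I}^2\omega_j\geq\frac12\omega_j$ for $j$ large), and your inductive step is Steps~2 and~3 merged: glue the child weight to the neck weight to get the weight of Theorem~\ref{thmB}, then take a regularized maximum with a large multiple of the pulled-back $\varphi_{Z_I}$ after adjusting constants.

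Two small corrections. First, a small multiple $\epsilon\phi_j^{\mathcal C_J}$ only yields $\frac{\epsilon}{2}\omega_j$, so take coefficient $1$ as the paper does; since $\phi_j^{\mathcal C_J}$ is bounded, this costs nothing. Second, the child weights must be shifted by $j$-dependent constants of order $\log(\lambda_{j,J}/\lambda_{j,I})\to\infty$, so for non-leaf $I$ the construction gives only an upper bound plus a logarithmic lower bound, as in Theorem~\ref{thmB}, not a two-sided bound uniform in $j$. This applies equally to the paper's Step~3.
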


\begin{theorem}\label{thmB}
    For any $I\in \mathcal{T}$, there exist $L(I)\geq 2\max\{K_0,K_1\}$ and $j_0(I)$ such that for any $j\geq j_0$,  there exists psh function $\psi_{j,I}$ on $B(p_{j,I},2L^{-1}\lambda_{j,P(I)}^{-1})$,such that 
    \begin{equation}
           \mathrm{Ric}(\omega_j)+\ii\partial\pp \varphi_{j,I}\geq\frac{1}{2}\omega_j \text{ on } B(p_{j,I},2L^{-1}\lambda_{j,P(I)}^{-1})
    \end{equation}
    \begin{equation}
\psi_{j,I}=\phi_{j}^{\mathcal{C}_I}+L\psi_{j}^{\mathcal{C}_I}-2L\log(\frac{\lambda_{j,I}L}{\lambda_{j,P(I)}})+(2b_I+1)\log L \text{ on } A_{p_{j,I}}(L\lambda_{j,I}^{-1},2L^{-1}\lambda_{j,P(I)}^{-1})
    \end{equation}
    \begin{equation}
        L\sum_{J\leq I}\log d_{j,J}\leq \psi_{j,I}\leq C(I).
    \end{equation}
\end{theorem}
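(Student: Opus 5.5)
We prove Theorems~\ref{thm A} and~\ref{thmB} together by induction on the height of $I$, that is, on the maximal distance from $I$ to a leaf of $\mathcal T$. The implications are: Theorem~\ref{thmB} for all children $J\in C(I)$ gives Theorem~\ref{thm A} for $I$, and Theorem~\ref{thm A} for $I$ gives Theorem~\ref{thmB} for $I$.

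\textbf{Base case: $I$ a leaf, Theorem~\ref{thm A}.} Here $(X_j,\lambda_{j,I}^2\omega_j,p_{j,I})$ converges smoothly, with no orbifold points, to a smooth Stein ALE scalar-flat surface $Z_I$ (Theorem~\ref{thm-bubble decomposition}).
\begin{itemize}
\item Take $\varphi_{Z_I}$ from Proposition~\ref{psh weight for ALE}.
\item Pull $\varphi_{Z_I}$ back via $\chi_j$ to $B(p_{j,I},2K\lambda_{j,I}^{-1})$, which converges smoothly to the compact set $\overline{B(p_{Z_I},2K)}$.
\item Add a multiple $\varepsilon\,\lambda_{j,I}^2 d^2$-type strictly psh term (the pull-back of $\rho_{Z_I}^2$), scaled so its Hessian dominates half of $\omega_j$.
\end{itemize}
The Ricci inequality holds on the limit with margin $\min\{1,\rho^{-2}(\log\rho^2)^{-2}\}\lambda^2\omega_j$, which is $\geq cK^{-2}(\log K)^{-2}\lambda_{j,I}^{2}\omega_j\gg\omega_j$ for $j\ge j_0(I,K)$, since $\lambda_{j,I}\to\infty$. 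It is therefore preserved under the $C^\infty$-small errors of the pull-back, after multiplying by a constant. On the annulus we have $\varphi_{Z_I}=a_Z\log\rho^2$ and $\rho=(1+\Psi(\rho^{-1}))d$. With $b_I=a_{Z_I}$ this gives \eqref{asymptotics of weight in thmA}, the error $\Psi(K^{-1},j^{-1})\log K$ coming from $\log(1+\Psi)$ and the convergence error. The bound $|\varphi_{j,I}|\le C(I,K)$ is immediate.

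\textbf{Theorem~\ref{thm A} implies Theorem~\ref{thmB}.} Glue $\varphi_{j,I}$ on $B(p_{j,I},2K\lambda_{j,I}^{-1})$ to the neck function $\Theta:=\phi_j^{\mathcal C_I}+L\psi_j^{\mathcal C_I}+\mathrm{const}$ of Proposition~\ref{psh weight for neck}. Use the maximum construction \cite[Chapter I--Lemma 5.17]{Demailly} on $A(\sqrt K\lambda^{-1}/2,2K\lambda^{-1})$, with $K=L^2$ chosen large.
\begin{itemize}
\item Near $\lambda d=\sqrt K/2$, $\varphi_{j,I}\approx b_I\log(\lambda^2d^2)$ while $\Theta$ has slope $\approx 2L\gg b_I$ in $\log d$. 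With the normalising constants $-2L\log(\lambda_{j,I}L/\lambda_{j,P(I)})+(2b_I+1)\log L$, we get $\Theta<\varphi_{j,I}$ at the inner radius and $\Theta>\varphi_{j,I}$ at the outer radius $\lambda d=2K=2L^2$. This comparison is the bookkeeping of $\log L$ terms, with $\Psi(K^{-1})\log K$ absorbed by the extra $+\log L$.
\item The Ricci bound: $\ii\partial\bar\partial\phi\ge\frac12\omega_j$, and $L(\ii\partial\bar\partial\psi+\Ric)\ge0$ together with $\Ric\ge -C\omega$-type control along the neck. Here we need $\Ric$ small relative to $\omega_j$, using Proposition~\ref{prop-curvature estimate on neck region}. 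One should take $\Theta=\phi+L\psi$ with the coefficient of $\Ric$ handled by $(L-1)\ii\partial\bar\partial\psi\ge -(L-1)\Ric$; since $\ii\partial\bar\partial\psi$ is nonnegative where needed, we instead use $\ii\partial\bar\partial\psi\ge-\Ric$ once, and use positivity of the Hessian of the cone weight $\psi_1$ for the remaining multiples.
\end{itemize}
The lower bound $L\sum_{J\le I}\log d_{j,J}$ follows from $\psi^{\mathcal C_I}\ge C\log d^2$, and inductively from the lower bounds on the children through $\varphi_{j,I}$.

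\textbf{Inductive step for Theorem~\ref{thm A}, and the main obstacle.} For $I$ with children, the limit $Z_I$ has orbifold points $q_J$, $J\in C(I)$. We use $\varphi_{Z_I}$, which has poles $2\log d(q_J,\cdot)$, pulled back away from $\bigcup_J B(p_{j,J},L^{-1}\lambda_{j,I}^{-1})$. Near each $p_{j,J}$ we glue it to $\psi_{j,J}$ from Theorem~\ref{thmB}. On $A(L\lambda_{j,J}^{-1},2L^{-1}\lambda_{j,I}^{-1})$ the function $\psi_{j,J}$ equals $L\log(\lambda_{j,I}^2d^2)+O(\log L)$, so after scaling $\varphi_{Z_I}$ by a large factor $M>L$ and shifting, the maximum construction applies again. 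This choice of scaling constants with only logarithmic loss is the main obstacle. The constants must be chosen depth by depth so that the final coefficient of $\log d_{j,J}$ stays bounded independently of $j$. This works because the tree has bounded depth and the constants $L(I)$, $M$ are fixed before $j\to\infty$. Finally, Theorem~\ref{thm--psh weight} follows by applying Theorem~\ref{thmB} at the root with $\lambda_{j,P(1)}=1$, and setting $\delta=L^{-1}$.
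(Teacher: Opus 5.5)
Your argument is essentially the paper's Step~2. You glue $\varphi_{j,I}$ from Theorem~\ref{thm A} to the neck weight $\phi_j^{\mathcal C_I}+L\psi_j^{\mathcal C_I}$ by the maximum construction, use the slope gap ($2L$ versus $2b_I$ in $\log d_{j,I}$) for the boundary comparisons, and use $\sqrt{-1}\partial\bar\partial\phi_j^{\mathcal C_I}+(L-1)\sqrt{-1}\partial\bar\partial\psi_j^{\mathcal C_I}+\bigl(\sqrt{-1}\partial\bar\partial\psi_j^{\mathcal C_I}+\Ric(\omega_j)\bigr)\geq\frac12\omega_j$ for the Ricci bound; the only change is cosmetic ($K=L^2$ and gluing region $\lambda_{j,I}d_{j,I}\in[L/2,2L^2]$ instead of $K=L$ and $[\sqrt L,L]$).

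One bookkeeping slip needs fixing: since $|\phi_j^{\mathcal C_I}|\leq C$ and $\psi_j^{\mathcal C_I}\leq 0$, your normalised $\Theta$ tends to $-\infty$ uniformly on the gluing annulus as $j\to\infty$, while $\varphi_{j,I}=O(\log L)$ there. So, as the paper does in passing to $\psi_{j,I,1}$, you must also translate $\varphi_{j,I}$ by a $j$-dependent constant, chosen via the slope gap so that the crossover lies in $\{L/2<\lambda_{j,I}d_{j,I}<L\}$; this gives $\psi_{j,I}$ the stated form on $A_{p_{j,I}}(L\lambda_{j,I}^{-1},2L^{-1}\lambda_{j,P(I)}^{-1})$.
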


We are going to do inductions on the height of a vertex  in the tree $\mathcal T$. There are three steps:
\begin{itemize}
    \item Step 1. Theorem \ref{thm A} holds for vertices $I$ with height 0, i.e., leaves in the tree $\mathcal{T}$;
    \item Step 2. If Theorem \ref{thm A} holds for a vertex $I$, then Theorem \ref{thmB} holds for this vertex $I$;
    \item Step 3. If Theorem \ref{thmB} holds for all siblings of a vertex $I$, then Theorem \ref{thm A} holds for its parent $P(I)$.
\end{itemize}

\noindent \textit{Proof of Step 1.} This is a direct consequence of Proposition \ref{psh weight for ALE}. Recall that for a leaf vertex $I\in \mathcal{T}$, $(X_j, \lambda_{j,I}^2\omega_j, p_{j,I})$ converges to a smooth Stein ALE scalar flat K\"ahler surface $(Z_I,\omega_{Z_I},p_{Z_I})$ in the pointed Cheeger-Gromov sense. Then the psh functions constructed in Lemma \ref{psh weight for ALE} can be pulled back to $X_j$. Note that the Ricci form is scaling invariant, we obtain that for any $K\geq 10$, for $j$ large,  we have nonnegative smooth psh functions $\varphi_{j,I}$ on $B(p_{j,I}, 2K\lambda_{j,I}^{-1})$ satisfying
\begin{equation}\label{eq--positivity on bubble region}
    \ii\partial\pp \varphi_{j,I}+\mathrm{Ric}(\omega_j)\geq\frac{1}{10}K^{-2}(\log K)^{-2}\lambda_{j,I}^2\omega_j
\end{equation} and 
\begin{equation}\label{psh on smooth bubble}
 \varphi_{j,I}=2a_I\log (\lambda_{j,I}d_{j,I})+\Psi(K^{-1},j^{-1})\log K \text{ on } A_{p_{j,I}}(\frac{\sqrt K}{2}\lambda_{j,I}^{-1},2K\lambda_{j,I}^{-1}).
\end{equation}
Since $\lambda_{j,I}\rightarrow \infty$ as $j\rightarrow\infty$, the constant in the right hand side of \eqref{eq--positivity on bubble region} is bigger than 1, when $j$ is sufficiently large. Therefore we obtain Theorem \ref{thm A} holds for vertices with height 0.
\hfill$\square$

\

\noindent\textit{Proof of Step 2.} Assuming Theorem \ref{thm A} holds for a vertex $I$, then we need to patch psh functions on the scale $\lambda_{j,I}$ and  psh functions on the cone region to get psh functions on the scale $\lambda_{j,P(I)}$.  By Theorem \ref{neck structure theorem}, we know that for any $L\geq 4K_0^2$, then for $j$ sufficiently large, we have psh functions 
\begin{equation}
\phi_j^{\mathcal C_I}+L\psi^{\mathcal C_I}_{j} \text{ on }A_{p_{j,I}}(\frac{\sqrt L}{2}\lambda_{j,I}^{-1},K_1^{-1}\lambda_{j,P(I)}^{-1})
\end{equation} satisfying
\begin{equation}\label{eq--eastimate on neck region}
     \ii\partial\pp\left(\phi_{j}^{\mathcal C_I}+L\psi^{\mathcal C_I}_{j}\right)+\mathrm{Ric}(\omega_j)\geq\frac{1}{2}\omega_j, \text{ with }    |\phi_j^{\mathcal{C}_I}|\leq C(I) \text{ and } C(I)\log d^2_{j,I}\leq \psi_j^{\mathcal{C}_I}\leq 0,
\end{equation}for some constant $C$ depends only on $I$ and  independent of $j$.

Let $\varphi_{j,I}$ be the psh function constructed in Theorem \ref{thm A}.
 Then we consider functions 
 \begin{equation}\label{eq--weight after translation}
\psi_{j,I,1}=\varphi_{j,I}-2L\log(\frac{\lambda_{j,I}}{\lambda_{j,P(I)}}) 
\end{equation} and 
\begin{equation}\label{eq-outside weight}
\psi_{j,I,2}=\phi_j^{\mathcal{C}_I}+L\psi^{\mathcal C}_{j}-2L\log (\frac{\lambda_{j,I}L}{\lambda_{j,P(I)}})+(2b_I+1)\log L,
 \end{equation}where $b_I$ is the constant in \eqref{asymptotics of weight in thmA}.
  We are going to show that we can take $L$ large such that for $j$ sufficiently large we have 
 \begin{equation}\label{eq--condition for glueing1}
\psi_{j,I,1}< \psi_{j,I,2} \text{ on }\{d_{j,I}\geq L\lambda_{j,I}^{-1}\} 
 \end{equation}
and 
\begin{equation}\label{eq--condition for glueing2}
  \psi_{j,I,1}> \psi_{j,I,2}  \text{ on } \{d_{j,I}\leq \sqrt{L}\lambda_{j,I}^{-1}\}.
\end{equation}
 To show this, consider the difference $\psi_{j,I,1}-\psi_{j,I,2}$ and notice that from the estimate \eqref{eq--eastimate on neck region}, we know that for $L$ large and then $j$ large, on the region $A_{p_{j,I}}(\frac{\sqrt L}{2}\lambda_{j,I}^{-1},2L\lambda_{j,I}^{-1})$, we have 
\begin{equation}\label{eq--intermediate bound}
  \psi_{j,I,1}-\psi_{j,I,2}= (b_I-L)\log (\lambda_{j,I}^2d_{j,I}^2)+2L\log L-(2b_I+1+\Psi(L^{-1},j^{-1})\log L+C
\end{equation}where $C$ denotes a function on $A_{p_j}(\frac{\sqrt L}{2}\lambda_j^{-1},2L\lambda_j^{-1})$, which is bounded independent of $L$ and $j$. Fix $L\geq K_0$ large and for large $j$ such that the error function $\Psi(L^{-1},j^{-1})$ in \eqref{psh on smooth bubble} is less that $\frac{1}{2}$ and $\frac{
1
}{2}\log L$ is bigger that the function $C$ in \eqref{eq--intermediate bound} and then for $j$ sufficiently large so that Theorem \ref{thm A} holds for $L$. Then we can check that \eqref{eq--condition for glueing1} and \eqref{eq--condition for glueing2} are satisfied.

Then we can define a psh functions on $B(p_{j,I}, L^{-1}\lambda_{j,P(I)}^{-1})$ by 
  \[ \psi_{j,I}(x) = \begin{cases} \psi_{j,I,1}   & d_{j, I}\leq \sqrt{L}\lambda_{j,I}^{-1} \\
          \max\{ \psi_{j,I,1},\psi_{j,I,2}\} & d_{j, I}\in [\sqrt{L}\lambda_{j,I}^{-1},L\lambda_{j,I}^{-1}]\\
          \psi_{j,I,2} & d_{j, I}\geq L\lambda_{j,I}^{-1},
       \end{cases}
    \]and the desired estimate in Theorem \ref{thmB} follows from Theorem \ref{thm A} and \eqref{eq--eastimate on neck region} and \eqref{eq-outside weight}.
\hfill$\square$

\

\noindent\textit{Proof of Step 3} Assuming Theorem \ref{thmB} holds for all siblings of a vertex $I$, then we show that Theorem \ref{thm A} holds for the vertex $P(I)$, the parent of $I$. Recall that $(X_j, \lambda_{j,P(I)}^2\omega_j, p_{j,P(I)})$ converges in the $\hat C^{\infty}$-Cheeger-Gromov sense to a Stein ALE scalar flat orbifold $(Z_{P(I)},\omega_{Z_{P(I)}}, p_{Z_{P(I)}})$ with finite orbifold points $\{x_i\}_{i=1}^k$ and for every $i$, points $p_{j,I_i}\in X_j$ are chosen such that $p_{j,I_i}\rightarrow x_i$, where $\{I_i\}_{i=1}^k$ consists of all children of $P(I)$ and we make the convention that $I_1=I$. 
As a consequence of Lemma \ref{psh weight for ALE} and the $\hat C^{\infty}$-Cheeger-Gromov convergence, for any $L,K\gg 1$, for $j$ sufficiently large, we have psh function $\widetilde{\varphi}_{j,P(I)}$ on $B(p_{j,P(I)},K \lambda_{j,P(I)}^{-1})\setminus\bigcup_{i}B(p_{j,I_i},\frac{L^{-2}}{2}\lambda_{j,P(I)}^{-1})$ such that 
\begin{equation}
    \mathrm{Ric}(\omega_j)+\ii\partial\pp \widetilde{\varphi}_{j,P(I)}\geq K^{-2}(\log K)^{-2}\lambda^2_{j,P(I)}\omega_j,
\end{equation}
\begin{equation}\label{inner control of outside weight in step3}
    \widetilde{\varphi}_{j,P(I)}=\log (\lambda^2_{j,P(I)}d_{j, I_i}^2)+\Psi(j^{-1}| L)\log L \text{ on } A_{p_{j,I_i}}(\frac{L^{-2}}{2}\lambda_{j,P(I)}^{-1},2L^{-1}\lambda_{j,P(I)}^{-1}).
\end{equation} Moreover, we have 
\begin{equation} \label{outside weight in step3}\widetilde{\varphi}_{j,P(I)}=a_I\left(\log(\lambda_{j,P(I)}^2d_{j,P(I)}^2)+\Psi(K^{-1},j^{-1})\log K\right) \text{ on } A_{p_{j,I}}(\frac{\sqrt{K}}{2}\lambda_{j,P(I)}^{-1},2K \lambda_{j,P(I)}^{-1}).
\end{equation}

For a child $I_i$ of $P(I)$, applying Theorem \ref{thm A}, we get psh functions $\psi_{j,I_i}$. Then similar to the proof of Step 2, we are going to patch these $\psi_{j,I_i}$ with a multiple of $\varphi_{j, P(I)}$ after adjusting constants. Firstly we choose $L$ large such that the balls $B(p_{j,I_i}, L^{-1}\lambda_{j,P(I)}^{-1})$ are disjoint and  $$L\geq 2\max\{L_i\mid i=1,\cdots k\},$$ where $L_i$ denotes the constant obtained in Theorem \ref{thmB} for the vertices $I_i$.  Then for this fixed $L$, we choose $j$ large such that the error term $\Psi(j^{-1}|L)$ in \eqref{inner control of outside weight in step3} is bounded by $\frac{1}{4}\log L$. For a large constant $A$,  we consider functions 

\begin{equation}
\widehat{\psi}_{j,I_i}=\psi_{j,I_i}+(2L_i-2b_{I_i}+1-3A)\log L_i
\end{equation}and 
\begin{equation}
    \widehat \varphi_{j,P(I)}=A \widetilde{\varphi}_{j,P(I)}
\end{equation} Then we can show that by choosing $A$ large (independent of $j$), we have 
\begin{equation}
    \widehat\psi_{j,I_i}< \widehat \varphi_{j,P(I)} \text{ on } \partial B(p_{j,I_i}, L^{-1}\lambda_{j,P(I)}^{-1})
\end{equation} and 
\begin{equation}
    \widehat\psi_{j,I_i}> \widehat \varphi_{j,P(I)} \text{ on } \partial B(p_{j,I_i}, L^{-2}\lambda_{j,P(I)}^{-1}).
\end{equation}
Then for any $K$ large and $j$ sufficiently large, we can define a $\varphi_{j,P(I)}$ on $B(p_{j,P(I)}, K\lambda_{j,P(I)}^{-1})$ by 

\[ \varphi_{j,P(I)} = \begin{cases} \widehat \psi_{j,I_i}   & d_{j, I_i}\leq L^{-2}\lambda_{j,P(I)}^{-1} \\
          \max\{ \widehat \psi_{j,I_i},\widehat \varphi_{j,P(I)}\} & d_{j, I_i}\in [L^{-2}\lambda_{j,P(I)}^{-1},L^{-1}\lambda_{j,P(I)}^{-1}]\\
          \widehat \varphi_{j,P(I)}& d_{j,I_i}\geq L^{-1}\lambda_{j,P(I)}^{-1}.
       \end{cases}
    \]
    The desired estimate on $\varphi_{j,P(I)}$ follows from the estimate on $\psi_{j, I_i}$ obtained in Theorem \ref{thm A} and \eqref{outside weight in step3}.
\qed

\section{Zariski openness of cscK metrics}\label{sec--zariski openness}

In this section, we prove the Zariski openness of  cscK surfaces in certain smooth polarized family, following \cite{donaldson14}. Let $\pi:(\underline{X},L)\to S$ be a smooth polarized family of surfaces, where $S$ is a quasi-projective variety and $\pi$ is a flat morphism. Here polarized family means that $L_s=L\vert_{X_s}$ is an ample line bundle for each $s\in S$, while smooth family means that $X_s$ is non-singular for each $s\in S$. By passing to some Zariski open subset we may assume the base $S$ is non-singular, so by general theory $\pi$ is a holomorphic submersion. In particular, all fibers are diffeomorphic to each other.
Now define $S^*$ as the subset that $X_s$ admits a cscK metric $\omega_s$ in the class $ c_1(L_s)$. 

We introduce the following condition:
\begin{definition}
    The family $\underline{X}\to S$ is said to satisfy Property (Sob) if there is a constant $C$ such that for any $s\in S^*$, the Sobolev constant $C_{\omega_s}$ is bounded by $C$.
\end{definition}

Following the approach in \cite{donaldson14}, we prove the following theorem. 
The improvement is that in complex dimension $2$, the bounded Ricci curvature 
assumption in \cite{donaldson14} can be removed.
Note that belonging to the controlled cone is preserved under smooth families, 
and by Lemma \ref{contolled cone imply sobolev bound}, the Sobolev constant is a priori bounded whenever the Kähler class lies in the controlled cone. 
Therefore, the following theorem implies Theorem \ref{Zariski openness in controlled cone}.
\begin{theorem}\label{Zariski openness under sob bound}
    Suppose $S$ has Property (Sob) and that for each $s\in S$,  $\mathrm{Aut}(X_s, L_s)$ is finite. Then $S^*$ is a Zariski open subset of $S$.
\end{theorem}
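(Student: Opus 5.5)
Following Donaldson's argument \cite{donaldson14}, I would first reduce Zariski openness to a statement about limits. Since $S^*$ is already open in the Euclidean topology by LeBrun--Simanca \cite{lebrun-Sim1994} (this uses finiteness of $\mathrm{Aut}$), it suffices to show two things: $S^*$ is constructible, and it is stable under specialization in the complement. Concretely, I would stratify $S$ by a finite Zariski partition along which some fixed Hilbert-scheme description holds. The key claim is then: if $s_j\in S^*$ and $s_j\to s_\infty\in S$, then $s_\infty\in S^*$. In other words, $S^*$ is closed as well as open on each irreducible Zariski-locally closed stratum. This is not quite enough as stated, so the real content (as in \cite{donaldson14}) is an algebraic characterization of $S^*$ by a finite-dimensional stability condition at one fixed large $k$: the condition ``the $k$-th Chow/balanced-type point is semistable with controlled invariants.'' That is a Zariski open condition by GIT.

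To carry this out, I would fix $s_j\in S^*$ with cscK metrics $\omega_j$. Property (Sob), together with $|S_{\omega}|$ bounded (scalar curvature is topological) and bounded volume, places $(X_{s_j},L_{s_j})$ in $\mathcal{K}(V,C)$. Theorem~\ref{thm--TVcompactness} and Theorem~\ref{algebraic convergence} then give a limit orbifold $X_\infty$ realized as a point $b_\infty$ of the same Hilbert scheme at a uniform embedding level $k_0$. Meanwhile the family gives another limit, $X_{s_\infty}$. Next I would run Donaldson's argument. Using the Bergman kernel asymptotics of Theorem~\ref{effective bergman function bound} for a fixed large $k$ (chosen as $k\ge r^{-2}(\log r^{-1})^\beta$ with $r$ small), I would show that the embeddings of $X_{s_j}$ by $L^2$-orthonormal bases are $\epsilon$-approximately balanced. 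The error is controlled on $\Omega_r$ by the estimate $Ck^{-2}r^{-4}$, and on $X\setminus\Omega_r$ by volume $Cr^4$ times the uniform $L^\infty$ bound of Proposition~\ref{L-infinity estimate}. Feeding this into the Stoppa/Sz\'ekelyhidi results \cite{Stoppa2009,szekelyhidi2015}, I would deduce that the limiting embedded cycle $X_\infty$ is K-polystable/Chow-polystable in the appropriate quantitative sense. Hence the orbit of the Hilbert point of $X_{s_\infty}$ (which degenerates to $X_\infty$ in the Hilbert scheme) has the polystable point $X_\infty$ in its closure, so $X_{s_\infty}$ is semistable at level $k$.

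Finally, to conclude, I would use the finite automorphism hypothesis and the openness of GIT-stable loci. The set of $s$ whose level-$k$ Hilbert point is semistable, with the relevant invariant (e.g.\ a lower bound for the balancing energy / Mabuchi-type functional) bounded, is Zariski open. The argument above shows that it contains $S^*$. Conversely, on that locus, Donaldson's deformation argument shows that approximately balanced metrics with the estimates of Theorem~\ref{effective bergman function bound} can be perturbed to cscK metrics. This uses the LeBrun--Simanca implicit function theorem around $X_\infty$ once one knows $X_\infty\cong X_{s_\infty}$, which follows from separatedness of polystable orbits plus $X_{s_\infty}$ smooth with finite automorphisms. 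So the two sets coincide.

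I expect the main obstacle to be the approximate-balancing step. Donaldson uses $k$-uniform Bergman estimates everywhere, whereas here the estimates hold only on $\Omega_r$ and require $k\gtrsim r^{-2}(\log r^{-1})^\beta$. One must check that the contribution of the bad set, of size $O(r^4)$ in volume and with densities bounded by $K_0^2$, together with the derivative bounds $Ck^{-1}r^{-3}$ and $Ck^{-1}r^{-4}$, still yields a balancing error smaller than the threshold in the Stoppa--Sz\'ekelyhidi criterion. I would first try optimizing $r$ as a power of $k$ (e.g.\ $r=k^{-1/3}$) and verify that $k^{-2}r^{-4}+r^4\to0$ in the required normalized norm.
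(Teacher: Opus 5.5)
\textbf{There is a genuine gap in the overall architecture of your argument.}

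\emph{The key claim is too strong.} Your claim that every limit of points $s_j\in S^*$ lies in $S^*$ would make $S^*$ open and closed, hence $\emptyset$ or $S$. That is far more than the theorem asserts, and nothing in the hypotheses prevents a special fibre from failing to be cscK. Your first instinct (constructibility plus closedness on strata) has roughly the right shape, but closedness only holds at \emph{generic} limit points.

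\emph{The GIT fallback is unavailable.} You propose to identify $S^*$ with a GIT-semistable locus at one fixed level $k$. This fails for three reasons:
\begin{itemize}
\item cscK is not known to give Chow semistability at a level that is uniform along the family;
\item semistability at a single level does not imply cscK;
\item there is no argument perturbing approximately balanced embeddings at fixed $k$ into cscK metrics. That converse would be a Yau--Tian--Donaldson-type theorem, which is exactly what Donaldson's method is designed to avoid.
\end{itemize}

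\emph{Two intermediate steps also fail.} First, the Chow limit $W$ of $T_j(X_{s_j})$ lies only in the closure of $\bigcup_s\Gamma_s$, which can be strictly larger than $\overline{\Gamma}_{s_\infty}$ (splitting of orbits). So you cannot assert that the orbit of $X_{s_\infty}$ degenerates to $X_\infty$. Second, \cite{Stoppa2009,szekelyhidi2015} say nothing about K-polystability of the singular limit. What they give is \emph{strict positivity} of the Futaki invariant of nontrivial (analytic) degenerations of the smooth cscK fibres $X_{s_j}$.

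\emph{The missing idea is Donaldson's generic-degeneration set $GD$.}
\begin{itemize}
\item By induction on $\dim S$, it suffices to show that $S^*\neq\emptyset$ forces $S^*$ to contain a nonempty Zariski open set.
\item $GD$ contains a nonempty Zariski open set $GD'$, which is connected in the Euclidean topology and meets the Euclidean-open set $S^*$. So it is enough to show that $s_j\in S^*$, $s_j\to\sigma\in GD$ forces $\sigma\in S^*$.
\item By Theorem~\ref{algebraic convergence}, $W\cong X_\infty$. If $[W]\in\Gamma_\sigma$, then $X_\sigma\cong X_\infty$ carries the (now smooth) cscK metric.
\item Otherwise, $GD$ supplies a nontrivial family of degenerations $\mathcal X$ over a neighbourhood $U$ of $\sigma$. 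Then $\widetilde{\mathrm{Fut}}(\mathcal X_{s_j})>0$ contradicts $\widetilde{\mathrm{Fut}}(\mathcal X_\sigma)\le 0$, because $\widetilde{\mathrm{Fut}}(\mathcal X_s)$ is constant in $s\in U$.
\end{itemize}

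\textbf{The quantitative step also fails as you planned it.} The analytic input is $\widetilde{\mathrm{Fut}}(\mathcal X_\sigma)\le 0$. Via \cite[Lemma~3 and \S\S3.1--3.2]{donaldson14}, this reduces to
$k\int_X|\rho_kP(k)^{-1}\omega^2-\omega_{FS,k}^2|\to 0$ uniformly over $s\in S^*$ (this is where Property (Sob) enters). In other words, you need an $o(k^{-1})$ bound, not mere smallness.

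A single-radius choice cannot reach this. The error is then of order $k^{-2}r^{-4}+r^4$, which is $\asymp k^{-2/3}$ at $r=k^{-1/3}$ and still $\asymp k^{-1}$ at the optimum $r=k^{-1/4}$.

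What works is the following.
\begin{itemize}
\item Take $r_0$ as small as Theorem~\ref{effective bergman function bound} permits, $k=r_0^{-2}(\log r_0^{-1})^{\beta}$. Then $\vol(X\setminus\Omega_{r_0})=O(k^{-2}(\log k)^{2\beta})$.
\item On $\Omega_{r_0}$, use the estimates on \emph{all} $\Omega_r$, $r\ge r_0$, simultaneously. The density errors are $\le Ck^{-2}r^{-4}$ on $\Omega_r$, while $\vol(X\setminus\Omega_r)\le Cr^4$. A layer-cake integration then gives a contribution of $O(k^{-2}\log k)$.
\item On the bad set, do not appeal to Proposition~\ref{L-infinity estimate}: it bounds $\rho_k$ but not $\omega_{FS,k}^2$ there. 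Instead, note that $\rho_kP(k)^{-1}\omega^2$ and $\omega_{FS,k}^2$ have equal total mass. Hence the bad-set contribution is bounded by $2\vol(X\setminus\Omega_{r_0})$ plus the good-set error.
\end{itemize}
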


In the following, we first give the proof of Theorem \ref{effective bergman function bound}, and then following Donaldson's argument to prove Theorem \ref{Zariski openness under sob bound}. 


\begin{proof}[Proof of Theorem \ref{effective bergman function bound}]
 It suffices to show that, given any sequence in $ \mathcal{K}(V,C_S)$, there exists a subsequence $X_{j}$ and a constant $C,\beta$ such that the desired estimate holds.
This subsequence $X_j$ converges to a orbifold $X_\infty$, and we can apply the bubbling analysis from Section~\ref{sec--bubble}. 
For each orbifold point $p_k \in X_\infty$, $k=1,\dots,N$, Theorem~\ref{thm-bubble decomposition} provides a bubble tree $\mathcal{T}_k$ and corresponding points $p_{j,I} \in X_j$. In the following, we omit the index $j$.

We define for   $r \in (0, 10^{-1})$.
\begin{equation}\label{geometric regular}
    \Omega_r := X \setminus \bigcup_{k}\bigcup_{I \in \mathcal{T}_k} B(p_I, r).
\end{equation}
Note that the number of orbifold points on $X_\infty$ is uniformly bounded by Theorem \ref{thm--TVcompactness}
By Theorem \ref{thm-bubble decomposition} and Theorem \ref{thm-characterization of orbifold points}, the number of vertices in the tree $\mathcal{T}$ is also bounded, therefore the bound on $Z_r:=X\setminus\Omega_r$ follows from Theorem~\ref{thm-volume bound and epsilon regularity}.

    Let $u_j$ be the psh functions on $X_j$ constructed in Theorem \ref{global weights}. 
    In particular, we have uniform estimate 
\begin{equation}\label{estimate on ball}
    C\log r\leq u_j\leq  0\text{ on }\Omega_r\subset X_j.
\end{equation}

The proof of this result follows the same approach as in \cite[Section~6]{donaldson14}, based on peaked sections method \cite{tian90}. 
The only difference is that, when applying H\"ormander’s $L^2$-estimate to perturb an almost holomorphic section into a holomorphic one, 
we must use the additional weight $u_j$ constructed in Theorem \ref{global weights}. 
Consequently, we require $k$ to be larger than $r^{-2}(\log r^{-1})^{\beta}$, rather than merely $r^{-2}$ as in \cite[Proposition~7]{donaldson14}.

Given an $x\in\Omega_r\subset X_j$. By the definition of $\Omega_r$ in \eqref{geometric regular}, Proposition \ref{prop-curvature bound on geometric regular part} and the standard elliptic estimate for cscK metrics, we know that the geodesic 1/2-ball centered at $x$ under the rescaled metric $r^{-2}g_j$ has bounded geometry. Then a local construction (see \cite[Section 6]{donaldson14} and \cite[Section 7.2]{gabor-book}) produce a smooth section $\sigma_{l,1}$ of $L_j^k$ for $k$ large satisfying 
 \begin{itemize}
 \item $\sigma_{k,1}$ is supported on $B(x,2^{-1}r)$ and $|\sigma_{k,1}|(x)=1$;
     \item $\Vert \pp \sigma_{k,1}\Vert_{L^{2,\#}}=\epsilon(kr^2);$
     \item $\Vert  \sigma_{k,1}\Vert_{L^{2,\#}}=1+\epsilon(kr^2).$
 \end{itemize}
Here $L^{2,\#}$ denotes the $L^2$-norm with respect to the rescaled metric $k\omega$ and $\epsilon(x)$ denotes a quantity (independent of $X_j$) bounded by $$e^{-x^{\alpha}}p(x),$$ where $\alpha>0$ and $p$ is a polynomial in $x$. 
We emphasize that the exponentially decaying factor will play an important role in what follows.
 Applying H\"ormander's $L^2$-estimate with the weight $u_j$, we get smooth sections $\sigma_{k,2}$ satisfying $\pp \sigma_{k,2}=\pp \sigma_{k,1}$ and 
\begin{equation}
\int_{X_j}|\sigma_{k,2}|^2e^{-u_j}\omega_j^2\leq \frac{2}{k}\int_{X_j}|\pp \sigma_{k,1}|^2e^{-u_j}\omega_j^2.
\end{equation}
Writing in the rescaled norm and noting that $u_j\leq 0$ on $X$ and $e^{-u_j}\leq r^{-C}$ on $B(x,2^{-1}r)$, we obtain that 
\begin{equation}
    \|\sigma_{k,2}\|_{L^{2,\#}}\leq 2r^{-C} \|\pp \sigma_{k,1}\|_{L^{2,\#}}.
\end{equation}By choosing $\beta$ sufficiently large, we can ensure that whenever 
$k \geq r^{-2}(\log r^{-1})^{\beta}$, one still has 
\begin{equation}
    \|\sigma_{k,2}\|_{L^{2,\#}} = \epsilon(kr^2),
\end{equation}possibly with a different choice of $\epsilon$. Then, by the same argument as in \cite[Section~6]{donaldson14}, the holomorphic section 
$\sigma_k=\sigma_{k,1}-\sigma_{k,2}$ of $L^k$ satisfies the following properties, which in turn yield the desired estimate for the Bergman kernel $\rho_k$:  
\begin{itemize}
    \item $|\sigma_k(x)|=1+\epsilon(kr^2)$;
    \item there exists a constant $C$ such that, for $kr^2$ sufficiently large,  
    \[
    |\langle\sigma_k,\tau\rangle_{L^{2,\#}}|
    \;\leq\; \frac{C\|\tau\|_{L^{2,\#}}}{kr^2}
    \]
    for any holomorphic section $\tau$ vanishing at $x$;
\item $\|\sigma_k\|_{L^{2,\#}}=1-\frac{S_{\omega}}{4\pi k}+O((kr^2)^{-2})$.
\end{itemize}

For higher-order estimates of the Bergman kernel, we use Tian's peaked section method \cite{tian90} and its refinements in \cite{Ruan,Lu,liulu}. While a detailed application of \cite{Lu,liulu} may yield all high order estimate, we restrict ourselves here to a basic version, which only derives the bound as stated in Theorem \ref{effective bergman function bound}, but this is sufficient for our later purposes.

For a point $x \in \Omega_r$, we consider the rescaled metric and line bundle
\[
g_r = \lceil r^{-2}\rceil g, \qquad L_r = L^{\lceil r^{-2}\rceil}.
\]
In the following, we study sections of $L_r^m$ for sufficiently large $m\in \mathbb{N}$, and let $L^{2,\#}$ denote the $L^2$ norm with respect to the rescaled metric $mg_r$. 
As discussed above, H\"ormander's $L^2$ estimate applies as usual with the weight constructed in Theorem~\ref{global weights}, provided $m \gg (\log r^{-1})^\beta$ for some fixed $\beta$. 

We use the notation in \cite{Lu,liulu}.  
Let $P=(p_1,p_2)$ be a multiindex and let $|P|=p_1+p_2$ and $P!=p_1!p_2!$.    
We order the multiindex $P$ lexicographically as follows:  
\begin{itemize}
    \item by total degree $|P|=p_1+p_2$;
    \item if the total degree is equal, by the first differing component of $P$;
\end{itemize}
Let $\sigma(P):\mathbb{N}_{\geq 0}^2 \to \mathbb{N}_{\geq 0}$ denote this ordering, and $R(j)$ its inverse. 
By \cite[Lemma 3.2]{Ruan}, which refines the results of \cite{tian90}, 
for multiindices $P$ with $|P|\leq 4$, there exists a uniform constant $C$ and holomorphic sections $s_{m,P}$ of $L_r^m$ for $m$ sufficiently large, such that:
\begin{itemize}
    \item $\|s_{m,P}\|_{L^{2,\#}}=1+O(m^{-10})$;  
    \item there exist local $K$-coordinates $(z_1,z_2)$ around $x$ and a local $K$-frame $e_L$ of $L$ such that  
\begin{equation}\label{peaked section}
    s_{m,P} \;=\; \bigl(\lambda_{m,P}\,z^P + o(|z|^5)\bigr)\, e_L^{\otimes m},
\end{equation}
where the constant is given by
\begin{equation}\label{lambda def}
    \lambda_{m,P}^2
    = \frac{1}{m^2}\left(
        \int_{|z|\leq \tfrac{\log m}{\sqrt m}}
        |z^P|^2\, |e_L|^{2m}\, \omega^2
    \right)^{-1};
\end{equation}

    \item for two multiindices $P\neq P'$, with $|P|,|P'|\leq 4$, we have
    \begin{equation}\label{eq-almost orthogonal for peak section}
        |\langle s_{m,P}, s_{m,P'} \rangle_{L^{2,\#}}| \leq C\,m^{-1-\frac{|\,|P|-|P'|\,|}{2}};
    \end{equation}
    \item for any section $\tau$ of $L^m$ vanishing to order $q\geq 5$, 
    \begin{equation}\label{eq--almost orthogonal}
        |\langle s_{m,P}, \tau \rangle_{L^{2,\#}}|\leq C\,m^{-1-\frac{|q-|P||}{2}}\|\tau\|_{L^{2,\#}}.
    \end{equation} 
\end{itemize}Here, $O(m^{-10})$ denotes a term bounded by $C' m^{-10}$, where the constant $C'$ depends only on the local geometry of $X$ near $x$ and is therefore uniformly bounded. It is also worth noting that, although there is no uniform pointwise control on the remainder term $o(\cdot)$ in the expansion \eqref{peaked section}, one still has a uniform $L^2$-norm estimate together with control on its vanishing order. This is sufficient for our purposes, since in later applications we use this expansion only to compute derivatives at the fixed point $x$ and to evaluate global $L^2$-inner products.  The definitions of $K$-coordinates and $K$-frames can be found in \cite[Proposition 2.1]{Ruan} and \cite[Definition A.2]{liulu}. These notions are essential for the proofs in these papers. Here we only use the fact that
\begin{equation}\label{property of K-coordinate}
   \nabla |e_L|^2(x)=0,\quad  \frac{\partial^2}{\partial z_i\partial z_j}|e_L|^2(x)=0, \quad \ii\partial\pp \left(\sum_i |z_i|^2 |e_L|^2\right)(x)=0.
\end{equation}
Following the argument in \cite[Section 4]{Lu}, we can expand the constant $\lambda_{m,P}$ up to certain order of $m$:
\begin{equation}\label{expansion of lambda00}
    \lambda_{m,(0,0)}^2=1+\frac{S_{\omega}}{4\pi}m^{-1}+O(m^{-2});
\end{equation}
\begin{equation}\label{expansion of lambda01}
\begin{aligned}
     \lambda^2_{m,(1,0)}&=m\left(1+\frac{1}{m}\int_{\mathbb C^2}|z_1|^2(\frac{1}{4}R_{i\bar jk\bar l}z_i\bar z_jz_k\bar z_l-R_{i\bar j}z_i\bar z_j)\frac{dV_0}{(2\pi)^2}+O(m^{-2})\right)^{-1}\\
     &=m+\frac{S_{\omega}}{4\pi}+O(m^{-1});
\end{aligned}
\end{equation}
\begin{equation}\label{expansion of lambda10}
\begin{aligned}
     \lambda^2_{m,(0,1)}&=m\left(1+\frac{1}{m}\int_{\mathbb C^2}|z_2|^2(\frac{1}{4}R_{i\bar jk\bar l}z_i\bar z_jz_k\bar z_l-R_{i\bar j}z_i\bar z_j)\frac{dV_0}{(2\pi)^2}+O(m^{-2})\right)^{-1}\\
&=m+\frac{S_{\omega}}{4\pi}+O(m^{-1}).
\end{aligned}
\end{equation}
Moreover we can improve the estimate in \eqref{eq-almost orthogonal for peak section} for $s_{m,(1,0)}$ and $s_{m,(0,1)}$:
\begin{equation}
\begin{aligned}
      \langle s_{m,(1,0)},s_{m,(0,1)}\rangle_{L^{2,\#}}&=\frac{1}{m}\int_{\mathbb C^2}z_1\bar z_2(\frac{1}{4}R_{i\bar jk\bar l}z_i\bar z_jz_k\bar z_l-R_{i\bar j}z_i\bar z_j)\frac{dV_0}{(2\pi)^2}+O(m^{-2})\\
      &=O(m^{-2})
\end{aligned}
\end{equation}
Then we can construct an $L^2$-orthonormal basis as follows.  
First, for $j \geq 15$, let $\{s_{m,j}\}$ be an orthonormal basis for the subspace of holomorphic sections vanishing to order at least $5$ at $x$.  
Next, apply the Gram--Schmidt process to the family 
\[
\{\, s_{m,P}, s_{m,j} \mid |P|\leq 4 ,j\geq 15 \,\},
\] 
to obtain an orthonormal basis $\{s_{m,j}\}_{j=0}^{d_m}$ by write
\[
    s_{m,i} \;=\; (1+\epsilon_{ii})\, s_{m,R(i)} \;+\; \sum_{j>i}^{14}\epsilon_{ij}\, s_{m,R(j)}+\sum_{j\geq 15}\epsilon_{m,j}s_{m,j},
    \qquad 0\leq i \leq 14,
\]
where $d_m=\dim H^0(X,L^m)\leq Cm^2$.
By the properties of peak sections discussed above \eqref{peaked section}--\eqref{eq--almost orthogonal}, 
we obtain estimates for the error terms $\epsilon_{ij}$.
In what follows, we record only those estimates that are sufficient for our later proof, 
without aiming for their sharpest form.
\begin{equation}\label{estimate on error terms}
\begin{aligned}
      &  |\epsilon_{00}|\leq \frac{C}{m^{3}}.\quad  |\epsilon_{0j}|\leq \frac{C}{m^{3/2}} \text{ for } j=1,2\quad |\epsilon_{0j}|\leq \frac{C}{m^{2}} \text{ for } j\geq 3 \\
      &     |\epsilon_{1j}|\leq \frac{C}{m^2} \text{ for } j=1,2\quad   |\epsilon_{1j}|\leq \frac{C}{m^{3/2}} \quad \text{ for } j=3,\cdots, 14 \\
      & |\epsilon_{22}|\leq \frac{C}{m^3}  \quad   |\epsilon_{2j}|\leq \frac{C}{m^{3/2}} \text{ for } j=3,\cdots,14 \\
    &|\epsilon_{ij}|\leq \frac{C}{m^3} \text{ for } i=0,1,2 \text{ and } j\geq 15.
\end{aligned}
\end{equation}
Then we can estimate the Bergman kernel $\rho_m=\sum_{i=0}^{d_m}|s_{m,i}|^2$ at the point $x$. 
We have
\begin{equation}
    |\nabla_{g_r}\rho_m|(x)\leq \frac{C}{m^{3/2}}\left(|\nabla_{g_r} s_{m,R(1)}|+|\nabla_{g_r}s_{m,R(2)}|\right)\leq \frac{C}{m}.
\end{equation}
\begin{equation}
\begin{aligned}
       |\nabla^2_{g_r}\rho_m|(x)\leq & \left|\sum_{i=0}^2\nabla^2_{g_r}|s_{m,R(i)}|^2\right|+C\sum_{i=0}^2\left|\epsilon_{ii}\nabla^2_{g_r} |s_{m,R(i)} |^2\right|\\
&+C\sum_{i=3}^5|\epsilon_{0i}\nabla^2_{g_r}\langle s_{m,R(i)},s_{m,R(0)}\rangle|+\sum_{i,j=1}^2\left|\epsilon_{0i}\epsilon_{0j}\nabla^2_{g_r} \langle s_{m,R(i)},s_{m,R(j)}\rangle\right|\\ &+2\left|\epsilon_{12}\nabla^2_{g_r}\langle s_{m,R(1)},s_{m,R(2)}\rangle\right|
\end{aligned}
\end{equation}Note that by the construction of the peaked section \eqref{peaked section}--\eqref{property of K-coordinate}, and by the estimate \eqref{expansion of lambda00}--\eqref{expansion of lambda10}, we know that
\begin{equation}
    \sum_{i=0}^2\nabla^2_{g_r}|s_{m,R(i)}|^2(x)=O(m^{-1}), \quad \left|\nabla^2_{g_r}\langle s_{m,R(i)},s_{m,R(j)}\rangle\right|(x)\leq Cm, \text{ for } i,j\in \{1,2\}.
\end{equation} Then by the estimate in \eqref{estimate on error terms} on the $\epsilon_{ij}$, we get 
\begin{equation}
    |\nabla^2_{g_r}\rho_m|(x)\leq \frac{C}{m}.
\end{equation}

Taking into account the rescaling of the metric $g_r=\lceil r^{-2}\rceil g$ 
and the corresponding change of polarization 
$L_r = L^{\lceil r^{-2}\rceil},$ (so $k=m\lceil r^{-2}\rceil$),
we then arrive at the desired estimate.
\end{proof}

With the preparations in place, the proof of the Zariski openness proceeds along 
the lines of \cite{donaldson14}; for the reader's convenience, we provide a brief 
sketch here, highlighting the main differences.

\begin{proof}[Proof of Theorem \ref{Zariski openness under sob bound}.]
First, by replacing the polarization with a sufficiently high power, we may assume the line bundle is very ample and that all higher cohomology groups vanish, 
so that the dimension of $H^0(X_s, L_s)$ is a fixed number $N+1$ for all $s \in S$, given by the Riemann–Roch formula (we can achieve this since the dimension of cohomology group is upper-semicontinuous \cite[Theorem 12.8]{hartshorne}). 
Choosing a basis for this space of sections, the embedding of $X_s$ corresponds to a point in the Chow variety 
$\mathrm{Chow}=\operatorname{Chow}(N,d)$ of cycles of degree $d$ in $\mathbb{P}^N$. 
For $s\in X_s$, let $\Gamma_s$ denote the $SL(N+1)$-orbit of the point $[X_s]$ in $\operatorname{Chow}$.

As we can do induction on the dimension of the base $S$, it is sufficient to show that if $S^*$ is non-empty, then it contains a non-empty Zariski subset of $S$. Then Donaldson introduced a subset $GD$ of $S$, where the name stands for ``generic degeneration". A point $\sigma\in GD$ \cite{donaldson14} if and only for any $(\sigma,[W])$ in the closure of 
   $ \bigcup_{s\in S, Z\in \Gamma_s}(s,[Z]),$ it satisfies
   \begin{itemize}
        \item  the cycle $[W]\in \overline{\Gamma}_{\sigma}$;
       \item if $[W]\in \overline{\Gamma}_{\sigma}\setminus\Gamma_{\sigma}$, then it can be realized as the central fiber of a non-trivial family of degenerations. 
   \end{itemize}
      
 This set is introduced to rule out the phenomenon of “splitting of orbits,” 
namely that the closure of a union of group orbits may be strictly larger than 
the union of the closures of the individual orbits. Following \cite{donaldson14}, we now explain the notion of a family of (analytic) 
degenerations. Let $U$ be an open subset of $S$, let $\Delta$ denote the unit disc 
in $\mathbb{C}$, and $\Delta^*$ the punctured disc. A flat family of degenerations 
parameterized by $U$ consists of a normal variety $\mathcal{X}$, a flat morphism 
\[
\mathcal{X} \to U \times \Delta
\]
together with a line bundle $\mathcal{L} \to \mathcal{X}$, which is very ample when 
restricted to each fiber. Moreover, the restriction of $(\mathcal{X},\mathcal{L})$ 
to $U \times \Delta^*$ is isomorphic to the polarized family obtained by first 
restricting $(\underline{X}, L)$ to $U$ and then pulling it back to $U \times \Delta^*$. 
In particular, for each $s \in U$, the restriction of $(\mathcal{X},\mathcal{L})$ to 
$\{s\} \times \Delta$ defines a degeneration of $(X_s,L_s)$. Let $\mathcal{X}_s$ denote this degeneration of $X_s$. $\mathcal{X}$ is said to be non-trivial, if for each $s\in U$, $\mathcal{X}_s$ is not isomorphic to a product.

Note that in the original definition, $\mathcal{X}$ is not required to be normal. 
However, in the applications below we will only consider the case where the fiber of $\mathcal{X}$ over $(\sigma,0)$ is normal. 
By \cite[Theorem 1.100 and Theorem 1.101]{GLS}, there exists a small neighborhood $U'$ of $\sigma$ such that the fibers of $\mathcal{X}$ over $(u,0)$ are normal for all $u \in U'$, and $\mathcal{X}$ itself is normal. 
Hence, without loss of generality, we include the normality requirement in the definition. Moreover, in \cite[Definition~3]{donaldson14}, it is not stated explicitly that the degeneration must be non-trivial when $[W]\in \overline{\Gamma}_{\sigma}\setminus\Gamma_{\sigma}$, but this follows from the argument given in \cite[Section~5.4]{donaldson14}.

 It is proved in \cite[Proposition~1]{donaldson14} that $GD$ contains a nonempty Zariski open subset $GD' \subset S$. 
Our goal is to show
\begin{equation}\label{contain a zariski open subset}
    GD' \subset S^*.
\end{equation}
Since $GD'$ is Zariski open and $S^* \neq \emptyset$ is open in the Euclidean topology, we know that
\begin{equation}
    GD' \cap S^* \neq \emptyset.
\end{equation}
Moreover, $GD'$ is connected in the Euclidean topology \cite[Theorem~7.1]{shafarevich2013}. 
Therefore, to prove \eqref{contain a zariski open subset}, it suffices to show that 
$GD' \cap S^*$ is closed in $GD'$ with respect to the Euclidean topology. 
In fact, we will establish the stronger statement that if a sequence $s_j \in S^*$ converges (in the Euclidean topology) to some $\sigma \in GD$, then $\sigma$ also lies in $S^*$.

Write $(X_j,L_j)=(X_{s_j},L_{s_j})$, then choosing $L^2$-orthonormal basis of $H^0(X_j,L_j)$, we get embeddings $T_j:X_j\rightarrow \mathbb{CP}^N$. Taking a subsequence, we can assume $T_j(X_j)$ converges to some algebraic cycle $W$, which by Proposition \ref{algebraic convergence} is isomorphic to $X_{\infty}$, the polarized Gromov-Hausdorff limit of $X_j$. In particular, $W$ is normal and with only quotient singularities. There are two cases due to the assumption $\sigma\in GD$:
\begin{itemize}
    \item $[W]\in \Gamma_{\sigma}$. Then this would imply $X_{\infty}$ is biholomorphic to $X_{\sigma}$. Then the cscK orbifold metric on $X_{\infty}$ would be smooth and therefore $\sigma\in S^*$.
    \item  $[W]\in \overline{\Gamma}_{\sigma}\setminus\Gamma_{\sigma}$. Moreover we get a non-trivial family of degenerations $\mathcal{X}$ parametrized by $U$, where $U$ is small neighborhood of $\sigma$ in $S$. We want to derive a contradiction in this case. 

    For an analytic degeneration $(\mathcal X,\mathcal L)\to\Delta$ of a given polarized manifold $(X,L)$, one can still define certain  Futaki type numerical invariant as the usual case of test configurations; see \cite[Section 2.3]{donaldson14} and \cite[Proposition 11]{szekelyhidi2015}. Denote this by $\widetilde{\mathrm{Fut}}(\mathcal X)$.
  Choose $j$ large so that $s_j \in U$. Since the restriction of $\mathcal{X}$ to $s_j \times \Delta$ is non-trivial, the results of \cite{Stoppa2009,szekelyhidi2015} and \cite[Proposition 5]{donaldson14} imply 
\begin{equation}
    \widetilde{\mathrm{Fut}}(\mathcal{X}_{s_j}) > 0.
\end{equation}
 On the other hand, as noted by Donaldson, the differential-geometric inputs ensure that 
\begin{equation}\label{futaki non-positive}
\widetilde{\mathrm{Fut}}(\mathcal{X}_{\sigma}) \leq 0.
\end{equation}
However, $\widetilde{\mathrm{Fut}}(\mathcal{X}_{s})$ is constant for $s \in U$ from the definition, yielding a contradiction.
\end{itemize}
In the following, we give the proof of \eqref{futaki non-positive}. Since Theorem~\ref{algebraic convergence} holds, \cite[Lemma~3]{donaldson14} remains applicable. 
Thus, by the argument in \cite[Section 3.1 and Section 3.2]{donaldson14}, the inequality \eqref{futaki non-positive} reduces to showing that
\begin{equation}\label{inequ to be proved}
   \limsup_{k\rightarrow\infty} k\int_X \Big|\frac{\rho_k}{P(k)} \, \omega^2 - \omega_{FS,k}^2\Big| \leq 0,
\end{equation}
where 
\begin{itemize}
    \item $X = X_s$ for some $s \in S^*$, and $\omega \in c_1(L_s)$ is the cscK metric;
    \item $L^2$-orthonormal holomorphic sections are used to define the embedding $T_k: X \to \mathbb{P}^{N_k}$, 
    and $\omega_{FS,k}$ is defined as $k^{-1}$ times the pull-back of the Fubini--Study metric via $T_k$;
    \item $\rho_{k}=\rho_{k,X} = \sum_{\alpha} |s_\alpha|^2=1+\frac{S_{\omega}}{4\pi}k^{-1}+\eta_k$ is the Bergman kernel function;
    \item $P(k) = \frac{\dim H^0(X,L^k)}{\int_X \frac{\omega^2}{2} k^2} = 1 + \frac{S_{\omega}}{4\pi} k^{-1} + \alpha_2 k^{-2}$.
\end{itemize}
By the definition of the Fubini-Study metric and the Bergman kernel, we have 
\begin{equation}
    \omega_{FS,k}=\omega+(2\pi k)^{-1}\ii\partial\pp \log \rho_k.
\end{equation}
As in \cite{donaldson14}, we write $\omega_{FS,k}^2=(1+F_1)\omega^2$ and $\frac{\rho_k}{P(k)}=1+F_2$. Then one can easily show that for any $r>0$, 
\begin{equation}\label{estimate of chow weights}
     \int_X \Big|\frac{\rho_k}{P(k)} \, \omega^2 - \omega_{FS,k}^2\Big| \leq 2\int_{\Omega_r}(|F_1|+|F_2|)\frac{\omega^2}{2}+2\vol(Z_r).
\end{equation}
Let $\beta$ be the constant given in Theorem \ref{effective bergman function bound}. Then for $k$ large, we let $r_0\in (0,10^{-1})$ the number such that $k=r_0^{-2}(\log r_0^{-1})^{\beta}$ and choose $r=r_0$ in \eqref{estimate of chow weights} to estimate the left hand side. The second term can be bounded by 
\begin{equation}\label{estimate for the second term}
    Cr_0^4\leq C\frac{(\log r_0^{-1})^{4\beta}}{k^2}\leq C\frac{(\log k)^{4\beta}}{k^2}.
\end{equation} To estimate the first term in \eqref{estimate of chow weights}, we use the Fubini theorem as in \cite{donaldson14}. Note that the estimate in Theorem \ref{effective bergman function bound} implies that for any $r\geq r_0$, $|F_2|\leq Ck^{-2}r^{-4}$ and 
\begin{equation}
\begin{aligned}
        |F_1|\leq & C \left(\frac{|\Delta \log \rho_k|}{k}+\frac{|\partial\pp \log \rho_k|^2}{k^2}\right)\leq   Ck^{-2}r^{-4}
\end{aligned}
\end{equation}Write $|F_1|+|F_2|=k^{-2}F$, then we have $F\leq Cr^{-4}$ on $\Omega_r$.
Then we can estimate
\begin{equation}\label{estimate for the first term}
\begin{aligned}
        \int_{\Omega_{r_0}}(|F_1|+|F_2|)\frac{\omega^2}{2}&= Ck^{-2}\int_0^{r_0^{-4}}\vol(\{x\in \Omega_{r_0}\mid F(x)\geq Ct\})dt\\
        &\leq Ck^{-2}\int_0^{r_0^{-4}}\vol(\Omega_{r_0}\cap Z_{Ct^{-1/4}})dt\\
        &\leq Ck^{-2}(1+\log r_0^{-4})\leq Ck^{-2}(1+\log k).
\end{aligned}
\end{equation}
Then the desired inequality \eqref{inequ to be proved} follows from \eqref{estimate for the first term} and \eqref{estimate for the second term}.
\end{proof}


\section{Non-collapsing implies Sobolev constant bound}\label{sec-noncollapsing imply sobolev}
As mentioned in the introduction, we are going to prove that the \emph{a priori} stronger Sobolev bound is equivalent to the weaker volume non-collapsing condition. The key input here is the compactness results established in \cite{tian-viaclovsky2}, which avoid the assumption on the Sobolev constant bound.
We remark here that the polarization assumption is used only to derive topological finiteness and a uniform $L^2$-bound on the curvature via Matsusaka’s big theorem. 
The following argument remains valid as long as one imposes a uniform bound on the first Betti number and a uniform $L^2$-bound on the curvature, which is required for the compactness results in \cite{tian-viaclovsky1,tian-viaclovsky2}.

\begin{theorem}[\cite{tian-viaclovsky2}]\label{thm-compact for non-collapsing}
   Given any sequence $(X_j,\omega_j,p_j)\in\widetilde{\mathcal K}(V,\kappa)$, the following holds:
    \begin{itemize}
        \item  by passing to some subsequence, it converges to a polarized cscK orbifold in the polarized $\hat C^{\infty}$-Cheeger-Gromov sense; 
        \item given any sequence of scales $\lambda_j\to\infty$, by passing to some subsequence,  the blow-up sequence $(X_j,\lambda_j^2\omega_j,p_j)$ converges to an ALE scalar-flat orbifold in the pointed $\hat C^{\infty}$-Cheeger-Gromov sense.
\item  Moreover, both the number of orbifold points and the orders 
of the local orbifold groups on the limit are bounded by a quantity depends only $V$ and $\kappa$. 
    \end{itemize}
\end{theorem}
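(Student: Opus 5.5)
The plan is to reduce the statement to the compactness machinery of \cite{tian-viaclovsky2}, after checking that the class $\widetilde{\mathcal K}(V,\kappa)$ satisfies its hypotheses.

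\textbf{Step 1 (a priori input).} By Matsusaka's big theorem and the discussion in Section~\ref{sec-boundedness}, the volume bound and $|S_\omega|\le 1$ bound $c_1(L)^2$ and $c_1(X)\cdot c_1(L)$. This gives finitely many diffeomorphism types, hence a uniform bound on $b_1$ and on $|c_1^2|+|c_2|$. Calabi's identities \eqref{c1 and ric L2} then give the uniform bound $\|\Rm\|_{L^2}\le\Lambda$, and no Sobolev bound is needed for this step. Together with the non-collapsing \eqref{assumption--volume non-collapsing}, these are exactly the hypotheses of \cite{tian-viaclovsky2}: bounded $b_1$, bounded $L^2$-curvature, bounded scalar curvature, and a volume lower bound.

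\textbf{Step 2 ($\epsilon$-regularity without a Sobolev constant).} This is the main obstacle. The point is to replace the global Sobolev inequality by a local one at scales where the curvature energy is small. On a ball $B(p,r)$ with $\int_{B(p,r)}|\Rm|^2<\epsilon$, I would argue by contradiction and blow up at the point of maximal $|\Rm|\,d(\cdot,\partial B)^2$. The blow-up limit is complete with bounded curvature, non-collapsed by \eqref{assumption--volume non-collapsing}, and has small energy. On it, Croke's estimate \cite{croke80} (the curvature is bounded there, so Ricci is bounded below) gives a local Sobolev constant. The usual Moser iteration for the elliptic system satisfied by cscK curvature then yields a contradiction. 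This is the route of \cite{tian-viaclovsky2}, and I would import it: it gives $\epsilon$-regularity, and then a volume upper bound $\vol(B(p,r))\le V_1r^4$ via a covering argument. With $\epsilon$-regularity and the energy bound, the curvature concentrates only at finitely many points, at most $\Lambda^2/\epsilon_0$ of them. Away from these points Cheeger--Gromov compactness applies, using the injectivity radius bounds supplied by non-collapsing and bounded curvature. This gives an orbifold limit, and removable singularity theory for the cscK equation at isolated points gives the orbifold structure. The polarization then upgrades as in the lemma of Section~\ref{orbifold compactness}, by limiting transition functions of the line bundles.

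\textbf{Step 3 (blow-ups).} For $\lambda_j\to\infty$, the rescaled metrics have scalar curvature $\lambda_j^{-2}S\to 0$. They are $\kappa$-non-collapsed on balls of radius up to $\tfrac12\lambda_j\,\mathrm{diam}\to\infty$, and they keep the same $L^2$ energy bound, since this energy is scale invariant in dimension four. Steps 1--2 therefore apply locally and give a complete scalar-flat K\"ahler orbifold limit. This limit has Euclidean volume growth, by non-collapsing and Step 2, and finite $L^2$ curvature. The argument of \cite{tian-viaclovsky1} (Bando--Kasue--Nakajima type) then shows the limit is ALE.

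\textbf{Step 4 (counting).} Each orbifold point carries energy at least $\epsilon_0$, so there are at most $\Lambda^2/\epsilon_0$ orbifold points. For the orders, the tangent cone at an orbifold point is $\mathbb R^4/\Gamma$, and volume convergence gives it volume ratio $\omega_4/|\Gamma|\ge\kappa$, where $\omega_4$ is the volume of the unit Euclidean ball. Hence $|\Gamma|\le\omega_4/\kappa$, and both bounds depend only on $V$ and $\kappa$.
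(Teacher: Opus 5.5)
Your Step~1 is exactly the paper's reduction. Matsusaka's theorem and Calabi's identities are used only to get topological finiteness (hence a uniform $b_1$ bound) and a uniform $L^2$ curvature bound, and the paper then simply cites \cite{tian-viaclovsky2}. Your point-selection argument with Croke and Moser iteration also matches the non-quantitative $\epsilon$-regularity that the paper describes. If you only meant to cite \cite{tian-viaclovsky2}, that would suffice. However, two steps of your outline fail as written.

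\textbf{The volume upper bound.} The scale-invariant bound $\vol(B(p,r))\le V_1r^4$ does not follow from $\epsilon$-regularity by a covering argument. Covering $B(p,r)$ by balls of radius $s$ with controlled curvature gives $\vol(B(p,r))\le N\cdot Cs^4$. The only available bound on $N$ is by packing, $N\le \vol(B(p,r+s))/(\kappa (s/2)^4)$, which is circular. Nothing local prevents $B(p,r)$ from containing many pieces of volume comparable to $r^4$, for instance many components of the annuli $A_p(r/2,r)$. This bound is exactly what gives pointed Gromov precompactness of the rescaled sequences in Step~3 and Euclidean volume growth of their limits. In Tian--Viaclovsky's theory it is where topological input is needed. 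You list the $b_1$ bound in Step~1 but never use it, which signals that some step is claiming more than it can deliver.

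\textbf{Removable singularities and the ALE structure.} Croke's estimate gives a Sobolev inequality only where the curvature is bounded at the relevant scale, i.e.\ on the regular region. Near a point of curvature concentration, and on the end of a blow-up limit, there is no such bound. The removable singularity theorem and the ALE theorem of \cite{tian-viaclovsky1} that you invoke are proved assuming a Sobolev inequality there, so using them here is circular. The missing idea is what the paper singles out as the second key observation of \cite{tian-viaclovsky2}: work first at the $C^0$ level.
\begin{itemize}
\item Quantitative $\epsilon$-regularity on annuli gives $|\Rm|=o(d^{-2})$, with $d$ the distance to the point, so tangent cones are flat.
\item Flatness together with non-collapsing already forces the tangent cone to be uniquely $\mathbb C^2/\Gamma$.
\item From this one constructs a $C^0$ extension of the metric across the point as in \cite[Proposition 5.10]{DS1}, which yields a uniform Sobolev inequality near the singularity.
\item Only then does the regularity theory of \cite{tian-viaclovsky1} apply.
\end{itemize}
The same argument at infinity gives the ALE structure in Step~3. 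Once these two points are supplied, your Step~4 is fine: counting orbifold points by energy and bounding $|\Gamma|$ via the volume ratio of $\mathbb R^4/\Gamma$.
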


We include here some explanation of the derivation of Theorem \ref{thm-compact for non-collapsing}. There are two major observations used in \cite{tian-viaclovsky2} to overcome the difficulty of lacking a priori Sobolev bound. Firstly, under the volume non-collapsing condition, a \textit{non-quantitative} $\epsilon$-regularity theorem \cite[Section 3]{tian-viaclovsky2} can be proved by a blow-up argument via a point selection process. Here being non-quantitative means that the geometry (say local Sobolev bound or curvature) is uniformly bounded when energy is small, but not bounded in terms of energy. But then one can run PDE arguments in \cite{tian-viaclovsky0} to upgrade into a quantitative $\epsilon$-regularity theorem. This allows us to understand the geometry in the regular region. Secondly, near singular set, the derivation of a Sobolev constant bound relies only on the information of metrics at the $C^0$-level, which in turn only depends on the knowledge of cone structure. For instance, to prove the removable singularity theorem in this setting, we first show that the tangent cone is uniquely isometric to $\mathbb C^2/\Gamma$. This only requires geometric input, namely tangent cones are flat and volume non-collapsing, but no further analytic ingredients. From the knowledge of tangent cones, it's standard to see there is a $C^0$-extension of metric across singularity \cite[Proposition 5.10]{DS1}, so in particular there is a uniform Sobolev bound around curvature singularity. From this we can go to higher regularities, as established in previous theory \cite{tian-viaclovsky1}. The same idea applies, also to the ALE structure at infinity as well as neck structural result.

With Theorem~\ref{thm-compact for non-collapsing} in hand, we can apply essentially the same argument as in the proof of Theorem \ref{thm-bubble decomposition} to obtain the bubbling tree structure for any sequence $\{X_j\} \subset \tilde{K}(V,\kappa)$. 
The only new ingredient is the following gap theorem for ALE scalar-flat K\"ahler orbifolds, which is used in the proof of Lemma~\ref{lem-no intermediate scales} and need to be re-established without assuming a universal bound on Sobolev constant.
\begin{lemma}
    There exists $\epsilon_0=\epsilon_0(\kappa)$ such that the following holds. Given $(Z,\omega_Z)$ an ALE K\"ahler scalar-flat orbifold with at most one orbifold point, and $\mathrm{Vol}(B(q,r))\geq\kappa r^4$ for any $B(q,r)\subset Z$. Then if $\Vert \mathrm{Rm}(\omega_Z)\Vert_{L^2(Z)}<\epsilon_0$, then $(Z,\omega_Z)$ is a flat cone.
\end{lemma}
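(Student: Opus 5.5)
We argue by contradiction, reducing to a setting where the Sobolev constant is controlled and the gap theorem of \cite[Corollary 4.5]{chen-weber11} applies.

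Suppose there are ALE scalar-flat K\"ahler orbifolds $(Z_i,\omega_i)$, each with at most one orbifold point and satisfying the volume lower bound $\vol(B(q,r))\ge \kappa r^4$, such that $\Vert \Rm(\omega_i)\Vert_{L^2}\to 0$ while no $Z_i$ is a flat cone. Curvature decays at infinity on an ALE end, and $\omega_i$ is orbifold smooth, so $\sup_{Z_i}|\Rm|$ is attained or approached at some point $q_i$. Since $Z_i$ is not flat, this supremum is positive; if it were infinite we would instead work at the orbifold point. Rescale so that $\sup_{Z_i}|\Rm|=1$, attained near $q_i$. Both the scale-invariant $L^2$-energy and the volume non-collapsing condition survive this rescaling.

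The first task is to obtain local geometric control without a Sobolev bound. Here we use the non-quantitative $\epsilon$-regularity of \cite[Section 3]{tian-viaclovsky2}, which holds under volume non-collapsing with small energy. Once $|\Rm|\le 1$ globally and energy is small, the non-collapsing bound gives a lower injectivity radius bound at regular points, or orbifold charts of bounded order near the orbifold point. The $L^2$-energy on unit balls tends to $0$. We then run the standard elliptic theory for cscK metrics: the traceless Ricci tensor is harmonic and the curvature satisfies an elliptic system. Following \cite{tian-viaclovsky0}, this gives $C^{k}$ bounds on $\Rm$ on unit balls, uniform in $i$.

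Next we pass to the limit. The sequence $(Z_i,\omega_i,q_i)$ then subconverges in the pointed $\hat C^\infty$ sense, via Theorem \ref{thm-compact for non-collapsing}-type compactness. The orbifold point, if present, either stays at bounded distance and remains an orbifold point of bounded order, or escapes to infinity. By Fatou's lemma the limit has $\int|\Rm|^2=0$, so it is flat. On the other hand, smooth convergence near $q_i$ preserves $|\Rm|(q_\infty)=1$. This is a contradiction, provided we rule out the possibility that $|\Rm|(q_i)=1$ is realized only in the limit as $q_i$ tends to the orbifold point. We handle this by working in the uniformizing chart, where the lift of the metric is smooth and curvature is bounded, so the same argument applies upstairs.

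The main obstacle is the uniform local regularity in the step above, namely passing from $|\Rm|\le1$ plus small energy to higher derivative bounds near the orbifold point, which needs a uniform bound on the order of the orbifold group. That order is controlled by non-collapsing, since $\vol(B(x,r))\approx \omega_4 r^4/|\Gamma|\ge\kappa r^4$ gives $|\Gamma|\le \omega_4/\kappa$. A cleaner route, if preferred, is this: volume non-collapsing together with flatness of all tangent cones and asymptotic cones, which follows from small energy as in \cite{tian-viaclovsky2}, yields a $C^0$-extension across the singular point and at infinity as in \cite[Proposition 5.10]{DS1}. That would give a Sobolev constant bound on $Z$ depending only on $\kappa$, after which \cite[Corollary 4.5]{chen-weber11} applies directly.
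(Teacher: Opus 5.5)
Your main argument works, but it takes a genuinely different route from the paper's. The paper's proof is topological. Non-collapsing bounds $|\Gamma_o|$ and $|\Gamma_\infty|$, so only finitely many subgroups of $U(2)$ can occur. Hence the orbifold corrections $1-1/|\Gamma_o|$, $1/|\Gamma_\infty|$ and the eta invariants $\eta(S^3/\Gamma_o)$, $\eta(S^3/\Gamma_\infty)$ take finitely many values. Integrality of $\sigma(Z)$ and $\chi(Z)$ in the signature and Gauss--Bonnet--Chern formulas then forces $\int_Z|W^-|^2$, and afterwards $\int_Z|\Ric|^2$, into a discrete set. So both vanish once the energy is small, and flatness follows because $W^+$ and $S$ vanish for a scalar-flat K\"ahler metric.

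You instead pick a point of maximal curvature, normalize $\sup|\Rm|=1$, and use uniform elliptic regularity for scalar-flat K\"ahler metrics in non-collapsed charts. This shows that a definite amount of energy sits near that point. Two simplifications are available. Once $|\Rm|\le 1$ globally, the appeal to the $\epsilon$-regularity of \cite{tian-viaclovsky2} is superfluous. You also do not need to pass to a limit: a bound $|\nabla\Rm|\le C(\kappa)$ gives $|\Rm|\ge 1/2$ on a ball of radius $c(\kappa)$ about the maximum point, hence $\int_Z|\Rm|^2\ge c'(\kappa)$.

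What each approach buys:
\begin{itemize}
\item The paper's route is short given the orbifold index formulas, and yields a gap constant determined by finitely many eta invariants.
\item Your route is local and avoids index theory entirely. However, it gives a non-explicit constant and leans on one step you only assert: near the orbifold point you need a uniformizing chart of radius $r_0(\kappa)$ on whose lift the injectivity radius is bounded below.
\end{itemize}
The bound $|\Gamma_o|\le\omega_4/\kappa$ is only part of that step. You must also rule out short geodesic loops at $o$ that do not come from $\Gamma_o$. This follows from non-collapsing by a Cheeger--Gromov--Taylor type argument carried out in the orbifold chart, and it should be stated as such.

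The alternative you sketch at the end should be discarded. Flatness of the tangent cone at $o$ and of the asymptotic cone holds for every ALE orbifold regardless of energy. A $C^0$-extension in the sense of \cite[Proposition 5.10]{DS1} only controls the Sobolev inequality at scales where $Z$ is already close to these cones. Those scales depend on $Z$, not just on $\kappa$, so no Sobolev constant depending only on $\kappa$ follows. Without such a constant, \cite[Corollary 4.5]{chen-weber11} gives no uniform gap. Moreover, in the paper this lemma is an input to the argument that produces Sobolev bounds from non-collapsing, so it cannot presuppose one.
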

\begin{proof}
    Let $\Gamma_o$ be the orbifold group at the unique orbifold singularity, and $\Gamma_\infty$ be the orbifold group at infinity. By Gau\ss-Bonnet-Chern formula and signature theorem, we have
    \begin{equation}
        \begin{aligned}
            &\chi(Z)=\frac{1}{8\pi^2}(\int_Z\vert W^-\vert^2-\frac{1}{2}\int_Z\vert \mathrm{Ric}\vert^2)+1-\frac{1}{\vert\Gamma_o\vert}+\frac{1}{\vert\Gamma_\infty\vert},\\
            &\sigma(Z)=-\frac{1}{12\pi^2}\int_Z\vert W^-\vert^2-\eta(S^3/\Gamma_o)+\eta(S^3/\Gamma_\infty).
        \end{aligned}
        \end{equation}

        The K\"ahler condition implies that $\Gamma_o$ and $\Gamma_\infty$ are all contained in $U(2)$. Since the volume ratio is uniformly bounded from below by $\kappa$, their cardinality is uniformly bounded. Therefore, there are only finitely many isomorphism classes of such groups \cite[Table 1]{lock-viaclovsky16}. In particular, the signature theorem implies that $\int_Z\vert W^-\vert^2$ is discrete, so by choosing $\epsilon_0$ small, we can assume that $W^-\equiv 0$. Now Gau\ss-Bonnet-Chern formula tells us $\int_Z\vert \mathrm{Ric}\vert^2$ is also discrete, so has to be zero for $\epsilon_0$ small. Therefore $(Z,\omega_Z)$ is flat, hence being a flat cone.
\end{proof}

Therefore, given a sequence $X_j \subset \tilde{K}(V,\kappa)$, after passing to a subsequence, Theorem~\ref{thm-bubble decomposition} and Corollary~\ref{cor-no intermediate scales} apply to $X_j$. 
In what follows, we use the same notation as in Section~\ref{sec--bubble}.

In the proof of Theorem~\ref{local sobolev}, which will be given below, 
we need to show that the neck region supports a universal Sobolev bound. 
For this purpose, we establish a weak structure theorem, which can be constructed directly without referring to the refined analysis in \cite{ozuch2022}. 
Let $Z_I$ be a bubble limit obtained in Theorem~\ref{thm-bubble decomposition}, 
and let $\Gamma_I \subset U(2)$ be the finite subgroup such that the tangent cone at infinity of $Z_I$ is given by
\[
\mathcal{C}_\infty(Z_I) = \mathbb{C}^2 / \Gamma_I .
\]
Then we have the following weak approximation along the neck region. 
Here the term \textit{``weak''} means that the metric is close to a conical metric, but without any quantitative rate of convergence.

\begin{proposition}\label{prop-weak approximation}
    There exist $1<K_0<K_1$, such that we can find smooth embeddings
   \begin{equation}
       \Phi_{j,I}: A_{p_{j}}(K_0 \lambda_{j,I}^{-1},K_1^{-1}\lambda^{-1}_{j,P(I)})\to\mathbb C^2/\Gamma_I,
   \end{equation} such that for $k\in \mathbb N_{\geq 0}$
   \begin{equation}
      \sup_{s\geq R}s^k\left((|\nabla^k_{g_{\Gamma}}((\Phi_{j,I}^{-1})^*J_j-J_{\Gamma_I})|_{g_{\Gamma_I}}+ |\nabla^k_{g_{\Gamma}}((\Phi_{j,I}^{-1})^*(\lambda_{j,I}^2g_j)-g_{\Gamma_I})|_{g_{\Gamma_I}}\right)=\Psi(R^{-1}|k).
   \end{equation}
\end{proposition}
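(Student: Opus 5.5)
The plan is a contradiction and compactness argument, using Corollary \ref{cor-no intermediate scales} (in the non-collapsed setting, via Theorem \ref{thm-compact for non-collapsing}) as the only geometric input.

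\textbf{Key claim.} For every $\eta>0$ and $k$ there exist $K_0,K_1$ with the following property. For $j$ large and every radius
\[
s\in [K_0\lambda_{j,I}^{-1},\,K_1^{-1}\lambda_{j,P(I)}^{-1}],
\]
the rescaled annulus $(A_{p_{j,I}}(s/2,2s),\,s^{-2}g_j,\,J_j)$ is $\eta$-close in $C^k$ to the annulus $A_0(1/2,2)\subset\mathbb C^2/\Gamma_I$ with its flat K\"ahler structure.

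If the claim failed, there would be scales $s_j$ with $\lambda_{j,I}s_j\to\infty$ and $\lambda_{j,P(I)}s_j\to 0$ along which closeness fails. By Lemma \ref{lem-no intermediate scales} and Corollary \ref{cor-no intermediate scales}, the rescalings $(X_j,s_j^{-2}\omega_j,p_{j,I})$ subconverge to the unique cone $\mathbb C^2/\Gamma_I$. The neck energy is small and the $\epsilon$-regularity is quantitative after \cite{tian-viaclovsky2}. Hence this convergence is smooth on the annulus, and it includes the complex structures, since the limit of K\"ahler structures is K\"ahler and the flat K\"ahler structure on the cone is determined up to isometry. This contradicts the assumed failure.

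\textbf{Gluing the local maps.} The claim gives, on dyadic annuli $A(2^{i-1}r,2^{i+1}r)$, diffeomorphisms onto model annuli that are $\eta_i$-isometric. Here $\eta_i\to 0$ as the annulus moves away from both ends of the neck, measured in units of $\lambda_{j,I}^{-1}$ and $\lambda_{j,P(I)}^{-1}$. On overlaps, two such maps differ by a near-isometry of the cone. Since the isometry group of $\mathbb C^2/\Gamma_I$ restricted to an annulus is compact (it is the normalizer of $\Gamma_I$ in $O(4)$), I would correct each consecutive map by an exact cone isometry so that the transition is $C^k$-close to the identity. I would then interpolate with a cutoff in the radial variable, using the exponential map of the flat metric, or center of mass on the compact link. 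Composing step by step outward from the inner boundary yields a single embedding $\Phi_{j,I}$ into $\mathbb C^2/\Gamma_I$, scaled by $\lambda_{j,I}$.

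\textbf{Estimates and the main obstacle.} The scale-invariant errors of the metric and of $J$ at radius $s$ are bounded by the local $\eta$ at that scale. For $s\ge R$ in $\lambda_{j,I}$-units this gives the bound $\Psi(R^{-1}|k)$ near the inner end. Near the outer end I would use the same argument from the parent side, with $K_1$ absorbing the error, so the supremum over $s\ge R$ is controlled as stated.

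The main obstacle is that the isometry corrections must not accumulate across the roughly $\log(\lambda_{j,I}/\lambda_{j,P(I)})$ dyadic steps. Closeness of the metric on each piece is scale-invariant, so errors do not add in the metric. However, the gluing could drift, for example by a slowly rotating identification along the neck. Such drift is harmless precisely because we require no rate: each step is matched to an exact isometry and only local closeness is asserted. I would check carefully that the interpolated map remains an embedding, which follows from $C^1$-closeness to isometries on overlapping annuli of ratio $4$.
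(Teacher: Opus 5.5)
Your route is the paper's. Its proof is a one-line appeal to Corollary~\ref{cor-no intermediate scales} (closeness to the unique cone at every intermediate scale) plus a gluing procedure as in \cite[Lemma~3.1]{SunZ}, and your contradiction argument followed by dyadic gluing is that argument. One step of your gluing, however, does not work as written.

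You correct transitions by exact isometries from $N_{O(4)}(\Gamma_I)$ and argue that the resulting slow drift of the identification is harmless. That is true for the metric, but not for $J$.

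\begin{itemize}
\item[(i)] Suppose $\Gamma_I\subset SU(2)$; this includes Eguchi--Hanson type necks, which are not excluded here. Then $\Gamma_I$ acts trivially on $\Lambda^+$. So the whole twistor sphere of parallel, $g_{\Gamma_I}$-orthogonal complex structures with the given orientation is $\Gamma_I$-invariant.
\item[(ii)] The $Sp(1)$ factor commuting with $\Gamma_I$ consists of cone isometries that rotate $J_{\Gamma_I}$ around this sphere.
\item[(iii)] Hence a nearest isometry $\gamma_i$ to a transition map satisfies only $|(\gamma_i)_*J_{\Gamma_I}-J_{\Gamma_I}|\lesssim\eta_i$, not equality.
\item[(iv)] On the $i$-th annulus your glued map carries $J_j$ close to $(\gamma_1\circ\cdots\circ\gamma_{i-1})_*J_{\Gamma_I}$. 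This can be far from $J_{\Gamma_I}$, because without a rate $\sum_i\eta_i$ over the roughly $\log(\lambda_{j,I}/\lambda_{j,P(I)})$ steps need not be small.
\end{itemize}

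So your claim that the $J$-error at radius $s$ is bounded by the local $\eta$ does not follow. Compare Proposition~\ref{psh weight for neck}: there $J$ is controlled by integrating $|\nabla_{g_0}J|$ radially, which uses the polynomial rate of Theorem~\ref{neck structure theorem}, and no such rate is available here.

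The fix is easy. Each transition map is $C^k$-close to preserving both $g_{\Gamma_I}$ and $J_{\Gamma_I}$. By the same Arzel\`a--Ascoli argument, it is therefore close to an element of the compact group $N_{U(2)}(\Gamma_I)/\Gamma_I$ of \emph{holomorphic} isometries. If you correct only by such elements, drift is harmless for $g$ and $J$ simultaneously.

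When $\det|_{\Gamma_I}\not\equiv 1$, the only $\Gamma_I$-invariant parallel complex structures of the right orientation are $\pm J_{\Gamma_I}$. In that case a near-holomorphic isometry is automatically holomorphic, and the issue does not arise.
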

\begin{proof}
    This follows from Corollary~\ref{cor-no intermediate scales} and a gluing procedure; for example, see \cite[Lemma~3.1]{SunZ}.
\end{proof}

Now we are ready to prove our main theorem in this section.
\begin{theorem}\label{local sobolev}
    There exist constants $C_S$ and $r_0$, depends only on $V$ and $\kappa$ such that for any $(X,\omega)\in \widetilde{\mathcal{K}}(V,\kappa)$, we have
    \begin{equation}
        C_{\omega}(r_0)\leq C_S.
    \end{equation}
\end{theorem}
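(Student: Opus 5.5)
We argue by contradiction, with a blow-up argument along the bubble tree. If the statement fails, there are $(X_j,\omega_j)\in\widetilde{\mathcal K}(V,\kappa)$, radii $r_j\to 0$, points $x_j\in X_j$ and functions $f_j\in W^{1,2}_0(B(x_j,r_j))$ with
\[
\Vert f_j\Vert_{L^4}\geq j\,\Vert\nabla f_j\Vert_{L^2}.
\]
The choice $r_j\to0$ is enough because $C_\omega(r)$ is monotone in $r$. By Theorem \ref{thm-compact for non-collapsing} we pass to a subsequence converging to a polarized cscK orbifold $X_\infty$. The remark after that theorem lets us run the proof of Theorem \ref{thm-bubble decomposition} together with Corollary \ref{cor-no intermediate scales}, which gives finitely many bubble trees $\mathcal T_k$, one at each curvature singularity.

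The plan is to cover $X_j$, for $j$ large, by a bounded number of regions of three types, each carrying a uniform local Sobolev inequality at its natural scale.

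\textbf{Regular regions.} These are the compact pieces $K\subset X_\infty^{\mathrm{reg}}$ and the compact pieces of each bubble $Z_I$ away from its orbifold points and its end. On them the metrics converge smoothly, so the rescaled geometry is uniformly bounded and the local Sobolev constant is bounded by its value on the limit.

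\textbf{Orbifold cores.} These are small balls around the orbifold points of $X_\infty$ and of the $Z_I$, viewed at the corresponding scale. Here we use the $C^0$-extension of the metric across the singular point, as explained after Theorem \ref{thm-compact for non-collapsing}. This gives a uniformly bi-Lipschitz comparison with the flat cone $\mathbb C^2/\Gamma$. Passing to the finite cover $\mathbb C^2$ and averaging over $\Gamma$, the Euclidean Sobolev inequality for compactly supported functions transfers with a constant depending only on $|\Gamma|$, and $|\Gamma|$ is bounded in terms of $\kappa$.

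\textbf{Necks.} These are the annuli $A_{p_{j,I}}(K_0\lambda_{j,I}^{-1},K_1^{-1}\lambda_{j,P(I)}^{-1})$. Proposition \ref{prop-weak approximation} gives $C^0$-closeness to the cone metric $g_{\Gamma_I}$ after rescaling by $\lambda_{j,I}^2$. The Sobolev inequality on $\mathbb C^2/\Gamma_I$ is scale invariant, so it holds on the whole neck with a uniform constant. This is despite the neck having unbounded conformal length, and it is where the scale invariance is essential.

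With this cover in hand, we glue. Take a partition of unity $\{\chi_\alpha\}$ subordinate to the cover, with $|\nabla\chi_\alpha|\le C/\rho_\alpha$, where $\rho_\alpha$ is the scale of the region: $\lambda_{j,I}^{-1}$ at bubble scale, and the distance to $p_{j,I}$ on necks. Write $f=\sum_\alpha\chi_\alpha f$ and apply the local inequalities to each piece. This gives
\[
\Vert f\Vert_{L^4}\le C\Vert\nabla f\Vert_{L^2}+C\sum_\alpha\Vert f/\rho_\alpha\Vert_{L^2(\mathrm{supp}\nabla\chi_\alpha)}.
\]
The error terms are lower order. We absorb them using Hölder's inequality and the volume upper bound $\mathrm{Vol}(B(p,r))\le V_1r^4$, which comes from the tangent cone structure via volume convergence. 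More precisely, for each overlap region $U$ of scale $\rho$,
\[
\Vert f/\rho\Vert_{L^2(U)}\le\rho^{-1}\mathrm{Vol}(U)^{1/4}\Vert f\Vert_{L^4(U)}\le C\Vert f\Vert_{L^4(U)}.
\]

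This crude bound only reproduces $\Vert f\Vert_{L^4}$ with a constant, so it cannot be absorbed directly. I expect this to be the main obstacle. The first thing to try is a Hardy inequality on the cone $\mathbb C^2/\Gamma$,
\[
\int |f|^2 r^{-2}\le C\int|\nabla f|^2 ,
\]
which holds in real dimension $4$. It transfers to the necks and cores by $C^0$-closeness, and with it the $\Vert f/\rho\Vert_{L^2}$ terms are controlled by $\Vert\nabla f\Vert_{L^2}$.

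On the regular regions the scale $\rho$ is comparable to $1$ relative to the region, and $f$ is supported in a ball of radius $r_j\to0$. Here we instead use a Poincaré-type bound: for $f\in W^{1,2}_0(B(x_j,r_j))$ with $r_j$ small compared to the injectivity scale of the regular region, the local Euclidean inequality applies directly. Combining the three cases contradicts $\Vert f_j\Vert_{L^4}\ge j\Vert\nabla f_j\Vert_{L^2}$, so $C_\omega(r_0)\le C_S$ with constants depending only on $V$ and $\kappa$.
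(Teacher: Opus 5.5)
\textbf{There is a genuine gap, and it sits exactly at the step you flag as the main obstacle.} With ordinary cutoffs satisfying $|\nabla\chi_\alpha|\le C/\rho_\alpha$, the error terms $\Vert f\nabla\chi_\alpha\Vert_{L^2}$ are only bounded by $C\Vert f\Vert_{L^4}$. The Hardy inequality does not rescue this in the way you use it.

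On the cone $\mathbb C^2/\Gamma$, the inequality $\int|f|^2r^{-2}\le C\int|\nabla f|^2$ needs $f$ to vanish at the outer end of the region. Your $f$ does not vanish on the boundary of a neck annulus, so applying it there requires cutting off again, which reproduces the same error terms.

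On the bubble cores (compact pieces of $Z_I$ at scale $\lambda_{j,I}^{-1}$) the situation is worse. The quantity to control is $\lambda_{j,I}\Vert f\Vert_{L^2(U)}$ on a transition annulus $U$ of size about $\lambda_{j,I}^{-1}$. No local inequality bounds this by $\Vert\nabla f\Vert_{L^2}$, since $f$ may be nearly constant there.

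Your Poincar\'e remark only covers the easy case, where $B(x_j,r_j)$ itself has bounded geometry at scale $r_j$. In the bubbling case the cores live at scales $\lambda_{j,I}^{-1}\ll r_j$, and $f$ is not compactly supported in them. What you would actually need is a global Hardy inequality $\int_{X_j}|f|^2\rho_j^{-2}\le C\int_{X_j}|\nabla f|^2$ adapted to the whole bubble tree. That is essentially as hard as the theorem, and it is not supplied.

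Relatedly, your ``orbifold cores'' do not exist on $X_j$. Near points converging to an orbifold point of $X_\infty$ or of $Z_I$ there is a neck followed by the child bubbles, not a region bi-Lipschitz to a cone. The $C^0$-extension applies only on the limit spaces.

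\textbf{The missing idea is the logarithmic cutoff trick in real dimension four.} The quantity $\Vert\nabla\chi\Vert_{L^4}$ is scale invariant. A cutoff that is linear in $\log d$ across an annulus $A(a,b)$ has $\Vert\nabla\chi\Vert_{L^4}^4\sim(\log(b/a))^{-3}$, so it can be made as small as you like. Annuli of large conformal length are available inside necks, and near the singular point of a limit space as $A(\epsilon,\sqrt\epsilon)$. Then $\Vert f\nabla\chi\Vert_{L^2}\le\Vert\nabla\chi\Vert_{L^4}\Vert f\Vert_{L^4}$ is absorbable.

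The paper uses this inside the contradiction argument:
\begin{enumerate}
\item It normalizes $\Vert f_j\Vert_{L^4}=1$ and $\Vert\nabla f_j\Vert_{L^2}\to0$ on the rescaled unit ball.
\item It applies the Sobolev inequality of the limit $Z_0$ to $\chi_{\epsilon,r}f_j$, with a log cutoff near the singular point. This shows that the $W^{1,2}$-mass of $f_j$ outside $B(p_{j,1},r)$ tends to zero for every $r>0$.
\item A log cutoff placed inside the neck, where the metric is close to the cone and the cone Sobolev inequality applies, transfers almost all of the $L^4$-mass to the next bubble scale while keeping the gradient small.
\item Iterating down the tree reaches the smooth deepest bubble, where this contradicts its Sobolev inequality.
\end{enumerate}

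If you replace your cutoffs by log cutoffs placed in the necks, your partition-of-unity scheme could also be made to close. As written, with $|\nabla\chi_\alpha|\le C/\rho_\alpha$, it does not.
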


\begin{proof}
   We argue by contradiction. 
Suppose there exists a sequence $r_j \to 0$ and points $p_j \in X_j$ such that the local Sobolev constants 
$C_S(p_j, r_j) \to \infty$. We consider the rescaled  metric $$\tilde{g}_j=r_j^{-2}g_j$$ and we get functions $f_j\in C^{\infty}_c(B_{\tilde{g}_j}( p_j,1))$ with $\|f_j\|_{L^4}=1$ and $\|\nabla f_j\|_{L^2}\rightarrow 0$, where all the norm and derivative are taken with respect to the rescaled metric $\tilde{g}_j$. By passing to a subsequence we know that $(X_j,\tilde{g}_j,p_j)$ converges to an ALE scalar-flat K\"ahler orbifold $(Z_0,x_0)$ in the pointed $\hat C^{\infty}$--Cheeger--Gromov sense. 
    If the convergence to $Z_0$ is smooth, then we would get a contradiction, since Sobolev inequality holds for ALE scalar-flat K\"ahler manifold. Otherwise we need to deal with the case where $Z_0$ has curvature singularities. The idea is to choose suitable cut-off functions along neck region, such that the new function has definite $L^4$-norm, but with small $L^2$-gradient. We arrive again at a contradiction if such process continues to the deepest bubble, where the convergence is smooth.

    According to the discussion above, by passing to a subsequence, we can assume the existence of bubble tree and the structural result for neck regions for convergence around $p_j$. Let $Z_I$ denote all bubble limits for this sequence and let $N$ denote the number of bubbles. For simplicity, we may assume each $Z_I$ contains only one curvature singularity, denoted by $x_I$, so the bubble tree is non-branching. The general case can be proved in the same manner. We may index $\mathcal T$ now as $\{1,2,\cdots, N\}$. Note that the Sobolev inequality holds for ALE scalar-flat K\"ahler orbifold $Z_I$ and let 
    $$C=2N(\max_{0\leq I\leq N}\{C_S(Z_I)\}+C_S(\mathbb R^4)+1).$$

    Now let $p_{j,1}\in X_j$ be the sequence of points converging to $x_1$, and $\lambda_{j,1}\to\infty$ be the sequence of scales, such that $(X_j,\lambda_{j,1}^2\tilde{g}_j,p_{j,1})$ converges to $(Z_1, x_1)$. Then we claim that for any $r>0$, 
    \begin{equation}\label{eq-pass to small scale}
       \lim_{j\rightarrow\infty}\|f_j\|_{W^{1,2}(B_{\tilde{g}_j}(p_{j,1},1)\setminus B_{\tilde{g}_j}(p_{j,1},r))}=0. 
    \end{equation}
To see this, we may assume that $p_{j,1}$ converges to $x_{0,1} \in Z_0$ under the pointed Gromov--Hausdorff convergence
\(
(X_j, \tilde{g}_j, p_j) \to (Z_0, x_0).
\) Then we choose cut-off functions $\chi_{\epsilon,r}$ such that $\chi_{\epsilon,r}\equiv 0$ on $B_{g_{Z_0}}(x_{0,1},\epsilon)$ and $\chi_{\epsilon,r}\equiv 1$ outside $B_{g_{Z_0}}(x_{0,1},r/2)$, where $0<\epsilon\ll r$. We can thus identify $\chi_{\epsilon,r} f_j$ as a function on $Z_0$, so by applying Sobolev inequality on $Z_0$, we see that
\begin{equation}
    \begin{aligned}
        \|f_j\|_{L^4(B_{\tilde{g}_j}(p_{j,1},1)\setminus B_{\tilde{g}_j}(p_{j,1},r))}&\leq  (1+\Psi(j^{-1}))\Vert f_j\Vert_{L^4( A_{x_{0,1}}(r/2,1))}\leq C\Vert\chi_{\epsilon,r} f_j\Vert_{L^4(B_{g_{Z_0}}(x_{0,1},1))}\\
         &\leq C\Vert\nabla(\chi_{\epsilon,r} f_j)\Vert_{L^2(B_{g_{Z_0}}(x_{0,1},1))}\\
         &\leq C(\Vert\nabla\chi_{\epsilon,r}\Vert_{L^4( A_{x_{0,1}}(\epsilon,r))}\Vert f_j\Vert_{L^4(A_{x_{0,1}}(\epsilon,r))}
         +\Vert\nabla f_j\Vert_{L^2( A_{x_{0,1}}(\epsilon,1))}).
    \end{aligned}
\end{equation}
Note that given any $r>0$, using the logarithmic cut-off trick, which will be exploited frequently below, we can achieve 
\begin{equation}
    \Vert\nabla \chi_{\epsilon,r}\Vert_{L^4}=\Psi(\epsilon|r).
\end{equation}
More precisely, we take 
\begin{equation}
 \chi_{\epsilon,r}(x)= \begin{cases}0, & \text { if } d_{Z_0}(x_0,x)\leq \epsilon  \\ \frac{\log d_{Z_0}(x_0,x)-\log \epsilon}{-\log \sqrt{\epsilon}}, & \text { if } d_{Z_0}(x_0,x)\in [\epsilon,\sqrt{\epsilon}], \\
 1, & \text { if } d_{Z_0}(x_0,x)\geq \sqrt{\epsilon}.
 \end{cases}
\end{equation}
Then, using the assumption that $\|f\|_{L^4}=1$,  $\|\nabla f_j\|_{L^2} \to 0$, we first let $j \to \infty$ and then let $\epsilon \to 0$. Applying H\"older's inequality, we obtain \eqref{eq-pass to small scale}.

By structural theorem of neck region, we know that there exists $K_0$ and $K_1$ such that for any $j\geq j_0$, the metric $\tilde{g}_j$ on the annulus $ A_{p_{j,1}}(K_0\lambda_{j,1}^{-1}, K_1^{-1})$ is close to a cone metric in the sense of Proposition \ref{prop-weak approximation} . 
We write 
\begin{equation}
    f_j=\rho_{j,1}f_j+(1-\rho_{j,1})f_j,
\end{equation}where $\rho_{j,1}$ is a cut-off function supported on $B_{\tilde{g}_j}(p_{j,1},r_1)$ and equals 1 on $B_{\tilde{g}_j}(p_{j,1},r_0)$ for some $K_0\lambda_{j,1}^{-1}<r_0<r_1<K_1^{-1}$ such that 
\begin{equation}
    \|\nabla_{\tilde{g}_j}\rho_{j,1}\|_{L^4}\leq \frac{1}{(2C)^{N+1}}.
\end{equation}
To achieve this, we use the logarithmic cut-off trick again. Then by \eqref{eq-pass to small scale} we know that with respect to the metric $\tilde{g}_j$, we have
\begin{equation}
\|\rho_{j,1}f_j\|_{L^4}\geq 1-\Psi(j^{-1}),
\end{equation}and 
\begin{equation}
    \|\nabla(\rho_{j,1}f_j)\|_{L^2}\leq \frac{1}{(2C)^{N+1}}+\Psi(j^{-1}).
\end{equation}
Let $g_{j,1}=\lambda_{j,1}^2\tilde{g}_j$, and we write 
\begin{equation}
    \lambda_{j,1}^{-1}\rho_{j,1}f_j=\chi_j \lambda_{j,1}^{-1}\rho_{j,1}f_j+(1-\chi_j)\lambda_{j,1}^{-1}\rho_{j,1}f_j,
\end{equation}where $\chi_j$ is a cut-off function supported on $B_{g_{j,1}}(p_{j,1},K)$ for a large constant $K> K_0$ and equals 1 on $B_{g_{j,1}}(p_{j,1},K_0)$  and such that 
\begin{equation}
\|\nabla_{g_{j,1}}\chi_j\|_{L^{4}}\leq \frac{1}{(2C)^{N+1}}.
\end{equation}
Since the metric is close to a cone metric along the neck region, for sufficiently large $j$ its Sobolev constant can be chosen to be less than $C_S(\mathbb{R}^4)$.
Then we know that for $j$ sufficiently large, with respect to the metric $g_{j,1}$, we have
\begin{equation}
\|(1-\chi_j)\lambda_{j,1}^{-1}\rho_{j,1}f_j\|_{L^4}\leq 2C_{S}(\mathbb R^4)\|\nabla ((1-\chi_j)\lambda_{j,1}^{-1}\rho_{j,1}f_j))\|_{L^2}\leq \frac{1}{(2C)^{N}}+\Psi(j^{-1}).
\end{equation}
Let $f_{j,1}=\chi_{j,1}\lambda_{j,1}^{-1}\rho_{j,1}f_j$. Then it is supported on $B_{g_{j,1}}(p_{j,1},K)$ and with respect to the metric $g_{j,1}$, we have
\begin{equation}
    \|f_{j,1}\|_{L^4}\geq 1-\frac{1}{(2C)^{N}}+\Psi(j^{-1})
\end{equation} and 
\begin{equation}
    \|\nabla f_{j,1}\|_{L^2}\leq \frac{1}{(2C)^{N}}+\Psi(j^{-1}).
\end{equation}
We can now repeat this process and get that for $j$ large and for each $k=1,\cdots,N$ (recall that we make the assumption that each $Z_I$ contains only a unique orbifold point for $I=1,\cdots,N-1$ and $Z_N$ is smooth), we have  functions $f_{j,k}$ supported on $B_{g_{j,k}}(p_{j,k}, K)$ for some large constant $K$ (independent of $j$), such that with respect to the metric $g_{j,k}$, we have
\begin{equation}
        \|f_{j,k}\|_{L^4}\geq 1-\sum_{i=1}^k\frac{1}{(2C)^{N+1-k}}+\Psi(j^{-1})
\end{equation}and 
\begin{equation}
    \|\nabla f_{j,k}\|_{L^2}\leq \frac{1}{(2C)^{N+1-k}}+\Psi(j^{-1})
\end{equation}
Since $Z_N$ is smooth and we have smooth convergence, then we will get a contradiction when $j$ large by the choice of the constant $C$.
\end{proof}

\noindent \textit{Proof of Theorem \ref{thm--non-collapsing implies Sob bound}}
This follows from Theorem \ref{local sobolev}, and a standard argument using cut-off functions and the volume ratio upper bound of geodesic balls.
\qed

\begin{remark}
    The idea of using bubbling analysis to derive Sobolev bounds also appeared in \cite[Theorem 1.2]{chen-wang12}, where a uniform isoperimetric constant bound was derived for Ricci flows.
\end{remark}



\section{Discussions and questions}\label{sec--discussion}

\subsection{Partial $C^0$-estimate and convergence within a Hilbert scheme}\label{began function lower bound}
As proved in Section~\ref{convergence in a flat family}, the key to realizing the Gromov--Hausdorff convergence in a flat family is to verify that the dimension of holomorphic sections does not jump in the limit. This property is a priori \textit{weaker} than the partial $C^0$-estimate. Moreover, as noted in Remark~\ref{gradient estimate imply lower bound}, if a suitable gradient estimate for holomorphic sections is available, then the no-jumping property of numerical invariants further implies a uniform lower bound for the Bergman function.

In the setting considered in this paper, due to the absence of a uniform Ricci curvature bound, 
a global gradient estimate for holomorphic sections seems to be  unavailable. Consequently, obtaining a 
uniform lower bound for the Bergman function becomes a subtle problem, even though we have achieved that every 
holomorphic section on the limit arises as a limit of holomorphic sections from 
the converging sequence. 
Therefore, Theorem~\ref{algebraic convergence} may be interpreted as stating that there are 
enough holomorphic sections coming from limits to identify the Gromov--Hausdorff limits with 
the algebraic limits, even if the approximating sequence may lack a uniform lower bound for the 
Bergman function. 
Moreover, the following question still appears to be of independent interest:
\begin{problem}
    Can we find a uniform integer $k_0$ and positive number $b_0$ depending only on $V$ and $C_S$, such that  for all $(X,\omega_X)\in\mathcal K(V,C_S)$
    \begin{equation}
        \inf_{X}\rho_{k_0,X}\geq b_0^2?
    \end{equation}
\end{problem}

\

In this paper, the proof of Theorem~\ref{thm-dim constant}, which shows that the dimension of holomorphic sections does not jump, is global in nature and relies on special topological features of four-manifolds, such as the Chern--Gau\ss--Bonnet formula and the signature theorem. It is natural to ask whether a more analytic argument exists, which would be important in the study of local convergence of cscK metrics and other problems in geometric analysis. We make some attempt here, trying to bypass the topological argument above, although the argument below is \textit{not} enough to establish the full conclusion as before.

Making use of the quasi-psh functions constructed in Theorem~\ref{thm--psh weight} and using only the H\"ormander's $L^2$-estimate, 
we can show that, on the limit space, any section with sufficiently high vanishing order at the orbifold points 
can be realized as the limit of sections on the approximating sequence. 
In what follows, we denote by $S = X_{\infty}^{\mathrm{Sing}}$ the set of orbifold points on $X_{\infty}$, 
and by $\mathcal{I}_S$ the ideal sheaf of functions vanishing along $S$.
\begin{proposition}\label{main analytic input}
    There exist integers $k_0, N_0$, such that for any $k\geq k_0$ and  $s_\infty\in H^0(X_\infty, L^k_\infty\otimes\mathcal{I}_S^{N_0})$, there exists a subsequence of holomorphic sections $s_{j_l}\in H^0(X_{j_l},L^{k}_{j_l})$ converging to $s_\infty$ in $C^{\infty}_{loc}$.
\end{proposition}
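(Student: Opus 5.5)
The plan is the standard cut-off plus H\"ormander argument, with the weight from Theorem~\ref{global weights} absorbing the lack of a Ricci lower bound. The vanishing order $N_0$ is chosen to dominate the logarithmic pole of that weight. Fix $k\ge k_0$, where $k_0$ is at least the constant of Theorem~\ref{global weights}, and fix $s_\infty\in H^0(X_\infty,L_\infty^k\otimes\mathcal I_S^{N_0})$.

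\textbf{Cut-off section.} Choose a logarithmic cut-off $\chi_\epsilon$ on $X_\infty$ which vanishes in an $\epsilon^2$-neighbourhood of $S$, equals $1$ outside the $\epsilon$-neighbourhood, and has $\|\nabla\chi_\epsilon\|_{L^2}\to0$. Transplant $\chi_\epsilon s_\infty$ to $X_j$ via the maps $\chi_j,\widehat\chi_j$ of the polarized $\hat C^\infty$-convergence, obtaining smooth sections $\sigma_{j,\epsilon}$ of $L_j^k$. These satisfy three properties: $\sigma_{j,\epsilon}$ converges to $\chi_\epsilon s_\infty$ in $C^\infty_{loc}$; $\bar\partial\sigma_{j,\epsilon}$ is supported in the transplanted annulus around $S$, up to an error $\Psi(j^{-1}\mid\epsilon)$ coming from the convergence of complex structures; and on that annulus $|\sigma_{j,\epsilon}|\le C\,d_\infty(\cdot,S)^{N_0}$ by the vanishing order.

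\textbf{Weighted correction.} Let $u_j$ be the quasi-psh weight of Theorem~\ref{global weights}. Its curvature bound $\sqrt{-1}\partial\bar\partial u_j+\Ric(\omega_j)+k_0\omega_j\ge\omega_j$ says exactly that $L_j^k$, equipped with the metric twisted by $e^{-u_j}$, has curvature $\ge\omega_j-\Ric(\omega_j)$ whenever $k\ge k_0$, which is the hypothesis for H\"ormander's estimate on $(0,1)$-forms with values in $L_j^k$ (twisting by $K_X^{-1}$). This yields $\tau_{j,\epsilon}$ with $\bar\partial\tau_{j,\epsilon}=\bar\partial\sigma_{j,\epsilon}$ and
\[
\int_{X_j}|\tau_{j,\epsilon}|^2e^{-u_j}\omega_j^2\le C\int_{X_j}|\bar\partial\sigma_{j,\epsilon}|^2e^{-u_j}\omega_j^2 .
\]
Since $u_j\le0$, the left side bounds $\|\tau_{j,\epsilon}\|_{L^2}^2$. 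On the support of $\bar\partial\sigma_{j,\epsilon}$ the weight satisfies $e^{-u_j}\le\prod_I d_{j,I}^{-C}$. The points $p_{j,I}$ converge to the orbifold points, so on the support of $\nabla\chi_\epsilon$ we have $d_{j,I}\gtrsim d_\infty(\cdot,S)\ge\epsilon^2$ for $j$ large. The integrand is therefore bounded by $C|\nabla\chi_\epsilon|^2d^{2N_0-C|\mathcal T|}$. With $N_0>C|\mathcal T|$, where $|\mathcal T|$ is the uniformly bounded number of vertices, this is $O(\|\nabla\chi_\epsilon\|_{L^2}^2)$, and the $\Psi(j^{-1}\mid\epsilon)$ term is negligible. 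Hence $\|\tau_{j,\epsilon}\|_{L^2}\le\Psi(\epsilon)+\Psi(j^{-1}\mid\epsilon)$.

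\textbf{Limit.} Set $s_j=\sigma_{j,\epsilon_j}-\tau_{j,\epsilon_j}$ with $\epsilon_j\to0$ slowly. Then $s_j\in H^0(X_j,L_j^k)$ has uniformly bounded $L^2$ norm. Proposition~\ref{L-infinity estimate} and the corollary after it give a subsequence converging in $C^\infty_{loc}$ to a holomorphic section. On compact subsets of $X_\infty^{\mathrm{reg}}$, $\tau_{j,\epsilon_j}$ is holomorphic for $j$ large and its $L^2$ norm tends to $0$, so by interior estimates it tends to $0$ in $C^\infty_{loc}$. The limit is therefore $s_\infty$.

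\textbf{Main obstacle.} The delicate step is making the bound $e^{-u_j}\le\prod_I d_{j,I}^{-C}$ usable near bubble points that collapse onto $S$. This requires comparing $d_{j,I}$ with the limiting distance to $S$ uniformly on the support of $\nabla\chi_\epsilon$. It also requires controlling $\bar\partial\sigma_{j,\epsilon}$ near $S$, where the transplant maps are not defined; this is why the cut-off stays at distance $\epsilon^2>0$ and $\epsilon$ is sent to zero only after $j\to\infty$.
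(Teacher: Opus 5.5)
Your proposal is correct and is essentially the paper's argument. You transplant a cut-off of $s_\infty$ to $X_j$, correct it by H\"ormander's estimate for $L_j^k\otimes K_{X_j}^{-1}$ with the weight $e^{-u_j}$ from Theorem~\ref{global weights}, and take $N_0$ large enough that the vanishing order of $s_\infty$ beats the pole $C\sum_I\log d_{j,I}$ on the cut-off annulus, where every $d_{j,I}$ is comparable to the distance to $S$. The paper uses a plain linear cut-off on $A_{p_{j,1}}(\epsilon_j,2\epsilon_j)$ with $\lambda_{j,1}^{-1}\le\epsilon_j^2$, so your logarithmic cut-off is harmless but not needed.

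One small correction: $\tau_{j,\epsilon_j}$ is not holomorphic on compact subsets of $X_\infty^{\mathrm{reg}}$, since there $\bar\partial\tau_{j,\epsilon_j}$ equals the transplanted $(\bar\partial_j-\bar\partial_\infty)s_\infty$. The conclusion still holds: $\tau_{j,\epsilon_j}=\sigma_{j,\epsilon_j}-s_j\to s_\infty-\lim s_j$ in $C^\infty_{loc}$, and $\|\tau_{j,\epsilon_j}\|_{L^2}\to0$ forces $\lim s_j=s_\infty$.
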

\begin{proof} We use the same notations as that in Theorem \ref{global weights}.
  Then for any $k\geq 2k_0$, we have
        \begin{equation}
            \Ric_{\omega_j}+\sqrt{-1}\partial\pp u_j+k\omega_j\geq(k-k_0+\frac12)\omega_j\geq\frac{k}{2}\omega_j.
        \end{equation}
        From this we can apply H\"ormander's estimate \cite[Chapter VIII, Theorem 6.1]{Demailly} to the line bundle $L_j^k\otimes K_{X_j}^{-1}$ using the Hermitian metric $h_j^{\otimes k}\cdot e^{-u_j}\otimes\omega_j$ over the complete K\"ahler manifold $(X_j,\omega_j)$. 

        Again to simplify notation, we will assume throughout that 
$X_{\infty}$ has a single orbifold point $p_{\infty}$.  Passing to some subsequence, we can find scales $\epsilon_m\to 0^+$. such that for any $j\geq m$ under the identification by $(\chi_j, \hat{\chi}_j)$, we have
        \begin{equation}
            \lambda_{j,1}^{-1}\leq  \epsilon_m^2,\ \vert A_j-A_\infty\vert_{L^{\infty}(X_j\setminus B(p_{j,1},\epsilon_m))}+\vert J_j-J_\infty\vert_{L^{\infty}(X_j\setminus B(p_{j,1},\epsilon_m))}\leq\epsilon_m,
        \end{equation}
        where $\lambda_{j,1}$ is the scale corresponding to $p_{j,1}$, the root vertex in the tree $\mathcal{T}$; see Theorem \ref{thm-bubble decomposition}.


        Then we choose a sequence of cut-off functions $\chi_j$, such that $\chi_j\equiv 1$ on $X_j\setminus B(p_{j,1},2\epsilon_j)$, $\chi_j\equiv 0$ on $B(p_{j,1},\epsilon_j)$ and $\vert\nabla\chi_j\vert_{\omega_j}\leq\frac{2}{\epsilon_j}$. For any section $s_\infty\in H^0(X_\infty, L_\infty^k\otimes\mathcal I_{S}^{N_0})$, where $N_0$ is to be determined below, via the bunlde isomorphism $\hat{\chi}_j$, we can view $\chi_j\cdot s_\infty$ as an almost holomorphic section to $X_j$. Let $\eta_j=\pp_j(\chi_j\cdot s_\infty)$, where $\pp_j=\pp_{J_j}$ denotes the complex structure on $X_j$.
        
        \noindent\textbf{Claim}: There exists constant $C$ and $\alpha>0$ such that for $k$ and $j$ large,
        \begin{equation}
\int_{X_j}\vert\eta_j\vert^2_{h_j^{\otimes k}}e^{-u_j}\omega_j^2\leq Ck\epsilon_j^\alpha.
        \end{equation}

         Granted this, we can find $\zeta_j$ such that $\pp_j\zeta_j=\eta_j$ with estimate 
         \begin{equation}
             \int_{X_j}\vert\zeta_j\vert^2_{h_j^{\otimes k}}\omega_j^2\leq \int_{X_j}\vert\zeta_j\vert^2_{h_j^{\otimes k}}e^{-u_j}\omega_j^2\leq \frac{2}{k} \int_{X_j}\vert\eta_j\vert^2_{h_j^{\otimes k}}e^{-u_j}\omega_j^2\leq C\epsilon_j^{\alpha}.
         \end{equation}
         By standard elliptic estimate, we also have high order estimate for $\zeta_j$ outside any definite sized small balls centered at $p_{j,1}$. Therefore by passing to some subsequence, we get holomorphic sections $s_j=\chi_j\cdot s_\infty-\zeta_j$  converging to $s_\infty$ both in $C^{\infty}_{loc}$ and globally in $L^2$.

         To estimate our integrand, we split it into two parts. Note that we have
         \begin{equation}
             \eta_j=\pp_j\chi_j\wedge s_\infty+\chi_j\cdot\pp_j s_\infty,
         \end{equation}
         so
         \begin{equation}
             \begin{aligned}
                 \int_{X_j}\vert\eta_j\vert^2_{h_j^{\otimes k}}e^{-u_j}\omega_j^2\lesssim &\int_{A_{p_{j,1}}(\epsilon_j,2\epsilon_j)}\vert\nabla\chi_j\vert^2\vert s_\infty\vert^2 e^{-u_j}\omega_j^2\\
                 &+\int_{X_j\setminus B(p_{j,1},\epsilon_j)}\chi_j^2\vert(\pp_j-\pp_\infty)s_\infty\vert^2e^{-u_j}\omega_j^2.
             \end{aligned}
         \end{equation}
         The first term can be estimated as follows:
         \begin{equation}
             \begin{aligned}
                 \int_{\mathcal A_{p_{j,1}}(\epsilon_j,2\epsilon_j)}\vert\nabla\chi_j\vert^2\vert s_\infty\vert^2 e^{-u_j}\omega_j^2&\lesssim\epsilon_j^{-2}{\int_{ A_{p_{j,1}}(\epsilon_j,2\epsilon_j)}\ d(\cdot,p_{j,1})^{2N_0-L}\omega_j^2}\\
                 &\lesssim\epsilon_j^{2N_0-L-2}\cdot \mathrm{Vol}(X_j,\omega_j)\sim\epsilon_j^{2N_0-L-2}.
             \end{aligned}
         \end{equation}
         Here we use the fact that $\lambda_{j,1}^{-1}\ll\epsilon_j$, so all the distances $d(\cdot,p_{j,\mathcal I})$ are comparable to $d(\cdot,p_{j,1})$ on the annulus $\mathcal A_{p_{j,1}}(\epsilon_j,2\epsilon_j)$.
         For the second term, we can bound it by 
         \begin{equation}
             \begin{aligned}
                & k\left(\vert J_j-J_\infty\vert_{L^{\infty}(X_j\setminus B(p_{j,1},\epsilon_j))}^2+\lesssim\vert A_j-A_\infty\vert_{L^{\infty}(X_j\setminus B(p_{j,1},\epsilon_j))}^2\right)\int_{X_j\setminus B(p_{j,1},\epsilon_j)}\vert s_\infty\vert^2 e^{-u_j}\omega_j^2\\
&\lesssim k\epsilon_j^2(1+\int_{ A_{p_{j,i}}(\epsilon_j,1)}d(\cdot,p_{j,1})^{2N_0-L}\omega_j^2)\lesssim k\epsilon_j^2(1+\epsilon_j^{2N_0-L}).
             \end{aligned}
         \end{equation}
         The claim follows by choosing $N_0>\frac12(L+2)$.
         
         \end{proof}


Without assuming Theorem~\ref{thm-dim constant}, one can still relate the Gromov–Hausdorff limit $X_\infty$ to algebraic limits of $X_j$ \cite{donaldson2010}. 
Let $T_{k,j}$ be the Kodaira embedding of $X_j$ via $L_j^k$ using an $L^2$-orthonormal basis. By Lemma~\ref{L-infinity estimate} and Proposition~\ref{main analytic input}, a subsequence converges in $C^\infty_{\mathrm{loc}}$ to 
\[
T_k : X_\infty^{\mathrm{reg}} \to \mathbb{CP}^{N_k},
\] 
a holomorphic embedding that coincides with the Kodaira map defined by $V_k$, where $V_k \subset H^0(X_\infty, L_\infty^k)$ consisting of sections arising as limits from $X_j$.
 By compactness of Hilbert scheme, we may assume $T_{k,j}(X_j)$ converges to a scheme ${W}_k$ in the analytic topology.

 As noted in \cite{donaldson2010}, a fundamental difficulty is that ${W}_k$ may fail to be irreducible. This can be overcome by proving a uniform lower bound on Bergman kernels, a major achievement established in \cite{DS1}. Given Proposition \ref{main analytic input}, we show that this irreducibility issue is the only obstruction for realizing convergence within a flat family, even without assuming a uniform lower bound on Bergman kernels.

Recall that $P$ denotes the Hilbert polynomial of $(X_j,L_j)$, which we have assumed to be independent of $j$, and $P_\infty$ denotes the Hilbert polynomial of $(X_\infty,L_\infty)$, where $L_\infty$ is assumed to be a line bundle.
\begin{proposition}
    For any given $k\in \mathbb N_{>0}$,  ${W_k}$ is irreducible if and only if $P(k)=P_{\infty}(k)$. 
\end{proposition}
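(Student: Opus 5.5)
The strategy is to compare three objects: the algebraic limit $W_k$, its distinguished component coming from $X_\infty$, and the image of $X_\infty$ under its full linear system. The main tools will be the volume (degree) computation used in the proof of Theorem \ref{algebraic convergence}, Corollary \ref{cor--dimension non-decreasing}, and Proposition \ref{main analytic input}.

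\emph{Step 0: the distinguished component.} Let $V_k\subset H^0(X_\infty,L_\infty^k)$ be the subspace of limit sections. By the convergence of orthonormal bases, $\dim V_k = P(k)$ for $k$ large, where we use vanishing of higher cohomology and $h^0=\chi$ on both sides. The $C^\infty_{loc}$ limit $T_k$ is the Kodaira map of $V_k$. The closure $B_k$ of $T_k(X_\infty^{\mathrm{reg}})$ is an irreducible component of (the reduction of) $W_k$.

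\emph{Step 1: if $P(k)=P_\infty(k)$, then $W_k$ is irreducible.} In this case $V_k=H^0(X_\infty,L_\infty^k)$, so $T_k$ is the complete linear system and is an embedding, with $T_k^*\omega_{FS}=k\omega_\infty+\ii\partial\pp\log\rho_{k,X_\infty}$. Integrating, with the logarithmic cut-off argument of Lemma \ref{lem-intersection} to handle the orbifold points, gives $\deg B_k=k^2L_\infty^2$. By volume convergence, $L_j^2=L_\infty^2$, and flatness of the Hilbert scheme limit gives $\deg W_k=k^2L_j^2$. Hence $B_k$ carries all of the degree, so $W_k$ has no other component and its multiplicity along $B_k$ is one.

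\emph{Step 2: if $W_k$ is irreducible, then $P(k)=P_\infty(k)$.} Irreducibility forces $(W_k)_{\mathrm{red}}=B_k$. The degree count then compares $\deg B_k$, which is at most $k^2 L_\infty^2$, with equality exactly when $V_k$ has no base points and $T_k$ is birational onto its image, against $\deg W_k=k^2L_j^2$. Since $L_j^2=L_\infty^2$, we get equality. Hence $V_k$ is base point free, $T_k\colon X_\infty\to B_k$ is birational and finite, and $W_k$ is generically reduced.

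Next we use Proposition \ref{main analytic input}. It gives $H^0(X_\infty,L_\infty^k\otimes\mathcal I_S^{N_0})\subset V_k$, so $V_k$ separates points and tangents away from the finite set $S$. Therefore $T_k$ is an isomorphism outside finitely many points, and $X_\infty$ is the normalization of $B_k$. We must rule out the possibility that $H^0(X_\infty,L_\infty^k)/V_k$ is nonzero. For this, compare Hilbert polynomials. The scheme $W_k$ has Hilbert polynomial $m\mapsto P(km)$. For large $m$, the scheme $B_k$ satisfies
\[
h^0(B_k,\mathcal O(m))=P_\infty(km)-\mathrm{length}\bigl(\nu_*\mathcal O_{X_\infty}/\mathcal O_{B_k}\bigr),
\]
where $\nu\colon X_\infty\to B_k$ is the normalization. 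Since $W_k$ is irreducible and generically reduced, its nilradical is supported at finitely many points. This gives $P(km)=P_\infty(km)-\delta+\ell$, where $\delta\ge0$ is the length above and $\ell\ge0$ comes from embedded points. Because Theorem \ref{thm-dim constant} is not to be used, I would obtain $\delta=\ell=0$ from a direct argument instead. Corollary \ref{cor--dimension non-decreasing} gives $P\le P_\infty$. We also know that $P(k)=\dim V_k$, and $V_k$ injects into $H^0(B_k,\mathcal O(1))$. The aim is to combine these with the degree-one inequality $\dim V_k\le h^0(\mathcal O_{W_k}(1))$ to force $h^0(X_\infty,L_\infty^k)=\dim V_k$.

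The main obstacle is exactly this last equality: $\delta$ and $\ell$ might a priori cancel, or the quotient $H^0(X_\infty,L_\infty^k)/V_k$ might be nonzero only in degree one. I would first try to show that $V_k$ is already the complete linear system by a $\pp$-argument as in Proposition \ref{main analytic input}, using sections whose prescribed jets at $S$ come from the local smoothing. If that fails, the fallback is to interpret the statement for all multiples $mk$ and use the polynomial identity above.
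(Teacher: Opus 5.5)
Your Step 1 is the paper's own argument (the degree count from the proof of Theorem \ref{algebraic convergence}). The converse, however, has a genuine gap, located exactly where you flag it. Your bookkeeping correctly reduces the claim to $\delta=\ell$. But both sides of $P(km)=P_\infty(km)-\delta+\ell$ are polynomials in $m$ whose quadratic and linear coefficients already agree. So this identity only restates the constant-term discrepancy $\ell-\delta=P(0)-P_\infty(0)$, and passing to multiples $mk$ adds no information. Corollary \ref{cor--dimension non-decreasing} only gives $\ell\le\delta$.

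Your other route is a $\pp$-argument realizing sections with prescribed low-order jets at $S$, and it is not available with the paper's tools. The weight is only controlled by $L\sum_I\log d_{j,I}\le u_j$, which is exactly why Proposition \ref{main analytic input} needs vanishing order $N_0>\frac12(L+2)$ at $S$. Producing limit sections that do not vanish to high order at $S$ is close to the Bergman-kernel lower bound the paper leaves open.

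There is also a smaller gap in the first paragraph of Step 2. The degree count cannot \emph{yield} generic reducedness. One has $\deg W_k=\mu\deg B_k$, where $\mu$ is the multiplicity along $B_k$, and this is compatible with $\deg B_k<k^2L_\infty^2$. You need $\mu=1$ as an input; the paper takes it from the smooth convergence of sections on the regular region.

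The missing ingredient is algebro-geometric rather than analytic. The paper chooses a smooth curve $C$ mapping to the Hilbert scheme, with the special point sent to $[W_k]$ and general fibers smooth (containing some $X_j$). It pulls back the universal family and normalizes the total space. The resulting $\mathcal X\to C$ is still flat with Hilbert polynomial $m\mapsto P(km)$. Now the central fiber is a Cartier divisor in a normal, hence $S_2$, threefold. It is therefore $S_1$ and has no embedded points, and being generically reduced it is reduced and irreducible, with normalization $X_\infty$. This disposes of $\ell$.

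To dispose of $\delta$, the paper invokes Koll\'ar's lemma \cite[Lemma 14.2]{kollar95}, equivalently Mumford's theorem \cite{mumford78} on the non-smoothability of certain isolated non-normal surface singularities. This gives $\mathcal X_0=X_\infty$. Flatness then gives $P_\infty(km)=P(km)$ for all $m$. So completeness of $V_k$ comes out a posteriori, not from an $L^2$-argument.

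Note also that within the paper $P(k)=P_\infty(k)$ already holds for every $k$ by Theorem \ref{thm-dim constant}, so the converse is formally immediate. The argument above is what is needed if, like the paper in this section, you want to avoid that topological input.
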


\begin{proof}
The direction from $P(k) = P_\infty(k)$ to the irreducibility of ${W}_k$ is established in the proof of Theorem~\ref{algebraic convergence} in Section~\ref{convergence in a flat family}. We give the proof of the converse direction.  

Firstly, by the uniform estimates of holomorphic sections on the regular region, ${W}_k$ 
is generically reduced. Let $\widetilde{W}_k$ denote its reduced scheme structure. The key fact that $W_k$ is irreducible implies that $\mathrm{Vol}(W_k)=\mathrm{Vol}(\widetilde{W}_k)=\mathrm{Vol}(X_\infty)$. On the other hand, Proposition \ref{main analytic input} implies that $T_k$ is an embedding on $X_\infty^{\mathrm{reg}}$, hence defines a birational map from $X_\infty$ to $\widetilde{W}_k$. Considering the induced birational map from the normalization of $\widetilde{W}_k$ to $X_{\infty}$, and noting that $X_\infty^{\mathrm{reg}}$ has codimension 2, we apply negativity lemma to compare the volume. It follows that $T_k$ extend across $X_\infty^{\mathrm {sing}}$, and indeed is the normalization of $\widetilde{W}_k$. As a result, $W_k$ has only isolated singularities as schemes.

However, at this stage it's possible that $W_k$ contains some embedded points. We choose a morphism from a smooth curve to the Hilbert scheme and pull back the universal family of Hilbert scheme. We assume that the central fiber is $W_k$, with general fibers smooth and contains \textit{some} $X_j$. By taking the normalization of the total space, we can get a flat family with normal total space
\begin{equation}
    \pi:\mathcal{X}\rightarrow C,
\end{equation}with $\mathcal{X}_0$ reduced and irreducible. It's clear that $X_{\infty}$ is the normalization of $\mathcal{X}_0$.
We can now apply a result of Koll\'ar \cite[Lemma 14.2]{kollar95}, or equivalently, a theorem of Mumford \cite{mumford78} on the non-deformability of certain isolated non-normal surface singularities. According to their result, we must have $X_\infty=\mathcal{X}_0$.  Since $\mathcal X\xrightarrow[]{\pi} C$ is flat, the Hilbert polynomial of $X_\infty$ is the same as all $X_j$.
\end{proof}

\subsection{CscK metrics converging in non-$\mathbb Q$-Gorenstein family}\label{in non-q-gorenstein family} 

It was proved in \cite{DS1,OSS16} that any Gromov--Hausdorff limit $X_\infty$ of smooth Kähler--Einstein Fano manifolds is $\mathbb{Q}$-Gorenstein smoothable. More precisely, there exists a flat projective morphism from a variety $\mathcal X$ to a smooth curve germ $(C,0)$ with smooth general fibers and central fiber $X_\infty$, such that $K_{\mathcal X}$ is $\mathbb Q$-Cartier. Examples of K\"ahler-Einstein Fano manifolds converging in a $\mathbb Q$-Gorenstein family have been constructed in \cite{spotti2014deformations,SSY16,LWX19}. For general cscK metrics, the gluing construction in \cite{BR15} and the openness results in \cite{PTT23} provides examples of cscK metrics converging in a $\mathbb{Q}$-Gorenstein family.

Note that for a sequence of non-collapsed polarized cscK surfaces $(X_j,\omega_j)$ converging to an orbifold $(X_\infty,\omega_\infty)$, by \eqref{c1 and ric L2} and Proposition \ref{prop--energy identity}, we know that whenever the Ricci curvature exhibits $L^2$-concentration along the sequence, or equivalently, when some bubble $Z_I$ in Theorem \ref{thm-bubble decomposition} is not Ricci-flat, then $c_1^2$ must jump on the limit. Consequently, by passing to some subsequence, any $X_j$ for $j$ large cannot fit into a $\mathbb{Q}$-Gorenstein family with central fiber $X_\infty$\footnote{Note that in this case, $X_\infty$ can still be $\mathbb Q$-Gorenstein smoothable, or even be degenerate limit of other smooth cscK metrics in some $\mathbb Q$-Gorenstein family; see the second example below. The notion of cscK metrics converging in a $\mathbb Q$-Gorenstein family is apparently stronger than requiring the central fiber to be $\mathbb Q$-Gorenstein smoothable.}. The converse is also true, namely if there is no non-Ricci-flat bubbles, then we can find a $\mathbb Q$-Gorenstein family $(\mathcal X,\mathcal L)$ with central fiber $(\mathcal X_0,\mathcal L_0)\cong (X_\infty,L_\infty)$. Indeed, by \cite[Theorem 3.5]{KSB}, for a flat family $\mathcal{X}$ over a smooth curve with central fiber having quotient singularities (equivalently, log terminal) and general fiber smooth, the total space $\mathcal{X}$ is $\mathbb{Q}$-Gorenstein if and only if $c_1(\mathcal X_t)^2$ is constant.

According to the discussion above, one can expect examples of cscK metrics on surfaces degenerating in non-$\mathbb{Q}$-Gorenstein families, although, to the best of the authors’ knowledge, no explicit construction is currently known. In this section, we propose two approaches to the construction of such examples.

\

The first approach is suggested by the Zariski openness established in Theorem~\ref{Zariski openness in controlled cone}. The following example is motivated by \cite[Section 5.5]{donaldson2010}, which goes back to Zariski \cite{zariski62}. Let $X_0$ be the projective surface obtained by blowing up $\mathbb{P}^2$ at five points $\{p_i\}_{i=1}^5$, where four points $p_2,\dots,p_5$ lie on a projective line $\ell$ and $p_1$ does not, and let $X_1$ be the blow-up of $\mathbb{P}^2$ at five points in general position.  
On $X_1$, consider the ample $\mathbb{Q}$-line bundle  
\begin{equation}
    L_1 = 3H - \sum_{i=1}^4E_i + \delta(H-E_5)
\end{equation}
where $H$ denotes the pull-back of $\mathcal{O}(1)$, each $E_i$ is a $(-1)$-curve from the blow-up, and $0<  \delta \ll 1$ is rational. On $X_0$, define $L_0$ with the same expression. Then $L_0$ is big and nef but not ample, since $L_0\cdot C=0$ for the proper transform $C$ of $\ell$. As $C^2=-3$, there exists a birational contraction  
\begin{equation}
    \mu: X_0 \to W_0,
\end{equation}
where $W_0$ has an orbifold point locally modeled on $\mathbb{C}^2/\mathbb{Z}_3$. This is a log terminal but non-canonical singularity. Moreover, $W_0$ admits an ample $\mathbb{Q}$-line bundle, still denoted $L_0$, whose pull-back to $X_0$ recovers the big and nef line bundle $L_0$.  

Note that $X_1$ can be viewed as the blow-up of a degree-$5$ del Pezzo surface, which admits a Kähler–Einstein metric by \cite{tianYau}. The class $c_1(L_1)$ is a small perturbation of the pull-back of the anti-canonical class. Building on the equivalence between the properness of the Mabuchi functional and the existence of cscK metrics established in \cite{chen-cheng}, it was shown in \cite[Section 5]{Sze-RCD} that there exists a cscK metric in $c_1(L_1)$ for $\delta$ sufficiently small; see also \cite{BJT} for a more general setting. Note that so far we fix \textit{one} $X_1$, and the resulting $\delta$ depends on the geometry of this particular surface. However, since $\mathrm{Aut}(X_1,L_1)$ is discrete and $c_1(L_1)$ lies in the controlled cone, Theorem~\ref{Zariski openness in controlled cone} applies. Therefore, one can choose $\delta$ uniform and construct a flat family $(X_t,L_t)$ through $(X_1,L_1)$, such that $(X_t,L_t)$ degenerates to $(W_0,L_0)$ in a non-$\mathbb{Q}$-Gorenstein family (as $c_1^2$ jumps in the limit). By picking up some sequence $t_j\to 0$, we can obtain a polarized Gromov–Hausdorff limit $(X_\infty,L_\infty)$ of $(X_{t_j},L_{t_j})$. It seems natural to expect that this limit is exactly $(W_0,L_0)$, which would provide a desired example.

\

The second approach is based on a possible generalization of the results in \cite{PTT23}.  
Let \( X \) be a projective surface with log terminal singularities, admitting an orbifold cscK metric, and suppose that \( X \) admits a local non-\(\mathbb{Q}\)-Gorenstein smoothing. Such examples do exist; for instance, one may take
\(
X = (\mathbb{P}^1 \times \mathbb{P}^1)/\mathbb{Z}_4.
\)
Indeed, since the anticanonical divisor \( -K_X \) is big,  by \cite[Proposition~3.1]{HP10}, there is no local-to-global obstruction to deformations, and hence \( X \) admits a non-\(\mathbb{Q}\)-Gorenstein deformation.
This gives rise to a three-dimensional total space $\mathcal{X}\rightarrow \Delta$ with $\mathcal X_0=X$. By \cite[Proposition~3.1]{Artin}, the general fiber of $\mathcal{X}$ is smooth. 
Although $\mathcal{X}$ is not $\mathbb{Q}$-Gorenstein, \cite[Theorem~3.5]{KSB} shows that it admits a $\mathbb{Q}$-Gorenstein modification 
\[
\phi : \widetilde{\mathcal{X}} \to \mathcal{X},
\] 
which is an isomorphism on the smooth fibers of $\mathcal{X}$ (this is the so-called $\mathbb{Q}$-factorial modification, whose existence in higher dimensions was proved in \cite{BCHM}).  

Fix a relative polarization $\mathcal{L}$ on $\mathcal{X}$ such that there exists a cscK metric with orbifold singularities in $c_1(\mathcal L_0)$ on the central fiber $\mathcal{X}_0$; for example, by perturbing the anticanonical class. One may try to generalize the result in \cite{PTT23}, and show that the properness of the Mabuchi functional is an open condition for the family $(\widetilde{\mathcal{X}},\phi^*\mathcal{L})$. 
Combining this with the work of Chen--Cheng \cite{chen-cheng}, we can then construct cscK metrics on nearby smooth fibers, which is expected to converge to the cscK metric on $(X,\mathcal L_0)$ in the Gromov-Hausdorff topology. To address the issue of nontrivial holomorphic vector fields, one may either restrict attention to varieties \(X\) without such vector fields, or, following \cite[Section~7]{BR15}, work in an equivariant setting.

\subsection{Construction of K-moduli}\label{further remarks}
It is known that the moduli space of polarized cscK metrics form a Hausdorff complex analytic space \cite{FS90,DN25}. With our Zariski openness result in place, and following the framework of \cite{Odaka13},  we obtain the following.
\begin{theorem}[=Theorem \ref{thm-moduli space}]
 Smooth polarized cscK surfaces $(X,L)$ with fixed Hilbert polynomial, finite automorphism group, and polarization lying in a controlled cone form a separated  moduli algebraic space; moreover, this moduli space has only quotient singularities.
\end{theorem}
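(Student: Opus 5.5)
The plan is to follow the framework of \cite{Odaka13}, which builds the moduli space as a quotient of a locally closed subscheme of a Hilbert scheme. The inputs are Zariski openness (Theorem~\ref{Zariski openness under sob bound}), finiteness of automorphisms, and a separatedness statement coming from uniqueness of cscK metrics.

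\textbf{Step 1: boundedness and the parameter scheme.} By Matsusaka's big theorem and the controlled cone condition, which bounds $c_1(L)^2$ and $c_1(X)\cdot c_1(L)$, there is a uniform $k_1$ with $L^{k_1}$ very ample and all higher cohomology vanishing. So every such $(X,L)$ embeds in a fixed $\mathbb P^N$, and its image is a point of a fixed Hilbert scheme $\mathrm{Hilb}$. Inside $\mathrm{Hilb}$, smoothness of the fibre, finiteness of $\mathrm{Aut}$, and the open condition that the embedding is given by the complete linear system of $L^{k_1}$ cut out a locally closed subscheme $H$. Over $H$ we have a smooth polarized family. The controlled-cone condition is numerical, hence constant on connected components, and Lemma~\ref{contolled  cone imply sobolev bound} then gives Property (Sob). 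Applying Theorem~\ref{Zariski openness under sob bound} to the universal family over (a desingularized stratification of) $H$, possibly stratifying and inducting on dimension, shows that the cscK locus $H^*\subset H$ is Zariski open. It is also $PGL(N+1)$-invariant.

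\textbf{Step 2: the quotient.} The action of $PGL(N+1)$ on $H^*$ has finite stabilizers, namely $\mathrm{Aut}(X,L)$. We form the quotient stack $[H^*/PGL(N+1)]$, which is a Deligne--Mumford stack, and take its coarse moduli algebraic space $M$ via Keel--Mori. This step requires the action to be proper. By the valuative criterion, properness reduces to the following claim: given two families over a punctured disc (or a DVR) that agree generically, if both central fibres are cscK, then the central fibres are isomorphic. This is the separatedness step, and I expect it to be the main obstacle.

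To handle it, I would argue analytically as in \cite{Odaka13}, replacing Kähler--Einstein uniqueness with cscK uniqueness. Suppose $(X_t,L_t)\to (X_0,L_0)$ and $(X_t,L_t)\to(X_0',L_0')$ with both limits smooth cscK. First, take cscK metrics on nearby fibres; these exist by Euclidean openness \cite{lebrun-Sim1994}. They are unique up to automorphism by Berman--Berndtsson / Chen--Tian uniqueness, and the automorphism groups are finite. Second, by Theorem~\ref{thm--TVcompactness} and Theorem~\ref{algebraic convergence}, a subsequence converges in the Gromov--Hausdorff sense and within the Hilbert scheme to a single polarized orbifold $X_\infty$. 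Third, continuity of cscK metrics in smooth families with finite $\mathrm{Aut}$ shows the metrics converge smoothly to the cscK metrics on $X_0$ and on $X_0'$ respectively. Hence both $X_0$ and $X_0'$ are isomorphic to $X_\infty$.

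An algebraic alternative is to use K-polystability of cscK manifolds (Stoppa, \cite{Stoppa2009}, \cite{szekelyhidi2015}) together with separatedness of uniformly K-stable families in the style of Blum--Xu. I would try the analytic route first, since it uses only results already in the paper.

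\textbf{Step 3: local structure.} For the singularity statement, I would invoke the Luna étale slice theorem for the reductive group $PGL(N+1)$ acting with finite stabilizers. Étale locally near $[X]$, the space $M$ is a slice $W$ modulo $\mathrm{Aut}(X,L)$. Unobstructedness of $X$ holds because the controlled cone forces $X$ to be a blow-up of $\mathbb P^2$ or of a Hirzebruch surface, so $H^2(T_X)=0$, and the polarization deforms along with it. This makes the slice smooth, so $M$ is étale locally $\mathbb C^n/\mathrm{Aut}(X,L)$, which has only quotient singularities. Alternatively, one can cite the Kuranishi description in \cite{FS90,DN25}.
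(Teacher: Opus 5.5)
Your proposal is correct and follows essentially the paper's route: Keel--Mori for the coarse space of the Deligne--Mumford quotient, separatedness from uniqueness of cscK metrics plus uniqueness of the (here smooth, by LeBrun--Simanca) limits, and quotient singularities from unobstructedness of $(X,L)$ on a blow-up of $\mathbb{P}^2$ or a Hirzebruch surface. The paper phrases your ``the polarization deforms along with it'' as $H^2(X,\mathcal{O}_X)=H^2(X,T_X)=0\Rightarrow H^2(X,\mathcal{E}_L)=0$ via the Atiyah extension $0\to\mathcal{O}_X\to\mathcal{E}_L\to T_X\to 0$. One precision: the valuative criterion needs the generic isomorphism to extend, not only the central fibres to be isomorphic, but your argument supplies this too, since by uniqueness the generic isomorphisms are isometries of smoothly converging cscK metrics and so subconverge to a biholomorphic isometry of the central fibres.
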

\begin{proof}
The proof follows from the same argument as that in \cite{Odaka13}. By \cite{keelMori}, we have a coarse moduli space which is an algebraic space and the separatedness follows from the uniqueness of the cscK metrics \cite{donaldson2001} and the uniqueness of limits as Riemannian manifolds. See also \cite{BM85,BB2017,chen-tian} and the references therein for more discussion on the uniqueness of canonical metrics.

We show that the moduli space has only quotient singularities in the following. Note that the controlled condition \ref{controlled cone for cscK} forces $X$ is a blow-up of $\mathbb P^2$ or Hirzebruch surfaces, and therefore we have $H^2(X, \mathcal{O}_X)=H^0(X,K_X)=0$ and $$H^2(X, T_X)=0$$ by \cite[Example 2.4.11]{sernesi2006}. From the exact sequence of the Atiyah extension of $L$ \cite[Section 3.3.3]{sernesi2006}
\begin{equation}
    0 \rightarrow \mathcal{O}_X \rightarrow \mathcal{E}_L \rightarrow T_X \rightarrow 0,
\end{equation} we know that 
\begin{equation}
    H^2(X, \mathcal{E}_L)=0.
\end{equation}
Therefore the moduli algebraic stack (in the sense of Deligne-Mumford) is smooth as the deformation is unobstructed, which implies that moduli space has only quotient singularities.
\end{proof}

The next question is the structure of compactified moduli space. When the polarization is contained in the controlled cone, the compactness theory in \cite{tian-viaclovsky1,tian-viaclovsky2} provides a natural Gromov-Hausdorff compactification. This Gromov--Hausdorff compactification is expected to  \cite{odaka10,Odaka13,OSS16} admit a projective scheme structure and coincide with the moduli space of polarized K-polystable surfaces constructed via algebraic K-stability theory.  This strategy, namely to construct and compactify K-moduli using Riemannian convergence theory, works well in the early stage of moduli theory for Fano manifolds \cite{OSS16,odaka15,LWX19}. We hope our work in this paper would provide some preliminary tools for moduli theory of non-collapsed cscK metrics beyond the Fano category. Moreover, one might also try to construct some explicit examples of compactified K-moduli as in \cite{OSS16}. We propose the following problem:

\begin{problem}
 Construct projective algebro-geometric moduli spaces that coincides with the Gromov-Hausdorff compactification for smooth polarized cscK surfaces \((X,L)\) with a fixed Hilbert polynomial, with finite automorphism group and with \(c_1(L)\) lying in the controlled cone. 
\end{problem}

\bibliographystyle{alpha}
\bibliography{ref.bib}

\end{document}